\documentclass[11pt,a4paper]{article}
\usepackage[utf8]{inputenc}
\usepackage[T1]{fontenc}

\usepackage[t,lf]{spectral}

\usepackage{amsmath}
\usepackage{amsfonts}
\usepackage{amsthm}
\usepackage{ upgreek }
\usepackage{mathrsfs}  

\usepackage{MnSymbol}   

\usepackage{marginnote}

\usepackage{csquotes}

\usepackage{stmaryrd}

\usepackage{enumitem}
\setlist[itemize]{parsep=0.5pt,leftmargin=17pt,itemindent=0pt}
\setlist[enumerate]{parsep=1pt,leftmargin=17pt,itemindent=0pt}

\usepackage[colorlinks=true, linkcolor=teal, linktocpage=true, citecolor=teal]{hyperref} %

\usepackage{mathtools}

\usepackage{cleveref}
\usepackage{autonum} 
\usepackage{calc}

\usepackage{pgf,tikz}
\usetikzlibrary{matrix,automata,arrows,positioning,calc,patterns,fit,calc,
decorations.pathreplacing,decorations.pathmorphing,decorations.markings,matrix,
tikzmark,shadows.blur}
\usetikzlibrary{cd}
 \usepgflibrary{fpu}
 
\usepackage{bbm}
\usepackage[margin=20pt,font=small,labelfont=sc]{caption}

\crefformat{equation}{(#2#1#3)}

\newtheorem{theorem}{Theorem}[section]
\newtheorem{proposition}[theorem]{Proposition}
\newtheorem{lemma}[theorem]{Lemma}
\newtheorem{corollary}[theorem]{Corollary}

\theoremstyle{definition}

\newtheorem{remarkx}[theorem]{Remark}
\newenvironment{remark}
  {\pushQED{\qed}\remarkx}
  {\popQED\endremarkx}
\newtheorem{assumptionx}[theorem]{Assumption}
\newenvironment{assumption}
  {\pushQED{\qed}\assumptionx}
  {\popQED\endassumptionx}

\numberwithin{equation}{section}

\input{macros.sty}

\newcommand{\Mp}{m_{\text p}}

\newcommand{\ph}{\varphi}

\DeclareMathOperator{\id}{id}

\begin{document}


\title{Diffusion stabilises time-periodic solutions\\ 
in conservation laws coupled to a relaxation oscillator}
\author{Julien Barr\'e, Nils Berglund and Hiroshi Horii}
\date{July 27, 2026} 

\maketitle

\begin{abstract}
We study a viscous one-dimensional conservation law, coupled to a fast ordinary differential equation. For a vanishing viscosity, the system has an infinite-dimensional family of time-periodic solutions, corresponding to relaxation oscillations. We show that 
for symmetric initial conditions, a small positive viscosity selects a unique periodic solution, which we prove to be linearly stable. The proof exploits the slow-fast structure through an averaging strategy, as well as spectral-theoretic 
methods. The results are illustrated by numerical simulations.
\end{abstract}

\leftline{\small 2020 {\it Mathematical Subject Classification.\/} 
5Q49, 
34C26 (primary), 
34K33, 
35P15 (secondary). 
}
\noindent{\small{\it Keywords and phrases.\/}
Slow--fast system,
relaxation oscillation, 
conservation law, 
transport equation, 
averaging,
spectral theory, 
plasma.
}



\section{Introduction}
\label{sec:intro} 
Dusty plasmas are systems containing ionized gases and solid particles, typically of micrometer size, which are important both for fundamental and applied reasons, see for instance~\cite{beckers2023physics} for a recent review. Even in controlled experimental conditions, they show complex dynamics: a region free of particles (\lq\lq the void\rq\rq) typically forms~\cite{vladimirov2005stability,hu2009theory}; an instability can give rise to periodic oscillations of the 
void~\cite{zhdanov2010auto,couedel2010self,mikikian2010threshold,pikalev2021dim}, or even to mixed-mode oscillations (MMOs)~\cite{mikikian2008mixed}.


The physical mechanisms behind this complex dynamics are still under discussion (see for instance the recent work~\cite{pikalev2021heartbeat}), which impedes a precise mathematical modelling. Nevertheless, it is clear that any model should involve transport and diffusion of the dust particles, long-range coupling through the Poisson equation, and multiscale phenomena in space and time. The models we shall study, although clearly unrealistic for dusty plasmas, have these ingredients. 
 
 Relaxation oscillations, introduced by van der Pol in~\cite{vanderPol26}, are one of the simplest structures giving rise to sustained oscillations and, under certain circumstances, to MMOs~\cite{MMO_review}. We shall thus couple relaxation oscillations with transport and diffusion of particles. We consider the following coupled system, where $n(t,x)$ is a density of particles, and $E(t),M(t)$ are global quantities coupled with $n$:
 \begin{align}
 \partial_t n(t,x) &= -a(E,M) \partial_x\bigbrak{f(n(t,x))} + \eps\partial_{xx}n(t,x)\;,
 \qquad x\in \mathbb{R}\;,  \label{eq:transport} \\
  \eta \dot{E} &= g(E,M)\;,  \label{eq:ODE_E} \\
   M(t) &= -\int_{-\infty}^0 n(t,x) \6x\;,  \label{eq:def_M} 
 \end{align}
 with boundary conditions 
\begin{equation}
\label{eq:bc} 
 \lim_{x\to\infty} n(t,x) = 1\;, \qquad 
 \lim_{x\to-\infty} n(t,x) = 0\;, \qquad 
 \lim_{\abs{x} \to \infty} \partial_x n(t,x) = 0~,
\end{equation} 
and complemented with appropriate initial conditions.
In the case of dusty plasmas, we think of $E(t)$ as representing the 
average electric field generated by the dust particles, ions and electrons, 
though other interpretations are possible.
The exact forms of the functions $a,g$ and $f$ do not matter, as long as the following qualitative characteristics are met:
\begin{itemize}
\item $a$ and $g$ are chosen such that the dynamics of $E,M$, which becomes autonomous for $\eps=0$, see Sec. \ref{ssec:epsilon0}, can sustain relaxation oscillations.  
\item The flux function $f$ is chosen such that the equation for $n$ can sustain steep stationary step profiles between $n=0$ and $n=1$; in practice, we will use $f(n)=n(1-n)$.
\end{itemize} 
The parameter $\eta$ controls the time scale separation, while $\eps$ measures the 
intensity of diffusion. The parameters $\eps$ and $\eta$ satisfy $0 < \eta, \eps \ll 1$, and we 
will usually assume $\eta \ll \eps$. We will thus study a regime of small diffusion and large time scale separation between the dynamics of $n$ and $E$.  Note that the coupling between the PDE and the ODE is global, through the parameter $M$: this is in line with an idealization of the long-rang coupling provided by the Poisson equation in a plasma. 
Our goal is to study the existence and stability of periodic solutions in this setting, and our result can be informally stated as follows (see Theorem \ref{thm:main} for a precise statement): without diffusion ($\eps=0$), the system \eqref{eq:transport}--\eqref{eq:ODE_E}--\eqref{eq:def_M} admits an infinite-dimensional family of periodic solutions, indexed by the initial condition for $n$; for $\eps$ and $\eta$ positive and small enough, there exists a unique periodic solution, which is asymptotically linearly stable: it is in a sense selected by the small diffusion.

Studies of traveling front profiles for scalar conservation laws with small diffusion are classical and ubiquitous (see for instance~\cite{Mastumura_Nishihara94,Pan_Warnecke04,Beck_Wayne08,Johnson_Zumbrun11}, \cite[Chapter 6]{Serre_book} for a textbook account and \cite{Sandstede02} for an overview), 
and have been applied, in the fast--slow setting, to systems ranging from the FitzHugh--Nagumo equations for neuronal dynamics~\cite{Rinzel_Terman}, 
predator-prey systems~\cite{GGKKMO07}
and chemical reactions~\cite{Li_Liu26}
to turbulent pipe flows~\cite{Engel_2022}.
However, these systems do not show oscillating solutions. 
The work~\cite{sandstede2008hopf} studies a Hopf bifurcation in a system of viscous conservation laws, leading to periodic solutions. There is a large literature on periodic traveling waves and their stability, see for instance \cite{oh2003stability,serre2005spectral,johnson2014behavior}. We also note that \cite{greenberg1991time} exhibits a  non viscous system of two conservation laws with a space-time periodic solution. By contrast, the solutions we will study are not space periodic, nor are they propagating. Coupling a conservation law with an ODE opens other possibilities to create oscillatory patterns and is a natural modelling step in different situations (see~\cite{borsche2012mixed} for applications in traffic modelling and hydrodynamics for instance). However, all in all, we have not found references directly connected with the system \eqref{eq:transport}--\eqref{eq:ODE_E}--\eqref{eq:def_M}.

The remainder of this article is organised as follows.
A precise statement of the hypotheses and the results is given Section~\ref{sec:results}. Numerical simulations are presented in Section~\ref{sec:sim}. Section~\ref{sec:averaging} explains in detail an averaging procedure. In Section~\ref{sec:stability}, we find the stationary solutions of the resulting averaged equation, which correspond to periodic solutions in the original system, and prove their linear stability. Finally, in Section~\ref{sec:periodic_unperturbed} we show that the remainders, which 
are small when $\eps,\eta$ are small and which are neglected in the averaged equation, do not modify the existence and stability of the periodic orbits, while the appendix contains the 
proof of an existence and uniqueness result for the system~\eqref{eq:transport}-\eqref{eq:ODE_E}-\eqref{eq:def_M}.

\subsection*{Acknowledgments}

The authors would like to thank 
Ga\"etan Cane,
Laurent Desvillettes, 
Luc Hillairet,
Christian Kuehn, 
and Maxime Mikikian 
for helpful discussions. 
This project was supported by the ANR project PERISTOCH, ANR--19--CE40--0023. 


\section{Model and results}
\label{sec:results} 


\subsection{A slow--fast transport--ODE system}
\label{ssec:model} 

We start from the system \eqref{eq:transport}-\eqref{eq:ODE_E}-\eqref{eq:def_M}, with boundary conditions \eqref{eq:bc}.
The flux $f$ will be taken of the form
\begin{equation}
 f(n) = n(1-n) 
 \qquad
 \text{for $0 \leqs n \leqs 1$\;,}
\end{equation} 
while its value outside the interval $[0,1]$ will not matter in the analysis, 
since we will show that under suitable assumptions, $0 < n(t,x) < 1$ for all $t\geqs0$. 

We first provide a global existence result for solutions of the system~\eqref{eq:transport}, 
for quite general functions $a$ and $g$. We will make more specific assumptions 
ensuring the existence of relaxation oscillations in Assumption~\ref{assump:ag} below. 
To deal with the behaviour for $x\to\pm\infty$, it will be convenient to fix a smooth, strictly 
increasing function $n^*:\R\to[0,1]$, satisfying the boundary condition~\eqref{eq:bc}, and 
such that 
\begin{equation}
n^* \in H^2(\R^-) \cap L^1(\R^-)\;, \qquad  (1-n^*)  \in H^2(\R^+) \cap L^1(\R^+)\;. 
\end{equation} 
An example of such a function is 
$n^*(x) = \frac12(1 + \tanh(x))$.

\begin{theorem}[Global existence and uniqueness of solutions]
\label{thm:existence} 
Assume the functions $f$, $a$ and $g$ are globally Lipshitz continuous, 
and $a$ is bounded. Let $n_0: \R\to [0,1]$ satisfy 
$n_0 - n^* \in H^2(\R) \cap L^1(\R)$. 
Then for any $E_0 \in \R$, the system~\eqref{eq:transport} with initial conditions 
$n(0,x) = n_0(x)$ and $E(0) = E_0$ admits a unique solution, which is global in 
time, and satisfies 
$n(t,\cdot) - n^* \in H^2(\R) \cap L^1(\R)$
for all $t\geqs0$. Furthermore, $n(t,\cdot)$ satisfies the boundary 
conditions~\eqref{eq:bc} for all $t\geqs0$, 
and admits the conserved quantity 
\begin{equation}
\label{eq:conserved_quantityP} 
 P(t) 
 = \int_{-\infty}^\infty \bigbrak{n(t,x) - n^*(x)} \6x\;.
\end{equation} 
\end{theorem}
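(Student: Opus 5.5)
We plan to write $n = n^* + u$ and work with the perturbation $u$, which satisfies
\begin{equation}
 \partial_t u = \eps\partial_{xx}u - a(E,M)\,\partial_x\bigbrak{f(n^*+u)} + \eps\,\partial_{xx}n^*\;,
 \qquad
 \eta\dot E = g(E,M)\;,
\end{equation}
with $M = -\int_{-\infty}^0 (n^*+u)\6x$. Since $n^*\in L^1(\R^-)$ and $u\in L^1(\R)$, $M$ is a Lipschitz functional of $u$ in $L^1$. The equation for $E$ is an ODE driven by $M$. We therefore treat the coupled system as an abstract semilinear evolution equation on $X = (H^2(\R)\cap L^1(\R))\times\R$ (or first on $(H^1\cap L^1)\times\R$, upgrading regularity afterwards). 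The linear part is $\eps\partial_{xx}$, which generates an analytic semigroup on $H^k$ and a contraction semigroup on $L^1$. The nonlinearity $a(E,M)\,\partial_x f(n^*+u)$ loses one derivative. Local existence and uniqueness will follow from a Banach fixed point in $C([0,T],X)$ applied to the Duhamel formulation. The smoothing estimate $\norm{\partial_x e^{\eps t\partial_{xx}}}_{L^p\to L^p}\leqs C(\eps t)^{-1/2}$ absorbs the derivative, because $t^{-1/2}$ is integrable. The forcing term $\eps\partial_{xx}n^*$ lies in $L^1\cap L^2$ by the hypotheses on $n^*$ (using smoothness and $H^2$ on each half-line). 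Lipschitz continuity of $f$ (or of $f'$ on bounded sets) and of $a$ and $g$ gives the contraction for small $T$.

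Next we establish invariance of $[0,1]$ via the maximum principle. Writing the $n$-equation in non-divergence form as $\partial_t n = \eps n_{xx} - a f'(n) n_x$, the constants $0$ and $1$ are solutions because $f(0)=f(1)=0$. Comparison for this scalar parabolic equation (for fixed coefficient $a(E(t),M(t))$ along the solution) shows that $0\leqs n\leqs1$ is preserved. This justifies ignoring $f$ outside $[0,1]$ and gives an a priori $L^\infty$ bound.

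For global existence we need a priori bounds that prevent blow-up of the $X$-norm in finite time. These come in four pieces.
\begin{itemize}
\item The $L^1$ bound: $\partial_t u = \partial_x(\cdot)$ plus $\eps\partial_{xx}n^*$. Using Kato's inequality ($\frac{d}{dt}\int\abs{u}\leqs\dots$) together with $\abs{f(n^*+u)-f(n^*)}\leqs L\abs u$, we expect $\norm{u}_{L^1}$ to grow at most exponentially.
\item The $E$ bound: since $M$ is then bounded on bounded time intervals, $E$ cannot blow up, because $g$ is globally Lipschitz.
\item The $H^1$ bound: the energy estimate $\frac12\frac{d}{dt}\norm{u}_{L^2}^2 + \eps\norm{u_x}^2_{L^2}\leqs \norm{a}_\infty\norm{f(n)-f(n^*)}\norm{u_x}+\dots$ combined with Young's inequality. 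The same for $u_x$ gives Gronwall bounds, using $\norm a_\infty<\infty$ and $0\leqs n\leqs1$.
\item The $H^2$ bound: it follows similarly, or from parabolic smoothing via the mild formulation.
\end{itemize}
With these, the continuation argument yields a global solution.

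Finally we check the remaining claims. Membership $u\in H^2$ implies $u,u_x\to0$ at $\pm\infty$, and $n^*$ satisfies \eqref{eq:bc}, so the boundary conditions hold. Conservation of $P$ comes from integrating the $u$-equation over $\R$: we get $\frac{d}{dt}P = \bigbrak{-a f(n) + \eps n_x}_{-\infty}^{\infty} = 0$, since $f(0)=f(1)=0$ and $n_x\to0$. This step needs a short justification, for instance by approximation with a smooth cutoff, since $u\in L^1$ only.

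The main obstacle is the $L^1$ part. The semigroup argument in $L^1$ with a derivative loss is delicate, and the Kato-inequality estimate must be justified at the regularity available. I would first try to obtain the $L^1$ bound by the weak-$L^1$ contraction argument for viscous conservation laws, comparing the solution $n$ to the function $n^*$ treated as a sub/supersolution up to the forcing term.
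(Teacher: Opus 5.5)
Your overall route is sound, but it differs from the paper's in two places.

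\textbf{Local existence.} You put $L^1$ into the fixed-point space and treat $M=-\int_{-\infty}^0(n^*+u)\6x$ as a Lipschitz functional of $u$. The paper keeps only $H^2$ and carries $M$ as a third scalar unknown, driven by the flux through $x=0$: $\dot M = a(E,M)f(n(t,0))-\eps\partial_x n(t,0)$. This involves only $u(t,0)$ and $\partial_x u(t,0)$, so it is controlled by $\norm{u}_{H^2}$ via Sobolev embedding. The derivative loss is absorbed as in your $t^{-1/2}$ argument, through the factor $\sqrt{T/\eps}$ in $\alpha(T)$.

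\textbf{Global existence.} You use the maximum principle, energy estimates and continuation. The paper instead rescales $u=\e^{\lambda t}\tilde u$ (and $E$, $M$) with $\lambda=T^{-1}\log 2$, so the fixed point on $[0,T]$ lands in a ball of half the radius. It then iterates with a non-shrinking step and no separate a priori bounds. That is shorter, but it has two weaknesses:
\begin{itemize}
\item It relies on $u\mapsto f(n^*+u)$ being globally Lipschitz in $H^2$. This fails even for $f(n)=n(1-n)$, because of the quadratic term $f''(n)(\partial_x n)^2$.
\item Its last step (a bound on $\tilde M$ gives $\tilde u\in L^1$) tacitly needs a sign condition such as $0\leqs n\leqs 1$, which the appendix never proves.
\end{itemize}
Your maximum principle and $H^1$-then-$H^2$ Gronwall estimates supply exactly these pieces. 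As in the paper, the $H^2$ part needs $f\in\cC^2$ (at least on $[0,1]$), not just Lipschitz continuity.

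\textbf{The $L^1$ bound.} This is the step you leave open, and it needs neither Kato's inequality nor an $L^1$-contraction argument. Your Kato sketch would in any case need a further integration by parts, since $\partial_x u$ need not be integrable when $u\in H^2\cap L^1$. Use the flux identity instead. Along your local solution,
\[
 \frac{\6}{\6t}\int_{-\infty}^0 n\6x
 = \frac{\6}{\6t}\int_0^\infty (1-n)\6x
 = -a(E,M)f(n(t,0))+\eps\partial_x n(t,0)\;,
\]
justified by the same truncation you use for $P$. The right-hand side is bounded by $\sup\abs{a}\max_{[0,1]}\abs{f}+\eps\bigl(\abs{\partial_x n^*(0)}+C\norm{u}_{H^2}\bigr)$.

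Since $0\leqs n\leqs 1$, you have $\abs{u}\leqs n+n^*$ on $\R^-$ and $\abs{u}\leqs (1-n)+(1-n^*)$ on $\R^+$. Hence $\norm{u(t)}_{L^1}$ stays bounded on bounded time intervals in terms of the $H^2$ bound. That $H^2$ bound is independent of $L^1$, because $a$ is bounded. As a byproduct, $P(t)$ equals the difference of the two integrals up to a constant, which gives its conservation directly.

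\textbf{A minor point.} $\partial_{xx}n^*\in L^1$ does not follow from the stated hypotheses on $n^*$. You can either:
\begin{itemize}
\item fix $n^*=\frac12(1+\tanh x)$ without loss of generality, since the class $n^*+H^2\cap L^1$ does not depend on the choice and $P$ only shifts by a constant; or
\item write $\eps\partial_{xx}n^*=\eps\partial_x(\partial_x n^*)$ with $\partial_x n^*\in L^1$, and absorb the derivative with the same $t^{-1/2}$ smoothing.
\end{itemize}
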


We give the proof in Appendix~\ref{app:existence}. We will later be able to relax the 
global Lipshitz and boundedness assumptions, by showing that for suitable 
initial conditions, the solution remains bounded uniformly in time. 

We will focus on the case where $n_0$ has a central symmetry. The following result 
shows that this symmetry is conserved over time.

\begin{proposition}[Symmetry]
\label{prop:symmetry} 
Assume that $f(n) = f(1-n)$ for all $n\in\R$, and that 
the initial condition satisfies the central symmetry 
\begin{equation}
\label{sym:n0} 
 n_0(-x) = 1 - n_0(x) 
 \qquad \forall x\in\R\;.
\end{equation} 
Then the solution of~\eqref{eq:transport} keeps this symmetry for all time, that is
\begin{equation}
\label{eq:symmetry0} 
 n(t,-x) = 1 - n(t,x) 
 \qquad \forall x\in\R, \forall t\geqs0\;.
\end{equation} 
\end{proposition}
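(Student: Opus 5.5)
The plan is to use uniqueness from Theorem~\ref{thm:existence}. Given the solution $(n(t,x),E(t))$ with initial data $(n_0,E_0)$, define the reflected function
\begin{equation}
 \tilde n(t,x) = 1 - n(t,-x)\;, \qquad \tilde E(t) = E(t)\;.
\end{equation}
We will show that $(\tilde n,\tilde E)$ solves the same system with the same initial data. Uniqueness then forces $\tilde n = n$, which is exactly~\eqref{eq:symmetry0}.

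\emph{Step 1: the PDE.} For the transport--diffusion equation~\eqref{eq:transport} this is a direct computation. We have $\partial_t\tilde n(t,x) = -\partial_t n(t,-x)$. Since $f(1-n)=f(n)$, we get $f(\tilde n(t,x)) = f(n(t,-x))$, and hence
\begin{equation}
 \partial_x f(\tilde n(t,x)) = -(\partial_x f(n))(t,-x)\;,
 \qquad
 \partial_{xx}\tilde n(t,x) = -(\partial_{xx}n)(t,-x)\;.
\end{equation}
So the right-hand side of~\eqref{eq:transport} evaluated at $\tilde n$ equals minus the right-hand side at $n$, taken at $-x$. This matches $\partial_t\tilde n$, provided the coefficient $a(\tilde E,\tilde M)$ equals $a(E,M)$.

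\emph{Step 2: the nonlocal quantity $M$.} This is the point needing care, because it is where the global coupling enters. We need $\tilde M(t) = -\int_{-\infty}^0 \tilde n(t,x)\,dx$ to equal $M(t)$. Substituting gives
\begin{equation}
 \tilde M(t) = -\int_0^\infty \bigl(1-n(t,y)\bigr)\,dy\;.
\end{equation}
This is not identical to $M$ in general. I would relate the two using the conserved quantity $P$ of~\eqref{eq:conserved_quantityP}. Take $n^*$ symmetric, for instance $n^*(x)=\frac12(1+\tanh x)$, so that $\int_{-\infty}^0 n^* = \int_0^\infty(1-n^*)$. Then $\tilde M(t) - M(t) = P(t)$, since
\begin{equation}
 P(t) = \int_{-\infty}^0 n\,dx - \int_0^\infty (1-n)\,dx\;.
\end{equation}
By symmetry of $n_0$ we have $P(0)=0$, and conservation gives $P(t)=0$ for all $t$. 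Hence $\tilde M = M$.

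\emph{Step 3: the ODE.} Because $\tilde M = M$, the equation~\eqref{eq:ODE_E} is satisfied by $\tilde E = E$. The functions $a(\tilde E,\tilde M)$ and $a(E,M)$ also coincide, which closes Step 1.

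\emph{Step 4: regularity and conclusion.} It remains to check that $(\tilde n,\tilde E)$ lies in the uniqueness class of Theorem~\ref{thm:existence}. The boundary conditions~\eqref{eq:bc} are preserved by the reflection $x\mapsto -x$ combined with $n\mapsto 1-n$. With $n^*$ symmetric, $\tilde n - n^* = -(n-n^*)(t,-\cdot)$, which lies in $H^2\cap L^1$. The initial data agree by~\eqref{sym:n0}. Uniqueness then concludes the proof.

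The main obstacle is the nonlocal term in Step 2. It is essential that $P$ is conserved and vanishes initially; if $n^*$ were not symmetric, one would instead use $\int(n-n^*)$ together with the difference $\int_{-\infty}^0 n^*-\int_0^\infty(1-n^*)$, which is constant in time.
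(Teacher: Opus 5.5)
Your proof is correct and follows essentially the same route as the paper: reflect the solution, use $f(1-n)=f(n)$ to check the PDE, use conservation of $P$ to show that the reflected mass $\tilde M$ equals $M$, and conclude by uniqueness. The only differences are cosmetic. The paper shows $\tilde M(0)=M(0)$ and $\frac{d}{dt}(M-\tilde M)=-\dot P=0$, which avoids choosing $n^*$ symmetric, whereas you identify $\tilde M-M$ with $P$ itself. You also explicitly check that the reflected solution lies in the uniqueness class, which the paper leaves implicit.
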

\begin{proof}
Consider the function $\bar n(t,x) = 1 - n(t,-x)$. It satisfies  
$\bar n(0,\cdot) = n_0$ and 
\begin{equation}
 \partial_x \bar n(t,x) = \partial_x n(t,-x)\;, \qquad 
 \partial_{xx} \bar n(t,x) = -\partial_{xx} n(t,-x)\;.
\end{equation} 
This implies
\begin{align}
 \partial_t \bar n(t,x) 
 &= - \partial_t n(t,-x) \\
 &= a(E,M) \partial_x \bigbrak{f(n(t,-x))} - \eps\partial_{xx}n(t,-x) \\
 &= -a(E,M) \partial_x \bigbrak{f(\bar n(t,x))} + \eps\partial_{xx}\bar n(t,x)\;,
\end{align}
where we have used the fact that $f'(n) = -f'(1-n)$. Now let 
\begin{equation}
 \bar M(t) = -\int_{-\infty}^0 \bar n(t,x)\6x
 = -\int_0^\infty \bigbrak{1-n(t,x)}\6x\;.
\end{equation} 
Then $\bar M(0) = M(0)$, and 
\begin{equation}
 \frac{\6}{\6t} \bigbrak{M(t)- \bar M(t)}
 = - \frac{\6}{\6t} 
 \biggbrak{\int_{-\infty}^0 n(t,x)\6x
 + \int_0^\infty \bigbrak{n(t,x)-1}\6x}
 = -\dot P(t)
 = 0\;.
\end{equation}
Therefore, $\bar M(t) = M(t)$ for all $t\geqs0$. 
It follows that $\bar n(t,x)$ satisfies the same equation as $n(t,x)$, 
with the same initial condition. By uniqueness of solutions 
of~\eqref{eq:transport}-\eqref{eq:ODE_E}-\eqref{eq:def_M}, 
one has $\bar n(t,x) = n(t,x)$ for all $x\in\R$ and all $t\geqs0$. 
This is equivalent to~\eqref{eq:symmetry0}. 
\end{proof}

The quantity $M(t)$ measures the total amount of mass in $(-\infty,0]$. 
With the symmetry~\eqref{eq:symmetry0}, this means that smaller values of 
$\abs{M(t)}$ imply a steeper profile $n(t,x)$ near $x = 0$. The limiting 
case $M(t) = 0$ corresponds to a jump at $x = 0$, that is, a shock. 

From now on, we are always going to assume that $n_0$ satisfies the following 
properties. 

\begin{assumption}[Properties of the initial condition $n_0$] \hfill
\label{assump:n0} 
\begin{itemize}
\item   \textbf{Regularity:} $n_0 - n^*\in H^2(\R)\cap L^1(\R)$. 

\item   \textbf{Symmetry:} $n_0(-x) = 1 - n_0(x)$ for all $x\in\R$. 

\item   \textbf{Monotonicity:} $\partial_x n_0(x) > 0$ for all $x\in\R$. 
\qed
\end{itemize}
\renewcommand{\qed}{}
\end{assumption}

Note that these assumptions imply in particular that $n_0$ satisfies the 
boundary conditions~\eqref{eq:bc}, cf.\ the argument around~\eqref{eq:Fourier-L1} 
in Appendix~\ref{app:existence}. We assume symmetry to simplify the computations. We do not expect non-symmetric initial conditions to present a different qualitative behavior; see a numerical illustration in Figure~\ref{fig:num_profiles_fixed_M}.

\begin{figure}
 \begin{center}
\scalebox{1.0}{
\begin{tikzpicture}[>=stealth',main node/.style={circle,minimum
size=0.01cm,inner sep=0.05cm,fill=white,draw},x=4cm,y=3cm]

\draw[->,thick] (-0.1,0) -> (2.9,0);
\draw[->,thick] (0,-0.1) -> (0,2);

%

\newcommand*{\Ez}{1.5}
\newcommand*{\Mz}{1.0}

\pgfmathsetmacro{\Em}{\Ez - 1/sqrt(3)};
\pgfmathsetmacro{\Mm}{\Mz + 2/(3*sqrt(3))};
\pgfmathsetmacro{\Ep}{\Ez + 1/sqrt(3)};
\pgfmathsetmacro{\Mp}{\Mz - 2/(3*sqrt(3))};

\draw[blue,very thick,-,smooth,domain=0.17:2.8,samples=500,/pgf/fpu,
/pgf/fpu/output format=fixed] plot (\x, {
\Mz + (\x - \Ez)^3 - (\x-\Ez)});

\draw[violet,very thick,-,smooth,domain=0.0:1.9,samples=300,/pgf/fpu,
/pgf/fpu/output format=fixed] plot ({\Ez - 0.1*(\x-\Mz)^2}, \x);

\draw[semithick, dashed] (0,{\Mm}) -- ({\Em},{\Mm}) -- ({\Em},0);
\draw[semithick, dashed] (0,{\Mp}) -- ({\Ep},{\Mp}) -- ({\Ep},0);

\node[main node] at (\Em,\Mm) {};
\node[main node] at (\Em,0) {};
\node[main node] at (0,\Mm) {};

\node[main node] at (\Ep,\Mp) {};
\node[main node] at (\Ep,0) {};
\node[main node] at (0,\Mp) {};

\node[main node] at (\Ez,\Mz) {};

\node[] at (2.8,-0.1) {$E$};
\node[] at (-0.1,1.85) {$M$};

\node[] at ({\Em},-0.12) {$E_-$};
\node[] at ({\Ep},-0.12) {$E_+$};

\node[] at (-0.12,{\Mm}) {$M_+$};
\node[] at (-0.12,{\Mp}) {$M_-$};

\node[blue] at (2.5,0.5) {$g(E,M) < 0$};
\node[blue] at (2.1,0.9) {$g(E,M) > 0$};

\node[violet] at (1.0,1.8) {$a(E,M) > 0$};
\node[violet] at (1.9,1.8) {$a(E,M) < 0$};

\end{tikzpicture}
}
\vspace{-5mm}
\end{center}
\caption[]{Geometry of the nullclines $g(E,M) = 0$ (or $M=h(E)$, critical manifold) 
and $a(E,M) = 0$ (or $E=k(M)$, slow nullcline).}
\label{fig:nullclines} 
\end{figure}

We give now more detailed assumptions on the functions $a$ and $g$ in~\eqref{eq:transport} 
and~\eqref{eq:ODE_E}:

\begin{assumption}[Properties of $a$ and $g$] \hfill
\label{assump:ag} 
\begin{itemize}
\item   \textbf{Regularity:} $a:\R\to\R$ and $g:\R^2\to\R$ are of class $\cC^2$. 

\item   \textbf{$S$-shaped critical manifold:} There exists a $\cC^2$ function $h:\R\to\R_+$ 
such that 
\begin{align}
 g(E,M) > 0 & \quad\Leftrightarrow\quad M > h(E)\;,\\
 g(E,M) < 0 & \quad\Leftrightarrow\quad M < h(E)\;.
\label{eq:def_h} 
\end{align} 
Furthermore, there exist $E_+ > E_-$ such that the function $h$ satisfies 
$h'(E) < 0$ on $(E_-,E_+)$ and 
$h'(E) > 0$ on $(-\infty,E_-) \cup (E_+,\infty)$, as well as  
\begin{equation}
 h''(E_-) < 0 
 \qquad \text{and} \qquad 
 h''(E_+) > 0\;.
\end{equation} 

\item   \textbf{Fold points:} One has 
\begin{equation}
 \partial_M g(E_\pm, h(E_\pm)) \neq 0
 \qquad \text{and} \qquad 
 \partial_{EE} g(E_\pm, h(E_\pm)) \neq 0\;.
\end{equation} 

\item   \textbf{Slow nullcline:} There exists a $\cC^2$ function $k:\R\to\R$ such that 
\begin{align}
 a(E,M) > 0 & \quad\Leftrightarrow\quad E < k(M)\;,\\
 a(E,M) < 0 & \quad\Leftrightarrow\quad E > k(M)\;.
\end{align} 
Furthermore, the nullclines $\set{M = h(E)}$ and $\set{E = k(M)}$ intersect 
exactly once, between the points of abscissa $E_-$ and $E_+$. 
\qed
\end{itemize}
\renewcommand{\qed}{}
\end{assumption}

These assumptions ensure that for $\eps = 0$, the variables $(E,M)$ display 
relaxation oscillations for small $\eta$, like those occurring for the 
Van der Pol oscillator. See Figure~\ref{fig:nullclines}, where we also 
use the notations $M_+ = h(E_-)$ and $M_- = h(E_+)$.


\subsection{The case $\eps = 0$}
\label{ssec:epsilon0} 

Substituting~\eqref{eq:transport} into~\eqref{eq:def_M} shows 
that $M(t)$ satisfies the ODE 
\begin{equation}
\label{eq:ODE_M} 
 \dot{M} = \frac14 a(E,M) - \eps \partial_x n(t,0)\;,
\end{equation} 
where the factor $\frac14$ is due to the fact that by conservation of symmetry
(Proposition~\ref{prop:symmetry}), one has $n(t,0) = \frac12$ for all $t\geqs0$, 
and $f(\frac12) = \frac14$. 
For $\eps = 0$, the variables $M$ and $E$ satisfy the closed system 
\begin{align}
\label{eq:ODE_ME_eps0} 
 \dot{M} &= \frac14 a(E,M) \\
 \eta\dot{E} &= g(E,M)\;.
\end{align}
Our assumptions on $a$ and $g$ ensure that for $\eta$ small enough, this equation 
admits a unique limit cycle $\Gamma$, attracting orbits in the whole $(E,M)$ plane 
exponentially fast. It is known that $\Gamma$ is at distance at most $\eta^{1/3}$ 
of the piecewise smooth curve consisting of the two stable parts of the 
critical manifold located between $M = M_-$ and $M = M_+$, completed by two 
horizontal lines~\cite{PontRod,Haberman}, see Figure~\ref{fig:periodic} below. 
Let $T$ denote the period of this limit cycle, and let 
$(E_p(t),M_p(t))_{t\in\R}$ be a periodic solution living on $\Gamma$. 

\begin{proposition}[Asymptotic behaviour for $\eps = 0$]
\label{prop:eps0} 
Assume $\eps = 0$. Then, there exists a constant $C$ depending on $M(0)$ and $E(0)$ 
such that, if 
\begin{equation}
\label{eq:bound_dxn} 
 \partial_x n(0,x) \leqs C \qquad \forall x\in\R\;,
\end{equation} 
then for sufficiently small $\eta$, the solution 
of~\eqref{eq:transport}--\eqref{eq:ODE_E}--\eqref{eq:def_M} with any initial condition satisfying 
Assumption~\ref{assump:n0} converges to a periodic function of time.
\end{proposition}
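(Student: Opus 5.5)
The plan is to exploit that for $\eps=0$ the ODE part decouples, and to solve the transport equation explicitly by characteristics. With $\eps=0$, the computation leading to~\eqref{eq:ODE_M} together with Proposition~\ref{prop:symmetry} (which gives $n(t,0)=\frac12$) shows that $(M,E)$ solves the closed planar system~\eqref{eq:ODE_ME_eps0}. Under Assumption~\ref{assump:ag} and for $\eta$ small, every orbit of this system converges exponentially fast to the limit cycle $\Gamma$. Hence $(E(t),M(t))\to(E_p(t+\theta),M_p(t+\theta))$ for some asymptotic phase $\theta$. In particular $M(t)$ stays in a compact set, and
\begin{equation}
 \Mp := \sup_{t\geqs0} M(t) - M(0) < \infty\;,
\end{equation}
with $\Mp$ depending only on $(M(0),E(0))$. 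This uses that orbits remain in a bounded neighbourhood of $\Gamma\cup\{(E(0),M(0))\}$.

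Next, write $A(t)=\int_0^t a(E(s),M(s))\6s$. Since $\dot M=\frac14 a$, we have $A(t)=4(M(t)-M(0))$. The equation $\partial_t n = -a(t) f'(n)\partial_x n$ is solved along characteristics $x = x_0 + f'(n_0(x_0))A(t)$, on which $n$ is constant, equal to $n_0(x_0)$. With $f'(n)=1-2n$ this gives
\begin{equation}
 \frac{\partial x}{\partial x_0} = 1 - 2\,\partial_x n_0(x_0)\,A(t)\;.
\end{equation}
When $A(t)\leqs0$ this is at least $1$. When $A(t)>0$ it is at least $1-2C\cdot 4\Mp$.

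Choose $C$ with $8C\Mp<1$ (any $C$ works if $\Mp\leqs0$). Then, under~\eqref{eq:bound_dxn}, the map $x_0\mapsto x$ is a diffeomorphism of $\R$ for every $t\geqs0$, uniformly in $t$. Consequently no shock ever forms, and the classical solution exists globally. It is given by $n(t,x)=n_0(X_0(A(t),x))$, where $X_0(A,\cdot)$ is the inverse of $x_0\mapsto x_0+(1-2n_0(x_0))A$. This is consistent with uniqueness, with the symmetry, and with $n(t,0)=\frac12$, since the characteristic from $x_0=0$ is stationary.

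Finally, the key observation is that $n(t,\cdot)$ depends on $t$ only through the scalar $A(t)=4(M(t)-M(0))$, via a map $A\mapsto n_0\circ X_0(A,\cdot)$. This map is continuous, indeed Lipschitz in $A$ in sup norm, on the compact range of $A$ where $1-2A\,\partial_x n_0$ stays bounded below; this follows by differentiating the implicit relation. Since $M(t)-M_p(t+\theta)\to0$, we get $n(t,\cdot)-\tilde n(t,\cdot)\to0$, where $\tilde n(t,\cdot)=n_0\circ X_0\bigl(4(M_p(t+\theta)-M(0)),\cdot\bigr)$ is $T$-periodic. This gives the asserted convergence to a periodic function.

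The main obstacle is the first step: controlling $\sup_t M(t)$ uniformly, including during the transient before reaching $\Gamma$ and uniformly in small $\eta$. This is what makes $C$ depend on the initial data. I would handle it with the standard relaxation-oscillation picture: after a fast jump of duration $O(\eta)$ onto a stable branch of the critical manifold, $M$ stays within $O(\eta^{1/3})$ of $[M_-,M_+]\cup\{M(0)\}$.
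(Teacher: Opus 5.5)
Your proposal is correct and follows essentially the paper's route. Your forward characteristic map $x_0\mapsto x_0+(1-2n_0(x_0))A(t)$ is the inverse of the paper's reciprocal function $X(t,n)=X(0,n)+(1-2n)\int_0^t a\,\mathrm{d}s$. Your no-shock condition $1-8C\sup_t(M(t)-M(0))>0$ is exactly the paper's positivity of $\partial_nX(t,n)=\partial_nX(0,n)-8(M(t)-M(0))$, and in both arguments periodicity of the limit comes from convergence of $(E,M)$ to $\Gamma$.

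Your version is slightly more explicit in three places. You pass directly through $A(t)=4(M(t)-M(0))$ and $M(t)-M_p(t+\theta)\to0$, rather than through the decay of $\int_t^{t+T}a\,\mathrm{d}s$. You make the continuity of $A\mapsto n(t,\cdot)$ explicit. You also correctly identify the supremum of $M(t)$ as the quantity that fixes $C$; the paper's text says ``minimal value'' there, but its inequality actually needs an upper bound on $M(t)$.
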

\begin{proof}
For $\eps = 0$, the equation~\eqref{eq:transport} reduces to the transport equation
\begin{equation}
\partial_t n(t,x) = -a(E(t), M(t)) \partial_x\bigbrak{n(t,x)(1-n(t,x))}\;.
\end{equation} 
This equation can be solved by the method of characteristics. We do this in a slightly 
unusual way, that will be particularly useful later on. Let 
\begin{equation}
\label{eq:def_tau} 
 \tau = \sup\bigsetsuch{t > 0}{0 < \partial_x n(s,x) < \infty  
 \text{ for all $x\in\R$ and all $s\in[0,t]$}}\;.
\end{equation} 
For $0\leqs t < \tau$, we can define the reciprocal function $X(t,n): [0,\tau)\times[0,1]\to\R$ 
as the unique solution of 
\begin{equation}
 X(t,n(t,x)) = x\;.
\end{equation} 
Then the relations 
\begin{equation}
 \partial_n X \partial_x n = 1\;, \qquad 
 \partial_t X + \partial_n X \partial_t n = 0
\end{equation} 
imply 
\begin{equation}
 \partial_t X(t,n) = a(E(t), M(t)) (1-2n)\;.
\end{equation} 
This equation can be integrated, leading to 
\begin{equation}
\label{eq:X_periodic} 
 X(t,n) = X(0,n) + (1-2n) \int_0^t a(E(s), M(s))\6s\;.
\end{equation} 
Since $(E(t),M(t))$ converges exponentially fast to $\Gamma$, one can 
decompose 
\begin{equation}
 a(E(t), M(t)) = a(E_p(t+\theta), M_p(t+\theta)) + R(t)\;,
\end{equation} 
where $\theta$ is a phase shift and $R(t)$ decreases exponentially fast. 
This implies that for any $t\geqs0$, 
\begin{equation}
 \int_t^{t+T} a(E(s), M(s))\6s
 = \int_t^{t+T} a(E_p(s+\theta), M_p(s+\theta))\6s + \int_t^{t+T} R(s)\6s 
 = \Order{\e^{-\gamma t}} 
\end{equation} 
for some $\gamma > 0$, where we have used the fact that 
\begin{align}
\label{eq:integral_aE} 
\int_t^{t+T} a(E_p(s+\theta), M_p(s+\theta)) \6s
&= 4\int_t^{t+T} \dot M_p(s+\theta)\6s \\
&= 4 \bigbrak{M_p(t+T+\theta) - M_p(t+\theta)}
= 0\;.
\end{align} 
Therefore, $X(t,\cdot)$ converges to a $T$-periodic 
function of time, and so does $n(t,\cdot)$. Furthermore, we have 
\begin{align}
 \partial_n X(t,n) 
 &= \partial_n X(0,n) - 2\int_0^t a(E(s), M(s))\6s \\
 &= \partial_n X(0,n) - 8\bigbrak{M(t) - M(0)}\;.
\label{eq:bound_dnX} 
\end{align} 
The assumption~\eqref{eq:bound_dxn} implies that $\partial_n X(0,n) \geqs 1/C$.  
Since the minimal value of $M(t)$ is entirely determined by the initial condition 
$(E(0),M(0))$, we can find a value of $C$ such that $\partial_n X(t,n) > 0$ for all 
$t\geqs0$. This implies that $0 < \partial_x n(t,x) < \infty$,
and thus $\tau = \infty$, i.e., there is no shock. 
\end{proof}

The expression~\eqref{eq:X_periodic} for the reciprocal function of $n(t,x)$ 
shows that $n(t,x)$ converges to a periodic modulation of $n(0,x)$. More 
precisely, we have 
\begin{equation}
 n(t,x) = n_0(Y(t,x))\;,
\end{equation} 
where $Y(t,x)$ is the unique $y$ solution of 
\begin{equation}
 y + \bigbrak{1 - 2n_0(y)}\int_0^t a(E(s), M(s))\6s = x\;,
\end{equation} 
which converges to a $T$-periodic function of time. In other words, 
the system~\eqref{eq:transport}--\eqref{eq:ODE_E}--\eqref{eq:def_M} has an uncountably infinite 
family of limit cycles, as each initial profile $n_0(x)$ leads to a different 
limit cycle. Note, however, that all these limit cycles share 
the same period.

\begin{remark}
If $M(0) \in [M_-, M_+]$, then it is known that $M(t)$ remains in the 
interval $[M_- - \Order{\eta^{2/3}}, M_+ + \Order{\eta^{2/3}}]$.
Then~\eqref{eq:bound_dnX} shows that one can take 
\begin{equation}
 C = \frac{1}{8[M_+ - M_- + \Order{\eta^{2/3}}]}
\end{equation} 
as upper bound on $\partial_x n(0,x)$. 
\end{remark}


\subsection{Main result and discussion}
\label{ssec:main_result} 

We can now state the main result of this work, which says that for sufficiently 
small, positive $\eps$, the system~\eqref{eq:transport}--\eqref{eq:ODE_E}--\eqref{eq:def_M} 
with Assumptions \ref{assump:n0} and \ref{assump:ag} has 
an isolated limit cycle, which is asymptotically stable. 
The effect of the diffusion term 
$\eps\partial_{xx}n$ is thus to almost completely lift the degeneracy, 
by collapsing the infinite-dimensional set of limit cycles to a single 
one.

\begin{theorem}[Main result]
\label{thm:main} 
Let Assumptions~\ref{assump:n0} and \ref{assump:ag} hold, and assume 
$0<\eps\ll 1$, $\eta \ll 1$. 
Then the system~\eqref{eq:transport}--\eqref{eq:ODE_E}--\eqref{eq:def_M}
admits an isolated periodic orbit $n^*(t,x)$, $E^\ast(t)$. This periodic orbit is linearly 
asymptotically stable, and therefore attracts neighboring solutions exponentially fast.
Furthermore, its period converges, as $\eps\to0$, to the common period of 
the infinite-dimensional family of limit cycles present when $\eps = 0$.
More precisely, there exist constants $\eps_0, c, C, M_0, M, \Delta_0,\Delta, \delta > 0$ such that, 
if $0 < \eps < \eps_0$ and $0\leqs \eta < c/\log(\eps^{-1})$, then for any initial condition $n_0$ satisfying, in addition to Assumption~\ref{assump:n0}, 
\begin{equation}
 \label{eq:bound_dxn2} 
 \partial_x n_0(x) \leqs C \qquad \forall x\in\R\;, 
\end{equation} 
and for some $t_0\in \R$:
\begin{equation}
\label{eq:n0_H2} 
 \bignorm{n_0 - n^*(t_0,\cdot)}_{H^2} \leqs M_0\;, 
 \qquad 
 \abs{E_0-E^\ast(t_0)}\leqs \Delta_0\;,
\end{equation}
the solution $n$ of~\eqref{eq:transport}--\eqref{eq:ODE_E}--\eqref{eq:def_M} 
is strictly increasing in $x$ for all $t\geqs0$ and $n, E$ satisfy 
\begin{align}
\label{eq:bound_normH2_diff} 
 \bignorm{n(t,\cdot) - n^*(t+t_0,\cdot)}_{H^2} 
 &\leqs M \e^{-\delta\eps t} \bignorm{n_0 - n^*(t_0,\cdot)}_{H^2}\;,\\
 |E(t)-E^\ast(t+t_0)| & \leqs  \Delta  \e^{-\delta\eps t}\abs{E_0-E^\ast(t_0)}\;.
\end{align} 
Furthermore, $0 < n(t,x) < 1$ for all $x\in\R$ and all $t\geqs0$. 
\end{theorem}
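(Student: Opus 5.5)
We propose to work in the reciprocal (Lagrangian-in-$n$) variables introduced in the proof of Proposition~\ref{prop:eps0}. As long as $\partial_x n>0$, we describe the profile by $X(t,n)$ with $X(t,n(t,x))=x$, with the symmetry $X(t,1-n)=-X(t,n)$. Writing $\partial_x n = 1/\partial_n X$ and $\partial_{xx} n = -\partial_{nn}X/(\partial_n X)^3$, equation~\eqref{eq:transport} becomes
\begin{equation}
 \partial_t X = a(E,M)(1-2n) + \eps\, \partial_n\Bigl(\frac{1}{\partial_n X}\Bigr)\;,
\end{equation}
and~\eqref{eq:ODE_M} becomes $\dot M = \frac14 a(E,M) - \eps/\partial_n X(t,\tfrac12)$.

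The first step is averaging. For $\eps=0$ the flow is $X(t,n)=X(0,n)+(1-2n)A(t)$, where $A(t)=\int_0^t a\,\6s$ is $T$-periodic up to exponentially small terms. We therefore set $X(t,n)=Z(t,n)+(1-2n)A(t)$. The diffusion term depends only on $\partial_n X=\partial_n Z-2A(t)$. Since $\eta\ll1$, $(E,M)$ is slaved to the limit cycle $\Gamma$ up to $\Order{\eps}$ corrections, so $A(t)=4(M_p(t)-M_p(0))+\Order{\eps t}$. Averaging over one period then gives, at leading order, the slow equation
\begin{equation}
 \partial_t Z = \eps\, \partial_n \Phi(\partial_n Z)\;, \qquad \Phi(p)=\frac1T\int_0^T \frac{\6s}{p-8(M_p(s)-M_p(0))}\;.
\end{equation}
This is a nonlinear parabolic equation in the variable $W=\partial_n Z$, namely $\partial_t W = \eps\,\partial_{nn}\Phi(W)$ on $n\in(0,1)$, with boundary behaviour inherited from~\eqref{eq:bc}. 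The function $\Phi$ is decreasing and convex on the region where the denominator stays positive. Condition~\eqref{eq:bound_dxn2} ensures that we start and remain in this region, which is the same mechanism that prevents shocks in Proposition~\ref{prop:eps0}. A near-identity change of variables of order $\eps$ should bring the full system to the form ``averaged equation $+\ \Order{\eps^2}$ remainder'' over times of order $1/\eps$. The condition $\eta<c/\log(\eps^{-1})$ enters here: it makes the $\e^{-\gamma t/\eta}$ transients and the $\Order{\eta^{2/3}}$ fold corrections compatible with this remainder.

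The second step is to study the averaged equation. Its stationary solutions satisfy $\partial_n\Phi(W)=\text{const}$, i.e.\ $\Phi(\partial_n Z)=\alpha+\beta n$. Symmetry in $n\mapsto1-n$ should force $\beta=0$, and the conserved quantity $P$ of Theorem~\ref{thm:existence} (equivalently the integral of $X$ in $n$) should fix $\alpha$ up to the phase $t_0$. This yields a unique stationary $Z^*$, which corresponds to the periodic orbit $n^*$. For linear stability we linearise: $\partial_t w = \eps\,\partial_{nn}(\Phi'(W^*)w)$, where $-\Phi'(W^*)>0$ is smooth and bounded below. This operator is a weighted Laplacian-type operator, self-adjoint in $L^2$ with weight $-\Phi'(W^*)$. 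On the symmetric, mass-zero subspace fixed by $P$ it has spectral gap $\delta>0$. This gives decay $\e^{-\delta\eps t}$, which is exactly the rate in~\eqref{eq:bound_normH2_diff}.

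The final step, which we expect to be the main obstacle, is persistence and stability under the neglected remainders. We plan to consider the period-$T_\eps$ Poincaré (time-$T$) map of the full system near $(n^*,E^*)$. This map equals the averaged time-$T$ flow $\e^{T\eps L}$ plus $\Order{\eps^2}$ corrections in $H^2$. Because the contraction $1-\delta\eps T$ beats $\Order{\eps^2}$, an implicit function / contraction mapping argument yields a unique fixed point, i.e.\ the isolated periodic orbit, together with its stability. The period converges to $T$ because $A$ and $M$ stay $\Order{\eps}$-close to the $\eps=0$ cycle. The delicate points are controlling the remainders uniformly in $H^2$ rather than only in the reciprocal variables, and handling the degeneracy of $X$ at $n\to0,1$ (the tails). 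For the tails we would use a weighted $H^2$ framework, with the comparison principle giving monotonicity and $0<n<1$. Converting estimates back from $X$ to $n$ uses $\partial_x n>0$ and the bound~\eqref{eq:bound_dxn2}, which propagates via the maximum principle applied to $\partial_x n$.
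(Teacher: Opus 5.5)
\textbf{There is a genuine gap in your averaging step.} It discards a term of leading order, and everything downstream fails because of it. The exact relation $\dot M=\frac14 a-\eps/\partial_nX(t,\frac12)$ gives
\[
 A(t)=\int_0^t a\,\mathrm{d}s=4\bigl(M(t)-M(0)\bigr)+4\eps\int_0^t\frac{\mathrm{d}s}{\partial_nX(s,\frac12)}\;.
\]
Since $M$ stays near $[M_-,M_+]$, the last term is a secular drift. It is precisely the $\Order{\eps t}$ you drop, and it is of order $1$ on the time scale $1/\eps$ on which averaging acts. So you cannot replace $A$ by the periodic function $4(M_p(t)-M_p(0))$. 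If you subtract only $4(1-2n)(M(t)-M_-)$, you obtain exactly
\[
 \partial_t Z=-\eps\,\partial_n\Bigl(\frac{1}{\partial_nX}\Bigr)+\frac{4\eps(1-2n)}{\partial_nX(t,\frac12)}\;.
\]
Note also the sign of the diffusion term: with your $+\eps\partial_n(1/\partial_nX)$ and $\Phi'<0$, your averaged equation $\partial_tW=\eps\partial_{nn}\Phi(W)$ is backward parabolic.

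After averaging, the nonlocal term becomes $-8mG(\partial_m\bar Y(s,0))$ in Proposition~\ref{prop:avrg_eta0}, i.e.\ $-2xG(\partial_xu(t,0))$ in~\eqref{eq:dtu}. It encodes the diffusive flux $\eps\partial_xn(t,0)$ through the centre. This flux changes the time spent on the two slow branches, so that the net transport $\int a\,\mathrm{d}t$ per cycle is positive, of order $\eps$, and steepens the profile against diffusion. That balance is what produces the periodic orbit. Without it your step~2 has nothing to find. Indeed $W=\partial_nZ\to+\infty$ as $n\to0,1$, so $\Phi(W^*)=\alpha+\beta n$ must vanish at both ends. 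This forces $\alpha=\beta=0$, and there is no admissible stationary profile at all.

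\textbf{Consequences for selection and stability.} With the nonlocal term, the stationary states of $w=G(\partial_xu)$ form a one-parameter family $w^*=c(1-x^2)$. The parameter is not fixed by $P$, which takes the same value for every symmetric profile since the difference of two such profiles is odd. It is fixed instead by the first integral $K=\int_0^1u\,\mathrm{d}x$ of the averaged equation. The value $-2M_-$ of $K$ comes from the identity $M=-\frac12\int_0^1X\,\mathrm{d}m$ of Remark~\ref{rem:MX}; in your variables this reads $\int_0^{1/2}Z\,\mathrm{d}n=M_-$ exactly.

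Correspondingly, the linearisation is not a self-adjoint weighted Laplacian. It is $\sA=\sA_0+\sA_1$, with two parts:
\begin{itemize}
\item $\sA_0w=-Hw''$, whose weight $H=-G'(v^*)$ vanishes at the boundary, so it is not bounded below as you claim;
\item $\sA_1w=-2Hw(0)$, a rank-one nonlocal perturbation.
\end{itemize}
The operator $\sA$ has a simple zero eigenvalue along $1-x^2$, the direction of the family. It has a spectral gap only on the invariant hyperplane $\sS_0$ tangent to the level sets of $K$. Proving this needs the rank-one argument of Lemma~\ref{lem:rank1}: $F(\lambda)=\frac12$, with $F$ the Laplace transform of the nonnegative function $q$. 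The implicit-function step for the Poincar\'e map must then be carried out on that hyperplane, as in Section~\ref{sec:periodic_unperturbed}. Your persistence step is the right idea, but it can only be applied once the averaged equation contains the nonlocal term.
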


We point out that the fact that $n$ remains bounded allows to remove the global Lipschitz hypotheses for the functions $a,f,g$ in Theorem \ref{thm:existence}, for initial conditions $n_0$ satisfying~\eqref{eq:n0_H2}.

One natural question is whether the periodic orbit we find here is unique. 
While we are not able to answer this question fully, our result shows at least 
that this limit cycle is isolated in a ball in $H^2$-norm. Furthermore, it is 
the only fixed point of a Poincar\'e map constructed by an averaging procedure, 
so that it is the only orbit with a period matching that of the limiting 
solutions given in Proposition~\ref{prop:eps0}. It is thinkable, however, 
that orbits of longer period exist, which would appear as periodic orbits 
of the Poincar\'e map, or that initial conditions that have a large slope 
or are non-monotonous lead to a more complex behaviour.

Another question is what kind of different behaviour can be observed 
if some of the assumptions are relaxed. We explore some of these questions 
in Section~\ref{sec:sim} via numerical simulations. It appears that the 
time separation parameter $\eta$ can be taken quite large, without changing 
the qualitative features of the result. However, non-symmetric initial 
conditions may lead to different limit cycles as shown in 
Figure~\ref{fig:num_profiles_fixed_M}.

Another way to test the limits of our result is to change the relative positions 
of the nullclines $g(E,M) = 0$ and $a(E,M) = 0$. As we will illustrate below, 
when the slow nullcline $a(E,M) = 0$ intersects the critical curve $g(E,M) = 0$ 
closer to one of its extrema, the system transitions from large-amplitude 
to small-amplitude oscillations. This could be an indication that more 
complex mixed-mode oscillations are possible in variants of our model. 
Indeed, while two-dimensional FitzHugh--Nagumo ordinary differential equations do not 
support MMOs, the situation changes in the presence of non-autonomous perturbations~\cite{Longo_Queirolo_Kuehn_24}. As discussed in Proposition~\ref{prop:radial}
below, the coupling of $M$ to $\partial_x n(t,0)$ introduces such an effective 
time-dependent forcing in the system. Another source of MMOs in higher-dimensional systems 
are folded nodes and folded saddles~\cite{GuckHW,GuckHaiduc,KrupaWechsFSN,MMO_review}. 
MMOs in slow--fast systems of dimension $2$ and higher can also be created by additive noise. 
This was shown for a two-dimensional stochastic FitzHugh--Nagumo equation 
in~\cite{BerglundLandon}, while the effect of noise on MMOs near folded nodes 
in three-dimensional systems was investigated in~\cite{BGK12,Berglund_Gentz_Kuehn_2015}.


\section{Numerical simulations}
\label{sec:sim} 

In this section, we present numerical simulations of the transport-ODE system 
\eqref{eq:transport}--\eqref{eq:ODE_E}--\eqref{eq:def_M}.
To simulate the system, we introduce a finite-domain version of~\eqref{eq:transport}--\eqref{eq:ODE_E}--\eqref{eq:def_M} on the interval $[0,L]$, namely
\begin{equation}
\label{eq:num_transport}
 \partial_t n(t,x) = -\partial_x F(n(t,x),E(t)) + \eps \partial_{xx}n(t,x)\;,
\end{equation}
where
\begin{equation}
\label{eq:num_flux}
 F(n,E)=a_0 n(1-n)(E_0-E)\;.
\end{equation}
Equivalently, this corresponds to the choice
\begin{equation}
 a(E,M)=a_0(E_0-E)
\end{equation}
in~\eqref{eq:transport}. The fast variable is written in the same form as in~\eqref{eq:ODE_E},
\begin{equation}
\label{eq:num_fast}
 \eta \dot E = g(E,M)\;,
\end{equation}
with
\begin{equation}
\label{eq:num_g}
 g(E,M)=E-\ph(E,M)\;.
\end{equation}
Here, \(\ph(E,M)\) is chosen as the cubic nonlinearity
\begin{equation}
\label{eq:num_phi}
 \ph(E,M)=b_3(E-E^\star)^3-b_1(E-E^\star)+b_0-cM\;.
\end{equation}
Thus
\begin{equation}
\label{eq:num_g_explicit}
 g(E,M)=E-b_3(E-E^\star)^3+b_1(E-E^\star)-b_0+cM\;.
\end{equation}
In the numerical simulations, we use the following mass-type variable in the finite-domain system:
\begin{equation}
\label{eq:num_M}
 M^{\rm sim}(t)=-\int_0^{L/2} n(t,x)\6x\;.
\end{equation}
With this convention, values of \(M^{\rm sim}(t)\) closer to zero correspond to a smaller mass in the left half of the domain, while more negative values correspond to a larger mass. The local maximum and local minimum values of \(M^{\rm sim}(t)\) therefore correspond to two different states of the periodic density profile. The critical manifold of the fast equation is defined by \(g(E,M^{\rm sim})=0\). For the cubic nonlinearity~\eqref{eq:num_phi}, this condition is equivalent to
\begin{equation}
\label{eq:num_h}
 M^{\rm sim}=\frac{b_3(E-E^\star)^3-b_1(E-E^\star)+b_0-E}{c}\;.
\end{equation}
This is the curve shown as the dashed line in the phase portraits below. Then, the density profiles are computed with a Lax-type finite-difference scheme.
We set
\begin{equation}
 \Delta x=\frac{L}{N}\;,\qquad x_i=i\Delta x\;,\qquad t_k=k\Delta t\;.
\end{equation}
We denote by $n_i^k$ the discrete density variable at the grid point $x_i$ and time $t_k$, and by $E^k$ the discrete field variable at time $t_k$. At each time step, $M_{\rm sim}^k$ is first computed from the current density profile. Then the field variable is updated by
\begin{equation}
\label{eq:num_scheme_E}
 E^{k+1}=E^k+\frac{\Delta t}{\eta}\Bigpar{E^k-\ph(E^k,M_{\rm sim}^k)}\;.
\end{equation}
The density is then updated using the updated field value $E^{k+1}$, for $1\leqs i\leqs N-2$, by
\begin{equation}
\label{eq:num_scheme_n}
 n_i^{k+1}=(1-\mu)n_i^k+\frac{\mu}{2}\Bigpar{n_{i+1}^k+n_{i-1}^k}-\frac{\Delta t}{2\Delta x}\Bigpar{F_{i+1}^k-F_{i-1}^k}\;,
\end{equation}
where $F_i^k=F(n_i^k,E^{k+1})$. Here $\mu$ denotes the dimensionless Lax diffusion parameter. Comparing this scheme with the standard explicit discretisation of the diffusion term $\eps\partial_{xx}n$, one has
\begin{equation}
\label{eq:num_mu_eps}
 \mu = 2\eps\frac{\Delta t}{\Delta x^2}\;.
\end{equation}
We also impose Dirichlet boundary conditions,
\begin{equation}
\label{eq:num_bc}
 n_0^{k+1}=0\;, \qquad n_{N}^{k+1}=1\;.
\end{equation}
These boundary values are the finite-domain analogues of the limiting conditions 
\begin{equation}
 \lim_{x\to-\infty}n(t,x)=0 
 \qquad\text{and}\qquad  
 \lim_{x\to+\infty}n(t,x)=1
\end{equation} 
imposed on~\eqref{eq:transport}. They are also compatible with the symmetric initial profiles used in the simulations. In particular, if the discrete profile satisfies
\begin{equation}
 n_{N-i}^k = 1-n_i^k\;,
\end{equation}
then the discrete total mass
\begin{equation}
 \Delta x \sum_{i=0}^{N} n_i^k
\end{equation}
is fixed by this symmetry, up to discretisation error. 

\begin{figure}[!tbp]
\centering
\includegraphics[width=0.7\linewidth]{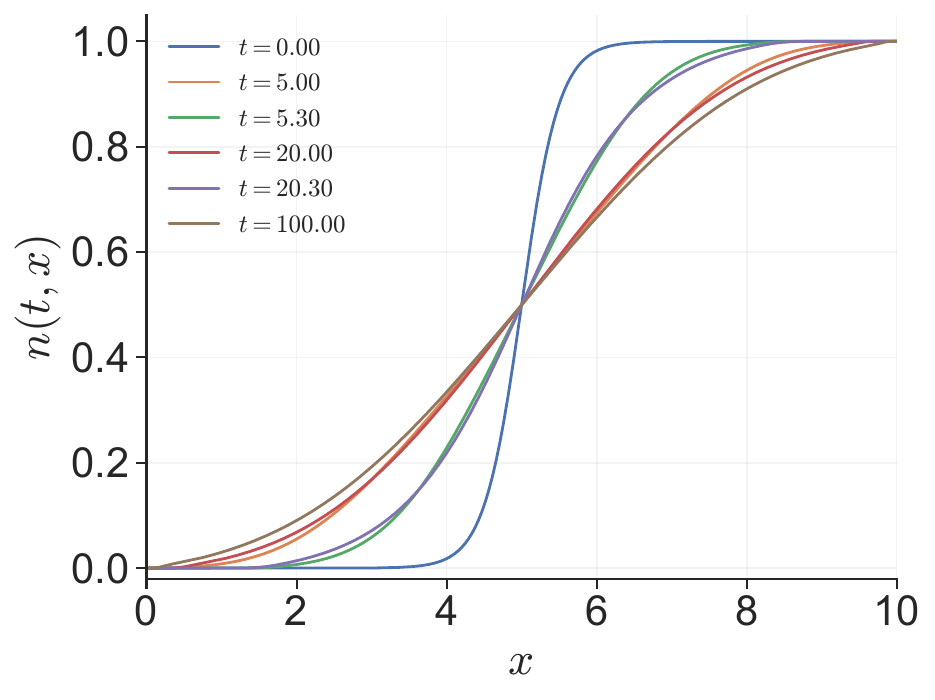}
\caption{
Time evolution of the density profile $n(t,x)$ for the transport-ODE model. The profiles are shown at the selected times $t=0$, $5.0$, $5.3$, $20.0$, $20.3$, and $100.0$. 
The pairs $t=5.0,5.3$ and $t=20.0,20.3$ illustrate the periodic behaviour of the density profile, while the profile at $t=100.0$ shows its long-time behaviour. 
Parameters: $L=10$, $\mu=0.1$, $\eps=0.05$, $a_0=7$, $E_0=1.56$, $b_3=2$, $b_1=-0.4$, $b_0=0.92$, $c=0.8$, $E^\star=1.5$, and $\eta=0.01$.
}
\label{fig:num_density_profiles}
\end{figure}

\subsection{Density profiles of the transport--ODE model}

Figure~\ref{fig:num_density_profiles} shows the time evolution of the density profile starting from the hyperbolic tangent initial condition given by
\begin{equation}
 n_0^{\tanh}(x)= \frac12\left[1+\tanh\left(2\left(x-\frac{L}{2}\right)\right)\right]\;.
\end{equation}
The profile evolves under the coupled transport--diffusion equation~\eqref{eq:num_transport} and the fast equation~\eqref{eq:num_fast} with $\eta$ small: $\eta=0.01$. The density profile exhibits an oscillatory behaviour, alternating between sharpening (approaching a shock)
and broadening (rarefaction)
during the evolution. The numerical results confirm that, after the initial transient stage, the density profile converges to a time-periodic profile whose period coincides with that of the limit cycle in the coupled $E$-$M^{\rm sim}$ dynamics. While the system is far from being infinite with this choice of parameters, we see on Figure~\ref{fig:num_density_profiles} that the qualitative behavior of the solution is as predicted by Theorem \ref{thm:main}, valid for an infinite system.


\subsection{Dependence on $\eta$}

We next examine the effect of the time-scale separation parameter $\eta$. For this purpose, we compute the trajectory of $(E(t),M^{\rm sim}(t))$ in the $E$-$M^{\rm sim}$ plane for $\eta=0.01$, $\eta=0.1$, and $\eta=1$. Figure~\ref{fig:num_phase_portraits_eta} shows the $(E(t),M^{\rm sim}(t))$ phase portrait for each value of $\eta$. For small $\eta$, the trajectory follows the critical manifold $g(E,M^{\rm sim})=0$ for a significant part of the cycle, as expected for a slow-fast system. The transitions between the two branches are then relatively sharp. As $\eta$ increases, the separation of time scales becomes weaker. The trajectory no longer jumps rapidly between the two branches, but instead forms a smoother closed orbit in the $E$-$M^{\rm sim}$ plane.
\begin{figure}[!tbp]
\centering

\begin{minipage}{0.32\linewidth}
\centering
\includegraphics[width=\linewidth]{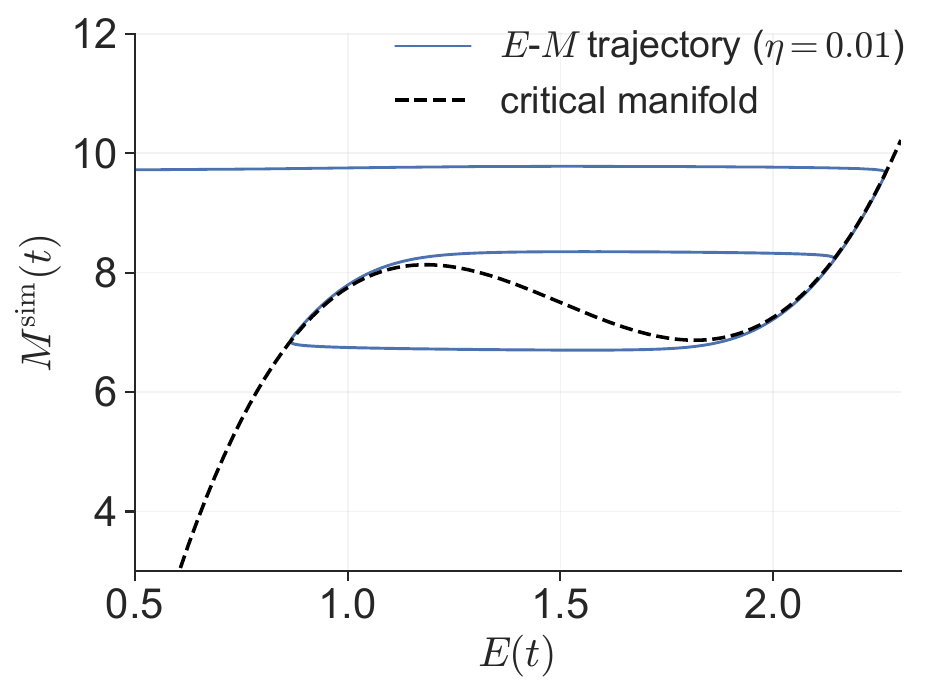}
\par
\small (a) $\eta=0.01$
\end{minipage}
\hfill
\begin{minipage}{0.32\linewidth}
\centering
\includegraphics[width=\linewidth]{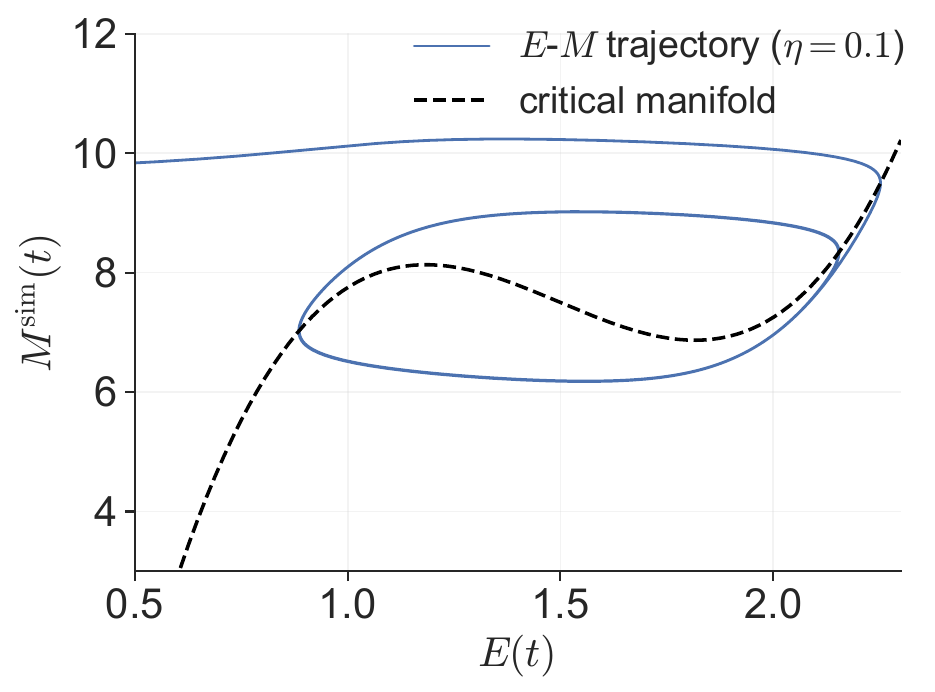}
\par
\small (b) $\eta=0.1$
\end{minipage}
\hfill
\begin{minipage}{0.32\linewidth}
\centering
\includegraphics[width=\linewidth]{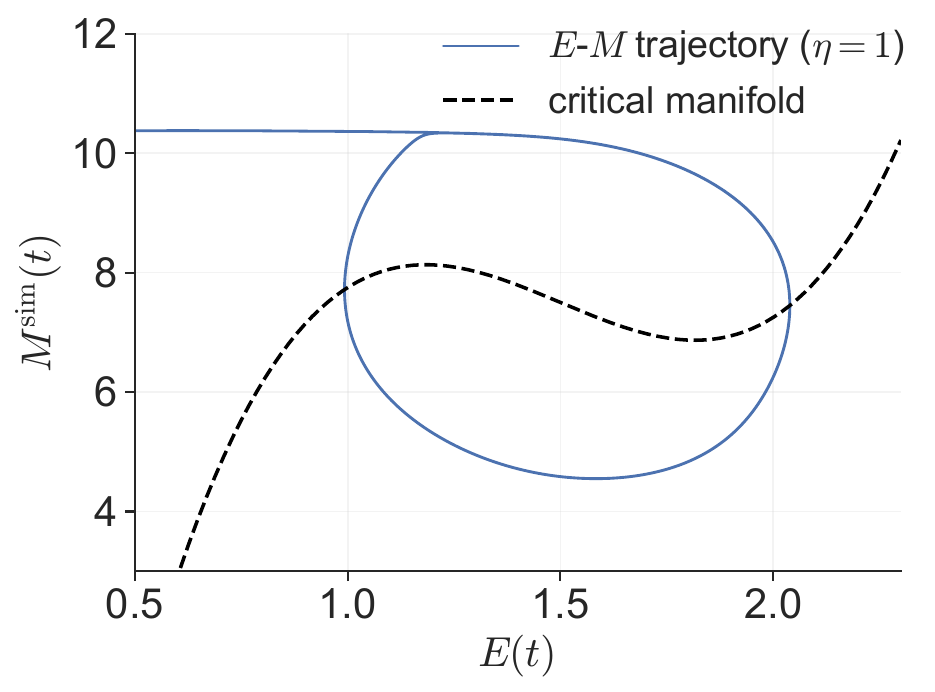}
\par
\small (c) $\eta=1.0$
\end{minipage}

\caption{
Phase portraits in the $E$-$M^{\rm sim}$ plane for different values of the time-scale separation parameter $\eta$. Panels (a), (b), and (c) correspond to $\eta=0.01$, $\eta=0.1$, and $\eta=1.0$, respectively. The curve shows the trajectory of $(E(t),M^{\rm sim}(t))$, and the dashed curve is the critical manifold \(g(E,M^{\rm sim})=0\), corresponding to the cubic nonlinearity in~\eqref{eq:num_phi}. Parameters: $L=10$, $\mu=0.1$, $\eps=0.05$, $a_0=7$, $E_0=1.56$, $b_3=2$, $b_1=-0.4$, $b_0=0.92$, $c=0.8$, and $E^\star=1.5$.
}
\label{fig:num_phase_portraits_eta}
\end{figure}
	
\begin{figure}[t]
    \centering
    \begin{tabular}{ccc}
        \includegraphics[width=0.31\textwidth]{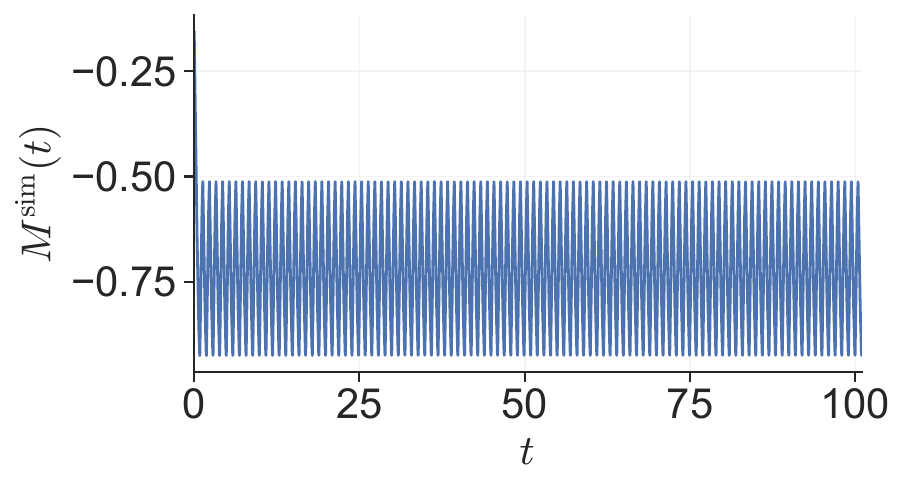}
        &
        \includegraphics[width=0.31\textwidth]{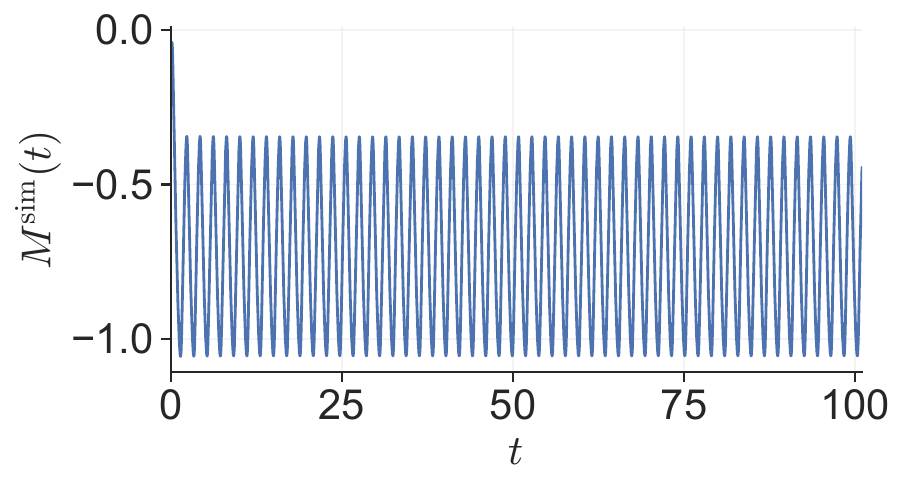}
        &
        \includegraphics[width=0.31\textwidth]{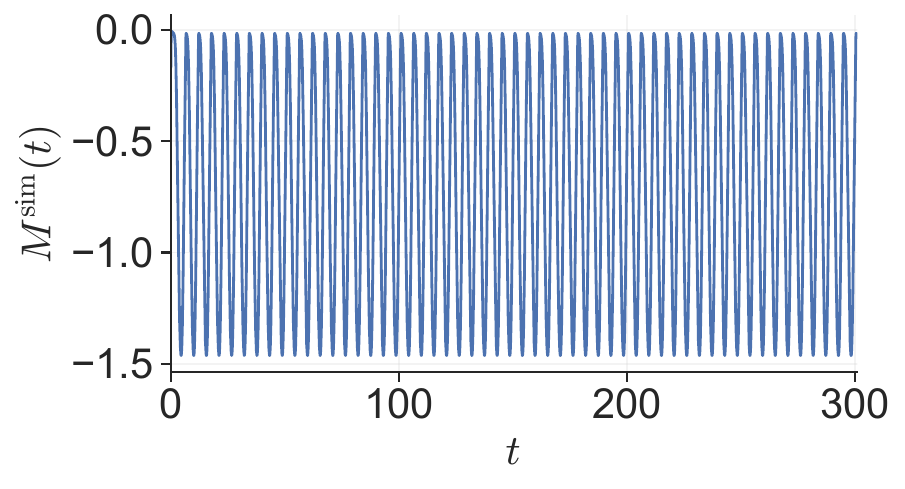}
        \\[-0.2em]
        (a) $\eta=0.01$
        &
        (b) $\eta=0.1$
        &
        (c) $\eta=1.0$
        \\[1.0em]
        \includegraphics[width=0.31\textwidth]{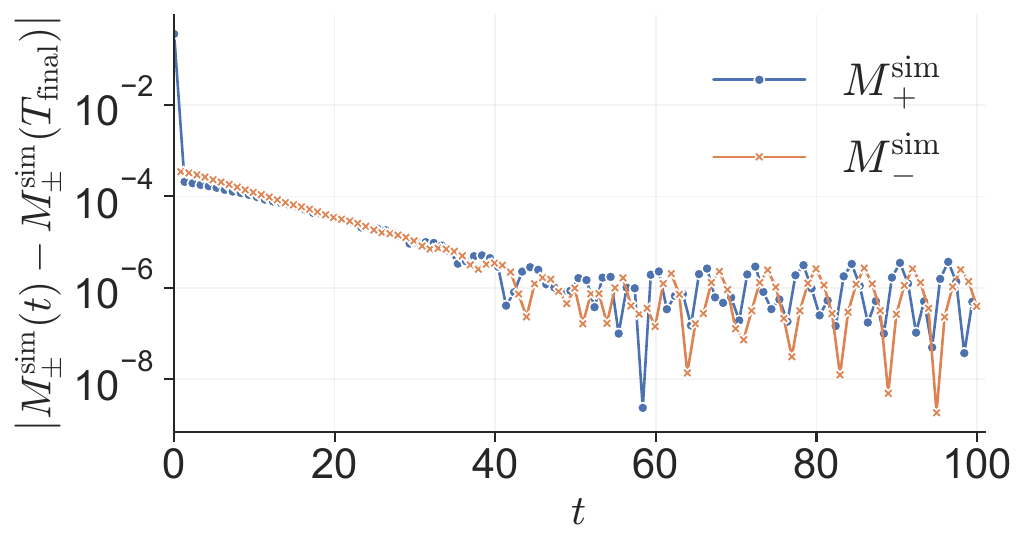}
        &
        \includegraphics[width=0.31\textwidth]{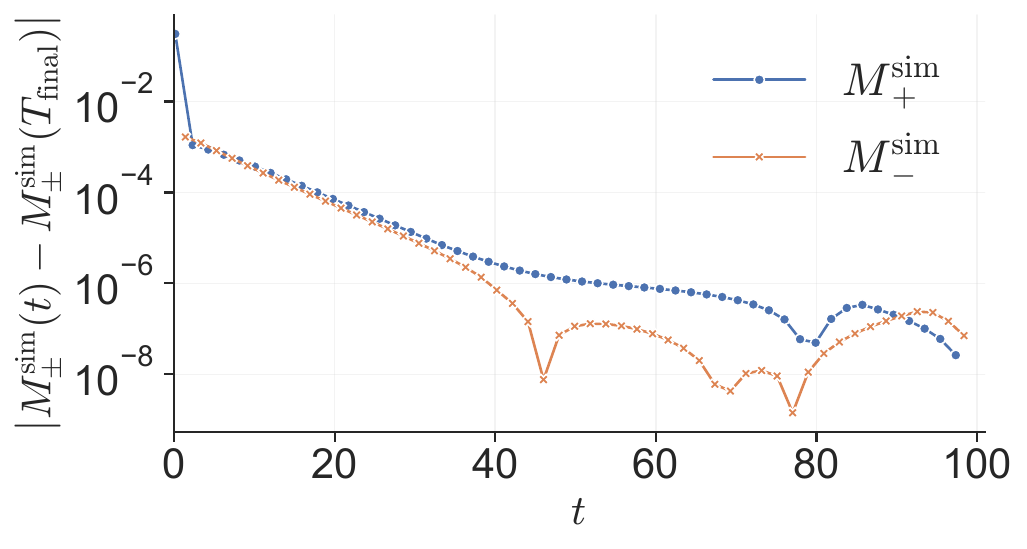}
        &
        \includegraphics[width=0.31\textwidth]{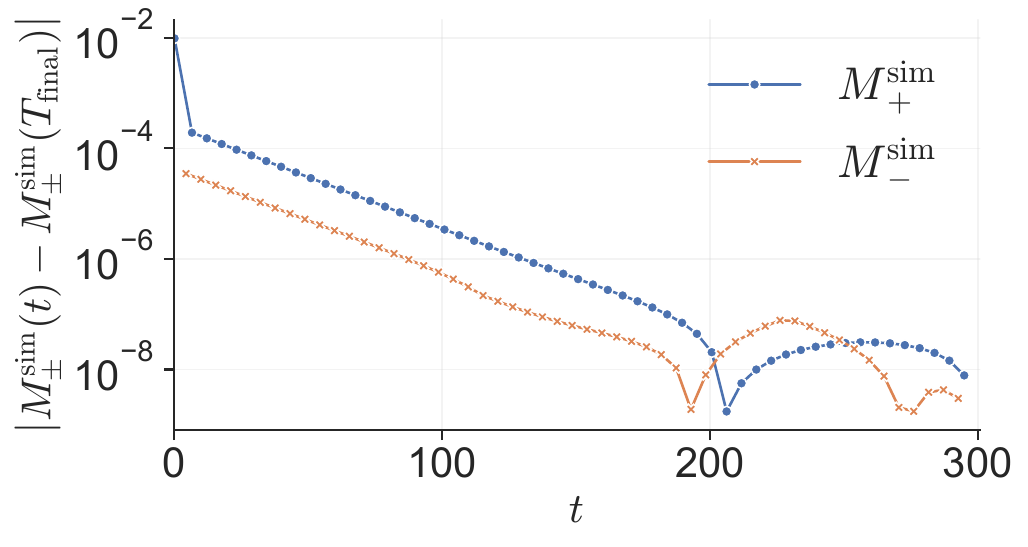}
        \\[-0.2em]
        (d) $\eta=0.01$
        &
        (e) $\eta=0.1$
        &
        (f) $\eta=1.0$
    \end{tabular}	
\caption{
Time series of $M^{\rm sim}(t)$ and convergence of the maximum and minimum values taken over each oscillation period, for different values of $\eta$. Panels (a)-(c) show the time series of $M^{\rm sim}(t)$ for $\eta=0.01$, $\eta=0.1$, and $\eta=1.0$, respectively. For each oscillation period, we take the maximum and minimum values of $M^{\rm sim}(t)$ and denote them by $M^{\rm sim}_+$ and $M^{\rm sim}_-$, respectively. Panels (d)-(f) show the differences $\left|M^{\rm sim}_{\pm}(t)-M^{\rm sim}_{\pm,\mathrm{final}}\right|$, where $M^{\rm sim}_{\pm,\mathrm{final}}$ is chosen as the corresponding final maximum or minimum near $T_{\rm final}=100$ in panels (d) and (e), and near $T_{\rm final}=300$ in panel (f). The blue curves show the differences for the maxima $M^{\rm sim}_+$, while the orange curves show the differences for the minima $M^{\rm sim}_-$. Numerical parameters: $\mu=0.1$ and $\eps=0.05$.}
    \label{fig:timeseries_extrema_eta}
\end{figure}

In addition, as shown in Figure~\ref{fig:timeseries_extrema_eta}, we examine the expected convergence toward an asymptotically periodic solution by using the time series of $M^{\rm sim}(t)$ and the maximum and minimum values taken in each oscillation period. Panels (a)-(c) show $M^{\rm sim}(t)$ for $\eta=0.01$, $\eta=0.1$, and $\eta=1.0$, respectively. Panels (d) and (e) show the differences between the maxima/minima in each oscillation period and the corresponding final maximum/minimum values near $T_{\rm final}=100$, while panel (f) uses the final values near $T_{\rm final}=300$. Since the oscillation period is longer for $\eta=1.0$, fewer cycles are completed over the same time interval, and a longer simulation is therefore required to observe convergence in the numerical simulation. For $\eta=0.01$ and $\eta=0.1$, these differences decrease to small values and then remain at these values with small fluctuations. For $\eta=1.0$, the decrease is slighter over the simulated time interval, since fewer oscillations occur in the same time window. This suggests that the approach to the asymptotic regime is related not only to the physical time, but also to the number of completed oscillation cycles. The stabilization of both the maxima and the minima of $M^{\rm sim}(t)$ provides numerical evidence consistent with convergence toward an asymptotically periodic solution.

\begin{figure}[!tbp]
\centering

\begin{minipage}{0.32\linewidth}
\centering
\includegraphics[width=\linewidth]{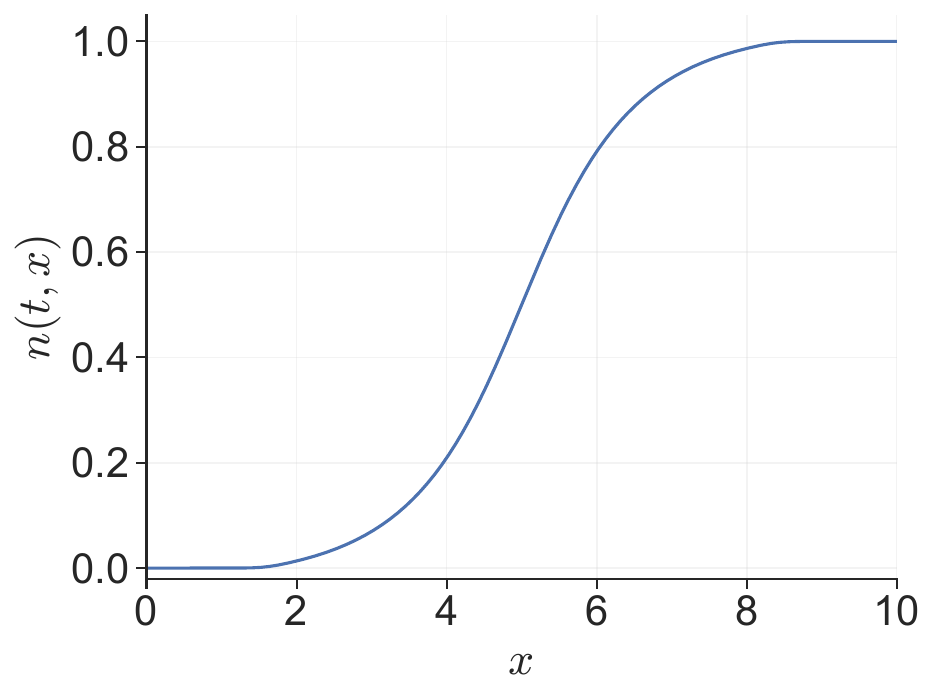}
\par
\small (a) $\eta=0.01$
\end{minipage}
\hfill
\begin{minipage}{0.32\linewidth}
\centering
\includegraphics[width=\linewidth]{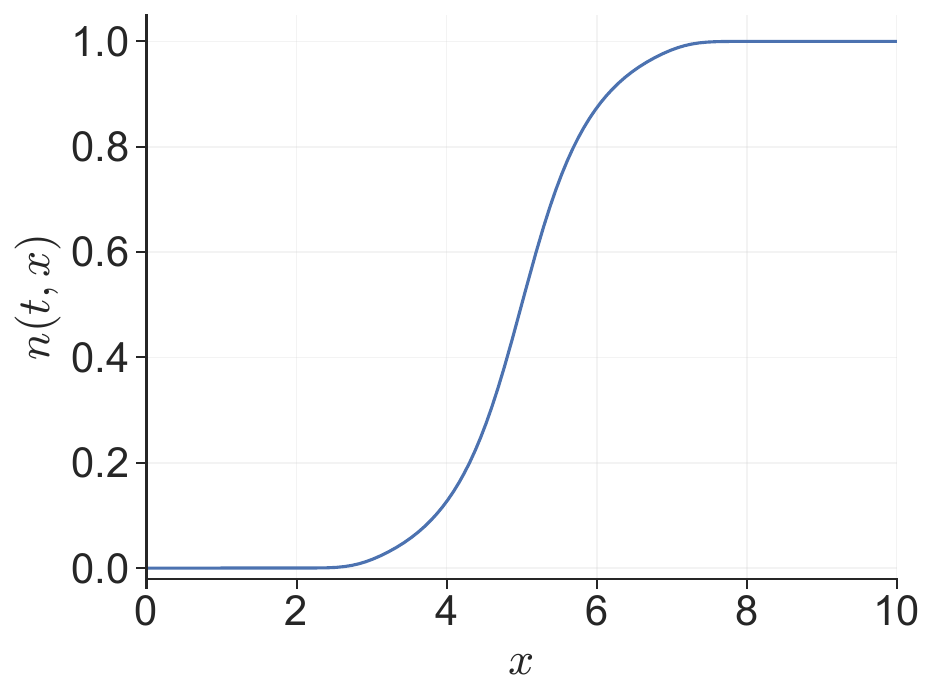}
\par
\small (b) $\eta=0.1$
\end{minipage}
\hfill
\begin{minipage}{0.32\linewidth}
\centering
\includegraphics[width=\linewidth]{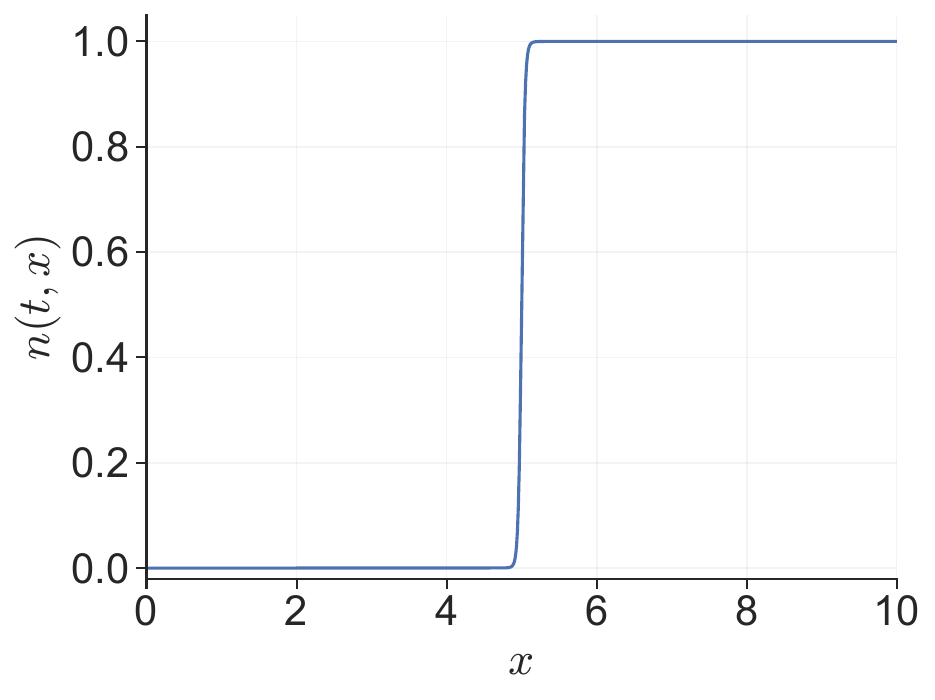}
\par
\small (c) $\eta=1.0$
\end{minipage}

\vspace{0.4cm}

\begin{minipage}{0.32\linewidth}
\centering
\includegraphics[width=\linewidth]{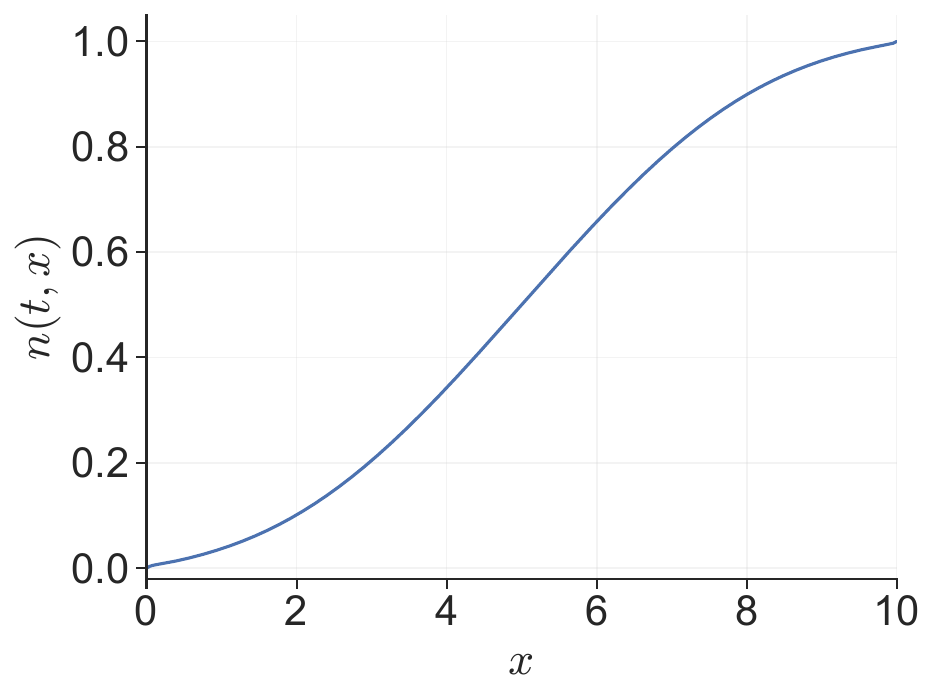}
\par
\small (d) $\eta=0.01$
\end{minipage}
\hfill
\begin{minipage}{0.32\linewidth}
\centering
\includegraphics[width=\linewidth]{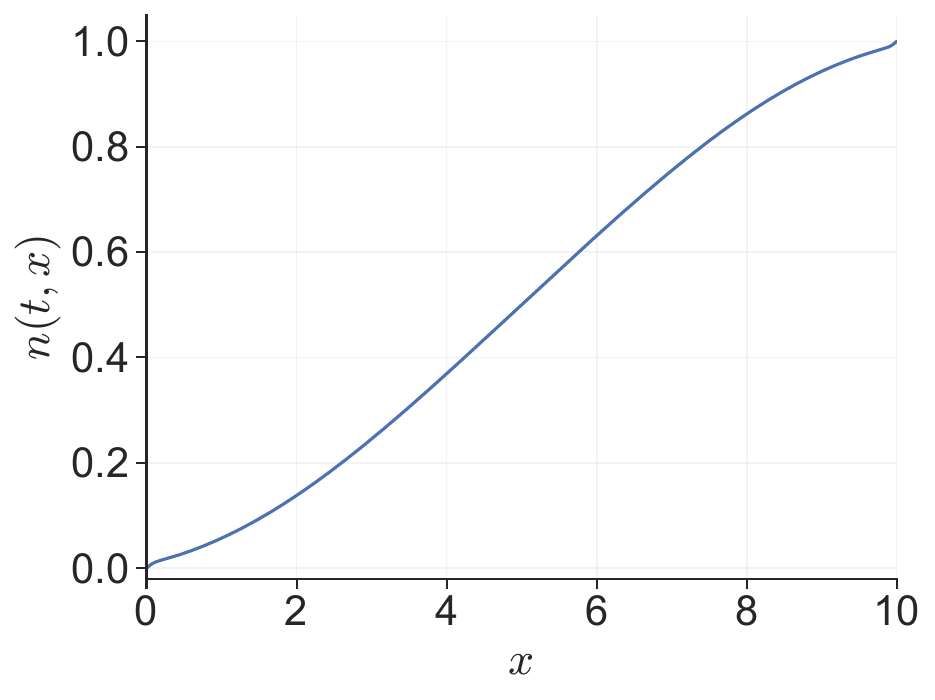}
\par
\small (e) $\eta=0.1$
\end{minipage}
\hfill
\begin{minipage}{0.32\linewidth}
\centering
\includegraphics[width=\linewidth]{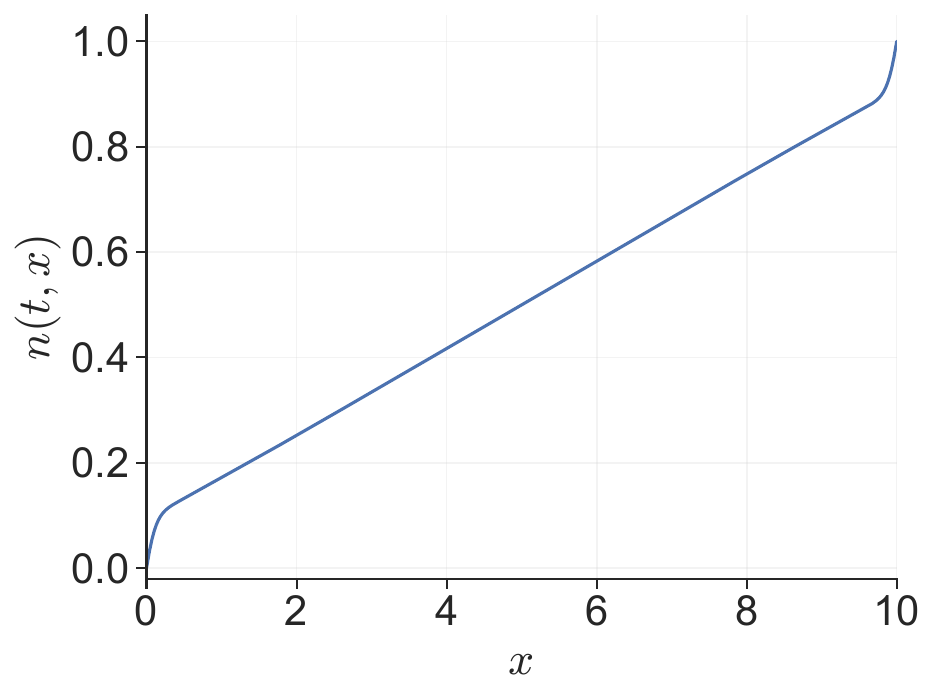}
\par
\small (f) $\eta=1.0$
\end{minipage}

\caption{
Density profiles near the local maximum and local minimum values of $M^{\rm sim}(t)$ along the cycle, for different values of the time-scale separation parameter $\eta$. Panels (a), (b), and (c) show the profiles near the local maximum side, denoted by $M^{\rm sim}_+$, for $\eta=0.01$, $\eta=0.1$, and $\eta=1.0$, respectively. Panels (d), (e), and (f) show the corresponding profiles near the local minimum side, denoted by $M^{\rm sim}_-$, for the same values of $\eta$. The selected times are $t=99.40$, $t=99.29$, and $t=95.41$ in panels (a), (b), and (c), and $t=100.95$, $t=100.31$, and $t=98.74$ in panels (d), (e), and (f), respectively. Parameters: $L=10$, $\mu=0.1$, $\eps=0.05$, $a_0=7$, $E_0=1.56$, $b_3=2$, $b_1=-0.4$, $b_0=0.92$, $c=0.8$, and $E^\star=1.5$.
}
\label{fig:num_profiles_eta_Mplus_Mminus}
\end{figure}

We also examine the density profiles near the local maximum and local minimum values of $M^{\rm sim}(t)$ along the trajectory, denoted by $M^{\rm sim}_+$ and $M^{\rm sim}_-$, respectively. Figure~\ref{fig:num_profiles_eta_Mplus_Mminus} shows the density profiles near the local maximum and local minimum values of $M^{\rm sim}(t)$ for different values of $\eta$. The upper row corresponds to the profiles near $M^{\rm sim}_+$, while the lower row corresponds to the profiles near $M^{\rm sim}_-$. Thus, the figure compares two representative states of the periodic density profile for each value of the time-scale parameter. The profiles show that $\eta$ controls how the density profile moves between these two representative states. For larger $\eta$, the transition time between the two states increases, and the profile evolves from one state to the other over a longer interval rather than jumping rapidly. This is consistent with the smoother phase portraits in Figure~\ref{fig:num_phase_portraits_eta}. During this longer transition, the density profile has more time to sharpen before switching to the other state, and it can also develop a broader shape before the next switch. For smaller $\eta$, the transition is faster, so the profile moves between the two states before such extreme shapes emerge. This behaviour can be observed in Figure~\ref{fig:num_profiles_eta_Mplus_Mminus} by comparing the cases $\eta=0.01$ and $\eta=1.0$. As $\eta$ increases, each oscillation takes longer, so the system remains in the rarefaction phase for a longer time. Consequently, the rarefaction-like profile spreads over the entire finite computational domain. As seen in panel~(f) of Figure~\ref{fig:num_profiles_eta_Mplus_Mminus}, the profile is then forced to match the fixed Dirichlet boundary values at the endpoints. The resulting endpoint bending is a finite-domain effect.

\subsection{Dependence on $\eps$}

In Figure~\ref{fig:timeseries_extrema_eta}, we examined the differences
$\left|M^{\rm sim}_{\pm}(t)-M^{\rm sim}_{\pm,\mathrm{final}}\right|$
between the maximum and minimum values of $M^{\rm sim}(t)$ in each oscillation period and their corresponding final values.  We now examine the dependence of the same differences on the diffusion parameter $\eps$, while keeping the time scale separation $\eta=0.01$ fixed.  In addition to the reference value $\eps=0.05$ used in Figure~\ref{fig:timeseries_extrema_eta}, we consider
\[
 \eps\in\{0.01,0.025,0.04,0.06,0.075,0.10\}\;.
\]
Figure~\ref{fig:extrema_convergence_epsilon} displays the resulting
differences for these six values of $\eps$.
\begin{figure}[!tbp]
\centering
\begin{tabular}{ccc}
\includegraphics[width=0.30\textwidth]{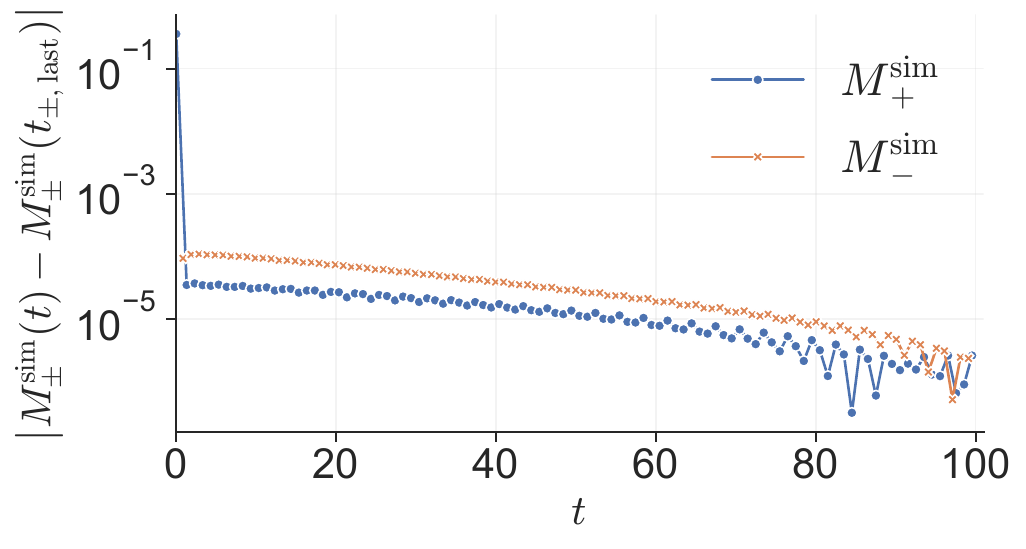}
&
\includegraphics[width=0.30\textwidth]{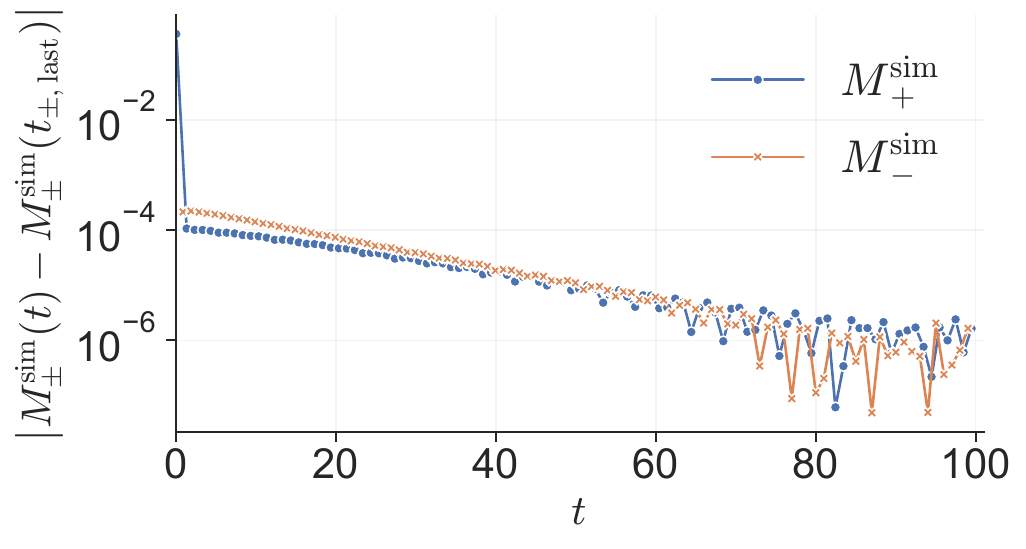}
&
\includegraphics[width=0.30\textwidth]{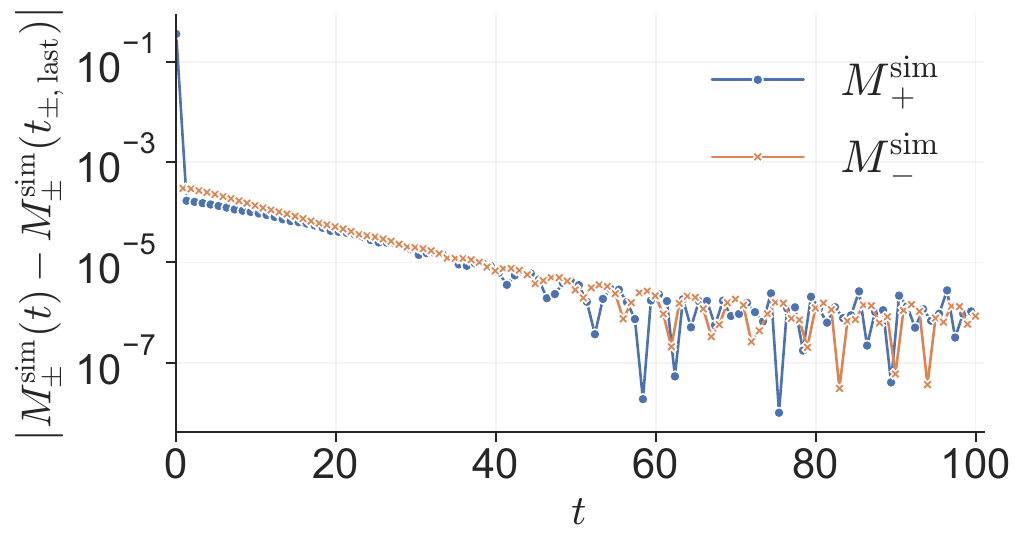}
\\[-0.2em]
(a) $\eps=0.01$
&
(b) $\eps=0.025$
&
(c) $\eps=0.04$
\\[1.0em]
\includegraphics[width=0.30\textwidth]{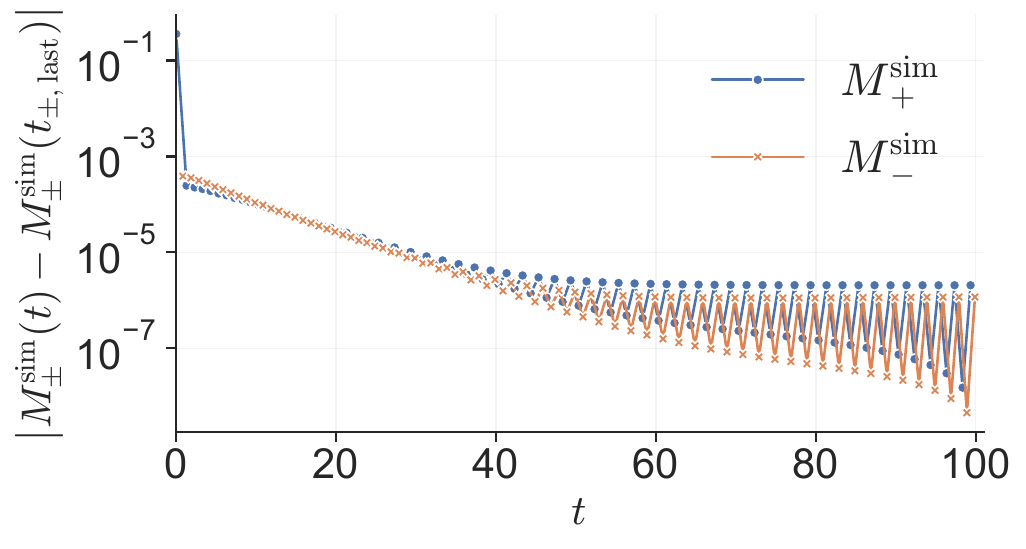}
&
\includegraphics[width=0.30\textwidth]{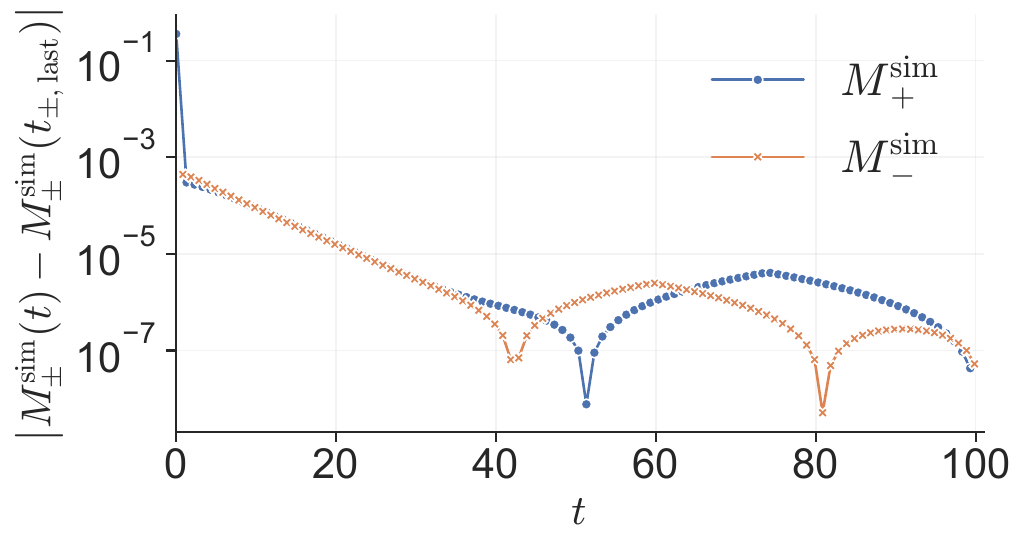}
&
\includegraphics[width=0.30\textwidth]{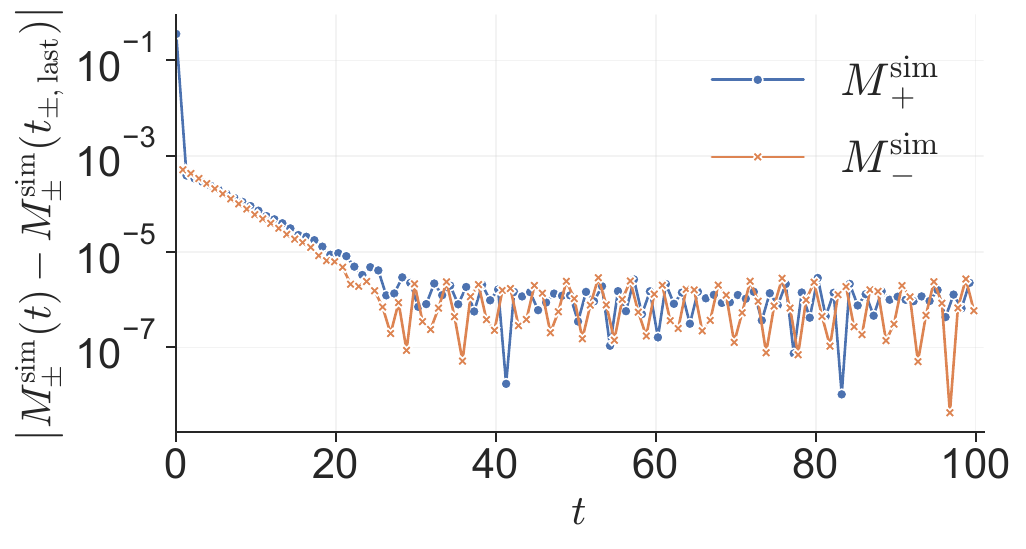}
\\[-0.2em]
(d) $\eps=0.06$
&
(e) $\eps=0.075$
&
(f) $\eps=0.10$
\end{tabular}
\caption{
Differences
$\left|M^{\rm sim}_{\pm}(t)-M^{\rm sim}_{\pm,\mathrm{final}}\right|$
between the maximum and minimum values of $M^{\rm sim}(t)$ in each oscillation period and their corresponding final values, computed as in Figure~\ref{fig:timeseries_extrema_eta}.  The blue and orange curves correspond to the maxima and minima, respectively.  Panels (a)--(f) correspond, respectively, to $(\eps,\mu)=(0.01,0.02)$, $(0.025,0.05)$, $(0.04,0.08)$, $(0.06,0.12)$, $(0.075,0.15)$, and $(0.10,0.20)$.  Parameters: $\eta=0.01$, $L=10$, $\Delta x=10^{-3}$, $\Delta t=10^{-6}$, $a_0=7$, $E_0=1.56$, $b_3=2$, $b_1=-0.4$, $b_0=0.92$, $c=0.8$, and $E^\star=1.5$, with $\mu=2\eps$.
}
\label{fig:extrema_convergence_epsilon}
\end{figure}
It shows that the differences decrease approximately exponentially before reaching the level of the numerical fluctuations.  The decay generally becomes faster as $\eps$ increases.  This confirms that increasing the diffusion coefficient accelerates convergence to the long-time oscillation. For each value of $\eps$, we fit the decay of the differences before the numerical fluctuations using
\[
 \left|M^{\rm sim}_{\pm}(t)-M^{\rm sim}_{\pm,\mathrm{final}}\right|
 \simeq C_\pm\e^{s_\pm t}\;.
\]
where $C_\pm$ is a constant and $s_\pm<0$ is the decay exponent. We
also include the previous data for $\eps=0.05$ from
Figure~\ref{fig:timeseries_extrema_eta}.  We then fit the dependence of
$s_\pm$ on $\eps$ using
\[
 s_\pm(\eps)=a_\pm\eps+b_\pm\;.
\]
\begin{figure}[h]
\centering
\begin{minipage}{0.48\linewidth}
\centering
\includegraphics[width=\linewidth]{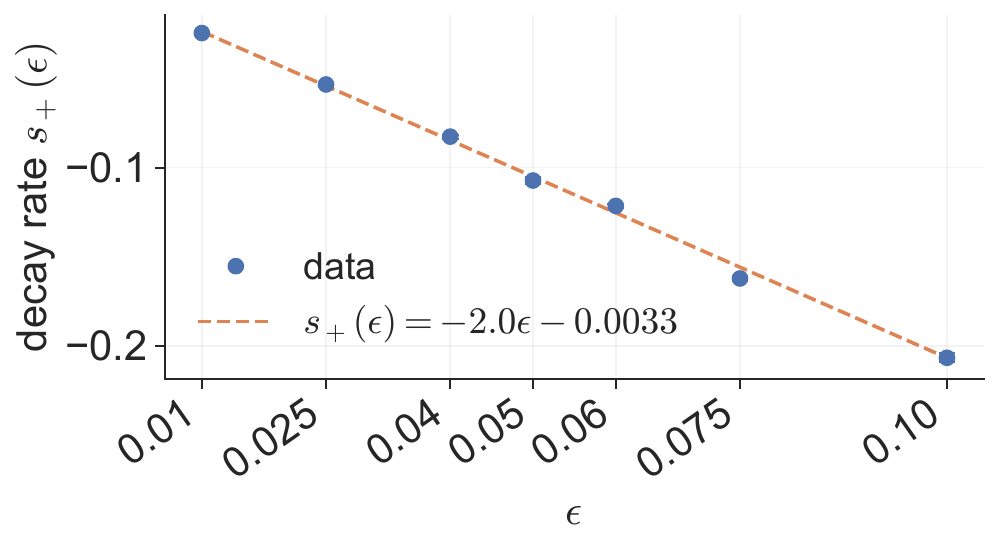}
\par
\small (a)
\end{minipage}
\hfill
\begin{minipage}{0.48\linewidth}
\centering
\includegraphics[width=\linewidth]{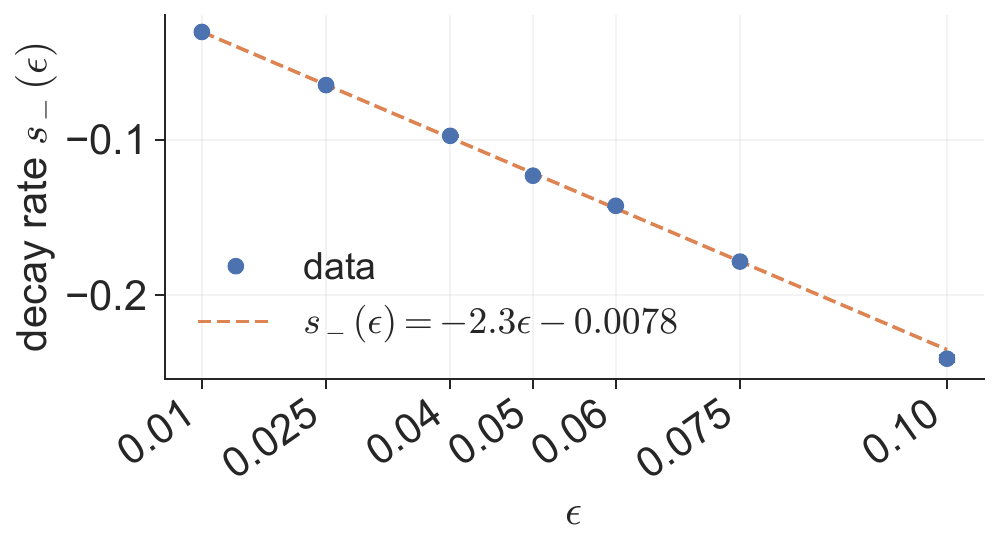}
\par
\small (b)
\end{minipage}
\caption{
Dependence of the fitted exponential-decay exponents $s_\pm$ on the diffusion parameter $\eps$ for fixed time scale separation $\eta=0.01$.  The points correspond to $\eps\in\{0.01,0.025,0.04,0.05,0.06,0.075,0.10\}$, and the dashed lines show the fits $s_\pm(\eps)=a_\pm\eps+b_\pm$.  Panel (a) shows $s_+$ for the maxima $M^{\rm sim}_+$, with
$a_+=-2.0$ and $b_+=-0.0033$.  Panel (b) shows $s_-$ for the minima $M^{\rm sim}_-$, with $a_-=-2.3$ and $b_-=-0.0078$.  Parameters: $\eta=0.01$, $L=10$, $\Delta x=10^{-3}$, $\Delta t=10^{-6}$, $a_0=7$, $E_0=1.56$, $b_3=2$, $b_1=-0.4$, $b_0=0.92$, $c=0.8$, and $E^\star=1.5$, with $\mu=2\eps$.
}
\label{fig:decay_rate_epsilon}
\end{figure}
Figure~\ref{fig:decay_rate_epsilon} shows the dependence of the fitted
decay exponents on the diffusion parameter $\eps$ for fixed
$\eta=0.01$.  The horizontal axis represents $\eps$.  The vertical axis
shows $s_+$ for the maxima $M^{\rm sim}_+$ in panel~(a), and $s_-$ for
the minima $M^{\rm sim}_-$ in panel~(b).  The plotted values indicate
that $s_+$ and $s_-$ vary approximately linearly with $\eps$.  The
fitted relations are
\[
 s_+(\eps)\simeq -2.0\eps-0.0033\;,
 \qquad
 s_-(\eps)\simeq -2.3\eps-0.0078\;.
\]
Since $s_\pm<0$, the decay rates are $|s_\pm|=-s_\pm$.  These decay
rates increase approximately linearly with $\eps$.
This behaviour is qualitatively consistent with the $\Order{\eps}$
decay rate in~\eqref{eq:bound_normH2_diff}.

\subsection{Comparison of profiles with different initial conditions}

\begin{figure}[!tbp]
\centering

\begin{minipage}{0.48\linewidth}
\centering
\includegraphics[width=\linewidth]{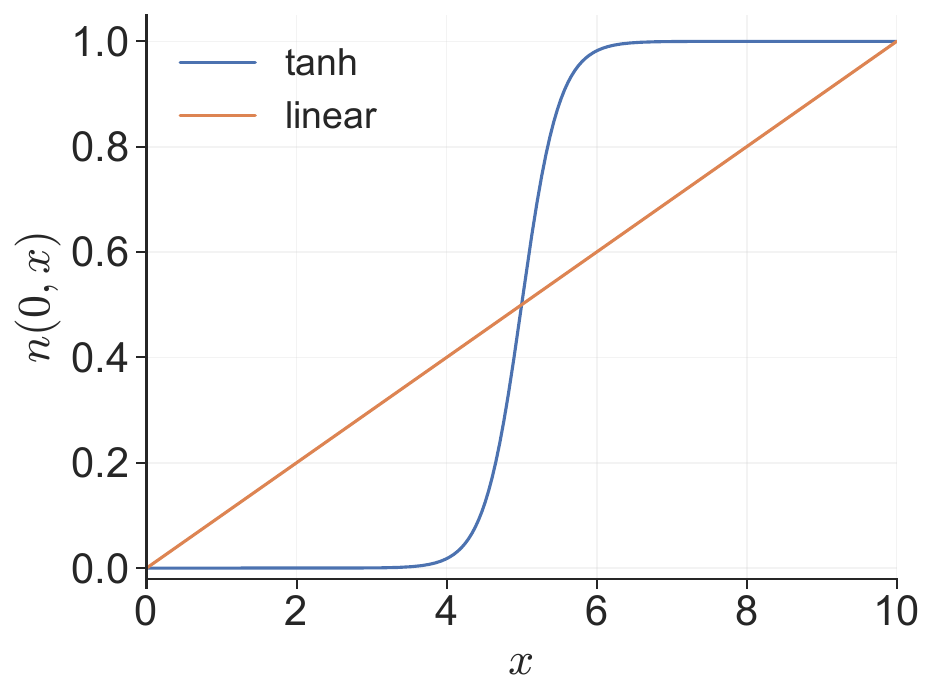}
\par
\small (a)
\end{minipage}
\hfill
\begin{minipage}{0.48\linewidth}
\centering
\includegraphics[width=\linewidth]{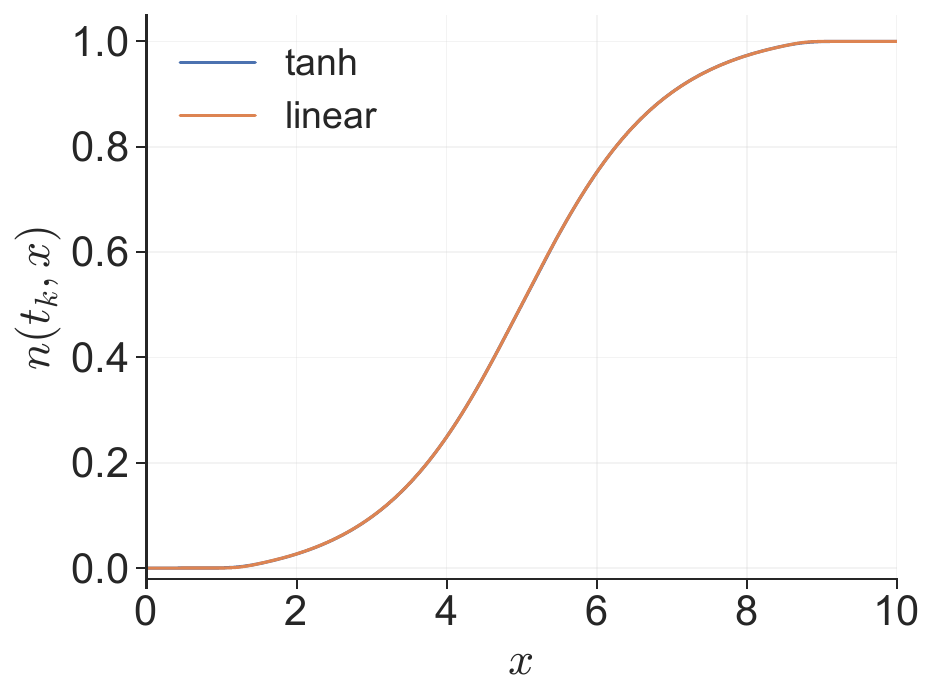}
\par
\small (b)
\end{minipage}

\vspace{0.4cm}

\begin{minipage}{0.48\linewidth}
\centering
\includegraphics[width=\linewidth]{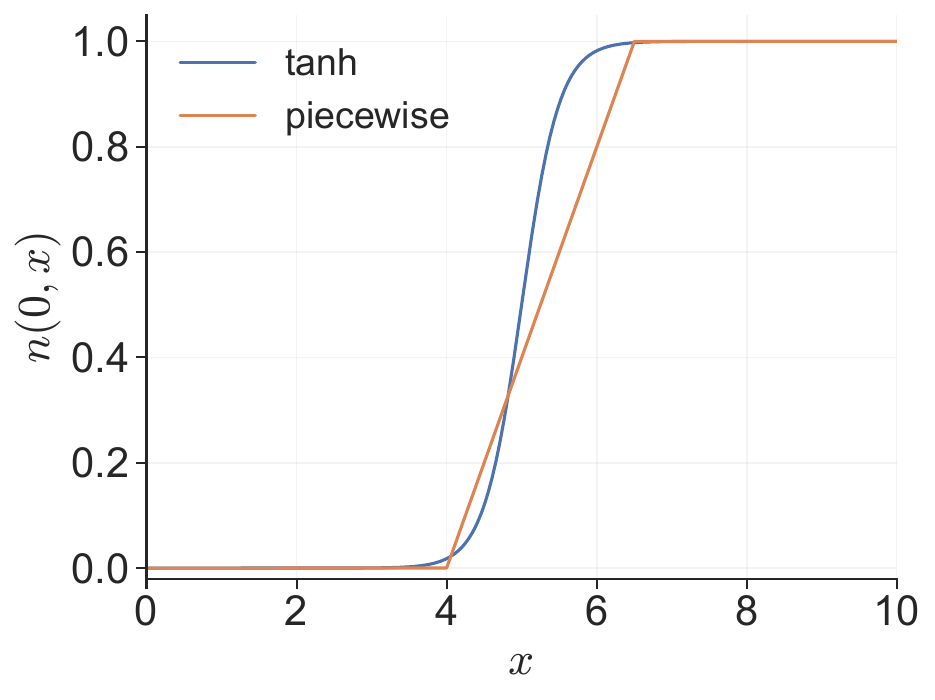}
\par
\small (c)
\end{minipage}
\hfill
\begin{minipage}{0.48\linewidth}
\centering
\includegraphics[width=\linewidth]{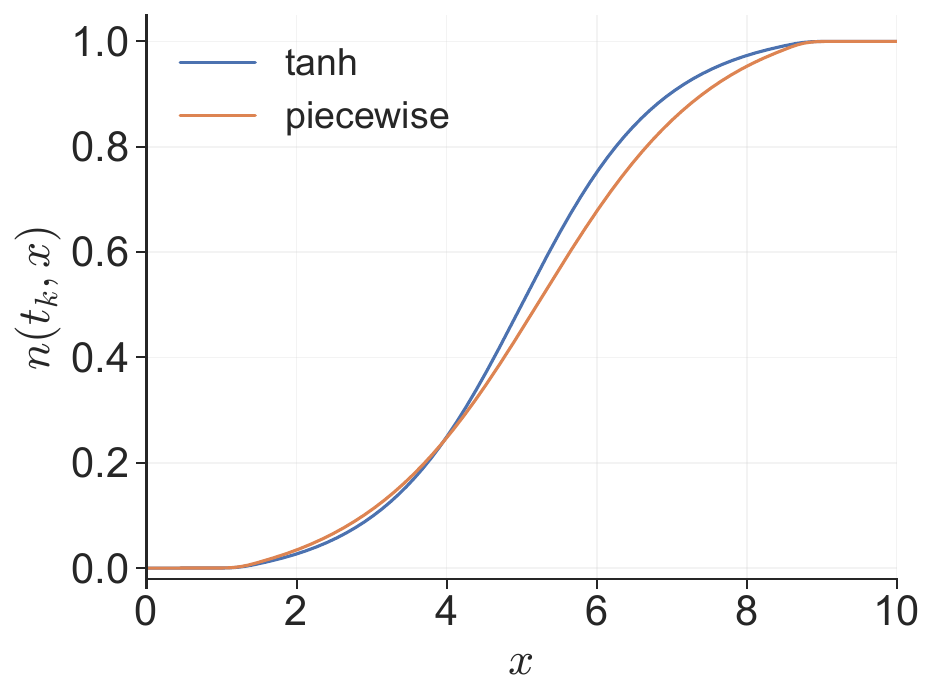}
\par
\small (d)
\end{minipage}

\caption{
Comparison of density profiles starting from different initial conditions. Panels (a) and (c) show the initial profiles: hyperbolic tangent and linear profiles in panel (a), and hyperbolic tangent and piecewise-linear profiles in panel (c). Panels (b) and (d) show the corresponding profiles after a sufficiently long time evolution, around $t\simeq 100$, at the time when $M^{\rm sim}(t)$ is closest to the reference value $M^{\rm sim}_{\rm ref}=-0.6$. The comparison is therefore made at the same dynamical stage, defined by the section $M^{\rm sim}=M^{\rm sim}_{\rm ref}$, after a sufficiently long time evolution. Parameters: $L=10$, $\mu=0.1$, $\eps=0.05$, $a_0=7$, $E_0=1.56$, $b_3=2$, $b_1=-0.4$, $b_0=0.92$, $c=0.8$, $E^\star=1.5$, and $\eta=0.01$.}
\label{fig:num_profiles_fixed_M}
\end{figure}

Next, we compare the time evolution of density profiles starting from different initial conditions. The purpose is to examine whether initial profiles with the same value of the conserved quantity \eqref{eq:conserved_quantityP} converge to the same periodic orbit,
and how this changes when the value of the conserved quantity is different. We use the following initial profiles:
\begin{align}
 n_0^{\tanh}(x) &= \frac12\Bigbrak{1+\tanh\Bigpar{2\Bigpar{x-\frac{L}{2}}}}\;,\\
 n_0^{\mathrm{lin}}(x) &= \frac{x}{L}\;.
\end{align}
We also use the piecewise-linear profile
\begin{equation}
 n_0^{\mathrm{pw}}(x)=
 \begin{cases}
 0\;, & 0\leqs x \leqs 0.4L\;,\\
 \dfrac{x-0.4L}{0.25L}\;, & 0.4L < x < 0.65L\;,\\
 1\;, & 0.65L \leqs x \leqs L\;,
 \end{cases}
\end{equation}
The hyperbolic tangent and linear initial profiles have the same value of the conserved quantity, whereas the piecewise-linear profile has a different one. This can be verified by direct integration over $[0,L]$ (notice that for a finite domain, \eqref{eq:conserved_quantityP} is equal to the total mass, up to a finite constant). For each initial condition, we record the density profile at the time when $M^{\rm sim}(t)$ is closest to the reference value $M^{\rm sim}_{\rm ref}=-0.6$. This provides a way to compare the profiles at a comparable dynamical time along the periodic orbit.
Figure~\ref{fig:num_profiles_fixed_M} compares the long-time density profiles starting from different initial conditions, evaluated at the same reference value of $M^{\rm sim}$. Panels (a) and (b) show the evolution from the hyperbolic tangent and linear initial profiles. Panel (a) shows the initial profiles, while panel (b) shows the profiles around $t\simeq 100$, at the time when $M^{\rm sim}(t)$ is closest to $M^{\rm sim}_{\rm ref}=-0.6$. Panels (c) and (d) show the analogous evolution from the hyperbolic tangent and piecewise-linear initial profiles. Since the total mass is conserved in the symmetric setting, up to discretisation error, the initial total mass equals the total mass at any later time, and can therefore be used to label the long-time profiles. At the reference value of $M^{\rm sim}$, profiles starting from initial conditions with the same total mass are indistinguishable, while profiles starting from initial conditions with different total masses remain clearly separated. This illustrates the selection of a unique periodic orbit for symmetric initial conditions, as stated in Theorem \ref{thm:main}. It also suggests that non-symmetric initial conditions do have a similar qualitative behavior, although the limiting periodic orbits are different.



\subsection{Transient large- and small-amplitude oscillations}

We finally examine the dynamics when the parameter $E_0$ is chosen close to the lower-$E$ extremum of the critical manifold. From~\eqref{eq:num_h}, the extrema of the critical manifold are determined by differentiating $M^{\rm sim}$ with respect to $E$:
\begin{equation}
    \frac{dM^{\rm sim}}{dE}
    =
    \frac{3b_3(E-E^\star)^2-b_1-1}{c}\;.
\end{equation}
Setting this derivative equal to zero and using $b_3=2$, $b_1=-0.4$, and $E^\star=1.5$, we obtain
\begin{equation}
    6(E-1.5)^2-0.6=0\;.
\end{equation}
The $E$-coordinates of the two extrema are therefore $E_-\simeq1.184$ and $E_+\simeq1.816$. Here, $E_-$ corresponds to the lower-$E$ local maximum, while $E_+$ corresponds to the higher-$E$ local minimum.
Figure~\ref{fig:num_time_series_E0_canard} shows the time series of $M^{\rm sim}(t)$ for $E_0=1.1925$, $1.193$, and $1.195$. These values lie just above $E_-$. All three simulations use the same initial conditions, namely the hyperbolic-tangent profile $n_0^{\tanh}$ and $E(0)=0.4$. We also set $\eta=0.01$ and keep all other numerical parameters fixed.
\begin{figure}[!tbp]
\centering

\begin{minipage}{0.32\linewidth}
\centering
\includegraphics[width=\linewidth]{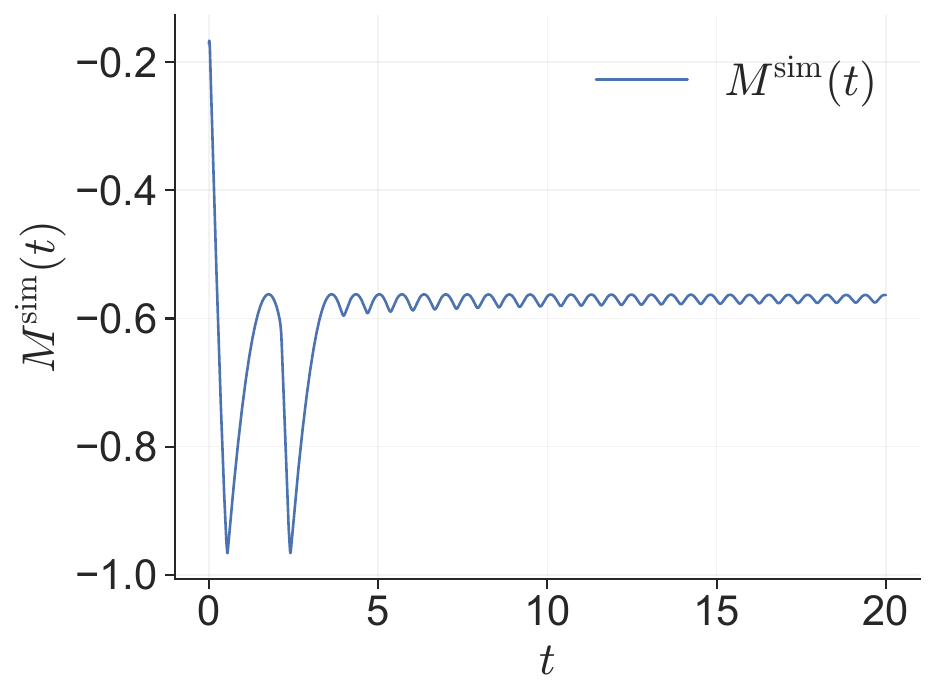}
\par
\small (a) $E_0=1.1925$
\end{minipage}
\hfill
\begin{minipage}{0.32\linewidth}
\centering
\includegraphics[width=\linewidth]{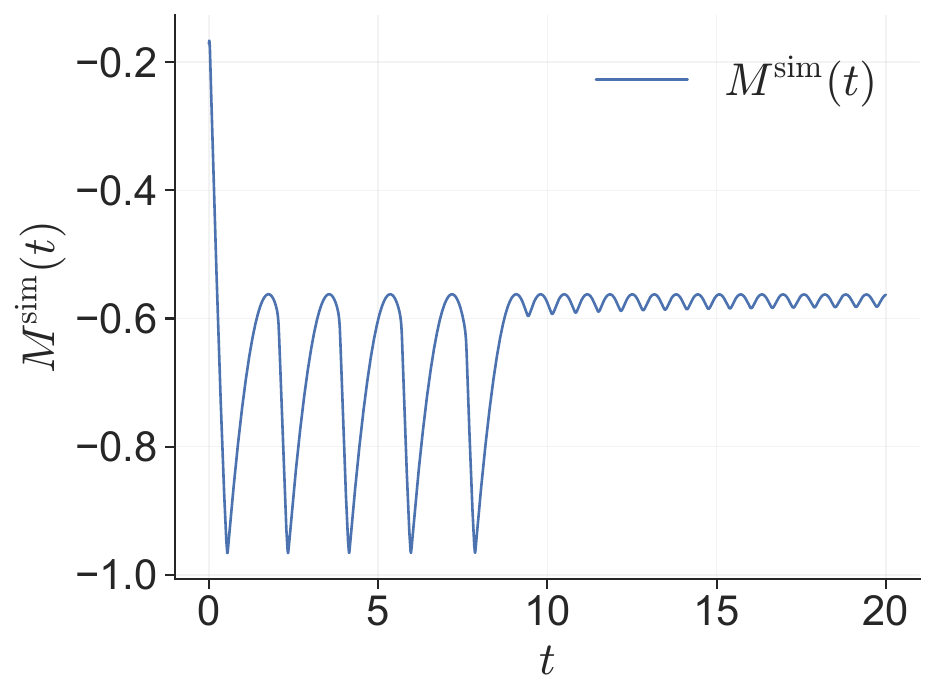}
\par
\small (b) $E_0=1.193$
\end{minipage}
\hfill
\begin{minipage}{0.32\linewidth}
\centering
\includegraphics[width=\linewidth]{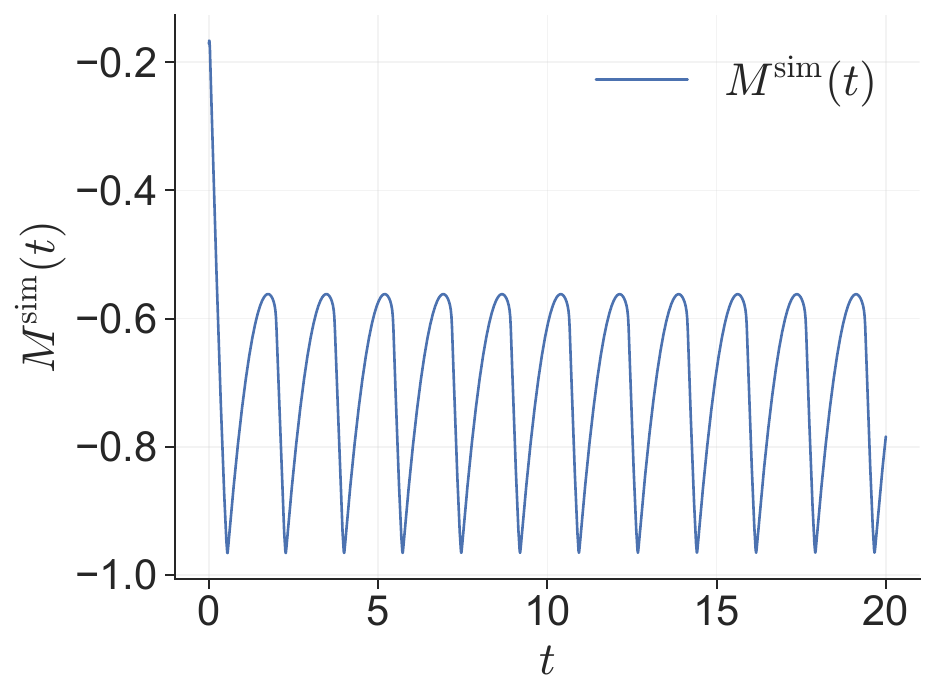}
\par
\small (c) $E_0=1.195$
\end{minipage}

\caption{
Time series of $M^{\rm sim}(t)$ for three nearby values of $E_0$. Panels (a), (b), and (c) correspond to $E_0=1.1925$, $E_0=1.193$, and $E_0=1.195$, respectively. The initial conditions are $n_0^{\tanh}$ and $E(0)=0.4$ in all three panels, and all parameters other than $E_0$ are kept fixed. For the two smaller values of $E_0$, the initial large-amplitude oscillations are followed by a small-amplitude oscillatory regime. For $E_0=1.195$, the large-amplitude oscillations are maintained throughout the time interval $0\leq t\leq20$. Parameters: $L=10$, $\mu=0.1$, $\eps=0.05$, $a_0=7$, $b_3=2$, $b_1=-0.4$, $b_0=0.92$, $c=0.8$, $E^\star=1.5$, and $\eta=0.01$.
}
\label{fig:num_time_series_E0_canard}
\end{figure}
In all three panels, the large-amplitude oscillations extend approximately between $M^{\rm sim}=-1.0$ and $M^{\rm sim}=-0.6$. In panel~(a), for $E_0=1.1925$, two large-amplitude oscillations are followed by a small-amplitude oscillatory regime at approximately $t=4$. In panel~(b), for $E_0=1.193$, approximately five large-amplitude oscillations occur, and the small-amplitude regime is reached around $t=9$. In panel~(c), for $E_0=1.195$, the large-amplitude oscillations persist throughout the time interval $0\leq t\leq20$. Thus, within the simulated time interval, a transient mixed-amplitude pattern is observed for $E_0=1.1925$ and $E_0=1.193$, whereas no transition to the small-amplitude mode is observed for $E_0=1.195$. This comparison shows that the appearance of this pattern is highly sensitive to small changes in $E_0$.

This behaviour is compatible with the canard regime known for the two-dimensional FitzHugh--Nagumo  (or modified Van der Pol) system. In the corresponding slow-fast picture, a stable small-amplitude periodic orbit is created through a Hopf bifurcation when the slow nullcline passes near the fold of the critical manifold. As the parameter is varied further, the amplitude of this periodic orbit can increase rapidly over a small parameter interval, eventually producing a relaxation-oscillation orbit.


\section{Averaging}
\label{sec:averaging} 

The first main idea in the proof of Theorem~\ref{thm:main} is to use averaging. 
To put the system into a form where averaging applies, we use the same idea as in 
the proof of Proposition~\ref{prop:eps0}, which is to work with the reciprocal function 
$X(t,n): [0,\tau)\times[0,1]\to\R$ of $n(t,x)$, defined as the unique solution of 
\begin{equation}
\label{eq:def_X} 
 X(t,n(t,x)) = x\;.
\end{equation} 
This is possible as long as $x\mapsto n(t,x)$ is strictly increasing, hence 
the use of $\tau$ defined in~\eqref{eq:def_tau}. The 
system~\eqref{eq:transport}--\eqref{eq:ODE_E}--\eqref{eq:def_M}  then takes the following form.

\begin{proposition}
If  $x\mapsto n(t,x)$ is strictly increasing, the 
system~\eqref{eq:transport}--\eqref{eq:ODE_E}--\eqref{eq:def_M}  is equivalent to 
\begin{align}
 \partial_t X(t,n) &= (1-2n) a(E(t), M(t)) - \eps \partial_n \biggpar{\frac{1}{\partial_n X(t,n)}}\;, \\
 \dot{M}(t) &= \frac14 a(E(t), M(t)) - \frac{\eps}{\partial_n X(t,\frac12)}\;, \\
 \eta \dot{E}(t) &= g(E(t),M(t))\;.
 \label{eq:coupledX} 
\end{align}
\end{proposition}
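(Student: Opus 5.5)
The plan is to change variables through the inverse function $X(t,\cdot)$ defined by~\eqref{eq:def_X}, in the same way as in the proof of Proposition~\ref{prop:eps0}, but now keeping track of the viscous term. Throughout, $x\mapsto n(t,x)$ is assumed strictly increasing, so that $X(t,\cdot)$ is well defined and $\partial_x n>0$. Note that monotonicity alone does not give $\partial_n X<\infty$; we use that $\partial_x n(t,x)>0$ for all $x\in\R$ (as in the definition~\eqref{eq:def_tau} of $\tau$), so that $X$ is a $\cC^1$ bijection. With the regularity from Theorem~\ref{thm:existence}, $X$ has enough derivatives for the computation below.

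First, differentiate the identity $X(t,n(t,x))=x$ with respect to $x$ and with respect to $t$:
\begin{equation}
 \partial_n X\,\partial_x n = 1\;, \qquad \partial_t X + \partial_n X\,\partial_t n = 0\;.
\end{equation}
The first relation gives $\partial_x n = 1/\partial_n X$, and hence
\begin{equation}
 \partial_{xx} n = \partial_x\bigpar{1/\partial_n X(t,n(t,x))} = \partial_n\bigpar{1/\partial_n X}\,\partial_x n\;.
\end{equation}
Substituting into~\eqref{eq:transport} gives
\begin{equation}
 \partial_t n = \partial_x n\,\bigbrak{-a\,(1-2n) + \eps\,\partial_n(1/\partial_n X)}\;.
\end{equation}
Multiplying by $\partial_n X = 1/\partial_x n$ and using the second relation then yields the $X$-equation in~\eqref{eq:coupledX}.

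Second, the $M$-equation comes from differentiating~\eqref{eq:def_M} in time and inserting~\eqref{eq:transport}. This step requires the decay conditions~\eqref{eq:bc}, which justify discarding the boundary terms at $-\infty$, since there $n\to0$ and $\partial_x n\to0$. The result is $\dot M = a f(n(t,0)) - \eps\,\partial_x n(t,0)$, which is~\eqref{eq:ODE_M}. By Proposition~\ref{prop:symmetry} we have $n(t,0)=\tfrac12$, so $X(t,\tfrac12)=0$ and
\begin{equation}
 \partial_x n(t,0)=1/\partial_n X(t,\tfrac12)\;.
\end{equation}
This gives the second line of~\eqref{eq:coupledX}, and the $E$-equation is unchanged.

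For the converse, given a solution $X$ with $\partial_n X>0$, we define $n(t,\cdot)$ as the inverse of $X(t,\cdot)$ and reverse the computation. The relation between $M$ and $n$ is the only delicate point. The $M$-ODE determines $M$ only up to its initial value, so we must take $M(0)=-\int_{-\infty}^0 n_0$ and check that $\frac{d}{dt}\bigl(M+\int_{-\infty}^0 n\bigr)=0$, which follows from the same boundary computation. I expect this point, that is, making the boundary-term and regularity justifications rigorous, to be the main (mild) obstacle. The core identities are routine.
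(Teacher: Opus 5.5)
Your proof is correct and is essentially the paper's argument. The paper differentiates $X(t,n(t,x))=x$ twice to get $\partial_{xx}n=-\partial_{nn}X/(\partial_nX)^3$, which is the same as your chain-rule identity $\partial_{xx}n=\partial_n(1/\partial_nX)\,\partial_xn$, and it obtains the $\dot M$ equation as in~\eqref{eq:ODE_M} using $n(t,0)=\frac12$. Your converse sketch goes beyond what the paper writes; there, matching $\dot M$ with $-\int_{-\infty}^0\partial_t n(t,x)\6x$ also needs $X(t,\frac12)=0$ for all $t$, which holds because the $X$-equation is invariant under $X(t,n)\mapsto -X(t,1-n)$, so symmetric initial data stay symmetric.
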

\begin{proof}
Taking two $x$-derivatives of the relation $X(t,n(t,x)) = x$ yields 
\begin{equation}
 \partial_{nn}X (\partial_x n)^2 + \partial_n X \partial_{xx} n = 0\;,
\end{equation} 
which shows that 
\begin{equation}
 \partial_{xx}n = - \frac{\partial_{nn}X}{(\partial_n X)^3}\;.
\end{equation} 
This yields 
\begin{equation}
 \partial_t X = (1-2n) a(E,M) + \eps\frac{\partial_{nn}X}{(\partial_n X)^2}\;,
\end{equation} 
which is equivalent to~\eqref{eq:coupledX}.
\end{proof}

Note that the equation for $\dot M$ in~\eqref{eq:coupledX} is actually a consequence 
of the equation for $\partial_t X$. 
In order to make symmetries more apparent, we will use the additional change of variables 
\begin{equation}
 n = \frac12 (1 + m)\;,
\end{equation} 
which transforms the system~\eqref{eq:coupledX} into 
\begin{align}
 \partial_t X(t,m) &= -m a(E(t), M(t)) - \eps \partial_m \biggpar{\frac{1}{\partial_m X(t,m)}}\;, \\
 \label{eq:coupled2X} 
 \dot{M}(t) &= \frac14 a(E(t), M(t)) - \frac{\eps}{2\partial_m X(t,0)}\;, \\
 \eta \dot{E}(t) &= g(E(t),M(t))\;,
\end{align}
where now $X:[0,\tau)\times[-1,1]\to\R$. Owing to the symmetry assumption~\eqref{eq:symmetry0}, 
$X$ is an odd function of $m$. One can thus study the equation for $m\in[0,1)$, with 
boundary conditions 
\begin{equation}
\label{eq:bc_X} 
 X(t,0) = 0\;, \qquad 
 \lim_{m\to 1} X(t,m) = \infty\;.
\end{equation} 

\begin{remark}
\label{rem:MX} 
In the new variables, the definition~\eqref{eq:def_M} of $M(t)$ translates into 
\begin{equation}
\label{eq:MX} 
 M(t) = \frac12 \int_{-1}^0 X(t,m) \6m
 = -\frac12 \int_0^1 X(t,m) \6m\;.
\end{equation} 
This could be viewed as the existence of an additional conserved quantity, 
although it is merely a consequence of the definition of $M$.
\end{remark}

In what follows, we will first consider the simpler limiting case $\eta = 0$ in Section~\ref{ssec:eta0}, 
before moving to the general case in Section~\ref{ssec:eta_positive}. 


\subsection{The case $\eta = 0$}
\label{ssec:eta0} 

In the limiting case $\eta = 0$, we can consider that the fast variable 
$E$ follows the stable parts of the critical manifold $\set{g(E,M) = 0}$, 
jumping instantaneously from one stable branch to the other one at the fold 
points $(E_-,M_+)$ and $(E_+,M_-)$. This will be justified by the analysis 
presented in Section~\ref{ssec:eta_positive} for small $\eta > 0$.
This limiting behaviour suggests introducing a new time $s$ defined by 
\begin{equation}
\label{eq:def_ph} 
 M(s) = M_- + \ph(s)\;,
\end{equation} 
where 
\begin{equation}
 \ph(s) = 
 \begin{cases}
  s & 0\leqs s\leqs \Delta M\;, \\
  2\Delta M - s & \Delta M \leqs s \leqs 2\Delta M =: T\;,
 \end{cases}
\end{equation} 
with $\Delta M := M_+ - M_-$. We define the function 
\begin{equation}
 \tilde a(s) = 
 \begin{cases}
  a(E^*_-(M(s)), M(s)) & 0\leqs s\leqs \Delta M\;, \\
  a(E^*_+(M(s)), M(s)) & \Delta M \leqs s \leqs 2\Delta M\;,
 \end{cases}
\end{equation} 
where $E^*_\pm(M)$ denote the largest and smallest $E$ solving of $g(E,M) = 0$. 
Then the first two equations of the system~\eqref{eq:coupled2X} become
\begin{align}
 \partial_t X(t,m) &= -m \tilde a(s) - \eps \partial_m \biggpar{\frac{1}{\partial_m X(t,m)}}\;, \\
 \dot{M}(t) &= \frac14 \tilde a(s) - \frac{\eps}{2\partial_m X(t,0)}\;.
 \label{eq:coupled3X} 
\end{align}
Since 
\begin{equation}
\label{eq:dsdt} 
 \frac{\6s}{\6t} 
 = \frac{\6M}{\6t} \biggpar{\frac{\6M}{\6s}}^{-1} 
 = \biggpar{\frac14 \tilde a(s) - \frac{\eps}{2\partial_m X(t,0)}}\ph'(s)\;,
\end{equation} 
we can transform to the new time $s$.
This is justified by the fact that $\tilde a(s)$ is bounded away from $0$ by construction,
as long as $\partial_m X(t,0)$ stays larger than a constant times $\eps$. The result is 
\begin{equation}
 \partial_s X(s,m) 
 = -4\biggbrak{m + \eps \frac{1}{\tilde a(s)}\partial_m \biggpar{\frac{1}{\partial_m X(s,m)}}
 + \frac{2m\eps}{\tilde a(s)\partial_m X(s,0)} + \Order{\eps^2}} \ph'(s)\;,
\end{equation} 
where we have used the fact that $\ph'(s)\in\set{-1,1}$, and thus $\ph'(s) = 1/\ph'(s)$. 
We make a slight abuse of notation by using the same letter $X$ for the time-transformed 
function. 

In order to obtain a form of the equation suitable for averaging, we make the additional 
change of variables 
\begin{equation}
\label{eq:Ysm} 
 Y(s,m) = X(s,m) + 4m\ph(s)\;,
\end{equation} 
which yields 
\begin{equation}
\label{eq:dsY} 
 \partial_s Y(s,m) = \eps A(s,Y(s,m)) + \Order{\eps^2}\;,
\end{equation} 
where
\begin{equation}
 A(s,Y(s,m)) = -4\biggbrak{\partial_{m} \biggpar{\frac{1}{\partial_m Y(s,m)-4\ph(s)}}
 + \frac{2m}{\partial_m Y(s,0)-4\ph(s)}} \frac{\ph'(s)}{\tilde a(s)}\;.
\end{equation} 
We will later use the following observation, which is a direct consequence of 
Remark~\ref{rem:MX}.
\begin{proposition}[Conserved quantity]
\label{prop:conservation_Y_eta0} 
For all $s\geqs0$, one has 
\begin{equation}
 \int_{-1}^0 Y(s,m)\6m
 = -\int_0^1 Y(s,m)\6m 
 = 2M_-\;.
\end{equation} 
\end{proposition}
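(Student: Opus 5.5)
The proof is a direct computation from Remark~\ref{rem:MX} together with the definition~\eqref{eq:Ysm} of $Y$. The plan has three steps.

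\textbf{Step 1: express the integrals of $Y$ through $M$.} Integrate~\eqref{eq:Ysm} over $m\in[0,1]$ at fixed time $s$. Since $\int_0^1 4m\6m = 2$, this gives
\begin{equation}
 \int_0^1 Y(s,m)\6m = \int_0^1 X(s,m)\6m + 2\ph(s)\;.
\end{equation}
By~\eqref{eq:MX}, $\int_0^1 X(s,m)\6m = -2M(s)$. Hence
\begin{equation}
 \int_0^1 Y(s,m)\6m = -2M(s) + 2\ph(s)\;.
\end{equation}
On $[-1,0]$ the same computation uses $\int_{-1}^0 4m\6m = -2$ and the first equality in~\eqref{eq:MX}, and yields
\begin{equation}
 \int_{-1}^0 Y(s,m)\6m = 2M(s) - 2\ph(s)\;.
\end{equation}
Alternatively, this follows from the oddness of $X$ in $m$, which holds by the symmetry of Proposition~\ref{prop:symmetry}; then $Y$ is odd as well. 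This already gives the first equality in the statement.

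\textbf{Step 2: identify $M(s)-\ph(s)$.} By the definition~\eqref{eq:def_ph} of the time $s$, we have $M(s) = M_- + \ph(s)$. Therefore $M(s)-\ph(s) = M_-$, and both integrals equal $\pm 2M_-$, with the signs as stated.

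\textbf{Main obstacle: justifying the change of time.} The only point needing care is that $s$ is a genuine time reparametrisation defined through $M$. The relation~\eqref{eq:dsdt} is what makes $M(s)=M_-+\ph(s)$ an identity rather than an approximation, even with the $\Order{\eps}$ correction term in $\dot M$. Here $s$ is defined by inverting $t\mapsto M(t)$ on each monotone branch, with $\ph$ recording the branch. This inversion is legitimate as long as $\tilde a(s)$ is bounded away from zero and $\partial_m X(t,0)\gg\eps$.

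The integrals must also converge. This follows because $X(s,\cdot)$ is integrable near $m=\pm1$, which in turn follows from $n-n^*\in L^1$ (Theorem~\ref{thm:existence}), since that makes $M$ finite. Under these conditions the identity holds for all $s\geqs0$, without any $\Order{\eps^2}$ error.
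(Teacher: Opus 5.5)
Your proposal is correct and is essentially the paper's proof. You integrate $Y = X + 4m\varphi(s)$ over $[-1,0]$ or $[0,1]$, use Remark~\ref{rem:MX} to express $\int X\,\mathrm{d}m$ through $M(s)$, and cancel $\varphi(s)$ using $M(s) = M_- + \varphi(s)$. Your remarks on convergence of the integrals (via $n-n^*\in L^1$ and $n^*\in L^1(\mathbb{R}^-)$) are correct details the paper leaves implicit. One small correction: $M(s)=M_-+\varphi(s)$ holds by the very definition~\eqref{eq:def_ph} of $s$, and~\eqref{eq:dsdt} is derived from it rather than the other way round.
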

\begin{proof}
Combining~\eqref{eq:MX} and the definition~\eqref{eq:def_ph} of $\ph(s)$, we 
get 
\begin{equation}
 2M_- + 2\ph(s) 
 = 2M(s) 
 = \int_{-1}^0 Y(s,m)\6m - 4\ph(s)\int_{-1}^0 m\6m 
 = \int_{-1}^0 Y(s,m)\6m + 2\ph(s)\;.
\end{equation} 
An analogous computation holds for the integral from $0$ to $1$. 
\end{proof}
The main result of this subsection is the following averaging result.

\begin{proposition}[Averaging]
\label{prop:avrg_eta0} 
There exists a $T$-periodic change of variables of the form 
\begin{equation}
 Y(s,m) = \bar Y(s,m) + \Order{\eps}\;,
\end{equation} 
casting the system~\eqref{eq:dsY} into the form
\begin{equation}
\label{eq:dsYbar} 
 \partial_s \bar Y(s,m) = \eps \bar A(\bar Y(s,m)) + \Order{\eps^2}\;,
\end{equation} 
where 
\begin{equation}
\label{eq:def_Abar} 
 \bar A(\bar Y(s,m))
 = -4\partial_m \bigbrak{G(\partial_m \bar Y(s,m))} - 8mG(\partial_m \bar Y(s,0))\;,
\end{equation} 
with 
\begin{equation}
\label{eq:def_G} 
 G(z) = \frac1T \int_0^T \frac{\6s}{\abs{\tilde a(s)}(z - 4\ph(s))}\;.
\end{equation} 
\end{proposition}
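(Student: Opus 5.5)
This is a first-order averaging transformation, done directly on the infinite-dimensional evolution \eqref{eq:dsY}.

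\textbf{Step 1: the change of variables.} I look for a near-identity transformation
\begin{equation}
 Y(s,m) = \bar Y(s,m) + \eps W(s,\bar Y)(m)\;,
\end{equation}
where $W(s,\cdot)$ is $T$-periodic in $s$. It is defined by
\begin{equation}
 W(s,\bar Y) = \int_0^s \bigbrak{A(u,\bar Y) - \bar A(\bar Y)}\6u\;,
 \qquad \bar A(\bar Y) = \frac1T\int_0^T A(u,\bar Y)\6u\;.
\end{equation}
Here the profile $\bar Y$ is frozen during the integration in $u$. By construction the integrand has zero mean over a period, so $W$ is $T$-periodic and bounded.

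\textbf{Step 2: the transformed equation.} Differentiating gives
\begin{equation}
 \partial_s Y = \partial_s\bar Y + \eps\partial_s W + \eps\,\mathrm{D}W\cdot\partial_s\bar Y\;.
\end{equation}
Inserting \eqref{eq:dsY} yields
\begin{equation}
 (\id+\eps\,\mathrm{D}W)\partial_s\bar Y = \eps A(s,\bar Y) - \eps\bigbrak{A(s,\bar Y)-\bar A(\bar Y)} + \Order{\eps^2}\;.
\end{equation}
The $\Order{\eps^2}$ term includes $\eps[A(s,\bar Y+\eps W)-A(s,\bar Y)]$. Inverting $\id+\eps\,\mathrm{D}W$ by a Neumann series then gives \eqref{eq:dsYbar}.

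\textbf{Step 3: computing $\bar A$.} In $A$, the $s$-dependence enters only through $\ph(s)$, $\ph'(s)$ and $\tilde a(s)$. The operator $\partial_m$ commutes with averaging in $s$, because $\ph$ does not depend on $m$. The key sign fact is that $\ph'(s)/\tilde a(s) = 1/\abs{\tilde a(s)}$. On $[0,\Delta M]$ one has $M$ increasing, so $\tilde a>0$ and $\ph'=1$. On $[\Delta M,T]$ one has $\tilde a<0$ and $\ph'=-1$. This is exactly the sign structure of relaxation along the two stable branches, where $\dot M=\tfrac14\tilde a$. Hence
\begin{equation}
 \frac1T\int_0^T \frac{\ph'(s)}{\tilde a(s)\,(z-4\ph(s))}\6s = G(z)\;,
\end{equation}
and averaging the two terms of $A$ gives precisely \eqref{eq:def_Abar} with $G$ as in \eqref{eq:def_G}.

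\textbf{Main obstacle: making the $\Order{\eps}$ and $\Order{\eps^2}$ terms rigorous.} The map $A$ is a nonlinear second-order differential operator in $m$. Consequently $W$ and $\mathrm{D}W$ lose derivatives, and the remainders contain $\partial_m^2$ and $\partial_m^3$ of $\bar Y$. The estimates only hold on a region where the following conditions are met:
\begin{itemize}
 \item $\partial_m\bar Y - 4\ph(s)$ is bounded below uniformly in $s\in[0,T]$, i.e.\ $\partial_m X$ stays bounded away from zero along the cycle (the no-shock condition from Proposition~\ref{prop:eps0});
 \item $\abs{\tilde a}$ is bounded below;
 \item $\bar Y$ is bounded in a suitable Sobolev norm, such as $H^2$ or $H^3$ in $m$.
\end{itemize}

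I would therefore state the result with $\Order{\cdot}$ understood in such norms on this region. The remainders are $\Order{\eps^2}$ on time intervals of length $\Order{1/\eps}$, controlled by a Gronwall-type argument. The derivative loss would be absorbed later through the parabolic smoothing of the averaged equation, whose linearisation is diffusive since $G'<0$. The estimates also need care near $m\to1$, where $\partial_m Y$ blows up; there $G(z)\sim 1/z$ stays well behaved.

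Finally, the boundary condition $\bar Y(s,0)=0$ and the conserved quantity of Proposition~\ref{prop:conservation_Y_eta0} must be compatible with the transformation. This holds because $\int_0^1 W\,\6m$ vanishes to leading order: $\int_0^1 A\,\6m$ integrates to boundary terms that cancel, as in Remark~\ref{rem:MX}.
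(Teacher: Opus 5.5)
Your proposal is correct and follows the paper's proof. You use the same periodic near-identity transformation $Y=\bar Y+\eps w$ with $w=\int_0^s (A-\bar A)$, and the same averaging computation based on $\tilde a(s)/\ph'(s)=\abs{\tilde a(s)}$. Commuting $\partial_m$ with the $s$-average, instead of integrating $\bar A$ in $m$ and differentiating as the paper does, is the same calculation; your extra remarks on the $\eps\,\mathrm{D}W\cdot\partial_s\bar Y$ term and on the function-space setting go beyond what the paper makes rigorous without changing the argument.
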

\begin{proof}
The first step is standard in the theory of averaging. We decompose 
\begin{equation}
 A(s,Y) = \bar A(Y) + A^0(s,Y)\;,
\end{equation} 
where 
\begin{equation}
 \bar A(Y) = \frac1T \int_0^T A(s,Y)\6s
\end{equation} 
is the average of $A$, and $A^0(s,Y)$ has average zero. We then carry out the change of 
variables 
\begin{equation}
 Y = \bar Y + \eps w(s,\bar Y)\;,
\end{equation} 
which yields 
\begin{equation}
\label{eq:dsY1} 
 \partial_s \bar Y(s,m) + \eps\partial_s w(s,\bar Y(s,m))
 = \eps A(s, \bar Y(s,m)) + \Order{\eps^2}\;.
\end{equation} 
The function 
\begin{equation}
\label{eq:def_w0} 
 w(s,\bar Y) = \int_0^s A^0(s_1,\bar Y)\6s_1 
\end{equation} 
is periodic since $A^0$ has average zero. With this choice of $w$, 
the equation~\eqref{eq:dsY1} becomes 
\begin{equation}
 \partial_s \bar Y(s,m)
 = \eps \bar A(\bar Y(s,m)) + \Order{\eps^2}\;.
\end{equation} 
It remains to cast $\bar A$ into the expression~\eqref{eq:def_Abar}. 
Noting that $\tilde a(s)/\ph'(s) = \abs{\tilde a(s)}$ and writing $Z(\cdot,m) = \partial_m Y(\cdot,m)$, 
we first compute 
\begin{align}
 \int_0^m &\bar A(Y(\cdot,\bar m))\6\bar m \\
 ={}& -\frac4T \biggbrak{\int_0^T \frac{1}{\abs{\tilde a(s)}} 
 \int_0^m \partial_{m} \biggpar{\frac{1}{Z(\cdot,\bar m)-4\ph(s)}}
 \6\bar m \6 s
 + m^2 \int_0^T \frac{\6s}{\abs{\tilde a(s)}(Z(\cdot,0) - 4\ph(s))}} \\
 ={}& -\frac4T \biggbrak{\int_0^T \frac{\6s}{\abs{\tilde a(s)}(Z(\cdot,m)-4\ph(s))} 
 - (1-m^2) \int_0^T \frac{\6s}{\abs{\tilde a(s)}(Z(\cdot,0) - 4\ph(s))}} \\
 ={}& -4 \bigbrak{G(Z(\cdot,m)) - (1-m^2)G(Z(\cdot,0))}\;,
 \label{eq:Abar} 
\end{align}
where $G$ is defined in~\eqref{eq:def_G}. Taking the derivative with respect 
to $m$ yields~\eqref{eq:def_Abar}.  
\end{proof}

The interest of this result is that to leading order in $\eps$, the averaged 
equation~\eqref{eq:dsYbar} no longer depends on the time variable $s$. 
Therefore, if the order $\eps$ approximation admits a stationary solution that 
is hyperbolic, standard perturbation theory implies that the non-averaged 
equation~\eqref{eq:dsY} admits, for sufficiently small $\eps$, a periodic 
solution, that is also hyperbolic. We will examine such solutions in 
Section~\ref{sec:stability}. 


\subsection{The case $\eta > 0$}
\label{ssec:eta_positive} 

We now extend the above approach to the case of small positive $\eta$. 
We will proceed in several steps.

\begin{enumerate}
\item   Replace the variables $(M,E)$ by polar-like coordinates $(r,\theta)$, 
and write the equations in these new variables (Section~\ref{sssec:eta_polar}).

\item   Show that for appropriate initial conditions, $r(t)$ scales in the 
same way as the periodic solution $r^*_0(\theta)$ of the case $\eps = 0$
(Section~\ref{sssec:eta_radial}). 

\item   Find conditions under which $\theta(t)$ is strictly increasing, so that 
$\theta$ can be used as new time (Section~\ref{sssec:eta_time}).

\item   Average the equation for $X(\theta,m)$, to obtain an equation that does 
not depend on $r$ or $\theta$ to lowest order in $\eps$ (Section~\ref{sssec:eta_averaging}).

\item   Use a Poincar\'e map and the implicit function theorem to show that 
the averaged equation admits a unique stationary solution. We will carry out this 
last step in Section~\ref{ssec:unperturbed_eta}.
\end{enumerate}


\subsubsection{Polar-like coordinates}
\label{sssec:eta_polar} 

\begin{figure}
 \begin{center}
\scalebox{1.0}{
\begin{tikzpicture}[>=stealth',main node/.style={circle,minimum
size=0.01cm,inner sep=0.05cm,fill=white,draw},x=4cm,y=3cm]

%

\newcommand*{\Ez}{1.5}
\newcommand*{\Mz}{1.0}

\pgfmathsetmacro{\Em}{\Ez - 1/sqrt(3)};
\pgfmathsetmacro{\Mm}{\Mz + 2/(3*sqrt(3))};
\pgfmathsetmacro{\Ep}{\Ez + 1/sqrt(3)};
\pgfmathsetmacro{\Mp}{\Mz - 2/(3*sqrt(3))};

\path[fill=blue!25,line width=0] (-0.2,{\Mm}) rectangle ({\Em},{\Mp}); 
\path[fill=teal!25,line width=0] (-0.2,{\Mm}) rectangle ({\Em},1.9); 
\path[fill=green!25,line width=0] ({\Em},{\Mm}) rectangle ({\Ep},1.9); 
\path[fill=yellow!25,line width=0] ({\Ep},{\Mm}) rectangle (3.1,1.9); 
\path[fill=orange!25,line width=0] ({\Ep},{\Mp}) rectangle (3.1,{\Mm}); 
\path[fill=red!25,line width=0] ({\Ep},-0.2) rectangle (3.1,{\Mp}); 
\path[fill=purple!25,line width=0] ({\Em},-0.2) rectangle ({\Ep},{\Mp}); 
\path[fill=violet!25,line width=0] (-0.2,-0.2) rectangle ({\Em},{\Mp}); 

\draw[->,thick] (-0.2,0) -> (3.3,0);
\draw[->,thick] (0,-0.2) -> (0,2);

\draw[blue,very thick,-,smooth,domain=0.13:2.8,samples=500,/pgf/fpu,
/pgf/fpu/output format=fixed] plot (\x, {
\Mz + (\x - \Ez)^3 - (\x-\Ez)});

\draw[violet,very thick,-,smooth,domain=-0.2:1.9,samples=300,/pgf/fpu,
/pgf/fpu/output format=fixed] plot ({\Ez - 0.1*(\x-\Mz)^2}, \x);

\draw[semithick] (0,{\Mm}) -- (3.1,{\Mm});
\draw[semithick] (0,{\Mp}) -- (3.1,{\Mp});
\draw[semithick] ({\Em},0) -- ({\Em},1.9);
\draw[semithick] ({\Ep},0) -- ({\Ep},1.9);

\node[main node] at (\Em,\Mm) {};
\node[main node] at (\Em,0) {};
\node[main node] at (0,\Mm) {};

\node[main node] at (\Ep,\Mp) {};
\node[main node] at (\Ep,0) {};
\node[main node] at (0,\Mp) {};

\node[main node] at (\Ez,\Mz) {};

\node[] at (3.2,-0.1) {$E$};
\node[] at (-0.1,1.85) {$M$};

\node[] at ({\Em},-0.12) {$E_-$};
\node[] at ({\Ep},-0.12) {$E_+$};

\node[] at (-0.12,{\Mm}) {$M_+$};
\node[] at (-0.12,{\Mp}) {$M_-$};

\node[semithick,draw=blue,blue,circle,fill=white] at (0.2,{(\Mm+\Mp)/2}) {1};
\node[semithick,draw=teal,teal,circle,fill=white] at ({\Em-0.4},{\Mm+0.25}) {2};
\node[semithick,draw=green,green!50!black,circle,fill=white] at ({(\Em+\Ep)/2},{\Mm+0.25}) {3};
\node[semithick,draw=yellow,orange,circle,fill=white] at ({\Ep+0.3},{\Mm+0.25}) {4};
\node[semithick,draw=orange,orange!75!black,circle,fill=white] at ({\Ep+0.7},{(\Mm+\Mp)/2}) {5};
\node[semithick,draw=red,red,circle,fill=white] at ({\Ep+0.5},{\Mp-0.3}) {6};
\node[semithick,draw=purple,purple,circle,fill=white] at ({(\Ep+\Em)/2},{\Mp-0.3}) {7};
\node[semithick,draw=violet,violet,circle,fill=white] at ({\Em-0.4},{\Mp-0.3}) {8};

\end{tikzpicture}
}
\vspace{-5mm}
\end{center}
\caption[]{Decomposition of the $(E,M)$-plane into regions used to define 
the polar-like $(\theta,r)$ coordinates.}
\label{fig:rtheta} 
\end{figure}

We introduce a parametrisation of the $(E,M)$-plane by two coordinates 
$(r,\theta)$, which are defined differently in $8$ different regions of the plane, 
as shown in Figure~\ref{fig:rtheta}. We do not define the new variables in the 
central rectangle $(E_-,E_+)\times(M_-,M_+)$, because we will make assumptions 
on the initial condition ensuring that the solution never enters that region. 
The regions $1$ and $5$ are treated in a similar way, as are the regions $3$ and 
$7$, and the four remaining regions. 

To this end, we introduce 
\begin{align}
 T_0 &= 0\;, & 
 T_5 &= 2\Delta M + 3\;, \\
 T_1 &= \Delta M := M_+ - M_-\;, & 
 T_6 &= 2\Delta M + 4\;, \\
 T_2 &= \Delta M + 1\;, & 
 T_7 &= 2\Delta M + 5\;, \\
 T_3 &= \Delta M + 2\;, & 
 T_8 &= 2\Delta M + 6\;, \\
 T_4 &= \Delta M + 3\;.
\end{align}
The parametrisation is chosen in such a way that $(E,M)$ belongs to region $i$
if and only if $\theta\in[T_{i-1},T_i]$ and $r\geqs0$. Two regions overlap 
on the sets $\set{\theta = T_i}$. The precise definitions are as follows. 

\begin{itemize}
\item  \textbf{Region $1$: $E\leqs E_-$ and $M_- \leqs M \leqs M_+$:}
This region is parametrised as 
\begin{align}
 M &= M_- + \theta\;, \\
 E &= E_- - r\;.
\label{eq:polar_region1} 
\end{align}
The system~\eqref{eq:coupled2X} becomes 
\begin{align}
 \partial_t X
 &= -ma(E_--r, M_-+\theta) - \eps\partial_m\biggpar{\frac{1}{\partial_m X(t,m)}}\;,\\
 \dot \theta &= \frac14 a(E_--r, M_-+\theta) - \frac{\eps}{2\partial_m X(t,0)}\;,\\
 \eta \dot r &= -g(E_--r, M_-+\theta)\;.
 \label{eq:system_regime1} 
\end{align}

\item  \textbf{Region $2$: $E\leqs E_-$ and $M \geqs M_+$:}
In this region, we set 
\begin{align}
 M &= M_+ + r^2 \sin(\alpha(\theta))\;, \\
 E &= E_- - r\cos(\alpha(\theta))\;,
\end{align}
where 
\begin{equation}
 \alpha(\theta) = \frac\pi2 (\theta-T_1)\;.
\end{equation} 
Note that the parametrisation is continuous at $\theta = T_1$. 
The reason for the $r^2$ in the definition of $M$ is that it will simplify the 
$\eta$-dependence of the equations (at the price of creating a more complicated 
dependence on $\alpha(\theta)$). 
To compute the equations in the new variables, we observe that 
\begin{align}
 \dot M &= 2r\dot r\sin(\alpha(\theta)) + r^2 \alpha'(\theta) \dot\theta \cos(\alpha(\theta))\;, \\
 \dot E &= -\dot r\cos(\alpha(\theta)) + r \alpha'(\theta) \dot \theta \sin(\alpha(\theta))\;.
\end{align}
Using linear combinations of these equations, we get 
\begin{align}
 \dot \theta &= \frac{\cos(\alpha(\theta))}{r^2\alpha'(\theta)}\dot{M}
 + \frac{2\sin(\alpha(\theta))}{r\alpha'(\theta)}\dot E\;,\\
 \dot r &= \frac{\sin(\alpha(\theta))}{2r}\dot{M} - \cos(\alpha)\dot E\;,
 \label{eq:system_regime2} 
\end{align}
where the expressions for $\dot M$ and $\dot E$ follow from~\eqref{eq:coupled2X}. 
Note that the exact expressions for $\dot r$ and $\dot\theta$ will not matter in 
the analysis, since the system spends little time in Region $2$. We will only need 
to control the order of magnitude of these quantities in terms of the small 
parameter $\eta$. 

\item  \textbf{Region $3$: $E_- \leqs E \leqs E_+$ and $M \geqs M_+$:}
Here we take 
\begin{align}
 M &= M_+ + r^2\;, \\
 E &= E_- + (E_+ - E_-)(\theta - T_2)\;.
 \label{eq:E_theta_regime2} 
\end{align}
The system~\eqref{eq:coupled2X} becomes 
\begin{align}
 \partial_t X
 &= -ma(E(\theta), M_++r^2) - \eps\partial_m\biggpar{\frac{1}{\partial_m X(t,m)}}\;,\\
 \eta \dot \theta &= \frac{1}{E_+ - E_-}g(E(\theta), M_++r^2)\;,\\
 \dot r &= \frac{1}{2r}\biggbrak{\frac14 a(E_--r, M_-+\theta) 
 - \frac{\eps}{2\partial_m X(t,0)}}\;,
 \label{eq:system_regime3} 
\end{align}
where $E(\theta)$ is given in~\eqref{eq:E_theta_regime2}.
\end{itemize}

\begin{table}
\begin{center}
 \begin{tabular}{|c|l|l|l|l|}
\hline 
\vrule height 12pt depth 5pt width 0pt
Region & $(E,M)$ Domain & $M(\theta,r)$ & $E(\theta,r)$ & $\alpha(\theta)$ \\
\hline 
\hline 
\vrule height 12pt depth 5pt width 0pt
1 & $E\leqs E_-$ & $M_- + \theta$ & $E_- - r$ & \\
  & $M_- \leqs M \leqs M_+$  &&& \\
\hline
\vrule height 14pt depth 5pt width 0pt
2 & $E\leqs E_-$ & $M_+ + r^2 \sin(\alpha(\theta))$ & $E_- - r\cos(\alpha(\theta))$ &
$\frac\pi2 (\theta-T_1)$\\
  & $M \geqs M_+$  &&& \\
\hline
\vrule height 14pt depth 5pt width 0pt
3 &  $E_- \leqs E\leqs E_+$ & $M_+ + r^2$ & $E_- + (E_+ - E_-)(\theta - T_2)$ & \\
  & $M \geqs M_+$  &&& \\
\hline
\vrule height 14pt depth 5pt width 0pt
4 & $E \geqs E_+$ & $M_+ + r^2 \cos(\alpha(\theta))$ & $E_+ + r\sin(\alpha(\theta))$ & 
$\frac\pi2 (\theta-T_3)$\\
  & $M \geqs M_+$  &&& \\
\hline
\vrule height 12pt depth 5pt width 0pt
5 & $E \geqs E_+$ & $M_+ - (\theta - T_4)$ & $E_+ + r$ & \\
  & $M_- \leqs M \leqs M_+$  &&& \\
\hline
\vrule height 14pt depth 5pt width 0pt
6 & $E\geqs E_+$ & $M_- - r^2 \sin(\alpha(\theta))$ & $E_+ + r\cos(\alpha(\theta))$ &
$\frac\pi2 (\theta-T_5)$\\
  & $M \leqs M_-$  &&& \\
\hline
\vrule height 14pt depth 5pt width 0pt
7 &  $E_- \leqs E\leqs E_+$ & $M_- - r^2$ & $E_+ + (E_+ - E_-)(\theta - T_6)$ & \\
  & $M \leqs M_-$  &&& \\
\hline
\vrule height 14pt depth 5pt width 0pt
8 & $E \leqs E_-$ & $M_- - r^2 \cos(\alpha(\theta))$ & $E_- - r\sin(\alpha(\theta))$ & 
$\frac\pi2 (\theta-T_7)$\\
  & $M \leqs M_-$  &&& \\
\hline
\end{tabular}
\end{center}
\vspace{-3mm}
\caption[]{Parametrisation of regions by polar-like coordinates.}
\label{table:polar} 
\end{table}

The last five regions use similar changes of variables, that we list in Table~\ref{table:polar}.
One easily checks that the above transformation defines a continuous, piecewise smooth 
bijective map  
\[
\begin{alignedat}{2}
 \Phi: [0,T)\times &[0,\infty) &&\to \R^2 \setminus (E_-,E_+)\times(M_-,M_+) \\
 & (\theta,r)&&\mapsto (E,M)
\end{alignedat} 
\]
 where $T = T_8$.


\subsubsection{Behaviour of the radial coordinate}
\label{sssec:eta_radial} 

\begin{figure}
 \begin{center}
\scalebox{1.0}{
\begin{tikzpicture}[>=stealth',main node/.style={circle,minimum
size=0.01cm,inner sep=0.05cm,fill=white,draw},x=3.5cm,y=2.5cm]

\draw[->,thick] (-0.1,0) -> (2.9,0);
\draw[->,thick] (0,-0.1) -> (0,2);

%

\newcommand*{\Ez}{1.5}
\newcommand*{\Mz}{1.0}

\pgfmathsetmacro{\Em}{\Ez - 1/sqrt(3)};
\pgfmathsetmacro{\Mm}{\Mz + 2/(3*sqrt(3))};
\pgfmathsetmacro{\Ep}{\Ez + 1/sqrt(3)};
\pgfmathsetmacro{\Mp}{\Mz - 2/(3*sqrt(3))};
\pgfmathsetmacro{\Emm}{\Ez - 2/sqrt(3)};
\pgfmathsetmacro{\Epp}{\Ez + 2/sqrt(3)};


\draw[black,semithick,-,smooth,domain=0.17:2.8,samples=500,/pgf/fpu,
/pgf/fpu/output format=fixed] plot (\x, {
\Mz + (\x - \Ez)^3 - (\x-\Ez)});

\draw[blue,very thick,-,smooth,domain={\Emm}:{\Em},samples=400,/pgf/fpu,
/pgf/fpu/output format=fixed] plot (\x, {
\Mz + (\x - \Ez)^3 - (\x-\Ez)}) -- ({\Epp}, {\Mm});

\draw[blue,very thick,-,smooth,domain={\Epp}:{\Ep},samples=400,/pgf/fpu,
/pgf/fpu/output format=fixed] plot (\x, {
\Mz + (\x - \Ez)^3 - (\x-\Ez)}) -- ({\Emm}, {\Mp});

\draw[semithick, dashed] (0,{\Mm}) -- ({\Em},{\Mm}) -- ({\Em},0);
\draw[semithick, dashed] (0,{\Mp}) -- ({\Ep},{\Mp}) -- ({\Ep},0);

\draw[violet, semithick, <->] ({\Em}, {\Mm}) -- ({\Em}, {\Mp + 1.0});
\draw[violet, semithick, <->] ({\Em}, {\Mm}) -- ({\Em - 0.3}, {\Mm});

\node[violet] at ({\Em + 0.12}, {\Mm + 0.12}) {$\eta^{2/3}$};
\node[violet] at ({\Em - 0.12}, {\Mm + 0.07}) {$\eta^{1/3}$};

\node[main node] at (\Em,\Mm) {};
\node[main node] at (\Em,0) {};
\node[main node] at (0,\Mm) {};

\node[main node] at (\Ep,\Mp) {};
\node[main node] at (\Ep,0) {};
\node[main node] at (0,\Mp) {};


\draw[violet,semithick,-,smooth,domain=0.1:1.01,samples=100,/pgf/fpu,
/pgf/fpu/output format=fixed] plot ({\Em - 1.1*(\Em-\Emm)*sqrt(1-\x+0.01)}, {\Mp + \x}) 
-- ({\Epp - 0.05}, {\Mp + 1.0});

\draw[violet,semithick,-,smooth,domain=0:130,samples=20,/pgf/fpu,
/pgf/fpu/output format=fixed] plot ({\Epp - 0.05 + 0.104*sin(\x)}, {\Mp + 0.8 + 0.2*cos(\x)});

\draw[violet,semithick,-,smooth,domain=0.1:1.01,samples=100,/pgf/fpu,
/pgf/fpu/output format=fixed] plot ({\Ep + 1.1*(\Epp-\Ep)*sqrt(1-\x+0.01)}, {\Mm - \x}) 
-- ({\Emm + 0.05}, {\Mm - 1.0});

\draw[violet,semithick,-,smooth,domain=0:130,samples=20,/pgf/fpu,
/pgf/fpu/output format=fixed] plot ({\Emm + 0.05 - 0.104*sin(\x)}, {\Mm - 0.8 - 0.2*cos(\x)});

\node[] at (2.8,-0.1) {$E$};
\node[] at (-0.1,1.85) {$M$};

\node[] at ({\Em},-0.12) {$E_-$};
\node[] at ({\Ep},-0.12) {$E_+$};

\node[] at (-0.12,{\Mm}) {$M_+$};
\node[] at (-0.12,{\Mp}) {$M_-$};

\node[blue] at ({\Ez + 0.1}, {\Mm + 0.1}) {$\Gamma(0)$};
\node[violet] at ({\Ez + 0.2}, {\Mm + 0.35}) {$\Gamma(\eta)$};

\end{tikzpicture}
}
\vspace{-5mm}
\end{center}
\caption[]{Schematic representation of the singular periodic orbit $\Gamma(0)$, 
and of its deformation $\Gamma(\eta)$ for $\eta > 0$.}
\label{fig:periodic} 
\end{figure}

In this section, we show that the radial variable $r(t)$ behaves in a similar 
way for $\eps = 0$ and for small positive $\eps$. 
When $\eps = 0$, the equation for $(r,\theta)$ is equivalent to the two-dimensional 
slow--fast system~\eqref{eq:ODE_ME_eps0}. We know~\cite{PontRod,Haberman} that it 
admits a unique periodic orbit $\Gamma(\eta)$ (see Figure~\ref{fig:periodic}), 
which is exponentially attracting, and can be parametrised by $\theta$. 
We will denote this parametrisation $r = r^*_0(\theta;\eta)$. 
In the limit $\eta\to0$, the periodic solution $\Gamma(0)$ is called \emph{singular periodic orbit}, 
and it satisfies 
\begin{equation}
\label{eq:def_r0star} 
 r^*_0(\theta;\eta = 0) = 
 \begin{cases}
  E_- - E^*_-(M_-+\theta) & \text{for $0\leqs\theta\leqs T_1$\;,} \\
  E^*_+(M_+-(\theta-T_4)) - E_+ & \text{for $T_4\leqs\theta\leqs T_5$\;,} \\
  0 & \text{otherwise\;.}
 \end{cases}
\end{equation} 
Here the functions $E_- - E^*_-(M_-+\theta)$ and $E^*_+(M_+-(\theta-T_4)) - E_+$
correspond to the stable branches of the critical manifold in $(r,\theta)$ 
coordinates (cf.~\eqref{eq:polar_region1} and Table~\ref{table:polar}). 

Classical results (see for instance~\cite{PontRod,Haberman}) show that for 
small positive $\eta$, the periodic orbit behaves like 
\begin{equation}
 r^*_0(\theta;\eta) - r^*_0(\theta;\eta = 0) \asymp
 \begin{cases}
  \dfrac{\eta}{\max\set{T_1-\theta,\eta^{2/3}}} & \text{for $\theta\in[0, T_1]$\;,} \\[10pt]
  \eta^{1/3} & \text{for $\theta\in[T_1,T_3]\cup[T_5,T_7]$\;,} \\
  \dfrac{\eta^{1/3}}{\max\set{\sqrt{T_4-\theta},\eta^{1/3}}} & \text{for $\theta\in[T_3,T_4)$\;,} \\[10pt]
  \dfrac{\eta}{\max\set{T_5-\theta,\eta^{2/3}}} & \text{for $\theta\in[T_4, T_5]$\;,} \\[10pt]
  \dfrac{\eta^{1/3}}{\max\set{\sqrt{T-\theta},\eta^{1/3}}} & \text{for $\theta\in[T_7,T)$\;,}
 \end{cases}
 \label{eq:rstar_asymp} 
\end{equation} 
see Figure~\ref{fig:d_radial}. Here we write $a(\theta) \asymp b(\theta)$ to indicate 
that $c_- a(\theta) \leqs b(\theta) \leqs c_+ a(\theta)$ for constants $c_\pm$ 
independent of $\theta$ and $\eta$. 

We now show that we have the same qualitative behaviour for the perturbed 
system~\eqref{eq:coupled2X} for small positive $\eps$. 

\begin{figure}
 \begin{center}
\scalebox{1.0}{
\begin{tikzpicture}[>=stealth',main node/.style={circle,minimum
size=0.01cm,inner sep=0.05cm,fill=white,draw},x=2.5cm,y=3cm]

\draw[->,thick] (-0.1,0) -> (5.6,0);
\draw[->,thick] (0,-0.1) -> (0,1.2);


\draw[blue,very thick,-,smooth,domain=0:1,samples=100,/pgf/fpu,
/pgf/fpu/output format=fixed] plot (\x, {sqrt(1-\x)}) -- (1,0) -- (4,0);

\draw[blue,very thick, dashed] (4,0) -- (4,1);

\draw[blue,very thick,-,smooth,domain=4:5,samples=100,/pgf/fpu,
/pgf/fpu/output format=fixed] plot (\x, {sqrt(5-\x)}) -- (5,0);


\draw[violet,semithick,-,smooth,domain=0:1,samples=100,/pgf/fpu,
/pgf/fpu/output format=fixed] plot (\x, {sqrt(1-\x+0.1)}) ;

\draw[violet,semithick,-,smooth,domain=1:2,samples=100,/pgf/fpu,
/pgf/fpu/output format=fixed] plot (\x, {sqrt(0.01/(\x-0.9))}) -- (3, {sqrt(0.01/1.1)});

\draw[violet,semithick,-,smooth,domain=3:3.9,samples=100,/pgf/fpu,
/pgf/fpu/output format=fixed] plot (\x, {sqrt(0.01/(1.1*(3.908-\x)))});

\draw[violet,semithick,-,smooth,domain=0:90,samples=100,/pgf/fpu,
/pgf/fpu/output format=fixed] plot ({4-0.1*cos(\x)}, {1 +(sqrt(1.1)-1)*sin(\x)});

\draw[violet,semithick,-,smooth,domain=4:5,samples=100,/pgf/fpu,
/pgf/fpu/output format=fixed] plot (\x, {sqrt(5-\x+0.1)}) ;

\foreach \x in {1,...,5}
\node[main node] at (\x,0) {};

\foreach \x in {1,...,5}
\node[] at ({\x},-0.12) {$T_\x$};

\node[blue] at (0.5,0.4) {$r^*_0(\theta;0)$};
\node[violet] at (1,0.7) {$r^*_0(\theta;\eta)$};

\node[] at (5.45,-0.1) {$\theta$};
\node[] at (-0.1,1.1) {$r$};

\end{tikzpicture}
}
\vspace{-5mm}
\end{center}
\caption[]{Radial behaviour of the periodic solution for $\eps = 0$, both 
in the singular limit $\eta = 0$, and for small, positive $\eta$. Only the 
$\theta$-interval $|0,T_5]$ is shown, the behaviour on $[T_5,T_8]$ is 
similar to the behaviour on $[T_1,T_4]$.}
\label{fig:d_radial} 
\end{figure}

\begin{proposition}
\label{prop:radial} 
Assume there exists a constant $q_0 > 0$ such that 
\begin{equation}
\label{eq:assump_q0} 
  \partial_m X(\theta,0) \geqs q_0 \qquad \forall \theta\in[0,T]\;. 
\end{equation} 
Then there exists $\eps_0 > 0$ such that, for $0<\eps<\eps_0$, any $(M,E)$ 
components of a solution of~\eqref{eq:coupled2X} with initial condition $r(0)$ 
in a neighbourhood of order $\eta$ of $r^*_0(0;\eta)$ satisfy the 
asymptotics~\eqref{eq:rstar_asymp} for $\theta\in[0,T]$. 
\end{proposition}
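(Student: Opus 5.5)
The plan is to view the $(E,M)$-subsystem of~\eqref{eq:coupled2X} as a perturbation of the planar slow--fast system~\eqref{eq:ODE_ME_eps0}. The only coupling to $X$ enters through the extra drift
\begin{equation}
 \dot M = \frac14 a(E,M) - \eps\,b(t)\;, \qquad b(t) := \frac{1}{2\partial_m X(t,0)}\;.
\end{equation}
By assumption~\eqref{eq:assump_q0} one has $0 < b(t) \leqs 1/(2q_0)$, so the perturbation is a bounded, non-autonomous forcing of size $\Order{\eps}$ acting on the slow variable only. The key observation is that every estimate leading to~\eqref{eq:rstar_asymp} in~\cite{PontRod,Haberman} uses only the sign and size of the slow vector field, plus a uniform lower bound on $|a|$ away from the slow nullcline. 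All of these are preserved under an $\Order{\eps}$ perturbation once $\eps_0$ is small compared with $\inf|a|$ along $\Gamma(0)$. I would therefore go through the regions of Figure~\ref{fig:rtheta} one by one, following the solution along the cycle.

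\textbf{Regions 1 and 5 (slow branches).} Here $r$ is the fast variable and $\theta$ the slow one. We have $\dot\theta = \frac14 a - \eps b(t)$, which stays bounded away from zero, so $\theta$ can serve as time and
\begin{equation}
 \eta \frac{\6r}{\6\theta} = \frac{-g(E_--r,M_-+\theta)}{\frac14 a - \eps b}\;.
\end{equation}
The branch $r = r^*_0(\theta;0)$ is uniformly attracting, since $\partial_E g \neq 0$ there, as long as $T_1 - \theta \gg \eta^{2/3}$. A standard Tihonov/Fenichel argument with a Lyapunov function $(r-\bar r(\theta))^2$ then shows that $r$ tracks an adiabatic solution. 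The deviation is of order $\eta/(\text{attraction rate})$, and the attraction rate degenerates like $T_1-\theta$ near the fold; this gives the $\eta/(T_1-\theta)$ bound. The $\eps b$ term only changes the denominator by a relative $\Order{\eps}$ amount, so it does not affect the constants $c_\pm$ in $\asymp$. An initial error of order $\eta$ is admissible, because it is contracted on the time scale $\eta$.

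\textbf{Fold regions, up to $\theta = T_1 + \Order{1}$.} This is the main obstacle. Near $(E_-,M_+)$ I would use the standard fold scaling $E - E_- = \eta^{1/3}u$, $M - M_+ = \eta^{2/3}v$, $t - t_1 = \eta^{1/3}\tau$. The system becomes a Riccati equation $u' = -v + c u^2 + \Order{\eta^{1/3}}$, $v' = -c' + \Order{\eta^{1/3}}$. The perturbation $\eps b$ enters $v'$ only as $-\eps b$ multiplied by $\eta^{1/3}$-scaled time. Since $b$ is merely measurable and bounded, I would not expand in it. Instead I would sandwich $v'$ between two autonomous comparison systems with constant slopes $c' \pm C\eps$. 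Monotonicity of the Riccati flow in its forcing then gives upper and lower bounds on the solution of the same $\eta^{1/3}$, $\eta^{2/3}$ type. This yields the $\eta^{1/3}$ estimate in Regions 2 and 3 and the exit into the fast jump.

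\textbf{Fast jump and re-entry, $\theta \in [T_3,T_4)$.} During the jump, $M$ changes by only $\Order{\eta}$, so $\eps b$ contributes nothing visible. The approach to the other branch, $\eta^{1/3}/\sqrt{T_4-\theta}$, follows from the same Lyapunov argument as in Regions 1 and 5, written in the square-root coordinates of Table~\ref{table:polar}. The second half cycle is identical by symmetry of the construction. Throughout, the constants depend only on $q_0$, $g$ and $a$, which closes the argument for $\eps < \eps_0$.
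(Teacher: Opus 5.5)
\textbf{Verdict.} Your argument is sound in outline, but it takes a different route from the paper, and a few details need fixing.

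\textbf{Comparison with the paper.} The paper does not redo the relaxation-oscillation estimates. It writes the $(M,E)$ equations as a three-dimensional slow--fast system in $(x,y,z)=(M,\theta,E)$, with $\dot y=1$ and slow drift $\frac14 a(z,x)-\eps p(y)$, $p=1/\partial_m X(\cdot,0)$. It then verifies the Szmolyan--Wechselberger hypotheses: folded critical manifold, normal switching $\partial_x f\,g_1\neq 0$ on the fold lines, transversality of the reduced flow, and existence of the singular cycle. All of these hold once $\eps|p|\le\eps/q_0$ is small. Finally it reads off \eqref{eq:rstar_asymp} from their Theorems~1 and~2: slow manifolds $\eta$-close to $S^\pm_{\textup{a}}$, continued past the fold at distance $\eta^{1/3}$ in $E$ and $\eta^{2/3}$ in $M$, with exponential attraction of nearby orbits.

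You instead reprove these estimates by hand. On the slow branches you use adiabatic tracking; taking $\theta$ as time shows that $\eps b$ only multiplies the fast vector field by a positive bounded factor and leaves the critical manifold untouched. At the fold you use a scalar comparison argument for the Riccati equation.

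The paper's route is short, and it produces the geometric objects (slow manifolds, contraction of the return map) that are reused in Sections~\ref{sec:averaging} and~\ref{sec:periodic_unperturbed}. However, the results it cites are for smooth vector fields with bounds uniform in $\eta$. It therefore implicitly treats $p$ as a given smooth function of $y$, although $p$ is produced by the coupled solution. Its regularity in $y$, uniformly in $\eta$, is not established: its second derivative involves $\dot E$, which is of order $\eta^{-1}$ during the jumps. Your sandwich argument only needs $0<b\le 1/(2q_0)$, which is exactly what \eqref{eq:assump_q0} gives. It is therefore more robust, at the price of doing the fold-to-jump matching yourself.

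\textbf{Details to fix.}
\begin{enumerate}
\item Near the fold the attraction rate is $|\partial_r\tilde g|\asymp r\asymp\sqrt{T_1-\theta}$, not $T_1-\theta$ (cf.\ \eqref{eq:dB0}). The deviation $\eta/(T_1-\theta)$ comes from dividing the drift $\eta|\partial_\theta r^*_0(\theta;0)|\asymp\eta/\sqrt{T_1-\theta}$ by this rate. Tracking holds while $(T_1-\theta)^{3/2}\gg\eta$, which is what places the fold region at $T_1-\theta\asymp\eta^{2/3}$.
\item With $E-E_-=\eta^{1/3}u$ and $M-M_+=\eta^{2/3}v$, the time must be scaled as $t-t_1=\eta^{2/3}\tau$, not $\eta^{1/3}\tau$. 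Then $\eps b$ enters $v'$ directly at order $\eps$. This is harmless, because it is small compared with $\frac14 a(E_-,M_+)>0$.
\item The profile $\eta^{1/3}/\sqrt{T_4-\theta}$ on $[T_3,T_4)$ is not an attraction effect. It is the fast jump at height $M-M_+\asymp\eta^{2/3}$, written in the coordinates $M=M_++r^2\cos\alpha$, $E=E_++r\sin\alpha$ of Table~\ref{table:polar}. There $\cos\alpha\asymp T_4-\theta$, so $r^2(T_4-\theta)\asymp\eta^{2/3}$. What you need is a two-sided bound on $M-M_+$ at the fold exit and during the jump. Your comparison argument supplies this, but note that the early part of the jump still moves $M$ by $\Order{\eta^{2/3}}$, not $\Order{\eta}$.
\item The lower bound on $|a|$ should be taken along the attracting branches up to the folds, not along all of $\Gamma(0)$, since $a$ vanishes on the jump segments.
\end{enumerate}
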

\begin{proof}
We will rely on results in~\cite{Szmolyan_Wechselberger_JDE04}, based on 
geometric singular perturbation theory~\cite{Fenichel} (see 
also~\cite{Krupa-Szmolyan_SIMA01}). To cast our system 
into the form considered there, we make the change of notation 
$M\mapsto x$, $\theta\mapsto y$, $E\mapsto z$. Then the equations for $\dot{M}$ and 
$\dot{E}$ in~\eqref{eq:coupled2X} become 
\begin{align}
\label{eq:slowfast_SW} 
 \dot{x} &= g_1(x,y,z) := \frac14 a(z,x) - \eps p(y)\;, \\
 \dot{y} &= g_2(x,y,z) := 1\;, \\
\eta\dot{z} &= f(x,y,z) := g(z,x)\;,
\end{align}
where we have introduced $p(y) = 1/\partial_mX(\theta,0)$. The lower bound 
$q_0$ on $\partial_m X(\theta,0)$ implies an upper bound on $p(t)$. 
We first check that the four main assumptions 
in~\cite{Szmolyan_Wechselberger_JDE04} are satisfied.
\begin{enumerate}
\item   \textbf{Critical manifold:} The critical manifold 
$S = \setsuch{(x,y,z)}{f(x,y,z) = 0}$ admits the decomposition 
\begin{equation}
 S = S_{\textup{a}}^- \cup L^- \cup S_{\textup{r}} \cup L^+ \cup S_{\textup{a}}^+
\end{equation} 
into two attracting parts $S_{\textup{a}}^\pm$, a repelling part $S_{\textup{r}}$, 
and two fold curves $L^{\pm}$. In our case, the critical manifold has the equation 
$x = h(z)$ (where $h$ is defined in~\eqref{eq:def_h}), and the decomposition is 
given by restricting $z$ to 
the intervals $(-\infty,E_-)$, $\set{E_-}$, $(E_-,E_+)$, $\set{E_+}$, and $(E_+,\infty)$. 

\item   \textbf{Normal switching condition:} This condition reads 
\begin{equation}
 \begin{pmatrix}
  \partial_x f \\ \partial_y f
 \end{pmatrix}
 \cdot 
 \begin{pmatrix}
  g_1 \\ g_2 
 \end{pmatrix}
\biggr\vert_{L^\pm} \neq 0\;, 
\label{eq:normal_switching} 
\end{equation} 
and implies that there are no critical points on the fold curves. 
In our case, we have $\partial_y f = 0$, since $f$ does not depend on $y$. The condition 
thus reduces to $\partial_x f g_1 \neq 0$ on the fold curves, which is indeed satisfied 
if $\eps\abs{p(y)}$ is small enough. 

\item   \textbf{Transversality condition of reduced flow:}
The reduced flow on the critical manifold $S$ is obtained by eliminating the variable 
$x$ from~\eqref{eq:slowfast_SW}. Since $x = h(z)$ on $S$, we obtain 
\begin{align}
 h'(z)\dot{z} &= \frac14 a(z,h(z)) - \eps p(y)\;, \\
 \dot{y} &= 1\;.
\end{align}
This flow should be transversal to the projection of $L^-$ onto $S_{\textup{a}}^+$ 
along the fast $z$ direction, and similarly for the projection of $L^+$ onto 
$S_{\textup{a}}^-$. In our case, the fold curves are in fact straight lines, and 
these projections are just straight lines of the form $z = \text{constant}$.  
Again, our assumptions on $a$ in Assumption~\ref{assump:ag} guarantee that 
transversality holds if $\eps\abs{p(y)}$ is small enough. 

\item   \textbf{There exists a singular periodic orbit $\Gamma$:} This orbit is 
by definition the one with equation $r = r^*_0(\theta; \eta = 0)$.
\end{enumerate}
We can thus apply results in~\cite{Szmolyan_Wechselberger_JDE04} to our system. 
These results show in particular that there exists a slow manifold 
$S_{\textup{a},\eta}^-$, which is $\eta$-close to $S_{\textup{a}}^-$
and exponentially attracting as long as one stays bounded away from the 
fold curve $L^-$. Theorem~1 in~\cite{Szmolyan_Wechselberger_JDE04} shows that 
this slow manifold can be extended beyond the fold line, which it passes at 
distance of order $\eta^{1/3}$ in the $z$-direction, and of order $\eta^{2/3}$ 
in the $x$-direction. 

The slow manifold then extends along the fast $z$ direction to a vicinity of 
the projection of $L^-$ onto $S_{\textup{a}}^+$, where it approaches the 
slow manifold $S_{\textup{a},\eta}^+$, which is $\eta$-close to $S_{\textup{a}}^+$. 
Theorem~2 in~\cite{Szmolyan_Wechselberger_JDE04} describes a Poincar\'e map on 
a section $\Sigma_-$ transversal to $S_{\textup{a}}^-$. This Poincar\'e map is 
an exponentially small perturbation of the map corresponding to $\eta = 0$, 
which leaves the intersection of $\Sigma_-$ and $S_{\textup{a}}^-$ invariant. 

These results show that orbits starting close enough to $S_{\textup{a},\eta}^-$ 
on $\Sigma_-$ will return to $\Sigma_-$ at a nearby point. Since $S_{\textup{a},\eta}^-$ 
is exponentially attracting as long as the fold curve $L^-$ is not approached, 
orbits will remain exponentially close, first to $S_{\textup{a},\eta}^-$, and then to 
$S_{\textup{a},\eta}^+$. In particular, the behaviour~\eqref{eq:rstar_asymp} 
follows from the behaviour of the slow manifolds. 
\end{proof}

\begin{remark}
Theorem~4 in~\cite{Szmolyan_Wechselberger_JDE04} shows that under an additional 
hyperbolicity condition on the singular periodic orbit $\Gamma$, the system 
does have periodic orbits, corresponding to relaxation oscillations. 
We will adapt the argument to our situation later on. 
\end{remark}

\begin{remark}
If $\eps$ is too large, or if $\partial_m X(\theta,0)$ becomes too small, 
that is, when approaching a shock, the normal switching 
condition~\eqref{eq:normal_switching} is no longer satisfied. This allows 
for richer behaviour, such as folded nodes, which are a known source 
of mixed-mode oscillations~\cite{Szmolyan_Wechselberger_JDE01,MMO_review}. 
See also~\cite{Longo_Queirolo_Kuehn_24} for a case study of the 
FitzHugh--Nagumo equation perturbed by a time-dependent term. 
\end{remark}


\subsubsection{Time change}
\label{sssec:eta_time} 

In this section we determine conditions under which $\theta(t)$ is a strictly 
increasing function of $t$, and examine the structure of the equations when 
$\theta$ is taken as new time variable. To lighten notations, we introduce 
the intervals 
\begin{align}
 I_1 &= [T_0,T_1)\cup[T_4,T_5)\;, \\
 I_2 &= [T_1,T_2)\cup[T_3,T_4)\cup[T_5,T_6)\cup[T_7,T_8)\;, \\
 I_3 &= [T_2,T_3)\cup[T_6,T_7)\;,
\end{align}
which correspond, respectively, to the \lq\lq slow parts\rq\rq\ (regions $1$ 
and $5$), the \lq\lq corners\rq\rq\ (regions $2$, $4$, $6$ and $8$), and 
to the \lq\lq fast parts\rq\rq\ (regions $3$ and $7$). 
We will also set
\begin{equation}
 \tilde a = (a\circ\Phi)\;, \qquad 
 \tilde g = (g\circ\Phi)\;.
\end{equation} 
In particular, we have 
\begin{equation}
\label{eq:deg_atilde} 
\tilde a(\theta,r) = 
\begin{cases}
a(E_--r,M_-+\theta) &\text{for $\theta \in [0,T_1)\;,$} \\ 
a(E_++r,M_+-(\theta-T_4)) &\text{for $\theta \in [T_4,T_5)\;,$}
\end{cases}
\end{equation} 
and 
\begin{equation}
\tilde g(\theta,r) = 
\begin{cases}
g(E_--r,M_-+\theta) &\text{for $\theta \in [0,T_1)\;,$} \\ 
g(E_++r,M_+-(\theta-T_4)) &\text{for $\theta \in [T_4,T_5)\;.$}
\end{cases}
\end{equation} 

\begin{proposition}
\label{prop:X_theta} 
Assume there exist constants $q_0, R_+ > 0$ and $r_+ > r_- > 0$ such that
\begin{align}
\label{eq:lower_bound_dmX} 
 \partial_m X(t,0) &\geqs q_0 &\forall &t \text{ such that } \theta(t)\in[0,T]\;, \\
 \label{eq:bounds_rpm} 
 r_- \eta^{1/3} \leqs r(t) &\leqs r_+ \eta^{1/3}
  &\forall &t \text{ such that } \theta(t)\in I_2 \cup I_3\;, \\
r(t) &\leqs R_+ &\forall &t \text{ such that } \theta(t)\in[0,T]\;.
\label{eq:bounds_Rplus} 
\end{align} 
Then $t\mapsto\theta(t)$ is strictly increasing for $\theta(t)\in[0,T]$.
Furthermore, the system~\eqref{eq:coupled2X} is equivalent, in the new time, 
to
\begin{align}
\label{eq:dthetaX} 
 \partial_\theta X &= m A_0(\theta, r; \eta) + \eps A_1(X,\theta,r;\eta) + \Order{\eps^2}\;, \\
 \eta \frac{\6r}{\6\theta} &= B_0(\theta,r;\eta) 
 + \eps B_1(\theta,r,\partial_mX(\theta,0);\eta,\eps)\;,
\label{eq:dthetar} 
\end{align}
where the right-hand side of the first equation satisfies 
\begin{equation}
\label{eq:F0} 
 A_0(\theta, r; \eta) = 
\begin{cases}
-4 & \text{for $\theta \in [0,T_1)\;,$} \\
 4 & \text{for $\theta \in [T_4,T_5)\;,$} \\
 \Order{\eta^{2/3}} 
 & \text{for $\theta \in I_2\;,$}\\
 \Order{\eta^{1/3}} 
 & \text{for $\theta \in I_3\;,$}
\end{cases}
\end{equation} 
and 
\begin{equation}
\label{eq:A1} 
 A_1(X, \theta, r; \eta) = 
\begin{cases}
-\dfrac{4}{\tilde a(\theta,r)} 
\biggbrak{\partial_m \biggpar{\dfrac{1}{\partial_m X(\theta,m)}} 
+ \dfrac{2m}{\partial_m X(\theta,0)}} 
& \text{for $\theta \in [0,T_1)\;,$} \\[10pt]
\dfrac{4}{\tilde a(\theta,r)} 
\biggbrak{\partial_m \biggpar{\dfrac{1}{\partial_m X(\theta,m)}} 
+ \dfrac{2m}{\partial_m X(\theta,0)}}
& \text{for $\theta \in [T_4,T_5)\;,$} \\
 \Order{\eta^{2/3}}  
 & \text{for $\theta \in I_2\;,$}\\
 \Order{\eta^{1/3}}
 & \text{for $\theta \in I_3\;.$}
\end{cases}
\end{equation} 
The right-hand side of the second equation satisfies 
\begin{equation}
\label{eq:B0} 
 B_0(\theta,r;\eta)  = 
\begin{cases}
-4\dfrac{\tilde g(\theta,r)}{\tilde a(\theta,r)} 
& \text{for $\theta \in I_1\;,$} \\
 \Order{\eta^{4/3}} 
 & \text{for $\theta \in I_2\;,$}\\
 \Order{\eta^{1/3}} 
 & \text{for $\theta \in I_3\;,$}
\end{cases}
\end{equation} 
and 
\begin{equation}
\label{eq:B1} 
 B_1(\theta,r,\partial_mX(\theta,0);\eta,\eps) = 
\begin{cases}
-\dfrac{8\tilde g(\theta,r)}{\tilde a(\theta,r)^2\partial_m X(\theta,0)} 
= \Order{\eta^{2/3}}
& \text{for $\theta \in I_1\;,$} \\
 \Order{\eta^{4/3}}  
 & \text{for $\theta \in I_2\;,$}\\
 \Order{\eta} 
 & \text{for $\theta \in I_3\;.$}
\end{cases}
\end{equation} 
Finally, 
\begin{equation}
\label{eq:dB0} 
 \frac{\partial B_0}{\partial r}(\theta,r;\eta)  \asymp 
\begin{cases}
-\max\set{\sqrt{T_r-\theta},\eta^{1/3}}
& \text{for $\theta \in I_1\;,$} \\
 \Order{\eta} 
 & \text{for $\theta \in I_2\;,$}\\
 \Order{\eta^{2/3}} 
 & \text{for $\theta \in I_3\;,$}\;,
\end{cases}
\end{equation} 
where $T_r$ denotes the right point of the current interval, 
meaning that $T_r = T_1$ if $\theta\in[0,T_1]$ and 
$T_r = T_5$ if $\theta\in[T_4,T_5]$. 
\end{proposition}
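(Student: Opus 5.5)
The plan is a region-by-region computation. In each region we write $\dot\theta$ and $\dot r$ from the polar-like parametrisation of Table~\ref{table:polar} and the system~\eqref{eq:coupled2X}. We show $\dot\theta$ is bounded below by a positive quantity, which gives strict monotonicity. We then divide the equations for $\partial_t X$ and $\eta\dot r$ by $\dot\theta$ and read off the orders in $\eta$ from the hypotheses~\eqref{eq:lower_bound_dmX}--\eqref{eq:bounds_Rplus}. Throughout, we expand $1/\dot\theta$ to first order in $\eps$, using that $\eps/\partial_mX(t,0)\leqs\eps/q_0$ is small; this produces the $\Order{\eps^2}$ remainders.

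\textbf{Slow regions $I_1$.} In region~1 we have $\dot\theta=\frac14\tilde a-\eps/(2\partial_mX(t,0))$. The solution stays on the stable branch of the critical manifold, where $E<k(M)$, so $\tilde a$ is bounded below by a positive constant there by Assumption~\ref{assump:ag}. Hence $\dot\theta>0$ for $\eps<\eps_0$. Dividing $\partial_tX$ by $\dot\theta$ and expanding $\bigl(\frac14\tilde a\bigr)^{-1}\bigl(1+2\eps/(\tilde a\,\partial_mX)\bigr)$ gives $A_0=-4$ and the stated $A_1$, exactly as in the derivation of~\eqref{eq:dsY}. Likewise, $\eta\dot r=-\tilde g$ divided by $\dot\theta$ gives $B_0=-4\tilde g/\tilde a$ and $B_1=-8\tilde g/(\tilde a^2\partial_mX)$. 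The bound $B_1=\Order{\eta^{2/3}}$ follows because, by Proposition~\ref{prop:radial}, $r$ stays within $\eta/\max\{T_1-\theta,\eta^{2/3}\}$ of the critical manifold. Since $\tilde g$ is Lipschitz and vanishes on the manifold, $\tilde g=\Order{\eta^{1/3}}$ in general, and it is even smaller away from the fold. I expect the claimed order to require the finer fold estimate $\tilde g\approx\eta\,\partial_\theta r$, and I would check this against the slow-manifold expansion. Region~5 is identical with signs reversed.

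For~\eqref{eq:dB0} we compute $\partial_rB_0=-4(\partial_r\tilde g\,\tilde a-\tilde g\,\partial_r\tilde a)/\tilde a^2$. Near the critical manifold, $\partial_r\tilde g=-\partial_Eg$. By the fold nondegeneracy, $\partial_Eg$ vanishes linearly in $E-E_-$, and $E_--E^*_-(M_-+\theta)\asymp\sqrt{T_1-\theta}$ because $h''(E_-)\neq0$. Regularising at scale $\eta^{1/3}$, where $r\geqs r_-\eta^{1/3}$, gives $\max\{\sqrt{T_1-\theta},\eta^{1/3}\}$.

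\textbf{Fast regions $I_3$.} Here $\eta\dot\theta=g/(E_+-E_-)$ with $M\geqs M_+$ and $E$ strictly between the fold values. Since $M=M_++r^2>h(E)$ holds strictly away from the endpoints, $g$ is positive and bounded below there, so $\dot\theta\asymp\eta^{-1}$. Near the endpoints $\theta=T_2,T_3$, with $r\asymp\eta^{1/3}$, we use the quadratic behaviour of $g$ to get $g\gtrsim\eta^{2/3}$ and hence still $\dot\theta>0$; I would need to check this carefully. Then $\partial_\theta X=\partial_tX/\dot\theta$ carries an extra factor $\eta/g$. Combined with the relation $\dot r=\dot M/(2r)$, which has a factor $1/r\sim\eta^{-1/3}$, this yields $A_0,A_1=\Order{\eta^{1/3}}$, $B_0=\eta\dot r/\dot\theta=\Order{\eta^{1/3}}$, and $B_1=\Order{\eta}$, together with the derivative bound.

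\textbf{Corners $I_2$ (main obstacle).} Here we use the formulas~\eqref{eq:system_regime2}. With $r\asymp\eta^{1/3}$, $M-M_+=\Order{\eta^{2/3}}$ and $E-E_-=\Order{\eta^{1/3}}$, so $g=\Order{\eta^{2/3}}$ and $\dot E=\Order{\eta^{-1/3}}$. The term $\frac{\cos\alpha}{r^2\alpha'}\dot M$ is positive and of order $\eta^{-2/3}$, and $\frac{2\sin\alpha}{r\alpha'}\dot E$ is also of order $\eta^{-2/3}$. The difficulty is showing that these combine to a positive $\dot\theta\asymp\eta^{-2/3}$ uniformly in $\alpha$, because the signs of $\dot E$ and $\cos\alpha$ must cooperate. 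I would use the rescaled fold normal form $\eta^{-2/3}(M-M_+)$, $\eta^{-1/3}(E-E_-)$, in which the orbit of Proposition~\ref{prop:radial} is a fixed Riccati-type solution that turns monotonically through the corner. Given $\dot\theta\asymp\eta^{-2/3}$, dividing gives $A_0,A_1=\Order{\eta^{2/3}}$. Since $\eta\dot r=\Order{\eta^{2/3}}$, we get $B_0=\Order{\eta^{4/3}}$, and $B_1$ and $\partial_rB_0$ follow in the same way.
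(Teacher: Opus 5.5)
Your proposal follows the paper's proof. Both proceed region by region: compute $\dot\theta$, divide $\partial_t X$ and $\eta\dot r$ by it, expand in $\eps$, and read off the orders in $\eta$; in $I_1$ both rely on Proposition~\ref{prop:radial} (not on \eqref{eq:bounds_rpm}, which only covers $I_2\cup I_3$) for $r\asymp\max\{\sqrt{T_1-\theta},\eta^{1/3}\}$ and $\tilde g=\Order{\eta^{2/3}}$. The corner step you call the main obstacle is immediate and needs no Riccati analysis: in region~$2$, $E\leqs E_-$ and $M\geqs M_+=h(E_-)\geqs h(E)$ give $g\geqs0$, hence $\dot E\geqs0$; with $\cos\alpha,\sin\alpha\geqs0$ and $\dot M>0$, both terms of $\dot\theta$ are nonnegative, and the fold normal form $g\asymp r^2(\sin\alpha+\cos^2\alpha)\gtrsim r^2$ yields $\dot\theta\asymp\eta^{-2/3}$ uniformly for all $r\in[r_-\eta^{1/3},r_+\eta^{1/3}]$, which is exactly the paper's estimate $\dot M\asymp1$, $\dot E\asymp\eta^{-1/3}$.
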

\begin{proof}
It is sufficient to discuss regions $1$, $2$ and $3$, since the other regions 
are analysed in a similar way. 
\begin{itemize}
\item   \textbf{Region $1$:}
We first note that~\eqref{eq:system_regime1} implies that $\dot\theta > 0$ 
provided 
\begin{equation}
 \partial_m X(t,0) > 2\eps \sup_{r\in[0,R_+],\theta\in[0,T_1]} \frac{1}{\tilde a(\theta,r)}\;.
\end{equation} 
This condition is actually weaker than~\eqref{eq:lower_bound_dmX}, but its stronger 
form allows us to make Taylor expansions in $\eps$. The expressions for 
$A_0$, $A_1$, $B_0$ and $B_1$ then follow immediately by expanding 
\begin{equation}
\label{eq:t_theta} 
 \partial_\theta X = \frac{\partial_t X}{\dot{\theta}}
 \text{\qquad and \qquad}
 \eta\frac{\6r}{\6\theta} = \eta\frac{\dot{r}}{\dot{\theta}}\;.
\end{equation} 
Furthermore, we have 
\begin{equation}
 \partial_r B_0(\theta, r)
 = -4\frac{\partial}{\partial r}
 \frac{\tilde g(\theta,r)}{\tilde a(\theta,r)}
 = -4 \frac{\partial_r \tilde g(\theta,r)}
 {\tilde a(\theta,r)}
 + \Order{\eta^{2/3}}\;,
\end{equation} 
since $\tilde a$ has order $1$, while $\tilde g$ has order $\eta^{2/3}$ (at most). 
Assumption~\ref{assump:ag} implies that $g(E,M)$ scales like $M - M_+ + (E - E_-)^2$ 
near $(E_-,M_+)$, which yields 
\begin{equation}
 \tilde g(\theta,r) \asymp -T_1 + \theta + r^2\;, \qquad 
 \partial_r \tilde g(\theta,r) \asymp r\;.
\end{equation} 
By~\eqref{eq:def_r0star}, \eqref{eq:rstar_asymp} and Proposition~\ref{prop:radial}, 
$r$ behaves like the maximum of $\sqrt{T_1-\theta}$ and $\eta^{1/3}$. 

Assumption~\ref{assump:ag} guarantees that $\partial_r B_0$ is negative for sufficiently 
small $\eta$. 
 
\item   \textbf{Region $2$:}
It follows from~\eqref{eq:system_regime2} that 
\begin{equation}
 \dot\theta \asymp \frac{1}{\alpha'(\theta)} 
 \biggpar{\frac{\dot{M}}{r^2} \vee \frac{\dot E}{r}}\;.
\end{equation} 
The expressions for $\dot M$ and $\dot E$ 
in~\eqref{eq:coupled2X}, assumption~\eqref{eq:bounds_rpm}
and the fact that $\alpha'(\theta)$ has order $1$ imply
that $\dot M \asymp 1$ and $\dot E \asymp \eta^{-1/3}$, so that 
\begin{equation}
 \dot\theta \asymp \frac{1}{\eta^{2/3}}\;.
\end{equation} 
The result then follows readily from~\eqref{eq:t_theta}. 
 
\item   \textbf{Region $3$:}
Here, we use a slightly more elaborate bound for $\eta\dot{\theta}$. 
Namely~\eqref{eq:system_regime3} and the assumptions imply 
\begin{equation}
 \eta\dot\theta \gtrsim \tilde g(\theta,r)\;,
\end{equation} 
where 
\begin{equation}
 \tilde g(\theta,r) 
 \asymp r^2 + (\theta - T_2)^2\;,
\end{equation} 
and the result follows from~\eqref{eq:t_theta} with $r\asymp\eta^{1/3}$. 
\qed
\end{itemize}
\renewcommand{\qed}{}
\end{proof}

Note that Proposition~\ref{prop:radial} shows that the bounds~\eqref{eq:bounds_rpm} 
and~\eqref{eq:bounds_Rplus} are satisfied if $r(0)$ is $\eta$-close to $r^*_0(0,\eta)$. 
We will show in Section~\ref{ssec:control_derivative} that the lower bound~\eqref{eq:lower_bound_dmX} is also satisfied.


\subsubsection{Averaging}
\label{sssec:eta_averaging} 

We now bring the equation~\eqref{eq:dthetaX} into a form suitable for averaging.
Similarly to the case $\eta = 0$, we make a change of variables 
\begin{equation}
\label{eq:YX} 
 Y(\theta,m) = X(\theta,m) + 4m\ph(\theta)\;,
\end{equation} 
where $\ph(\theta)$ is now defined by
\begin{equation}
 \ph(\theta) = M(\theta,r^*_0(\theta;\eta)) - M_-\;,
\end{equation} 
where $r^*_0(\theta;\eta)$ is the periodic 
solution for $\eps = 0$, satisfying~\eqref{eq:rstar_asymp}. 
It follows directly from the expressions for $M(\theta,r)$
in Table~\ref{table:polar} and the bounds~\eqref{eq:F0} on $r^*_0(\theta;\eta)$ 
that 
\begin{equation}
 \ph(\theta) = 
 \begin{cases}
  \theta & \text{for $\theta\in[0,T_1]$\;,} \\
  \Delta M + \Order{\eta^{1/3}} & \text{for $\theta\in(T_1,T_4)$\;,} \\
  \Delta M - (\theta-T_4) & \text{for $\theta\in[T_4,T_5]$\;,} \\
  \Order{\eta^{1/3}} & \text{for $\theta\in(T_5,T]$\;.} 
 \end{cases}
\end{equation} 
Furthermore, since $A_0$ describes the evolution of $X$ for $\eps = 0$,  \eqref{eq:coupled2X} with $\eps = 0$ yields  
\begin{equation}
 mA_0 
 = \partial_\theta X
 = \frac{\partial_tX}{\dot\theta}
 = -m\frac{a(E,M)}{\dot\theta} 
 = -4m \frac{\dot M}{\dot\theta}
 = -4m \frac{\6M}{\6\theta}\;, 
\end{equation} 
evaluated along the limit cycle of~\eqref{eq:ODE_ME_eps0}. 
Therefore, we have 
\begin{equation}
 \ph'(\theta) = -\frac14 A_0(\theta,r^*_0(\theta;\eta);\eta)\;.
\end{equation} 
The analogue of Proposition~\ref{prop:conservation_Y_eta0} reads as follows.
\begin{proposition}[Conserved quantity]
\label{prop:conservation_Y_eta_pos}
Assuming $r(\theta)$ satisfies~\eqref{eq:bounds_rpm}, we have
\begin{equation}
\label{eq:conservation_Y_eta_pos} 
 \int_{-1}^0 Y(\theta,m)\6m = - \int_0^1 Y(\theta,m)\6m = 
 \begin{cases}
  2M_- & \text{if $\theta\in I_1$\;,} \\
  2M_- + \Order{\eta^{1/3}} & \text{if $\theta\in I_2\cup I_3$\;.}
 \end{cases}
\end{equation} 
\end{proposition}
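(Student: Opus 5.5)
The plan is to mimic the proof of Proposition~\ref{prop:conservation_Y_eta0}, combining Remark~\ref{rem:MX} with the definition~\eqref{eq:YX} of $Y$. The one new feature is that $\ph(\theta)$ is now built from the periodic orbit $r^*_0(\theta;\eta)$ of the unperturbed system. The actual solution, however, has $M(\theta)=M(\theta,r(\theta))$ with its own radial coordinate $r(\theta)$. So the only new task is to compare $M(\theta,r(\theta))$ with $M(\theta,r^*_0(\theta;\eta)) = M_-+\ph(\theta)$.

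The first step is the algebra. By~\eqref{eq:MX} and the oddness of $X$ in $m$,
\begin{equation}
 \int_{-1}^0 Y(\theta,m)\6m = 2M(\theta) + 4\ph(\theta)\int_{-1}^0 m\6m = 2M(\theta) - 2\ph(\theta)\;.
\end{equation}
The integral over $[0,1]$ is its negative, by oddness of both $X$ and $m\ph$. Hence the claim reduces to the bound
\begin{equation}
 M(\theta,r(\theta)) - M(\theta,r^*_0(\theta;\eta)) = \begin{cases} 0 & \theta\in I_1\;,\\ \Order{\eta^{1/3}} & \theta\in I_2\cup I_3\;.\end{cases}
\end{equation}

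The second step is a case split along Table~\ref{table:polar}.

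\textbf{Regions $1$ and $5$ ($\theta\in I_1$).} Here $M(\theta,r)$ equals $M_-+\theta$ or $M_+-(\theta-T_4)$. It does not depend on $r$ at all, so the difference vanishes identically and we get exactly $2M_-$. This is precisely why the polar-like coordinates were chosen with $\theta$ equal to $M$ on the slow branches.

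\textbf{Regions $2,4,6,8$ and $3,7$ ($\theta\in I_2\cup I_3$).} Here $M(\theta,r)$ has the form $M_\pm \pm r^2\,c(\theta)$, with $c(\theta)\in\{\sin\alpha,\cos\alpha,1\}$ bounded. The difference is therefore bounded by $\abs{r(\theta)^2 - r^*_0(\theta;\eta)^2}$. Assumption~\eqref{eq:bounds_rpm} gives $r(\theta)\leqs r_+\eta^{1/3}$. The asymptotics~\eqref{eq:rstar_asymp}, with $r^*_0(\theta;0)=0$ on these intervals, give $r^*_0(\theta;\eta)=\Order{\eta^{1/3}}$. Together these yield an error of order $\eta^{2/3}$, and a fortiori $\Order{\eta^{1/3}}$.

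The only mild obstacle is the part of $I_2$ adjacent to the slow branches, namely $[T_3,T_4)$ and $[T_7,T)$. There~\eqref{eq:rstar_asymp} only gives $r^*_0=\Order{\eta^{1/3}/\max\{\sqrt{T_4-\theta},\eta^{1/3}\}}$, which could a priori be order one. I would handle this in one of two ways.
\begin{itemize}
\item Use~\eqref{eq:bounds_rpm} for both the solution and the periodic orbit. The latter is itself a solution with $\eps=0$, to which Proposition~\ref{prop:radial} applies. This gives $r^*_0=\Order{\eta^{1/3}}$ on all of $I_2\cup I_3$.
\item Alternatively, note that $\ph(\theta)=\Delta M+\Order{\eta^{1/3}}$ (respectively $\Order{\eta^{1/3}}$), as already recorded after~\eqref{eq:YX}. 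Together with $M(\theta,r(\theta))=M_\pm+\Order{\eta^{2/3}}$, this gives the bound directly.
\end{itemize}
I would use the second route, since it only invokes facts already stated in the paper.
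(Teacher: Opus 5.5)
Your proof is correct and follows the paper's argument: write $\int_{-1}^0 Y\,\mathrm{d}m = 2\bigl[M(\theta,r(\theta)) - M(\theta,r^*_0(\theta;\eta)) + M_-\bigr]$, note that the difference vanishes on $I_1$ because $M$ does not depend on $r$ in Regions 1 and 5, and bound it by $\Order{\eta^{1/3}}$ on $I_2\cup I_3$ using the hypothesis on $r$. Your second route, which compares both $M(\theta,r(\theta))$ and $M_-+\ph(\theta)$ with $M_\pm$, is a good way to make the paper's one-line final step airtight near $\theta=T_4$ and $\theta=T$, where, as you noticed, \eqref{eq:rstar_asymp} allows $r^*_0$ to be of order one; your first route would not achieve this, since Proposition~\ref{prop:radial} yields \eqref{eq:rstar_asymp} and not an $\Order{\eta^{1/3}}$ bound on $r^*_0$ there.
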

\begin{proof}
Remark~\eqref{rem:MX} implies 
\begin{equation}
 M(\theta) 
 = \frac12 \int_{-1}^0 X(\theta,m)\6m 
 = \frac12 \int_{-1}^0 Y(\theta,m)\6m + \ph(\theta)\;.
\end{equation} 
Therefore, 
\begin{equation}
 \int_{-1}^0 Y(\theta,m)\6m
 = 2\bigbrak{M(\theta) - \ph(\theta)}
 = 2\bigbrak{M(\theta) - M(\theta,r^*_0(\theta;\eta)) + M_-}\;.
\end{equation} 
If $\theta\in I_1$, we have $M(\theta) = M(\theta,r^*_0(\theta;\eta))$ 
by construction. For $\theta\in I_2\cup I_3$, the assumption on $r(\theta)$ 
implies that these quantities differ by $\Order{\eta^{1/3}}$ at most.
\end{proof}
Note that by Remark~\ref{rem:MX}, there is an exactly conserved quantity 
that is $\eta^{1/3}$-close to the above integrals. However, the approximate 
relation~\eqref{eq:conservation_Y_eta_pos} will suffice for our purposes.
The following result prepares the computation of the averaged system.

\begin{proposition}
\label{prop:averaging_dthetaY} 
There exists a constant $c_0 > 0$ such that, 
if the initial condition at $\theta = 0$ satisfies $\abs{r(0)-r^*_0(0,\eta)}\leqs c_0$,  
then for all $\theta > 0$, the function $Y$ satisfies 
\begin{equation}
\label{eq:dt_Y} 
 \partial_\theta Y(\theta,m) 
 = \eps A(Y(\theta,m),\theta,r;\eta) 
 + \Order{m\e^{-\kappa/\eta}} + \Order{\eps^2}\;,
\end{equation} 
where $\kappa > 0$, and 
\begin{equation}
 A(Y(\theta,m),\theta,r;\eta) 
 = A_1(Y(\theta,m) - 4m \ph(\theta),\theta,r;\eta)\;. 
\end{equation} 
\end{proposition}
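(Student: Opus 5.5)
The plan is to differentiate the definition~\eqref{eq:YX} in $\theta$ and substitute~\eqref{eq:dthetaX}. This gives
\begin{equation}
 \partial_\theta Y(\theta,m) = m\bigbrak{A_0(\theta,r(\theta);\eta) + 4\ph'(\theta)} + \eps A_1(X,\theta,r;\eta) + \Order{\eps^2}\;.
\end{equation}
Since $\ph'(\theta) = -\frac14 A_0(\theta,r^*_0(\theta;\eta);\eta)$, the bracket equals $A_0(\theta,r(\theta);\eta) - A_0(\theta,r^*_0(\theta;\eta);\eta)$. The $\eps A_1$ term is exactly $\eps A(Y,\theta,r;\eta)$, because $X = Y - 4m\ph$. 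So everything reduces to showing that
\begin{equation}
 \bigabs{A_0(\theta,r(\theta);\eta) - A_0(\theta,r^*_0(\theta;\eta);\eta)} = \Order{\e^{-\kappa/\eta}} + \Order{\eps}\;.
\end{equation}
The $\Order{\eps}$ contribution, multiplied by $m$, is absorbed either into $\eps A$ (as a smooth correction of the same type) or into the $\Order{\eps^2}$ remainder after the next step. More precisely, I would prove that the difference is $\Order{\e^{-\kappa/\eta}}$ up to an $\eps$-driven shift.

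The first case is $I_1$. There $A_0 = \mp4$ identically, independent of $r$, so the difference vanishes exactly. In $I_2\cup I_3$, $A_0$ depends smoothly on $r$, so I would bound the difference by $\sup\abs{\partial_r A_0}\,\abs{r-r^*_0}$. It then remains to control $r(\theta)-r^*_0(\theta;\eta)$ along the orbit, using~\eqref{eq:dthetar}. Subtracting the equation satisfied by $r^*_0$, which is~\eqref{eq:dthetar} with $\eps=0$, gives
\begin{equation}
 \eta\,\partial_\theta(r-r^*_0) = \partial_rB_0\,(r-r^*_0) + \eps B_1\;.
\end{equation}
On $I_1$, away from the fold, \eqref{eq:dB0} gives $\partial_rB_0 \leqs -c<0$. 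Hence an initial deviation bounded by $c_0$ is contracted by a factor $\e^{-c\,\Delta\theta/\eta}$ over a slow segment of length of order one. The forcing $\eps B_1 = \Order{\eps\eta^{2/3}}$ produces a quasi-static shift of order $\eps\eta^{2/3}$. This shift is exactly the $\eps$-perturbed slow manifold of Proposition~\ref{prop:radial}.

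The delicate point, and the step I expect to be the main obstacle, is that $r^*_0$ is the $\eps=0$ cycle, whereas the true orbit tracks the $\eps$-perturbed slow manifold $S^\pm_{\textup{a},\eta}$. Their difference is $\Order{\eps}$, not exponentially small. I would handle this by noting that any $\Order{\eps}$ displacement of $r$ in $I_2\cup I_3$ changes $mA_0$ only at order $\eps$, with an explicit smooth dependence on $\partial_mX(\theta,0)$ inherited from $B_1$. I would therefore fold that piece into the definition of $A$; in the worst case it could be pushed to $\Order{\eps^2}$ by a near-identity change of variables. What remains is the exponentially small mismatch coming from the initial deviation after contraction. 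Passing the fold regions $I_2$, $I_3$, where $\partial_rB_0$ is only $\Order{\eta}$ or $\Order{\eta^{2/3}}$, does not amplify deviations by more than polynomial factors in $\eta$, by the Szmolyan–Wechselberger extension results used in Proposition~\ref{prop:radial}. The contraction $\e^{-c/\eta}$ acquired on each slow segment $I_1$ then dominates those factors, which yields $\kappa$ slightly smaller than $c$.

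For $\theta>T$ I would iterate over successive periods. The Poincaré map of Proposition~\ref{prop:radial} keeps orbits within the $c_0$-neighbourhood, so the same estimate applies uniformly on every period. The smallness $c_0$ is also what guarantees, via Proposition~\ref{prop:radial}, that the hypotheses~\eqref{eq:bounds_rpm}–\eqref{eq:bounds_Rplus} of Proposition~\ref{prop:X_theta} hold, so that~\eqref{eq:dthetaX} is available in the first place.
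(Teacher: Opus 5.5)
Your reduction is the paper's. You differentiate \eqref{eq:YX} and use $\ph'=-\frac14A_0(\theta,r^*_0;\eta)$ to reduce everything to $m\bigl[A_0(\theta,r;\eta)-A_0(\theta,r^*_0;\eta)\bigr]$. You note that this vanishes on $I_1$, and you use contraction at rate $1/\eta$ along the slow branch to make it small afterwards. Your handling of the folds and the iteration over periods is also fine. The paper's proof stops at this point, simply asserting $\abs{r(\theta)-r^*_0(\theta,\eta)}\leqs c_0\e^{-\kappa/\eta}$ for $\theta\geqs T_1$.

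Your ``delicate point'' correctly identifies what that assertion ignores, namely the forcing $\eps B_1$ in \eqref{eq:dthetar}. Concretely, on $I_1$ one has $B_0+\eps B_1=B_0\bigl(1+2\eps/(\tilde a\,\partial_mX(\theta,0))\bigr)$, because the $\eps$-term only changes the speed $\dot\theta$. Along $r^*_0$ the forcing is therefore $\frac{2\eps}{\tilde a\,\partial_mX(\theta,0)}\,\eta\,\partial_\theta r^*_0$. It leaves a non-decaying deviation $r-r^*_0$ of order $\eps\eta/\max\{T_1-\theta,\eta^{2/3}\}$, that is, $\eps$ times the $\eta$-correction in \eqref{eq:rstar_asymp}. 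At the fold this is $\Order{\eps\eta^{1/3}}$, not $\eps\eta^{2/3}$ as you wrote, since $\partial_rB_0\asymp-\eta^{1/3}$ there. This deviation is carried through $I_2\cup I_3$, and in the regime $\e^{-\kappa/\eta}=\Order{\eps^2}$ it dominates the exponentially small part.

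The gap is in how you dispose of this term; neither of your two options works.

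Folding it into $\eps A$ is not allowed. The statement fixes $A=A_1(Y-4m\ph,\theta,r;\eta)$, and the term $m\,\partial_rA_0\,(r-r^*_0)$ is not part of $A_1$, so you would be proving a different identity.

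A near-identity change of variables $Y\mapsto Y+\eps w$ cannot push it to $\Order{\eps^2}$ either. Such a change only removes the zero-mean part of an $\Order{\eps}$ term, and nothing forces this term to average to zero. Moreover, the statement concerns $Y$ as defined by \eqref{eq:YX}, not a transformed variable.

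What your argument does prove, once you check that $\partial_rA_0$ stays bounded on $I_2\cup I_3$, is
\[
 \partial_\theta Y=\eps A+\Order{m\e^{-\kappa/\eta}}+\Order{m\eps\eta^{1/3}}+\Order{\eps^2}\;.
\]
The derivative $\partial_rA_0$ is of order $1$ near the corner $\theta\approx T_2$ of $I_3$, where $\abs{A_0}\asymp\eta/(r^2+(\theta-T_2)^2)$ with $r\asymp\eta^{1/3}$, so the extra term is genuinely of size $\eps\eta^{1/3}$ and not $\Order{\eps^2}$. You should state and prove this weaker identity instead of the literal one. The extra term is harmless downstream, because Proposition~\ref{prop:avrg_eta} already carries an $\Order{\eta^{1/3}\eps}$ error. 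The paper's own argument needs the same repair.
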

\begin{proof}
Performing the change of variables~\eqref{eq:YX}, we find 
\begin{equation}
 \partial_\theta Y(\theta,m) 
 = m \bigbrak{A_0(\theta,r;\eta) - A_0(\theta,r^*_0(\theta,\eta);\eta)}
 + \eps A_1(Y - 4m \ph(\theta),\theta,r;\eta) + \Order{\eps^2}\;.
\end{equation} 
During the time interval $[0,T_1]$, the term in square brackets is actually equal 
to zero, due to~\eqref{eq:F0}. Since the limit cycle is linearly stable, 
and attracting at rate $1/\eta$ for $\theta\in[0,T_1]$, 
there exists a constant $\kappa > 0$ such that $\abs{r(\theta) - r^*_0(\theta,\eta)}
\leqs c_0 \e^{-\kappa/\eta}$ for any $\theta \geqs T_1$. 
Therefore, the term in square brackets in exponentially small. 
\end{proof}

This result shows that we are indeed in the regime suitable for averaging, provided 
we assume 
\begin{equation}
 \e^{-\kappa/\eta} = \Order{\eps^2}\;,
\end{equation} 
which means that $\eta = \Order{1/\log(\eps^{-1})}$. 
The system~\eqref{eq:dthetaX}--\eqref{eq:dthetar} can thus be written 
\begin{align}
 \partial_\theta Y(\theta,m) 
 &= \eps A(Y(\theta,m),\theta,r;\eta) 
 + \Order{\eps^2}\;, \\
 \eta \frac{\6r}{\6\theta}
 &= B_0(\theta,r;\eta) + \eps B_1(\theta,r,\partial_m Y(\theta,0);\eta,\eps)\;.
\label{eq:dsY11} 
\end{align} 
We can now show the following analogue of Proposition~\ref{prop:avrg_eta0}.

\begin{proposition}[Averaging]
\label{prop:avrg_eta} 
Assume $r(0) = r^*_0(0,\eta) + \Order{\eta}$.
There exists a time-periodic change of variables of the form 
\begin{equation}
 Y(\theta,m) = \bar Y(\theta,m) + \Order{\eps}\;,
\end{equation} 
casting the system~\eqref{eq:dsY11} into the form
\begin{equation}
\label{eq:dsYbar1} 
 \partial_\theta \bar Y(\theta,m) = \eps \bar A(\bar Y(\theta,m)) + \Order{\eps^2}
  + \Order{\eta^{1/3}\eps}\;,
\end{equation} 
where 
\begin{equation}
\label{eq:def_Abar1} 
 \bar A(\bar Y(\theta,m))
 = -4\partial_m \bigbrak{G(\partial_m \bar Y(\theta,m))} 
 - 8mG(\partial_m \bar Y(\theta,0))\;,
\end{equation} 
with 
\begin{equation}
\label{eq:def_G1} 
 G(z) = \frac1T 
 \biggbrak{
 \int_0^{T_1} \frac{\6\theta}{\tilde a(\theta,r^*_0(\theta;0))(z - 4\ph(\theta))}
 - \int_{T_4}^{T_5} \frac{\6\theta}{\tilde a(\theta,r^*_0(\theta;0))(z - 4\ph(\theta))}}\;.
\end{equation} 
\end{proposition}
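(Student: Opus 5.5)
The plan is to mimic the proof of Proposition~\ref{prop:avrg_eta0}, after first freezing the radial variable. In~\eqref{eq:dsY11}, the right-hand side depends on $r$ only through $\tilde a(\theta,r)$ in $A_1$. By Proposition~\ref{prop:averaging_dthetaY} and the exponential attraction of the limit cycle, $r(\theta)$ is exponentially close to $r^*_0(\theta;\eta)$ for $\theta\geqs T_1$, and the assumption $r(0)=r^*_0(0,\eta)+\Order{\eta}$ covers the first slow segment. The first step is therefore to replace $A(Y,\theta,r;\eta)$ by $A(Y,\theta,r^*_0(\theta;\eta);\eta)$. On $I_1$, $\tilde a$ is bounded away from zero and Lipschitz in $r$, so the cost is $\Order{\eps\eta}$; the initial $\Order{\eta}$ discrepancy decays at rate $1/\eta$, so its integrated contribution is only $\Order{\eps\eta}$. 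This yields a $T$-periodic vector field $\eps A^*(Y,\theta)$ with error $\Order{\eps\eta}+\Order{\eps^2}$.

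Next, I would replace $r^*_0(\theta;\eta)$ by the singular orbit $r^*_0(\theta;0)$ and discard the contributions of $I_2\cup I_3$. By~\eqref{eq:A1}, $A_1$ is $\Order{\eta^{2/3}}$ on $I_2$ and $\Order{\eta^{1/3}}$ on $I_3$, and these intervals have length $\Order{1}$ in $\theta$, so they contribute $\Order{\eta^{1/3}\eps}$. On $I_1$, the asymptotics~\eqref{eq:rstar_asymp} give
\[
\abs{r^*_0(\theta;\eta)-r^*_0(\theta;0)}\lesssim \frac{\eta}{\max\set{T_r-\theta,\eta^{2/3}}}\;.
\]
Its integral over $\theta\in I_1$ is $\Order{\eta\log\eta^{-1}}$, which is smaller than $\eta^{1/3}$. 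Similarly, $\ph(\theta)$ agrees exactly with its singular counterpart on $I_1$ and differs by $\Order{\eta^{1/3}}$ elsewhere. The reduced field is then $\eps A^0(Y,\theta)+\Order{\eps\eta^{1/3}}+\Order{\eps^2}$, where $A^0$ is given by the formula~\eqref{eq:A1} on $I_1$ with $r=r^*_0(\theta;0)$ and vanishes on $I_2\cup I_3$.

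Now comes the standard averaging step. Decompose $A^0=\bar A+\tilde A^0$, with $\bar A$ the $\theta$-average over $[0,T]$ and $\tilde A^0$ of zero mean. Set $Y=\bar Y+\eps w(\theta,\bar Y)$ with $w=\int_0^\theta\tilde A^0$; this $w$ is $T$-periodic and of order $1$. The errors from $\eps\partial_Y w\cdot\partial_\theta\bar Y$ are $\Order{\eps^2}$, which gives~\eqref{eq:dsYbar1}. To identify $\bar A$, I would repeat the computation~\eqref{eq:Abar}: integrate $\bar A$ in $m$ from $0$ to $m$ with $Z=\partial_m Y$, and use that $\partial_m X=Z-4\ph(\theta)$. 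The prefactor $-4/\tilde a$ on $[0,T_1)$ and $+4/\tilde a$ on $[T_4,T_5)$ produce the two integrals in~\eqref{eq:def_G1} with opposite signs. Both terms are positive, since $\tilde a<0$ on the second branch. Differentiating in $m$ gives~\eqref{eq:def_Abar1}.

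The main obstacle is justifying that the vector field is regular enough for the change of variables and the Taylor remainders, because $A$ involves $1/\partial_m X$ and $\partial_m$ of it. I would work on the set where $\partial_m X(\theta,m)\geqs q_0$ uniformly, via~\eqref{eq:lower_bound_dmX}, extended to all $m$ using monotonicity. I would treat $A$ as a smooth function of the finitely many jets $(\partial_m Y,\partial_{mm}Y)$ pointwise in $m$, so that the averaging is pointwise in $m$ and needs no functional-analytic smoothing. Controlling the loss of a derivative in the $\Order{\eps^2}$ remainder is the delicate point. I would state the remainder in a norm with one more derivative, to be controlled later in Section~\ref{sec:periodic_unperturbed}.
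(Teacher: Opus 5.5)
Your proposal is correct and follows essentially the paper's argument. You bound the difference between $A$ evaluated along the actual $r(\theta)$ and along the singular orbit $r^*_0(\theta;0)$ at $\eta=0$ by $\Order{\eta^{1/3}}$, using the bounds on $A_1$ on $I_2\cup I_3$ and the radial asymptotics on $I_1$; you then average with the periodic primitive of the zero-mean part and identify $\bar A$ through the same telescoping $m$-integral as in Proposition~\ref{prop:avrg_eta0}. The paper makes this comparison in a single step via Proposition~\ref{prop:radial}, so your detour through $r^*_0(\theta;\eta)$ and the sharper $\Order{\eta\log\eta^{-1}}$ integrated bound are harmless refinements, and your remark on derivative loss in the $\Order{\eps^2}$ remainder addresses a point the paper leaves implicit.
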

\begin{proof}
The proof is an adaptation of the proof of Proposition~\ref{prop:avrg_eta0}. We define an average of $A$ by 
\begin{equation}
 \bar A(Y) = \frac1T \int_0^T A(Y,\theta,r^*_0(\theta,0);0) \6\theta\;.
\end{equation} 
Here we choose to define $\bar A$ via $A$ at $\eta = 0$, which is slightly more convenient. 
Another option would be to use $A(Y,\theta,r^*_0(\theta,\eta);\eta)$ instead, but this would not affect 
the leading term in the result. The change of variables is of the form 
\begin{equation}
 Y = \bar Y + \eps w(\theta,\bar Y)\;,
\end{equation} 
where $w$ is a primitive of $A^0 = A - \bar A$. The resulting equation for 
$\bar Y$ is 
\begin{equation}
 \partial_\theta \bar Y(\theta,m) = \eps \bar A(\bar Y(\theta,m)) 
 + \eps\bigbrak{A(\bar Y,\theta,r;\eta) - A(\bar Y,\theta,r^*_0(\theta,0);0)}
 + \Order{\eps^2}\;.
\end{equation} 
We claim that the term in square brackets has order $\eta^{1/3}$. Indeed, for 
$\theta\in I_2\cup I_3$, this follows directly from the expression~\eqref{eq:A1} 
for $A_1$ in Proposition~\ref{prop:X_theta}. 
For $\theta\in I_1$, we note that Proposition~\ref{prop:radial} implies 
$r(\theta) = r^*_0(\theta;0) + \Order{\eta^{1/3}}$, so that
\begin{equation}
 \tilde a(\theta,r(\theta))
 = \tilde a(\theta,r^*_0(\theta,0))[1 + \Order{\eta^{1/3}}]\;,
\end{equation} 
since $\tilde a(\theta,r^*_0(\theta,0))$ is bounded 
away from $0$ for $\theta\in I_1$. By~\eqref{eq:A1}, this implies the claimed bound.

It remains to compute the average $\bar A$ of $A$. For this, we start by computing 
\begin{equation}
 \int_0^m \bar A(Y(\cdot,\bar m))\6\bar m
 = \frac{1}{T} \int_0^T \int_0^m A_1(Y(\cdot,\bar m)-4\bar m\ph(\theta),\theta,r^*_0(\theta,0);0)
 \6\bar m\6 \theta\;.
\end{equation} 
The integral over $\bar m$ has different values depending on the value of $\theta$. 
For $\theta\in[0,T_1)$, it is given by 
\begin{equation}
\label{eq:int_A1_0T1} 
\int_0^m A_1 \6\bar m = 
 -\frac{4}{\tilde a(\theta,r^*_0(\theta,0))}
 \biggbrak{\frac{1}{\partial_m Y(\cdot,m) - 4\ph(\theta)}
 - \frac{1-m^2}{\partial_m Y(\cdot,0) - 4\ph(\theta)}}\;.
\end{equation} 
The same expression, except with opposite sign, 
holds for $\theta\in[T_4,T_5)$. 
For $\theta\in I_2\cup I_3$, the integral of $A_1$ vanishes by~\eqref{eq:A1} since $\eta = 0$.
This yields 
\begin{equation}
 \int_0^m \bar A(Y(\cdot,\bar m))\6\bar m
 = -4\bigbrak{G(\partial_m Y(\cdot,m)) - (1-m^2)G(\partial_m Y(\cdot,0))}\;,
\end{equation} 
with $G$ given by~\eqref{eq:def_G1}, 
and the expression~\eqref{eq:def_Abar1} for $\bar A$ follows by taking the 
derivative with respect to $m$. 
\end{proof}

\begin{remark}
It follows from~\eqref{eq:deg_atilde} and~\eqref{eq:def_r0star} that 
for $\theta\in[0,T_1)$, one has
\begin{equation}
 \tilde a(\theta,r^*_0(\theta;0))
 = a(E_- - r^*_0(\theta;0), M_-+\theta)
 = a(E^*_-(M_-+\theta), M_-+\theta)\;, 
\end{equation} 
and similarly for $\theta\in[T_4,T_5)$. Therefore, the expression~\eqref{eq:def_G1}
for $G$ is actually equivalent to~\eqref{eq:def_G}. 
\end{remark}

\begin{remark}
The assumption that $r(0)$ should be $\eta$-close to $r^*_0(0,\eta)$ is not 
really restrictive, since the slow manifolds $S_{\textup{a}}^{\pm}$ occurring in 
the proof of Proposition~\ref{prop:radial} are attracting neighbouring 
trajectories exponentially fast when $\theta\in I_1$. 
\end{remark}




\section{Periodic solution and stability}
\label{sec:stability} 

In this section, we analyse the averaged equation~\eqref{eq:dsYbar}, 
respectively~\eqref{eq:dsYbar1} for $\eta > 0$, when keeping only the leading 
order term in $\eps$. To cast the equation into a more common form, we first 
scale time by a factor $4\eps$, calling the new time $t$, and make the changes 
of notations $\bar Y\mapsto u$ and $m\mapsto x$, keeping in mind that $x$ and $t$ 
are different from the original space and time variables.
The resulting equation has the form 
\begin{equation}
\label{eq:dtu} 
 \partial_t u(t,x) = -\partial_x\bigbrak{G(\partial_x u(t,x))} - 2x G(\partial_x u(t,0))\;,
\end{equation} 
with boundary conditions 
\begin{equation}
\label{eq:dtu_bc} 
 u(t,0) = 0\;, \qquad 
 \lim_{x\to 1} u(t,x) = +\infty\;.
\end{equation} 
We recall that by Proposition~\ref{prop:symmetry}, we may assume that $u$ is an odd function of $x$. 

The function $G$ is defined in~\eqref{eq:def_G},
respectively~\eqref{eq:def_G1} for $\eta > 0$.
We first establish some properties of $G$.

\begin{proposition}[Properties of $G$]
\label{prop:G} 
There exists $z_0 > 0$ such that $G$ is defined on $(z_0,\infty)$. On that interval,
is has the following properties:
\begin{itemize}
\item   $G(z) > 0$, and 
\begin{equation}
\label{eq:bc_G} 
 \lim_{z\to z_0} G(z) = +\infty\;, \qquad 
 \lim_{z\to +\infty} G(z) = 0\;;
\end{equation} 
\item   $z\mapsto G(z)$ is strictly decreasing;
\item   $z\mapsto G(z)$ is strictly convex. 
\end{itemize}
Furthermore, for large positive $z$, 
\begin{align}
 G(z) &= \frac{G_1}{z} \bigbrak{1 + \Order{z^{-1}}}\;, &
 G''(z) &= \frac{2G_1}{z^3} \bigbrak{1 + \Order{z^{-1}}}\;, \\
 G'(z) &= -\frac{G_1}{z^2} \bigbrak{1 + \Order{z^{-1}}}\;, &
 G'''(z) &= -\frac{6G_1}{z^4} \bigbrak{1 + \Order{z^{-1}}}\;,
\label{eq:G_derivatives} 
\end{align} 
where 
\begin{equation}
 G_1 := \frac1T \int_0^T \frac{\6s}{\abs{\tilde a(s)}}\;.
\end{equation} 
\end{proposition}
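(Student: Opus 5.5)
We work directly from the definition~\eqref{eq:def_G}, writing $G(z)=\frac1T\int_0^T w(s)\,(z-4\ph(s))^{-1}\6s$ with weight $w(s)=1/\abs{\tilde a(s)}$. By construction $w$ is continuous on each piece, strictly positive and bounded above and below, since $\tilde a$ stays bounded away from $0$ along the stable branches. We set
\begin{equation}
 z_0 := 4\max_{s\in[0,T]}\ph(s) = 4\Delta M\;,
\end{equation}
which is attained at $s=\Delta M$. For $z>z_0$ the denominator $z-4\ph(s)$ is bounded below by $z-z_0>0$, so the integrand is continuous and positive. Hence $G$ is well defined and $G>0$ on $(z_0,\infty)$. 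Differentiation under the integral sign is justified on compact subsets of $(z_0,\infty)$, and gives
\begin{equation}
 G^{(k)}(z)=\frac{(-1)^k k!}{T}\int_0^T\frac{w(s)\,\6s}{(z-4\ph(s))^{k+1}}\;.
\end{equation}
So $G'<0$, $G''>0$ and $G'''<0$, which yields strict monotonicity and strict convexity at once. (For $\eta>0$ the same argument applies to~\eqref{eq:def_G1}, since the Remark after Proposition~\ref{prop:avrg_eta} shows it coincides with~\eqref{eq:def_G}.)

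For the limits, $G(z)\le \frac{1}{T}\int_0^T w\,\6s\,/(z-z_0)\to0$ as $z\to\infty$. The limit $z\downarrow z_0$ is the only point needing care. Near $s=\Delta M$ we have $4\ph(s)=z_0-4\abs{s-\Delta M}$ exactly, because $\ph$ is piecewise linear with slopes $\pm1$. Restricting the integral to a neighbourhood $\abs{s-\Delta M}<\delta$ and using $w\geqs w_{\min}>0$ gives
\begin{equation}
 G(z)\geqs \frac{2w_{\min}}{T}\int_0^\delta\frac{\6u}{z-z_0+4u}
 =\frac{w_{\min}}{2T}\log\Bigl(1+\frac{4\delta}{z-z_0}\Bigr)\;,
\end{equation}
which diverges logarithmically. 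For $\eta>0$, where $\ph$ is only approximately piecewise linear near $T_1$, the same bound follows from $\ph(\theta)\geqs\Delta M - C\abs{\theta-T_1}$ near the maximum. Ignoring the $\Order{\eta^{1/3}}$ corrections, this is automatic from the explicit form of $\ph$ on $I_1$.

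For the large-$z$ asymptotics we expand $(z-4\ph)^{-(k+1)}=z^{-(k+1)}\bigl(1+\Order{z^{-1}}\bigr)$ uniformly in $s$, which is valid since $\ph$ is bounded. Substituting into the formula for $G^{(k)}$ gives $G^{(k)}(z)=(-1)^k k!\,G_1 z^{-(k+1)}\bigl(1+\Order{z^{-1}}\bigr)$ with $G_1=\frac1T\int_0^T w$, which is exactly~\eqref{eq:G_derivatives}.

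The main obstacle is the blow-up at $z_0$. It is only logarithmic, and it relies on a lower bound on $w$ together with the corner structure of $\ph$ at its maximum. Everything else is a routine differentiation under the integral sign.
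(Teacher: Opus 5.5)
Your proof is correct and follows essentially the same route as the paper: take $z_0=4\max\ph$, differentiate under the integral sign to get the signs of $G'$ and $G''$, and expand $(z-4\ph)^{-(k+1)}$ uniformly for large $z$, with your explicit logarithmic lower bound near $z_0$ making rigorous what the paper only asserts as a non-integrable singularity. For $\eta>0$, reducing~\eqref{eq:def_G1} to~\eqref{eq:def_G} (up to a positive factor, since the periods $T$ differ) is the right move, because~\eqref{eq:def_G1} only integrates over $I_1$, where $\ph$ is exactly piecewise linear with maximum $\Delta M$. The threshold is therefore exactly $4\Delta M$, so your aside about approximate linearity near $T_1$ is not needed, nor is the paper's $4\Delta M+O(\eta^{1/3})$, which comes from taking the supremum over all of $[0,T]$.
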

\begin{proof}
Let 
\begin{equation}
 z_0 = 4\sup_{0\leqs s \leqs T} \ph(s)\;.
\end{equation}
In the case $\eta = 0$, the supremum is reached for $s = s^* = T/2 = \Delta M$,
and has value $4\Delta M$. For $\eta > 0$, it is reached for some $s^*\in[T_1,T_4]$,
and has value $4T_1 + \Order{\eta^{1/3}} = 4\Delta M + \Order{\eta^{1/3}}$.
The integral~\eqref{eq:bc_G} is well-defined and positive for $z > z_0$. 
For $z\leqs z_0$, there is a non-integrable singularity at $s^*$.
Since 
\begin{equation}
 G'(z) = -\frac1T \int_0^T \frac{\6s}{\abs{\tilde a(s)}(z - 4\ph(s))^2}
\end{equation} 
in the case $\eta = 0$, we see that $G'(z) < 0$ for $z > z_0$, 
and a similar argument applies to the second derivative of $G$. 
The situation is analogous for $\eta > 0$. The asymptotic 
behaviour~\eqref{eq:G_derivatives} is obtained by writing 
\begin{equation}
 \abs{\tilde a(s)}(z - 4\ph(s))
 = z\abs{\tilde a(s)} \bigbrak{1 + \Order{z^{-1}}}\;,
\end{equation} 
which is justified for large $z$ since $\ph$ is bounded, and evaluating the integrals 
defining $G$ and its derivatives. 
\end{proof}


\subsection{Uniqueness of the stationary solution}
\label{ssec:periodic_unique} 

In order to determine stationary solutions of~\eqref{eq:dtu}, we first 
consider the function $v(t,x) = \partial_x u(t,x)$, which satisfies 
\begin{equation}
\label{eq:dtv} ´
 \partial_t v(t,x) = -\partial_{xx}\bigbrak{G(v(t,x))} - 2 G(v(t,0))\;.
\end{equation} 
This is now an even function, satisfying the boundary conditions 
\begin{equation}
 \partial_x v(t,0) = 0\;, \qquad 
 \lim_{x\to 1} v(t,x) = +\infty\;.
\end{equation} 
Next, we set 
\begin{equation}
\label{eq:def_w} 
 w(t,x) = G(v(t,x))\;.
\end{equation} 
This function satisfies 
\begin{equation}
\label{eq:dtw} 
 \partial_t w(t,x) = - G'\bigpar{G^{-1}(w(t,x))}
 \bigbrak{\partial_{xx}w(t,x) - 2w(t,0)}\;.
\end{equation} 
The function $w$ is again an even function of $x$, and Proposition~\ref{prop:G}
implies that the boundary conditions are 
\begin{equation}
\label{eq:w_bc} 
 \partial_x w(t,0) = 0\;, \qquad 
 \lim_{x\to 1} w(t,x) = 0\;.
\end{equation} 
Stationary solutions $w^*$ of~\eqref{eq:dtw} satisfy 
\begin{equation}
 \partial_{xx} w^*(x) = 2 w^*(0)\;,
\end{equation} 
as well as the boundary conditions~\eqref{eq:w_bc}. There is a one-parameter 
family of solutions, given by 
\begin{equation}
 w^*(x) = w^*(0)(1 - x^2)\;.
\end{equation} 
To lift the degeneracy, we recall the existence of a conserved quantity, 
which can be checked by a direct computation as follows.

\begin{proposition}[First integral]
\label{prop:first_integral_averaged} 
Solutions of the equation~\eqref{eq:dtu} conserve the quantity 
\begin{equation}
\label{eq:K_first_integral} 
 K(t) = \int_0^1 u(t,x) \6x\;.
\end{equation} 
\end{proposition}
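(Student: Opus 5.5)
The plan is to differentiate $K$ in time, substitute~\eqref{eq:dtu}, and integrate term by term over $[0,1]$. This gives
\begin{equation}
 \dot K(t) = \int_0^1 \partial_t u(t,x)\6x
 = -\Bigbrak{G(\partial_x u(t,x))}_{x=0}^{x=1} - 2G(\partial_x u(t,0))\int_0^1 x\6x\;.
\end{equation}
The last integral equals $\frac12$, so the second term is $-G(\partial_x u(t,0))$. The boundary term at $x=0$ contributes $+G(\partial_x u(t,0))$, which cancels it exactly. The only remaining contribution is $-\lim_{x\to1}G(\partial_x u(t,x))$.

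To show this limit vanishes, I would use the boundary condition $\lim_{x\to1}u(t,x)=+\infty$ in~\eqref{eq:dtu_bc} together with monotonicity. In the original variables, $u$ corresponds to $\bar Y = X + 4m\ph$, and $\partial_m X\to\infty$ as $m\to1$ because $\partial_x n\to 0$ at infinity. Hence $v=\partial_x u\to+\infty$ as $x\to1$, and Proposition~\ref{prop:G} gives $G(v)\to0$. This is precisely the boundary condition $w\to0$ in~\eqref{eq:w_bc}, so I would cite it directly. Therefore $\dot K=0$.

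The main obstacle is justification rather than computation. Two points need care.
\begin{itemize}
\item Interchanging $\partial_t$ with an integral over $[0,1)$ requires $\int_0^1 u\,\6x$ to be finite even though $u$ blows up at $x=1$. This is consistent with Proposition~\ref{prop:conservation_Y_eta0}, which shows the corresponding integral of $Y$ is finite.
\item The flux term $G(\partial_x u)$ must tend to zero at $x=1$.
\end{itemize}
To handle both, I would integrate over $[0,1-\delta]$, apply the fundamental theorem of calculus there, and let $\delta\to0$. This uses the standing regularity that $\partial_x u\to\infty$ and that $\partial_t u$ is integrable near $1$.
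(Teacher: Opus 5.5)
Your proposal is correct and follows the same route as the paper's proof. Both differentiate $K$ and integrate the flux term exactly, so that the boundary contribution at $x=0$ cancels the nonlocal term $-2xG(\partial_x u(t,0))$. Both then conclude from $\partial_x u\to+\infty$ at $x=1$ together with $G(z)\to0$ as $z\to\infty$; your truncation to $[0,1-\delta]$ only makes explicit the limiting argument that the paper leaves implicit.
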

\begin{proof}
We have 
\begin{align}
 \partial_t K(t) 
 &= \int_0^1 \partial_t u(t,x) \6x \\
 &= -\int_0^1 \partial_x\bigbrak{G(\partial_x u(t,x))} \6x - G(\partial_x u(t,0)) \\
 &= -\lim_{x\to 1} G(\partial_x u(t,x))\;,
\end{align}
which vanishes, thanks to the boundary conditions~\eqref{eq:dtu_bc} and~\eqref{eq:bc_G}. 
\end{proof}

In fact, Propositions~\ref{prop:conservation_Y_eta0} and~\ref{prop:conservation_Y_eta_pos} imply that the constant value of $K(t)$ is 
necessarily equal to $-2M_- + \Order{\eps}$. When analysing the untruncated 
averaged equation in Section~\ref{sec:periodic_unperturbed}, we will verify that 
there is only one periodic orbit. For now, it will be sufficient to know that 
$K(t)$ is conserved, without using its precise value. The form of $G$ imposes 
however a lower bound on $K$.

\begin{proposition}
The equation~\eqref{eq:dtw} admits a stationary 
solution uniquely determined by the value of $K(0)$, provided $K(0) > \frac{z_0}2$.
\end{proposition}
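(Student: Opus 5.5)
We start from the one-parameter family of stationary solutions already identified, $w^*(x) = c(1-x^2)$ with $c = w^*(0) > 0$ (positivity being forced by $G>0$ in Proposition~\ref{prop:G}). Since $G$ is a strictly decreasing bijection from $(z_0,\infty)$ onto $(0,\infty)$, each such $w^*$ corresponds to a unique even $v^*(x) = G^{-1}\bigpar{c(1-x^2)}$, which takes values in $(z_0,\infty)$ and tends to $+\infty$ as $x\to1$. This matches the boundary condition on $v$. The corresponding odd $u^*$ is $u^*_c(x) = \int_0^x G^{-1}\bigpar{c(1-y^2)}\6y$, which satisfies $u^*_c(0)=0$. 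Stationarity of $u$ itself follows from $\partial_x G(v^*) = -2xc = -2xG(v^*(0))$. The task therefore reduces to showing that the map $c\mapsto K(c) := \int_0^1 u^*_c(x)\6x$ is a bijection from $(0,\infty)$ onto $(z_0/2,\infty)$. The prescribed value $K(0)$, conserved by Proposition~\ref{prop:first_integral_averaged}, then selects exactly one $c$.

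First I would integrate by parts to get
\begin{equation}
 K(c) = \int_0^1 (1-x)\,G^{-1}\bigpar{c(1-x^2)}\6x\;.
\end{equation}
Some care is needed here, because $u^*_c$ may diverge at $x=1$. The boundary term vanishes provided $(1-x)u^*_c(x)\to0$. By the asymptotics~\eqref{eq:G_derivatives}, $G^{-1}(w)\sim G_1/w$ as $w\to0$, so $v^*(x)\sim G_1/(2c(1-x))$. Hence $u^*_c$ grows only logarithmically, and both $K(c)$ and the boundary term behave well.

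Next I would establish monotonicity. $G^{-1}$ is strictly decreasing, so for each $x<1$ the integrand $(1-x)G^{-1}(c(1-x^2))$ is strictly decreasing in $c$. It follows that $K$ is strictly decreasing, which gives uniqueness.

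Finally I would identify the range, and I expect this limit computation to be the main point. As $c\to\infty$, $G^{-1}(c(1-x^2))\to z_0$ for every $x<1$. Dominated convergence applies on $[0,1-\delta]$, with the bound $G^{-1}\leq G^{-1}(c_1(1-x^2))$ for $c\geq c_1$. Near $x=1$, the factor $(1-x)$ times the $1/(1-x)$ bound stays integrable. Together these give $K(c)\to z_0\int_0^1(1-x)\6x = z_0/2$. As $c\to0$, $G^{-1}(c(1-x^2))\geq G^{-1}(c)\to\infty$ uniformly, so $K(c)\to\infty$. Continuity of $K$, again by dominated convergence, together with the intermediate value theorem then shows that every value $K(0)>z_0/2$ is attained exactly once. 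Conversely, values $K(0)\leq z_0/2$ admit no stationary solution, which explains the hypothesis in the statement.
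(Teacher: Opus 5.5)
Your proof is correct and follows the same route as the paper. Both parametrise the stationary solutions by $c=w^*(0)$, rewrite $K$ as $\int_0^1(1-x)\,G^{-1}\bigl(c(1-x^2)\bigr)\,\mathrm{d}x$, and use that this is strictly decreasing in $c$ with limits $+\infty$ as $c\to0$ and $z_0/2$ as $c\to\infty$.

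The paper obtains that integral by exchanging the order of integration, which is legitimate since the integrand is nonnegative, and gives no further justification. You integrate by parts instead and supply what the paper leaves implicit: control of the boundary term via $G^{-1}(w)\sim G_1/w$, dominated convergence for continuity and for the limit, and the intermediate value theorem. Your threshold $z_0/2$ is the correct one; the paper's final line says $z_0$, which is a typo.
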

\begin{proof}
Writing $c = w^*(0)$, we have 
\begin{equation}
 u^*(x) = \int_0^x G^{-1}(c(1 - x_1^2)) \6x_1\;.
\end{equation} 
This implies that if $K^*(c)$ denotes the constant value of $K(t)$, then
\begin{align}
 K^*(c)
 &= \int_0^1 \int_0^x G^{-1}(c(1 - x_1^2)) \6x_1 \6x \\
 &= \int_0^1 (1-x_1) G^{-1}(c(1 - x_1^2)) \6x_1\;.
\end{align}
By Proposition~\ref{prop:G}, this is a decreasing function of $c$, satisfying
\begin{equation}
 \lim_{c\to0} K^*(c) = \infty\;, \qquad 
 \lim_{c\to\infty} K^*(c) = \frac{z_0}2\;.
\end{equation} 
Therefore, there exists a unique value of $c$ for any $K(0) = K^*(c) > z_0$. 
\end{proof}

Note that in terms of $v$, the invariant has the expression
\begin{equation}
 K = \int_0^1 \int_0^x v(y)\6y \6x 
 = \int_0^1 (1-y) v(y)\6y\;.
\end{equation} 
In terms of $w$, we conclude that the codimension $1$ manifolds  
\begin{equation}
\label{eq:def_SofK} 
 \sS(K) = 
 \biggset{w:[0,1]\to\R \;\colon\; w(1) = 0, w'(0) = 0, 
 \int_0^1 (1-x)G^{-1}(w(x))\6x = K}
\end{equation} 
are invariant under~\eqref{eq:dtw}. The admissible sets $\set{\sS(K)}_{K > z_0/2}$ 
provide a foliation of the function space. 
As pointed out above, the actual value of $K$ is given by 
$-2M_- + \Order{\eps}$, and 
since $z_0 = 4(M_+ - M_-) + \Order{\eta^{1/3}}$ 
(see the proof of Proposition~\ref{prop:G}), 
the condition $K > \frac{z_0}2$ translates to
$-2M_- + \Order{\eps} \leqs 2(M_+ - M_-) + \Order{\eta^{1/3}}$, 
that is, $M_+ < \Order{\eps} + \Order{\eta^{1/3}}$. In practice, our 
sign convention implies that $M$ has to be negative, and smaller values of $\abs{M}$ 
correspond to steeper density profiles $n(t,x)$.


\subsection{Stability of the stationary solution}
\label{ssec:periodic_stability} 

The equation linearised around a stationary solution $w^*(x)$ is given by 
\begin{equation}
\label{eq:w_linearised} 
 \partial_t w_1(t,x) = H(x)\bigbrak{\partial_{xx} w_1(t,x) + 2 w_1(t,0)}\;, 
\end{equation} 
where 
\begin{equation}
 H(x) = -G'(v^*(x))\;.
\end{equation} 

\begin{proposition}
\label{prop:H}
The function $H$ has the following properties:
\begin{itemize}
\item   $H$ is even, strictly positive on $(-1,1)$, and $H(1) = 0$;
\item   $H$ is strictly decreasing on $(0,1)$ and 
\begin{equation}
\label{eq:limit_Hprime1} 
 \lim_{x\to1} H'(x) = 0\;;
\end{equation} 
\item   $\lim_{x\to1} H''(x) > 0$.  
\end{itemize}
\end{proposition}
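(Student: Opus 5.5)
The plan is to write $H$ as a composition and read off each property from Proposition~\ref{prop:G} together with the explicit form of the stationary solution. Put $c = w^*(0) > 0$. Then $w^*(x) = c(1-x^2)$ and $v^*(x) = G^{-1}(c(1-x^2))$, so
\begin{equation}
 H(x) = -G'\bigl(G^{-1}(c(1-x^2))\bigr) = \Psi\bigl(c(1-x^2)\bigr)\;, \qquad \Psi(w) := -G'(G^{-1}(w))\;,
\end{equation}
where $\Psi$ is defined for $w\in(0,\infty)$. Evenness of $H$ is then immediate. Since $G$ is strictly decreasing, $G'<0$ wherever it is nonzero, and we get $H>0$ on $(-1,1)$. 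As $x\to1$ we have $w^*\to0$, hence $v^*\to+\infty$, and \eqref{eq:G_derivatives} gives $G'(v^*)\to0$. So $H(1)=0$, understood as a limit.

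\textbf{Monotonicity.} Differentiating, $\Psi'(w) = -G''(G^{-1}(w))/G'(G^{-1}(w))$. This is strictly positive, since strict convexity gives $G''>0$ and strict decrease gives $G'<0$. Then $H'(x) = -2cx\,\Psi'(c(1-x^2))$, which is negative on $(0,1)$.

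\textbf{Behaviour near $x=1$.} To get the limits, I will compute $\Psi$ and its derivatives as $w\to0$ using the large-$z$ asymptotics \eqref{eq:G_derivatives}:
\begin{itemize}
\item From $w = G(z) = G_1 z^{-1}(1+\Order{z^{-1}})$ we get $z = G^{-1}(w) = G_1 w^{-1}(1+\Order{w})$.
\item Hence $\Psi(w) = G_1 z^{-2}(1+\Order{z^{-1}}) = G_1^{-1}w^2(1+\Order{w})$.
\item Similarly $\Psi'(w) = G''(z)/(-G'(z)) = 2z^{-1}(1+\Order{z^{-1}}) = 2G_1^{-1}w(1+\Order{w})$, so $\Psi'(0^+)=0$.
\item This gives $H'(x) = -2cx\Psi'(c(1-x^2)) \to 0$, which is \eqref{eq:limit_Hprime1}.
\end{itemize}

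For the second derivative,
\begin{equation}
 H''(x) = -2c\,\Psi'(c(1-x^2)) + 4c^2x^2\,\Psi''(c(1-x^2))\;.
\end{equation}
The first term tends to $0$. So the claim reduces to showing $\Psi''(w)\to 2/G_1$ as $w\to0$, which would give $\lim_{x\to1}H''(x) = 8c^2/G_1>0$.

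\textbf{Main obstacle: justifying $\Psi''(0^+)$.} Only here is a little care needed. We have
\begin{equation}
 \Psi''(w) = \frac{\frac{\6}{\6z}\bigl(-G''/G'\bigr)(z)}{G'(z)}\;, \qquad z = G^{-1}(w)\;,
\end{equation}
which involves $G'''$. Asymptotics of a derivative cannot be obtained by differentiating an $\Order{\cdot}$ relation, so each of $G',G'',G'''$ has to be expanded separately. This is exactly what \eqref{eq:G_derivatives} provides, since it lists the expansions of $G'$, $G''$ and $G'''$ individually. Inserting them gives
\begin{equation}
 \frac{\6}{\6z}\Bigl(-\frac{G''}{G'}\Bigr)
 = \frac{-G'''G' + (G'')^2}{(G')^2}
 = \frac{-6+4}{z^2}\bigl(1+\Order{z^{-1}}\bigr)\;.
\end{equation}
Dividing by $G'(z) = -G_1z^{-2}(1+\Order{z^{-1}})$ yields $\Psi''\to 2/G_1$.

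The only point I would double-check is whether \eqref{eq:G_derivatives} really holds with $\Order{z^{-1}}$ corrections at the level of $G'''$. I expect it does, since the proof of Proposition~\ref{prop:G} expands the integrand uniformly in $s$. If it did not, the fallback is a direct expansion of the integral defining $G$ in powers of $1/z$, which is legitimate because $\ph$ is bounded.
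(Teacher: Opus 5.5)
Your proof is correct and is essentially the paper's argument: in both, $H=-G'\circ G^{-1}\circ w^*$ is differentiated by the chain rule, the large-$z$ expansions of $G'$, $G''$, $G'''$ from Proposition~\ref{prop:G} are inserted, and one obtains $\lim_{x\to1}H''(x)=8c^2/G_1$. One small correction: the strict signs $G'<0<G''$ come from the integral formulas for $G'$ and $G''$, not from strict monotonicity and convexity alone (the paper makes the same shortcut). Your factorisation through $\Psi$ also gives the correct intermediate asymptotics $H(1-y)\sim 4c^2y^2/G_1$ and $H'(1-y)\sim -8c^2y/G_1$. The paper's displayed intermediates, by contrast, have slips that do not affect the final limits: it writes $H(1-y)=2cy[1+\Order{y}]$, and it gets a positive $H'(1-y)$ because it differentiates in $y$ rather than $x$.
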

\begin{proof}
$H$ is even because $v^*$ is even. It is positive because $G$ is decreasing on $[0,1)$, 
and it is decreasing on $(0,1)$ by convexity of $G$. 
To analyse the behaviour as $x\to 1$, we write $x = 1 - y$, and use the 
fact that 
\begin{equation}
 w^*(1-y) = 2cy + \Order{y^2}
\end{equation} 
where $c = w^*(0)$. By Proposition~\ref{prop:G}, this yields 
\begin{equation}
 v^*(1-y) = G^{-1} \bigpar{2cy + \Order{y^2}} 
 = \frac{G_1}{2cy} \bigbrak{1 + \Order{y}}\;.
\end{equation} 
Taking derivatives of the relation $w^*(x) = G(v^*(x))$, we further obtain 
\begin{equation}
 \partial_x v^*(1-y) = -\frac{G_1}{2cy^2} \bigbrak{1 + \Order{y}}\;, \qquad 
 \partial_{xx} v^*(1-y) = \frac{G_1}{cy^3} \bigbrak{1 + \Order{y}}\;.
\end{equation} 
It follows that 
\begin{equation}
 H(1-y) = -G'\bigpar{v^*(1-y)}
 = 2cy \bigbrak{1 + \Order{y}}\;, 
\end{equation} 
which implies $H(1) = 0$. Furthermore, 
\begin{equation}
 H'(1-y) = -G''\bigpar{v^*(1-y)} \partial_x v^*(1-y)
 = \frac{8c^2}{G_1} y \bigbrak{1 + \Order{y}}\;, 
\end{equation} 
which proves~\eqref{eq:limit_Hprime1}.
Finally, we find 
\begin{align}
 H''(1-y) 
 &= -G'''\bigpar{v^*(1-y)} \partial_x v^*(1-y)^2
 -G''\bigpar{v^*(1-y)} \partial_{xx} v^*(1-y) \\
 &= \frac{8c^2}{G_1} \bigbrak{1 + \Order{y}}\;,
\end{align}
which converges to $8c^2/G_1 > 0$ as $y\to0$. 
\end{proof}

Equation~\eqref{eq:w_linearised} also admits a first integral, as a consequence 
of the first integral~\eqref{eq:K_first_integral}. Indeed, since 
\begin{equation}
 G^{-1}(w^*(x) + w_1(x))
 = G^{-1}(w^*(x)) - \frac{1}{H(x)}w_1(x) + \Order{\norm{w_1(x)}^2}\;, 
\end{equation} 
we conclude that the quantity 
\begin{equation}
 \tilde K(t) = \int_0^1 \frac{1-x}{H(x)} w_1(t,x)\6x
\end{equation} 
should be conserved. This can be checked by a direct computation:

\begin{proposition}
\label{prop:Ktilde} 
The integral $\tilde K(t)$ is conserved by the evolution of~\eqref{eq:w_linearised}. 
\end{proposition}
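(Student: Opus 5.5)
The plan is a direct computation: differentiate $\tilde K$ in time, use~\eqref{eq:w_linearised} to replace $\partial_t w_1$, and observe that the factor $1/H(x)$ in the weight cancels the factor $H(x)$ in the equation. Concretely, I would write
\begin{equation}
 \frac{\6}{\6t}\tilde K(t)
 = \int_0^1 \frac{1-x}{H(x)}\,\partial_t w_1(t,x)\6x
 = \int_0^1 (1-x)\bigbrak{\partial_{xx} w_1(t,x) + 2w_1(t,0)}\6x\;.
\end{equation}
The second term is simply $2w_1(t,0)\int_0^1(1-x)\6x = w_1(t,0)$. For the first term I would integrate by parts once:
\begin{equation}
 \int_0^1 (1-x)\partial_{xx}w_1\6x
 = \Bigbrak{(1-x)\partial_x w_1}_{x=0}^{x=1} + \int_0^1 \partial_x w_1\6x
 = -\partial_x w_1(t,0) + w_1(t,1) - w_1(t,0)\;.
\end{equation}
Here I assume the boundary term at $x=1$ vanishes.

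The next step is to insert the boundary conditions inherited by the perturbation $w_1$ from~\eqref{eq:w_bc}. Since both $w$ and $w^*$ are even and vanish at $x=1$, the perturbation satisfies $\partial_x w_1(t,0)=0$ and $w_1(t,1)=0$. The right-hand side then reduces to $-w_1(t,0)+w_1(t,0)=0$, which gives conservation of $\tilde K$.

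The only delicate point is justifying the manipulations near $x=1$, where $H$ vanishes. First, the weight $(1-x)/H(x)$ is harmless. By the proof of Proposition~\ref{prop:H}, $H(1-y)=2cy\bigbrak{1+\Order{y}}$, so $(1-x)/H(x)\to 1/(2c)$ is bounded. Hence $\tilde K$ is well defined whenever $w_1(t,\cdot)\in L^1$, and differentiation under the integral sign is legitimate for sufficiently regular $w_1$. Second, I need the boundary term $(1-x)\partial_x w_1(t,x)$ to tend to $0$ as $x\to1$. This holds for instance if $\partial_x w_1(t,\cdot)$ is bounded, or even just $o((1-x)^{-1})$, near $x=1$. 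I would state this as part of the functional setting, for example $w_1(t,\cdot)\in H^2(0,1)$, in which case $\partial_x w_1$ is continuous up to $x=1$. This regularity near the degenerate endpoint is the step I expect to require care; the algebra itself is immediate.
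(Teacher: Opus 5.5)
Your computation is the paper's proof: differentiate, let the weight $1/H$ cancel the factor $H$ in \eqref{eq:w_linearised}, integrate by parts once, and use $\partial_x w_1(t,0)=0$ and $w_1(t,1)=0$. It is correct. Splitting off the constant term $2w_1(t,0)$ first, rather than integrating the whole bracket by parts as the paper does, is only cosmetic.

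One claim in your regularity paragraph is wrong: $(1-x)/H(x)$ is \emph{not} bounded near $x=1$. Proposition~\ref{prop:H} states $H(1)=0$ and $\lim_{x\to1}H'(x)=0<\lim_{x\to1}H''(x)$, so $H$ vanishes quadratically at $x=1$. Concretely, $G'(z)\simeq -G_1/z^2$ and $v^*(1-y)\simeq G_1/(2cy)$ give $H(1-y)=\frac{4c^2}{G_1}y^2\bigl[1+\Order{y}\bigr]$. The line $H(1-y)=2cy[1+\Order{y}]$ in that proof is a slip and is incompatible with $H'(1)=0$.

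Hence $(1-x)/H(x)\sim G_1/(4c^2(1-x))$, and $\tilde K$ is not defined for arbitrary $w_1\in L^1$. It is still well defined in your $H^2$ setting, but for a different reason: $w_1(t,1)=0$ together with $w_1\in H^2$ forces $|w_1(t,x)|\le C(1-x)$, so the integrand is bounded. Equivalently, $\tilde K(t)=\pscal{1-x}{w_1(t,\cdot)}$ in $\sH$, and $1-x\in\sH$ because $(1-x)^2/H$ is bounded. Differentiating under the integral sign then causes no trouble, since $(1-x)\partial_t w_1/H=(1-x)\bigl[\partial_{xx}w_1+2w_1(t,0)\bigr]$.
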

\begin{proof}
We have 
\begin{align}
\frac{\6}{\6t} \tilde K(t) 
&= \int_0^1 \frac{1-x}{H(x)} \partial_t w_1(t,x)\6x \\
&= \int_0^1 (1-x)\bigbrak{\partial_{xx}w_1(t,x) + 2 w_1(t,0)} \6x \\
&= (1-x)\bigbrak{\partial_x w_1(t,x) + 2x w_1(t,0)} \biggr|_0^1 
+ \int_0^1 \bigbrak{\partial_x w_1(t,x) + 2x w_1(t,0)} \6x \\
&= -\partial_x w_1(t,0) + w_1(t,1) - w_1(t,0) + w_1(t,0) \\ 
&= 0
\end{align}
thanks to the boundary conditions~\eqref{eq:w_bc}.
\end{proof}

Geometrically speaking, the set
\begin{equation}
\label{eq:def_S0} 
 \sS_0 = 
 \biggset{w_1:[0,1]\to\R \;\colon\; w_1(1) = 0, w_1'(0) = 0, 
 \int_0^1 \frac{1-x}{H(x)}w_1(x)\6x = 0}
\end{equation} 
represents the tangent plane to the manifold $\sS(K)$ at $w = w^*$. 

Defining linear operators $\sA_0$ and $\sA_1$ by 
\begin{align}
 (\sA_0 w)(x) &= -H(x) w''(x)\;, \\
 (\sA_1 w)(x) &= -2H(x) w(0)\;,
\label{eq:A0A1} 
\end{align}
we can rewrite the linearised equation~\eqref{eq:w_linearised} as 
\begin{equation}
 \partial_t w_1(t,x) = -(\sA w_1)(t,x)\;, 
 \qquad 
 \sA = \sA_0 + \sA_1\;.
 \label{eq:def_sA} 
\end{equation} 
The stabiliy of $w^*(x)$ is thus related to the spectrum of the 
operator $\sA = \sA_0 + \sA_1$ acting on even functions that vanish in $\pm1$. 
Determining this spectrum is an instance of Sturm--Liouville problem.
The following result lists some basic properties of $\sA$ and $\sA_0$. 

\begin{proposition}
\label{prop:A0} 
The linear operators $\sA_0$, $\sA_1$ and $\sA = \sA_0 + \sA_1$ have the following properties.
\begin{enumerate}
\item   The linear operator $\sA_0$ is self-adjoint in $\sH = L^2([0,1], H(x)^{-1}\6x)$ 
and positive. Therefore, the spectrum of $\sA_0$ is included in $[\delta,+\infty)$ 
for some $\delta > 0$. 
\item   The function $w_0:x\mapsto 1-x^2$ is in the kernel of $\sA$.
\item   $\sA_1$ is a rank-$1$ operator.
\item   The plane $\sS_0$ is invariant under $\sA$ and transversal to 
the line $\vspan(w_0) = \setsuch{\lambda w_0}{\lambda\in\R}$. 
\end{enumerate}
\end{proposition}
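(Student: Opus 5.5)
I would verify the four items one at a time, working throughout on the domain $D = \{w \in \sH : w(1)=0,\ w'(0)=0,\ w''\in\sH\}$, i.e.\ even functions vanishing at $\pm1$, restricted to $[0,1]$.

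\textbf{Item 1.} The core is an integration by parts with the weight $H^{-1}$. For $w_1,w_2\in D$,
\begin{equation}
 \langle \sA_0 w_1, w_2\rangle_{\sH} = -\int_0^1 w_1''(x)w_2(x)\6x
 = \int_0^1 w_1'(x)w_2'(x)\6x\;.
\end{equation}
The boundary terms vanish because $w_2(1)=0$ and $w_1'(0)=0$. This expression is symmetric and nonnegative, so $\sA_0$ is symmetric and positive.

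To get self-adjointness rather than just symmetry, I would use the Friedrichs extension of the closed form $q(w)=\int_0^1 (w')^2\6x$. Its form domain is $\{w\in H^1 : w(1)=0\}$.

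The spectral gap then follows from a weighted Poincaré inequality $\int_0^1 w^2 H^{-1}\6x \leqs \delta^{-1}\int_0^1 (w')^2\6x$. This is the main obstacle, because $H^{-1}$ is not integrable at $x=1$: by Proposition~\ref{prop:H}, $H(1-y)\sim 2cy$.

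I would handle it with Hardy's inequality. Write $w(x) = -\int_x^1 w'$. Cauchy--Schwarz gives $w(x)^2\leqs (1-x)\int_x^1 (w')^2$. Hence
\begin{equation}
 \int_0^1 \frac{w^2}{H} \6x \leqs \int_0^1 \frac{1-x}{H(x)}\6x\;\norm{w'}_{L^2}^2\;.
\end{equation}
The integral on the right is finite, since $(1-x)/H(x)$ is bounded by the asymptotics of $H$ near $1$ and by its positivity on $[0,1)$. This gives $\delta>0$.

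I would also note that the same bound shows the embedding of the form domain into $\sH$ is compact. Indeed, the weight $(1-x)/H$ is bounded and $w(x)^2 \leqs (1-x)\norm{w'}^2$ makes the tail near $1$ uniformly small. The spectrum of $\sA_0$ is therefore discrete, which will be useful later.

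\textbf{Item 2.} This is a direct check. For $w_0(x) = 1-x^2$ we have $w_0''=-2$ and $w_0(0)=1$, so $\sA w_0 = 2H - 2H = 0$. Also $w_0\in D$.

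\textbf{Item 3.} Here $\sA_1 w = -2w(0)\,H$. Its range is spanned by $H$, so it is rank one. One should note that $H\in\sH$, since $\int H^2/H = \int H<\infty$. The map $w\mapsto w(0)$ is bounded on the form domain, since $|w(0)|\leqs\norm{w'}_{L^2}$.

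\textbf{Item 4.} I would take the functional $\ell(w)=\int_0^1 (1-x)H^{-1}w\6x = \langle 1-x, w\rangle_{\sH}$, so that $\sS_0=\ker\ell\cap D$.

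For invariance, I would compute $\ell(\sA w)$ for arbitrary $w\in D$:
\begin{equation}
 \ell(\sA w) = -\int_0^1 (1-x)\bigbrak{w''(x)+2w(0)}\6x\;.
\end{equation}
This is exactly the integration by parts of Proposition~\ref{prop:Ktilde}, and it vanishes. So $\sA$ maps all of $D$ into $\ker\ell$, and in particular leaves $\sS_0$ invariant.

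For transversality, I would compute $\ell(w_0)=\int_0^1 (1-x)(1-x^2)/H\6x$. The integrand is strictly positive, so $\ell(w_0)>0$ and $w_0\notin\sS_0$. Since $\sS_0$ has codimension one, this gives $D=\sS_0\oplus\vspan(w_0)$, via the decomposition $w = \bigpar{w-\tfrac{\ell(w)}{\ell(w_0)}w_0} + \tfrac{\ell(w)}{\ell(w_0)}w_0$.
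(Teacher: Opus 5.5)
Your proof is correct and follows the same route as the paper: integration by parts against the weight $H^{-1}$ gives symmetry and positivity, and $\sA w_0 = 2H - 2H = 0$ is a direct check. Likewise $\sA_1$ is rank one by inspection, invariance of $\sS_0$ comes from the computation of Proposition~\ref{prop:Ktilde}, and transversality follows from $\pscal{1-x}{w_0} > 0$. Your bound $\int_0^1 w^2 H^{-1}\6x \leqs \norm{w'}_{L^2}^2 \int_0^1 (1-x)H(x)^{-1}\6x$, together with the Friedrichs extension and the compactness remark, actually supplies a step the paper skips. The paper infers the gap $\delta>0$ from strict positivity and a trivial kernel alone, and these do not by themselves exclude spectrum accumulating at $0$.
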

\begin{proof}
We work on the Hilbert space $\sH$ of functions $f:[0,1]\to\R$ satisfying 
the boundary conditions $f'(0) = 0$ and $f(1) = 0$, equipped with the inner product 
\begin{equation}
\label{eq:weighted_inner_product} 
 \pscal{f}{g} = \int_0^1 f(x)g(x) \frac{\6x}{H(x)}\;.
\end{equation} 
The boundary condition $f(1) = 0$ is actually a consequence of $\norm{f}_\sH
= \pscal{f}{f}^{1/2}$ being finite. We have 
\begin{equation}
 \pscal{f}{\sA_0 g} 
 = -\int_0^1 f(x) g''(x)\6x 
 = \int_0^1 f'(x)g'(x)\6x 
 = \pscal{\sA_0 f}{g}\;,
\end{equation} 
since the boundary terms vanish owing to the boundary conditions. 
This shows that $\sA_0$ is self-adjoint, while positivity follows from 
the fact that 
\begin{equation}
\label{eq:L1norm} 
 \pscal{f}{\sA_0 f} = \int_0^1 f'(x)^2\6x 
 = \norm{(f')^2}_{L^1}
 \geqs 0\;.
\end{equation} 
In fact, $\pscal{f}{\sA_0 f} = 0$ if and only if $f$ is constant, 
and the only constant function in $\sH$ is the identically zero function. 
This implies the claim on the spectrum of $\sA_0$. 

The fact that $\sA w_0 = 0$ follows from a direct computation, while 
$\sA_1$ has rank $1$ by definition. Finally, the invariance of 
$\sS_0$ is a consequence of Proposition~\ref{prop:Ktilde}, and one 
easily checks that $\pscal{1-x}{w_0} > 0$, implying transversality. 
\end{proof}

Note that we have obtained a decomposition 
\begin{equation}
\label{eq:decomp_H} 
 \sH = \sS_0 \oplus \vspan(w_0)
\end{equation} 
of $\sH$ into subspaces invariant under $\sA$, where $\vspan{w_0}$ 
corresponds to the eigenvalue $0$. Our aim is now to show that 
all eigenvalues of $\sA$ restricted to $\sS_0$ have a strictly 
positive real part. 
To this end we analyse the Sturm--Liouville eigenvalue problem 
\begin{equation}
\label{eq:w_SL} 
 \sA w = \lambda w 
\end{equation} 
for $w\in\sH$. The following lemma is a classical result from the theory 
of rank $1$ perturbations.

\begin{lemma}
\label{lem:rank1}
Let $\lambda$ be an eigenvalue of $\sA$ which is not an eigenvalue of $\sA_0$. Then 
\begin{equation}
\label{eq:defF} 
 F(\lambda) := H(0)\pscal{\delta_0}{(\sA_0-\lambda)^{-1}H} = \frac12\;, 
\end{equation} 
where $(\sA_0-\lambda)^{-1}$ denotes the resolvent of $\sA_0$, 
and $\delta_0$ is the Dirac distribution at $0$, so that 
$H(0)\pscal{\delta_0}{f} = f(0)$. 
\end{lemma}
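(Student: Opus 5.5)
Let $w\in\sH$, $w\neq0$, satisfy $\sA w=\lambda w$, and assume $\lambda$ is not in the spectrum of $\sA_0$. Using the definitions~\eqref{eq:A0A1}, the eigenvalue equation reads
\begin{equation}
 (\sA_0-\lambda)w = -\sA_1 w = 2w(0)\,H\;.
\end{equation}
The plan is to invert $\sA_0-\lambda$ on the right-hand side, evaluate the result at $x=0$, and read off a scalar condition.

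\textbf{Step 1: the function $H$ lies in $\sH$.} By Proposition~\ref{prop:H}, $H$ is even, so $H'(0)=0$, and $H(1)=0$. Moreover $\int_0^1 H(x)^2\,\frac{\6x}{H(x)}=\int_0^1 H<\infty$. Hence $H\in\sH$ and the resolvent may be applied to it, giving
\begin{equation}
 w = 2w(0)\,(\sA_0-\lambda)^{-1}H\;.
\end{equation}

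\textbf{Step 2: $w(0)\neq0$.} If $w(0)=0$, then $(\sA_0-\lambda)w=0$. Since $\lambda$ is not an eigenvalue of $\sA_0$, this forces $w=0$, a contradiction.

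\textbf{Step 3: evaluation at $0$.} Apply the functional $f\mapsto f(0)=H(0)\pscal{\delta_0}{f}$ to the identity of Step~1. This gives
\begin{equation}
 w(0) = 2w(0)\,H(0)\pscal{\delta_0}{(\sA_0-\lambda)^{-1}H} = 2w(0)F(\lambda)\;.
\end{equation}
Dividing by $w(0)\neq0$ yields $F(\lambda)=\tfrac12$.

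\textbf{Main obstacle: point evaluation on the resolvent.} One must check that $g=(\sA_0-\lambda)^{-1}H$ has a well-defined value at $0$, so that $\pscal{\delta_0}{g}$ makes sense. For this I will use the form domain. From~\eqref{eq:L1norm}, the domain of $\sA_0$ consists of functions with $g'\in L^2$ and $g(1)=0$. Then
\begin{equation}
 \abs{g(0)} = \biggabs{\int_0^1 g'(x)\6x} \leqs \norm{g'}_{L^2}\;,
\end{equation}
so $g$ is continuous and evaluation at $0$ is a bounded functional on that domain.

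Alternatively, one can avoid the functional-analytic setting. Solve $-Hg''-\lambda g=H$, i.e.\ $g''=-1-\lambda g/H$, directly as an ODE with $g'(0)=0$. Since $H$ vanishes only at $x=1$, this solution is $\cC^2$ on $[0,1)$. Under either route, the argument applies equally to complex $\lambda$.
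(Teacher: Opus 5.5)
Your proof is correct and is the paper's argument: rewrite $\sA w=\lambda w$ as $(\sA_0-\lambda)w = 2w(0)H$, rule out $w(0)=0$ because $\lambda$ is not an eigenvalue of $\sA_0$, invert, then evaluate at $0$ and divide by $w(0)$. The paper leaves implicit your extra checks that $H\in\sH$ and that evaluation at $0$ makes sense for $(\sA_0-\lambda)^{-1}H$; your $g'\in L^2$, $g(1)=0$ bound is really a statement about the form domain, but it applies because the operator domain is contained in it.
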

\begin{proof}
The eigenvalue equation~\eqref{eq:w_SL} can be rewritten as 
\begin{equation}
\label{eq:A0w} 
 \bigpar{(\sA_0 - \lambda) w}(x) = 2w(0) H(x)\;,
\end{equation} 
which is equivalent to 
\begin{equation}
\label{eq:A0w1} 
 w(x) = 2w(0) \bigpar{(\sA_0 - \lambda)^{-1}H}(x)\;.
\end{equation} 
If $w(0)$ were equal to $0$, \eqref{eq:A0w} would imply that $w$ is an 
eigenfunction of $\sA_0$, which is excluded by our assumption that 
$\lambda$ is not an eigenvalue of $\sA_0$. Therefore, we may evaluate~\eqref{eq:A0w1}
at $x = 0$, and divide by $w(0)$, which yields 
\begin{equation}
 1 = 2 \bigpar{(\sA_0 - \lambda)^{-1}H}(0)\;.
\end{equation} 
This is equivalent to~\eqref{eq:defF}. 
\end{proof}

Since $w(0)$ is in the kernel of $\sA$, $0$ is an eigenvalue of $\sA$, and we should have 
$F(0) = \frac12$. This can be checked directly. 

\begin{proposition}
\label{prop:F(0)} 
One has $F(0) = \frac12$. 
\end{proposition}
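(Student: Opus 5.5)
We need $F(0)=H(0)\langle\delta_0,\sA_0^{-1}H\rangle = (\sA_0^{-1}H)(0)$. The plan is to compute $f:=\sA_0^{-1}H$ explicitly by solving the boundary value problem it defines, and then evaluate it at $0$.

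First, $f$ is characterised as the unique element of $\sH$ with $-H(x)f''(x)=H(x)$. Uniqueness holds because $\sA_0$ is positive with spectrum in $[\delta,\infty)$ by Proposition~\ref{prop:A0}, so $0$ is not in its spectrum. Since $H>0$ on $[0,1)$ by Proposition~\ref{prop:H}, we may divide by $H$. The problem then reduces to
\begin{equation}
 f''(x)=-1\;,\qquad f'(0)=0\;,\qquad f(1)=0\;.
\end{equation}
Its solution is $f(x)=\frac12(1-x^2)=\frac12 w_0(x)$.

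Next, we check that this $f$ genuinely lies in $\sH$. We need $\int_0^1 f(x)^2\,\6x/H(x)<\infty$. Near $x=1$ we have $H(1-y)=2cy(1+\Order{y})$ by Proposition~\ref{prop:H}, while $f^2\sim 2y^2$. The integrand is therefore bounded, and $f\in\sH$ satisfies the boundary conditions. Hence $(\sA_0^{-1}H)(0)=f(0)=\frac12$, which gives $F(0)=\frac12$.

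As a consistency check, this matches the fact that $w_0$ lies in the kernel of $\sA$. Indeed, taking $w=w_0$ and $\lambda=0$ in~\eqref{eq:A0w1} gives $w_0=2w_0(0)\,\sA_0^{-1}H=2\sA_0^{-1}H$, which is the same identity. The only delicate point is justifying that $\sA_0^{-1}H$ is well defined and equals the classical ODE solution. Here $H$ itself lies in $\sH$, since $\int_0^1 H\,\6x<\infty$, and the degenerate weight at $x=1$ is controlled by the asymptotics of Proposition~\ref{prop:H}.
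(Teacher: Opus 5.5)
Your proof is correct and follows the paper's argument. Setting $f=\sA_0^{-1}H$, the equation $\sA_0 f=H$ reduces to $f''=-1$ with $f'(0)=0$ and $f(1)=0$, which gives $f(x)=\frac12(1-x^2)$ and hence $F(0)=f(0)=\frac12$. Your extra checks (that $f\in\sH$, and the consistency with $w_0\in\ker\sA$) are fine, up to a harmless slip: $f(1-y)^2\sim y^2$ rather than $2y^2$, which still leaves $f^2/H$ bounded near $x=1$.
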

\begin{proof}
Let $v = \sA_0^{-1}H$. This means that $\sA_0 v = H$, which is equivalent to 
$v''(x) = -1$.
The solution in $\sH$ is $v(x) = \frac12(1-x^2)$. 
In particular, $F(0) = H(0)\pscal{\delta_0}{v} = v(0) = \frac12$. 
\end{proof}

The resolvent can be represented as the Laplace transform 
\begin{equation}
\label{eq:resolvent_Laplace} 
 (\sA_0 - \lambda)^{-1} 
 = \int_0^\infty \e^{\lambda t} \e^{-t\sA_0} \6t\;,
\end{equation} 
where $\e^{-t\sA_0}$ is the \lq\lq heat kernel\rq\rq\ associated with $\sA_0$. 
It is defined by $u(t,x) = (\e^{-t\sA_0}g)(x)$ being the solution of 
\begin{align}
 \partial_t u(t,x) &= -\sA_0 u(t,x)\;, \\  
 \lim_{t\to0} u(t,x) &= g(x)\;.
\label{eq:heat} 
\end{align} 
By the Hille--Yosida theorem, since the spectrum of $\sA_0$ is contained in 
$[\delta,+\infty)$, the operators $\e^{-t\sA_0}$ form a strongly continuous 
semigroup with generator $-\sA_0$. Then a proof of the representation~\eqref{eq:resolvent_Laplace} 
of the resolvent can be found, for instance, in~\cite[Proposition~4.11]{Hairer_LN_2009}.

It follows from~\eqref{eq:defF} and~\eqref{eq:resolvent_Laplace} that we can rewrite 
\begin{equation}
 F(\lambda) = \int_0^\infty \e^{\lambda t} q(t) \6t\;, \qquad 
 q(t) := H(0)\pscal{\delta_0}{\e^{-t\sA_0}H}\;,
\end{equation} 
that is, $F$ is the Laplace transform of a positive measure. 

\begin{proposition}
\label{prop:a} 
The function $q$ has the following properties:
\begin{itemize}
\item   $q(0) > 0$. 
\item   $t\mapsto q(t)$ is continuous on $[0,\infty)$;
\item   there exists a constant $C > 0$ such that 
$\abs{q(t)} \leqs C\e^{-\delta t}$ for all $t\geqs0$, where $\delta$ is the bottom of the spectrum 
of $\sA_0$, cf.\ Proposition~\ref{prop:A0};
\item   $q(t)\geqs 0$ for all $t\geqs0$. 
\end{itemize}
\end{proposition}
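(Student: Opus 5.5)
Since $H(0)\pscal{\delta_0}{f} = f(0)$, we have $q(t) = u(t,0)$, where $u$ solves the heat-type problem~\eqref{eq:heat} with initial datum $g = H$:
\begin{equation}
 \partial_t u(t,x) = H(x)\,\partial_{xx} u(t,x)\;, \qquad \partial_x u(t,0) = 0\;, \quad u(t,1) = 0\;, \quad u(0,\cdot) = H\;.
\end{equation}
All four properties will be read off from this parabolic equation. The first is immediate: $q(0) = H(0) > 0$ by Proposition~\ref{prop:H}.

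For the decay bound, I would use the spectral theorem for the self-adjoint operator $\sA_0$ on $\sH$ (Proposition~\ref{prop:A0}). Since the spectrum is contained in $[\delta,\infty)$, we get $\norm{\e^{-t\sA_0}H}_\sH \leqs \e^{-\delta t}\norm{H}_\sH$, and $\norm{H}_\sH^2 = \int_0^1 H\6x < \infty$. The difficulty is that point evaluation at $0$ is not bounded on $\sH$. I would handle it by smoothing: write $\e^{-t\sA_0} = \e^{-\sA_0/2}\e^{-(t-1/2)\sA_0}$ for $t\geqs 1$. The $\sH$ form domain embeds into $H^1$ near $0$, because $\pscal{f}{\sA_0 f} = \norm{f'}_{L^2}^2$ and $f(1)=0$ give $\abs{f(0)} \leqs \norm{f'}_{L^2}$. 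Hence
\begin{equation}
 \abs{u(t,0)}^2 \leqs \pscal{u(t)}{\sA_0 u(t)} \leqs \frac{C}{t}\,\e^{-2\delta (t-1)}\norm{H}_\sH^2\;,
\end{equation}
where the last step uses the elementary bound $\lambda \e^{-2\lambda s} \leqs C s^{-1}\e^{-2\delta s}$ on the spectrum. This gives exponential decay at rate $\delta$, up to adjusting constants; for the exact rate $\delta$, use $\lambda\e^{-2\lambda t}\leqs C_\delta \e^{-2\delta t}$ for $t\geqs1$.

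Continuity on $(0,\infty)$ follows from the same estimate, since $t\mapsto \sA_0^{1/2}u(t)$ is continuous in $\sH$ for $t>0$. Continuity at $t=0$ is more delicate, because we need $u(t,0)\to H(0)$. Here $H$ is smooth and satisfies the boundary conditions: it is even, so $H'(0)=0$, and $H(1)=0$. Moreover, $\sA_0 H = -HH''$ is bounded and lies in $\sH$ by Proposition~\ref{prop:H}. So $H$ lies in the domain of $\sA_0$, and the bound $\norm{\sA_0^{1/2}(u(t)-H)}_\sH \to 0$ together with the inequality $\abs{f(0)}\leqs \norm{f'}_{L^2}$ gives continuity at $0$.

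Positivity is the main obstacle. I would use a parabolic maximum principle on $[0,1]$, with the Neumann condition at $0$ handled by even reflection to $[-1,1]$ with Dirichlet data at $\pm1$. Because $H>0$ on $(-1,1)$, the equation $\partial_t u = H\partial_{xx}u$ is uniformly parabolic on compact subintervals. At an interior negative minimum, $\partial_{xx}u \geqs 0$ gives $\partial_t u \geqs 0$, so the minimum cannot decrease. The degeneracy $H(1)=0$ only matters near the boundary, where $u=0$ is imposed anyway. To make this rigorous, I would apply the argument to $u+\kappa\e^{t}$ and let $\kappa\to0$, or alternatively use the Beurling--Deny criterion: the Dirichlet form $\int (f')^2$ satisfies $\mathcal E(f^+)\leqs\mathcal E(f)$, so the semigroup is positivity preserving. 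Then $H\geqs0$ implies $u(t,\cdot)\geqs0$, and hence $q(t) = u(t,0) \geqs 0$. The Beurling--Deny route avoids regularity issues at the degenerate endpoint, so I would favour it.
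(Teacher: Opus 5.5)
Your proposal is correct. For decay and continuity it uses the same strategy as the paper: spectral calculus for the self-adjoint $\sA_0$, plus the fact that evaluation at $0$ is controlled by $\norm{\sA_0^{1/2}f}_{\sH}=\norm{f'}_{L^2}$. You justify that key point differently. The paper writes $q(t)=H(0)\pscal{\sA_0^{-1/2}\delta_0}{\e^{-t\sA_0}\sA_0^{1/2}H}$ and proves $\sA_0^{-1/2}\delta_0\in L^2$ by regularising the Dirac mass and computing $\lim\pscal{\varphi_\eps}{\sA_0^{-1}\varphi_\eps}$. Your inequality $\abs{f(0)}=\abs{\int_0^1 f'(x)\6x}\leqs\norm{f'}_{L^2}$, which uses only $f(1)=0$, is exactly the dual statement, obtained in one line.

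The smoothing step for $t\geqs1$ is an unnecessary detour. Your intermediate bound $\lambda\e^{-2\lambda s}\leqs Cs^{-1}\e^{-2\delta s}$ is also false for large $s$, although you then replace it with a correct one. Since you check yourself that $H$ lies in the domain, hence the form domain, of $\sA_0$, you get directly
\[
 \abs{q(t)} \leqs \norm{\e^{-t\sA_0}\sA_0^{1/2}H}_{\sH} \leqs \e^{-\delta t}\norm{H'}_{L^2}
 \qquad \text{for all } t\geqs0\;.
\]
This is the paper's estimate, and it also gives continuity on all of $[0,\infty)$ at once. The paper's spectral-measure argument for continuity at $0$ implicitly relies on the same two finiteness facts.

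For positivity, the paper simply invokes the maximum principle for~\eqref{eq:heat}. Your preferred Beurling--Deny argument is a genuinely different justification. The form $\int_0^1 (f')^2\6x$ has a lattice-closed domain and satisfies $\mathcal{E}(\abs{f})=\mathcal{E}(f)$, so $\e^{-t\sA_0}$ preserves positivity. Continuity of $\e^{-t\sA_0}H$ then upgrades almost-everywhere nonnegativity to $q(t)\geqs0$.

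What this buys is that it works directly with the self-adjoint realisation that defines $q$. It needs no classical regularity up to the degenerate endpoint $x=1$, where $H$ vanishes. The maximum-principle route is more elementary and pointwise. To make it rigorous, though, one must know that $u(t,\cdot)$ is smooth in the interior and continuous up to $x=1$ with $u(t,1)=0$, and that again comes from the form domain.
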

\begin{proof}
We have $q(0) = H(0)$, which is positive by Proposition~\ref{prop:H}. 

Continuity of $q$ at $0$ follows by writing 
$q(t) - q(0) = (\e^{-t\sA_0}H - H)(0)$ as an integral of $\e^{-t\lambda} - 1$ 
against the spectral measure of $\sA_0$. Continuity 
at any $t>0$ then follows from the semigroup property.

The exponential decay of $q(t)$ can be shown by writing 
\begin{equation}
 q(t) = H(0)\pscal{\delta_0}{\e^{-t\sA_0}H} 
 = H(0)\pscal{\sA_0^{-1/2}\delta_0}{\e^{-t\sA_0}\sA_0^{1/2}H}\;. 
\end{equation} 
Indeed, if $\sA_0^{1/2}H \in L^2$ and $\sA_0^{-1/2}\delta_0 \in L^2$, this entails 
the required bound 
\begin{equation}
 \abs{q(t)} 
 \leqs H(0)\e^{-\delta t} \bignorm{\sA_0^{1/2}H}_{L^2} \bignorm{\sA_0^{-1/2}\delta_0}_{L^2}\;,
\end{equation} 
since the spectrum of $\sA_0$ is included in $[\delta,\infty)$. 
We have $\sA_0^{1/2}H \in L^2$, because 
\begin{equation}
 \bignorm{\sA_0^{1/2}H}_{L^2}^2 
 = \pscal{\sA_0^{1/2}H}{\sA_0^{1/2}H}
 = \pscal{\sA_0H}{H}
 = \bignorm{(H')^2}_{L^1}
 < \infty\;,
\end{equation} 
cf.~\eqref{eq:L1norm}.
To show that $\sA_0^{-1/2}\delta_0\in L^2$, we write 
\begin{equation}
 \bignorm{\sA_0^{-1/2}\delta_0}_{L^2}^2 
 = \pscal{\sA_0^{-1/2}\delta_0}{\sA_0^{-1/2}\delta_0} 
 = \pscal{\delta_0}{\sA_0^{-1}\delta_0}
 = \lim_{\eps\to0} \pscal{\ph_\eps}{\sA_0^{-1}\ph_\eps}\;,
\end{equation} 
where $\ph_\eps$ is a regularisation of $\delta_0$. To this end, we fix a $\sC^\infty$ 
function $\ph$ supported on $[-1,1]$ and of integral $1$, and set $\ph_\eps(x) = 
\eps^{-1}\ph(\eps^{-1} x)$. Then $v_\eps = \sA_0^{-1}\ph_\eps$ satisfies 
\begin{equation}
 v_\eps''(x) = -\frac{1}{\eps} \frac{\ph(\eps^{-1}x)}{H(x)}\;.
\end{equation} 
Let $c_\eps = v_\eps'(-1)$. Since $\ph_\eps$ is supported on $[-\eps,\eps]$, 
$v_\eps'$ is constant outside this interval, while for $x\in[-\eps,\eps]$, 
\begin{equation}
 v_\eps'(x) = c_\eps - \int_{-1}^{\eps^{-1}x} \frac{\ph(z)}{H(\eps z)}\6z\;.
\end{equation} 
Since $v_\eps$ should be even, $v_\eps'$ should be odd and we set 
\begin{equation}
 c_\eps = \frac12 \int_{-1}^1 \frac{\ph(z)}{H(\eps z)} \6z\;.
\end{equation} 
Integrating once again, we find for $x\in[-\eps,\eps]$
\begin{equation}
  v_\eps(x) 
 = c_\eps(1-\eps) 
 + \int_{-\eps}^x \Bigbrak{c_\eps - \int_{-1}^{y/\eps} \frac{\ph(z)}{H(\eps z)}\6z} \6y
 = c_\eps + \Order{\eps}\;.
\end{equation} 
It follows that 
\begin{equation}
 \pscal{\ph_\eps}{\sA_0^{-1}\ph_\eps}
 = \pscal{\ph_\eps}{v_\eps} 
 = \frac{1}{\eps} \int_{-\eps}^\eps \ph(\eps^{-1}x) v_\eps(x)\frac{\6x }{H(x)}
 = \int_{-1}^1 \ph(y) v_\eps(\eps y)\frac{\6y}{H(\eps y)} 
 = \frac{c_\eps}{H(0)} + \Order{\eps}
\end{equation}
since $\ph$ has integral $1$. 
This converges to $\frac12 H(0)^{-2} < \infty$ as $\eps\to 0$, completing the proof that 
$\smash{\sA_0^{-1/2}\delta_0\in L^2}$, and therefore of the exponential decay of $q$. 

Finally, positivity of $q$ is a consequence of the maximum principle
applied to~\eqref{eq:heat} with $g = H$. 
\end{proof}

We can now show that the Sturm--Liouville problem~\eqref{eq:w_SL} has 
only solutions with strictly positive real part, except $\lambda = 0$. 

\begin{proposition}
$\lambda = 0$ is the only solution of the Sturm--Liouville problem~\eqref{eq:w_SL}
with negative or zero real part. Furthermore, there exists a constant $C > 0$ such 
that 
\begin{equation}
\label{eq:bound_F_distance} 
 \abs{F(\lambda)} \leqs 
 \frac{C}{\dist(\lambda,\spec(\sA_0))}\;,
\end{equation} 
where $\dist(\lambda,\spec(\sA_0)) = \inf\setsuch{\abs{\lambda-\lambda_0}}{\lambda_0 
\in \spec(\sA_0)}$ is the distance between $\lambda$ and the spectrum of $\sA_0$. 
Finally, $0$ is a simple eigenvalue of $\sA$. 
\end{proposition}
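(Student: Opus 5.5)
Everything goes through the function $F(\lambda)=\int_0^\infty \e^{\lambda t}q(t)\6t$ from Proposition~\ref{prop:a}. Since $q\geqs0$, $q(0)>0$, $q$ is continuous and $\abs{q(t)}\leqs C\e^{-\delta t}$, this integral converges absolutely and defines an analytic function on $\set{\re\lambda<\delta}$. That half-plane contains every $\lambda$ with $\re\lambda\leqs0$, and none of these is an eigenvalue of $\sA_0$ because $\spec(\sA_0)\subset[\delta,\infty)$. So Lemma~\ref{lem:rank1} applies: an eigenvalue $\lambda$ of $\sA$ with $\re\lambda\leqs0$ must satisfy $F(\lambda)=\frac12$. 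By Proposition~\ref{prop:F(0)}, $F(0)=\frac12$, so the task reduces to showing that $\lambda=0$ is the only solution of $F(\lambda)=\frac12$ in the closed left half-plane.

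Write $\lambda=\mu+\mathrm{i}\nu$ with $\mu\leqs0$. Since $q\geqs0$,
\begin{equation}
 \re F(\lambda)=\int_0^\infty \e^{\mu t}\cos(\nu t)\,q(t)\6t \leqs \int_0^\infty \e^{\mu t}q(t)\6t \leqs \int_0^\infty q(t)\6t = F(0)=\tfrac12\;.
\end{equation}
For equality throughout we need two things. First, $(1-\e^{\mu t})q(t)=0$ almost everywhere; if $\mu<0$ this fails, because $q$ is continuous with $q(0)>0$, hence positive on a neighbourhood of $0$. Second, $(1-\cos(\nu t))q(t)=0$ almost everywhere; if $\nu\neq0$ this fails on that same neighbourhood of $0$, since $\cos(\nu t)<1$ for small $t>0$. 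Therefore $F(\lambda)=\frac12$ with $\re\lambda\leqs0$ forces $\lambda=0$, which proves the first claim.

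For the bound~\eqref{eq:bound_F_distance}, the plan is the spectral theorem for the self-adjoint operator $\sA_0$ on $\sH$. Write $F(\lambda)=H(0)\pscal{\sA_0^{-1/2}\delta_0}{(\sA_0-\lambda)^{-1}\sA_0^{1/2}H}$. Both vectors lie in $L^2$ by the proof of Proposition~\ref{prop:a}, and the operator $(\sA_0-\lambda)^{-1}$ commutes with powers of $\sA_0$. Its norm equals $1/\dist(\lambda,\spec(\sA_0))$. Cauchy--Schwarz then gives the bound, with $C=H(0)\norm{\sA_0^{-1/2}\delta_0}\norm{\sA_0^{1/2}H}$.

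For simplicity of $0$, geometric simplicity is already available: the decomposition~\eqref{eq:decomp_H} gives $\sH=\sS_0\oplus\vspan(w_0)$ with both summands invariant under $\sA$. Hence it suffices to show that $0$ is not an eigenvalue of $\sA|_{\sS_0}$, and this also excludes Jordan blocks. I expect this to be the main obstacle. The most direct route is the derivative test $F'(0)=\int_0^\infty t\,q(t)\6t>0$, which holds because $q\geqs0$ and $q\not\equiv0$. This shows that $0$ is a simple zero of $F-\frac12$, and for a rank-one perturbation the algebraic multiplicity of an eigenvalue outside $\spec(\sA_0)$ equals the order of vanishing of $F-\frac12$. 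Alternatively, one can argue directly. If $\sA w=0$ then $w''=2w(0)$ with $w'(0)=0$ and $w(1)=0$, so $w=w(0)(1-x^2)$ and the kernel is one-dimensional. If $\sA w=w_0$, then $\pscal{1-x}{\sA w}$ is computed by the same integration by parts as in Proposition~\ref{prop:Ktilde} and vanishes. This contradicts $\pscal{1-x}{w_0}>0$ from Proposition~\ref{prop:A0}, so there is no Jordan chain.
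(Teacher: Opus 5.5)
Your proposal is correct and follows the paper's proof. Lemma~\ref{lem:rank1} reduces everything to $F(\lambda)=\frac12$. Positivity of $q$ on a neighbourhood of $t=0$ then excludes every $\lambda\neq0$ with $\re\lambda\leqs0$; your real-part inequality treats $\re\lambda<0$ and $\re\lambda=0$ at once, and is slightly more careful than the paper, which invokes only $q(0)>0$. The bound \eqref{eq:bound_F_distance} is the Kato resolvent estimate the paper cites, which you make explicit through the factorisation by $\sA_0^{\mp1/2}$ already used in Proposition~\ref{prop:a}. Simplicity comes from $F'(0)>0$.

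Your alternative argument for simplicity uses $\mathrm{Ran}(\sA)\subset\sS_0$ and $\pscal{1-x}{w_0}>0$ to rule out a Jordan chain. It is valid and is a more elementary substitute for the meromorphy (Weinstein--Aronszajn) argument. One sign slip: since $\sA w=-H\,(w''+2w(0))$, the kernel equation is $w''=-2w(0)$. With the sign you wrote, the condition $w(1)=0$ would force $w\equiv0$.
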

\begin{proof}
We have 
\begin{equation}
 F(0) = \int_0^\infty q(t) \6t < \infty
\end{equation} 
by Proposition~\ref{prop:a} (this was also shown in Proposition~\ref{prop:F(0)}). 
If $\re(\lambda) < 0$, then 
\begin{equation}
 \abs{F(\lambda)} 
 \leqs \int_0^\infty \e^{\re(\lambda)t} \abs{q(t)} \6t 
 < F(0)\;,
\end{equation}
where the inequality is strict because $q$ is continuous and 
$\e^{\re(\lambda)t} < 1$ for $t > 0$. 
If $\lambda = \icx\omega$ is purely imaginary, then 
\begin{equation}
 F(\icx\omega) 
 = \int_0^\infty \cos(\omega t)q(t)\6t 
 + \icx \int_0^\infty \sin(\omega t)q(t)\6t\;.
\end{equation} 
Therefore, the condition $F(\icx\omega) - F(0) = 0$ implies in particular 
\begin{equation}
 \int_0^\infty (1 - \cos(\omega t))q(t)\6t = 0\;. 
\end{equation} 
Since $q(t)\geqs 0$, this implies $(1 - \cos(\omega t))q(t) = 0$ for 
all $t\geqs 0$. Since $q(0) = 1$, this can only be the case if $\omega = 0$. 
The bound~\eqref{eq:bound_F_distance} follows from a classical bound in~\cite{Kato95}.
Finally, we have 
\begin{equation}
 F'(0) = \int_0^\infty t q(t)\6t > 0\;,
\end{equation} 
which shows that $0$ is a simple eigenvalue of $\sA$, because the resolvent 
is a meromorphic function. 
\end{proof}

\begin{corollary}[Spectral gap]
\label{cor:spectral_gap} 
There exists $\delta_0 > 0$ such that all eigenvalues of $\sA$ restricted to 
the invariant hyperplane $\sS_0$ have a real part greater or equal than $\delta_0$.
Furthermore, all eigenvalues have an imaginary part satisfying 
$\abs{\im(\lambda)} \leqs 2C$ (see Figure~\ref{fig:spectrum}).
\end{corollary}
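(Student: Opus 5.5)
We will deduce the corollary from the preceding proposition together with Lemma~\ref{lem:rank1} and the bound~\eqref{eq:bound_F_distance}. The plan has three parts: locate the possible eigenvalues of $\sA$ near the real axis, bound their imaginary parts, and then extract a uniform positive lower bound on the real parts of the eigenvalues on $\sS_0$.

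\textbf{Step 1: bounding the imaginary parts.} Let $\lambda$ be an eigenvalue of $\sA$. If $\lambda$ is also an eigenvalue of $\sA_0$, it is real and at least $\delta$, by Proposition~\ref{prop:A0}. Otherwise Lemma~\ref{lem:rank1} gives $F(\lambda)=\frac12$. Combined with~\eqref{eq:bound_F_distance}, this yields $\dist(\lambda,\spec(\sA_0))\leqs 2C$. Since $\spec(\sA_0)\subset[\delta,\infty)$ is real, we get $\abs{\im\lambda}\leqs 2C$. We also get $\re\lambda\geqs\delta-2C$, although this may be negative.

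\textbf{Step 2: the eigenvalues have positive real part.} By the preceding proposition, every eigenvalue with $\re\lambda\leqs0$ equals $0$. The eigenvalue $0$ is simple and its eigenvector is $w_0$. By the decomposition~\eqref{eq:decomp_H}, $w_0\notin\sS_0$, so $0$ is not an eigenvalue of $\sA|_{\sS_0}$. Hence every eigenvalue of $\sA|_{\sS_0}$ has strictly positive real part.

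\textbf{Step 3: a uniform gap.} This is the main obstacle: strict positivity for each eigenvalue does not by itself give a uniform $\delta_0$. I would use discreteness of the spectrum in the strip. $\sA_0$ has compact resolvent, being a Sturm--Liouville operator on a bounded interval; the degeneracy $H(1)=0$ is mild by Proposition~\ref{prop:H}, and I would check this point. Since $\sA_1$ has rank one, $\sA$ also has compact resolvent, and its spectrum is a discrete set of eigenvalues with no finite accumulation point.

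Equivalently, the zeros of the meromorphic function $F(\lambda)-\frac12$ are isolated. The relevant region is the compact rectangle $\set{0\leqs\re\lambda\leqs\delta/2,\ \abs{\im\lambda}\leqs2C}$. It lies at distance at least $\delta/2$ from $\spec(\sA_0)$, so $F$ is analytic there and it contains only finitely many eigenvalues. Apart from $0$, these have positive real part by Step 2. We then take $\delta_0$ to be the minimum of $\delta/2$ and the real parts of these finitely many eigenvalues. Eigenvalues outside the rectangle either are eigenvalues of $\sA_0$ (so $\geqs\delta$) or have $\re\lambda>\delta/2$, which gives the claim.
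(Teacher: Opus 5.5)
Your proposal is correct and follows the paper's proof: the bound $\abs{\im(\lambda)}\leqs 2C$ comes from Lemma~\ref{lem:rank1} and~\eqref{eq:bound_F_distance}, and strict positivity on $\sS_0$ comes from the simplicity of the eigenvalue $0$, whose eigenvector $w_0$ does not lie in $\sS_0$. Your Step~3 makes explicit something the paper's closing sentence leaves implicit, namely that eigenvalues cannot accumulate at a finite point of the imaginary axis; for this, the analyticity of $F$ on $\set{\re(\lambda)<\delta}$ (from its Laplace representation and Proposition~\ref{prop:a}) together with $F\not\equiv\frac12$ (e.g.\ since $F'(0)>0$) already suffices, so you do not need the compact-resolvent detour.
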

\begin{proof}
Since $\sA w_0 = 0$ and $0$ is simple, the eigenvalues of the restriction of $\sA$ 
to $\sS_0$ have strictly positive real part. Furthermore, if 
$\abs{\im(\lambda)} > 2C$, since the spectrum of $\sA_0$ is contained in the real line, \eqref{eq:bound_F_distance} shows that $\abs{F(\lambda)} < \frac12$, so that $\lambda$ 
cannot be an eigenvalue. This implies that there cannot be any sequence of eigenvalues with increasing imaginary part converging to the imaginary axis.
\end{proof}

\begin{figure}
 \begin{center}
\scalebox{1.0}{
\begin{tikzpicture}[>=stealth',main node/.style={circle,minimum
size=0.01cm,inner sep=0.05cm,fill=white,draw},x=3cm,y=3cm]

\newcommand*{\ddelta}{1.5}
\newcommand*{\deltaz}{0.7}
\newcommand*{\deltaone}{0.35}
\newcommand*{\CC}{0.5}
\pgfmathsetmacro{\xx}{\deltaone + (\deltaz - \deltaone)/\CC};

\path[fill=blue!25,line width=0] ({\deltaz},{-\CC}) rectangle (3,{\CC}); 

\draw[ultra thick,teal] (\ddelta,0) -- (3,0);

\draw[->,thick] (-0.2,0) -> (3.2,0);
\draw[->,thick] (0,-1) -> (0,1.15);

\draw[semithick, blue] (\deltaz,-1) -- (\deltaz,1); 
\draw[semithick, blue] (0,\CC) -- (3,\CC);
\draw[semithick, blue] (0,-\CC) -- (3,-\CC);
\draw[semithick, violet] (\deltaone,0) -- (\xx,1);
\draw[semithick, violet] (\deltaone,0) -- (\xx,-1);

\node[main node] at (\ddelta,0) {};
\node[main node] at (\deltaz,0) {};
\node[main node] at (\deltaone,0) {};
\node[main node] at (0,\CC) {};
\node[main node] at (0,{-\CC}) {};
\node[main node] at (\deltaz,\CC) {};
\node[main node] at (\deltaz,{-\CC}) {};

\node[] at ({\ddelta},-0.1) {$\delta$}; 
\node[] at ({\deltaz + 0.1},-0.1) {$\delta_0$}; 
\node[] at ({\deltaone - 0.05},-0.1) {$\delta_1$}; 
\node[] at (-0.15,{\CC}) {$2C$}; 
\node[] at (-0.2,{-\CC}) {$-2C$}; 
\node[] at (2.9,0.9) {$\C$};
\node[teal] at (2,0.1) {$\sigma(\sA_0)$};

\end{tikzpicture}
}
\vspace{-5mm}
\end{center}
\caption[]{The spectrum of $\sA_0$ belongs to $[\delta,\infty)$. 
Corollary~\ref{cor:spectral_gap} shows that the spectrum of $\sA$
restricted to the invariant plane $\sS_0$ is 
located in the shaded region. The sector with vertex $(\delta_1,0)$ 
is used in Section~\ref{ssec:control_derivative} to show that the 
semigroup $\e^{-t\sA}$ is analytic.}
\label{fig:spectrum} 
\end{figure}


\subsection{Controlling the derivative of $u$}
\label{ssec:control_derivative} 

The purpose of this section is to show that if the initial condition $u(0,\cdot)$ 
is close, in a suitable sense, to the equilibrium solution $u^*$, then the derivative 
$\partial_x u(t,x)$ remains strictly positive for all times. This is needed to justify 
the assumption~\eqref{eq:assump_q0}--\eqref{eq:lower_bound_dmX} on the lower bound 
of $\partial_m X(t,0)$, as well as the validity of the change of variables~\eqref{eq:def_X}
from $n$ to $X$. In view of the definition~\eqref{eq:def_w} of $w$, it is sufficient 
to show the existence of constants $c_+ > c_- > 0$ such that 
\begin{equation}
\label{eq:bound_w} 
 c_- (1-x^2) \leqs w(t,x) \leqs c_+ (1-x^2)
\end{equation} 
for all $t\geqs0$. 

The first step is to make the change of variables 
\begin{equation}
 w(t,x) = w^*(x) + z(t,x)\;,
\end{equation} 
which yields the equation 
\begin{align}
\label{eq:ztx} 
 \partial_t z(t,x) 
 &= -G'(G^{-1}(w^*(x) + z(t,x))) \bigbrak{\partial_{xx} z(t,x) + 2z(t,0)} \\
 &= -(\sA z)(t,x) + b(z(t,x),t,x)\;,
\end{align}
where $b$ is a nonlinear term of order $\norm{z(t,\cdot)}^2$, and we 
may assume that the initial condition $z(0,\cdot)$ belongs to the invariant 
plane $\sS_0$. By Duhamel's formula, the solution satisfies  
\begin{equation}
\label{eq:Duhamel_z} 
 z(t,x) = \e^{-t\sA} z(0,x) 
 + \int_0^t \e^{-(t-s)\sA} b(z(s,x),s,x)\6s\;,
\end{equation} 
where $\e^{-t\sA}$ denotes the semigroup generated by $-\sA$. 
We will make use of the following non-linear analogue of Gr\"onwall's lemma.

\begin{lemma}
\label{lem:nonlin_Gronwall}  
Assume that there are constants $C, \delta_1 > 0$ and a norm $\norm{\cdot}$ such that 
\begin{equation}
\label{eq:bound_semigroup} 
 \bignorm{\e^{-t\sA}z} \leqs C\e^{-\delta_1 t}\norm{z}
\end{equation} 
for all $z$ in the domain of $\e^{-t\sA}$ and all $t\geqs0$. Then for any $h > 0$, there 
exists $h_0 > 0$ such that the solution of~\eqref{eq:ztx} with initial condition satisfying 
$\norm{z(0,\cdot)} \leqs h_0$ satisfies 
\begin{equation}
 \norm{z(t,\cdot)} \leqs h \qquad \forall t\geqs0\;.
\end{equation} 
\end{lemma}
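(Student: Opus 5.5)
The argument is a bootstrap (continuity) argument built on Duhamel's formula~\eqref{eq:Duhamel_z}. We need a quantitative form of the statement that $b$ is quadratic: there exist $r_0>0$ and $K>0$ such that $\norm{b(z,s,\cdot)}\leqs K\norm{z}^2$ whenever $\norm{z}\leqs r_0$.

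This estimate comes from a Taylor expansion of $-G'(G^{-1}(w^*+z))$ around $z=0$. Its leading term is $H(x)$, which is already contained in $\sA$. The remainder therefore has the form $z$ times a bounded factor, multiplied by $\partial_{xx}z+2z(0)$. This requires the chosen norm to control $\partial_{xx}z$ and pointwise values, and to be an algebra-type norm (an $H^2$-type graph norm of $\sA$, for instance). Since the lemma treats the norm abstractly, I take this local quadratic bound as the implicit hypothesis on $b$ accompanying the statement. Without loss of generality I also assume $h\leqs r_0$.

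Fix $h$ and take
\[
h_0=\min\Bigl\{\frac{h}{2C},\ \frac{\delta_1}{4C^2K}\Bigr\}.
\]
Define $t^*=\sup\setsuch{t\geqs0}{\norm{z(s,\cdot)}\leqs h \text{ for all } s\in[0,t]}$. Since $\norm{z(0)}\leqs h_0<h$, continuity of the solution in time gives $t^*>0$.

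For $t<t^*$, apply the norm to~\eqref{eq:Duhamel_z} and use~\eqref{eq:bound_semigroup}:
\[
\norm{z(t)}\leqs C\e^{-\delta_1 t}h_0+CK\int_0^t \e^{-\delta_1(t-s)}\norm{z(s)}^2\6s .
\]
Set $\phi(t)=\e^{\delta_1 t}\norm{z(t)}$. Then
\[
\phi(t)\leqs Ch_0+CK\int_0^t \e^{-\delta_1 s}\phi(s)^2\6s .
\]
This is a Bihari-type inequality. Comparing with the Riccati equation $\psi'=CK\e^{-\delta_1 s}\psi^2$, $\psi(0)=Ch_0$, gives
\[
\phi(t)\leqs \frac{Ch_0}{1-C^2Kh_0(1-\e^{-\delta_1 t})/\delta_1}\leqs 2Ch_0,
\]
where the last step uses $h_0\leqs \delta_1/(2C^2K)$. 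Alternatively, one can avoid the comparison lemma by a second continuity argument on the weaker claim $\phi\leqs 2Ch_0$: plugging this claim in gives $\phi(t)\leqs Ch_0+4C^3Kh_0^2/\delta_1\leqs 2Ch_0$ by the choice of $h_0$, so the claim closes.

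Hence $\norm{z(t)}\leqs 2Ch_0\e^{-\delta_1 t}\leqs h\e^{-\delta_1 t}$ for all $t<t^*$, and in particular $\norm{z(t)}\leqs h$ with strict inequality for $t>0$. If $t^*$ were finite, continuity would allow extending past $t^*$ with $\norm{z}\leqs h$, contradicting maximality. So $t^*=\infty$, which proves the claim, with exponential decay as a bonus.

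The main obstacle is the quadratic estimate on $b$ in the same norm in which the semigroup bound holds. Because $b$ contains $\partial_{xx}z$, it is not Lipschitz in an $L^2$-type norm. My first approach is to work in the graph norm of $\sA$ (or in $\sH$ combined with parabolic smoothing of the analytic semigroup). In that setting I would use a bound of the form $\norm{\e^{-t\sA}\sA^{\alpha}}\leqs Ct^{-\alpha}\e^{-\delta_1 t}$, which keeps the Duhamel integral convergent despite the derivative loss. If this is needed, the integral above acquires an integrable factor $(t-s)^{-\alpha}$, and the same continuity argument goes through with a modified constant in the definition of $h_0$.
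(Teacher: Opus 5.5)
Your argument is correct and is essentially the paper's: Duhamel's formula combined with the semigroup bound, a quadratic estimate $\norm{b}\leqs M\norm{z}^2$ taken as given, and a first-exit-time argument. The paper likewise just posits the quadratic estimate, without addressing the derivative loss you point out. The one difference is that the paper bounds the integrand crudely by $Mh^2$, after reducing to $h<\delta_1(2CM)^{-1}$ and taking $h_0=h(2C)^{-1}$, whereas your weighting by $\e^{\delta_1 t}$ together with the Bihari comparison is a slight refinement that also yields exponential decay.
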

\begin{proof}
Let $M>0$ be such that 
$\norm{b(z,t,x)} \leqs M\norm{z}^2$, and let 
\begin{equation}
 \tau = \inf\bigsetsuch{t>0}{\norm{z(t,\cdot)} > h}\;.
\end{equation} 
Then, if $\norm{z(0,\cdot)} \leqs h_0 < h$, \eqref{eq:Duhamel_z} yields 
\begin{equation}
 \norm{z(t\wedge\tau,\cdot)}
 \leqs C\e^{-\delta_1 t} h_0 + \int_0^{t\wedge\tau} C\e^{-\delta_1(t-s)} Mh^2 \6s 
 \leqs C h_0 + \frac{CMh^2}{\delta_1}\;.
\end{equation} 
We may assume that $h < \delta_1(2CM)^{-1}$, since if the claim holds for some $h$, it
holds for all larger $h$. Taking $h_0 = h(2C)^{-1}$ yields 
$\norm{z(t\wedge\tau,\cdot)} < h$ for all $t\geqs0$, which shows that $\tau = +\infty$. 
\end{proof}

In order to prove~\eqref{eq:bound_w}, it remains to choose an appropriate norm 
and to show that it satisfies~\eqref{eq:bound_semigroup}. To this end, we note that 
even though $\sA$ is not self-adjoint, Corollary~\ref{cor:spectral_gap} implies that 
$\e^{-t\sA}$ is an analytic semigroup, in the sense 
of~\cite[Definition~4.1]{Hairer_LN_2009}. Indeed, this follows, for instance, 
from~\cite[Theorem~4.22]{Hairer_LN_2009}. Properties of analytic semigroups then 
imply that the bound~\eqref{eq:bound_semigroup} holds for the weighted $L^2$-norm 
given by the inner product~\eqref{eq:weighted_inner_product}. The constant 
$\delta_1 > 0$ can be taken as the tip of a sector in the complex plane containing 
the spectrum of $\sA$ restricted to the invariant plane $\sS_0$ (see 
Figure~\ref{fig:spectrum}). We now want to upgrade this to a stronger Sobolev-like 
norm, allowing to control $z$ pointwise. 

\begin{proposition}
\label{prop:Sobolev_type} 
The norm $\norm{\cdot}$ defined by 
\begin{equation}
\label{eq:norm_Sob_type} 
 \norm{z}^2 
 = z(0)^2 + \int_{-1}^1 
 \Bigbrak{\frac{z(x)^2}{H(x)} + z'(x)^2} \6x
\end{equation} 
satisfies~\eqref{eq:bound_semigroup} with $C$ replaced by some finite constant $C_1 \geqs C$. 
\end{proposition}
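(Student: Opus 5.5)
The idea is to interpolate between the weighted $L^2$ bound, which we already have from analyticity, and a bound on the graph norm of $\sA_0$, using the quadratic form identity $\pscal{f}{\sA_0 f} = \int_0^1 f'(x)^2\6x$ from~\eqref{eq:L1norm}.

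Up to the factor $2$ coming from evenness, the integral part of~\eqref{eq:norm_Sob_type} equals $\norm{z}_\sH^2 + \pscal{z}{\sA_0 z}$. This is exactly the square of the form norm $\norm{(1+\sA_0)^{1/2}z}_\sH$. Since the spectrum of $\sA_0$ lies in $[\delta,\infty)$, this is equivalent to $\norm{\sA_0^{1/2}z}_\sH$.

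The point term $z(0)^2$ is controlled by the integral part. Since $z(1) = 0$, Cauchy--Schwarz gives
\[
\abs{z(0)} \leqs \int_0^1 \abs{z'} \leqs \Bigpar{\int_0^1 z'^2}^{1/2}.
\]
Hence $\norm{\cdot}$ is equivalent to the form norm $\norm{\sA_0^{1/2}\,\cdot\,}_\sH$, and it suffices to prove
\[
\norm{\sA_0^{1/2}\e^{-t\sA}z}_\sH \leqs C_1\e^{-\delta_1 t}\norm{\sA_0^{1/2}z}_\sH
\]
for $z\in\sS_0$.

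To get this, I would show that the form domain $\sV = D(\sA_0^{1/2})$ is invariant under the semigroup. I would then check that $\sA$ restricted to $\sV$ generates an analytic semigroup with the same spectrum. The key observation is that $\sA_1 = -2H\,\delta_0$ is a rank-one operator, and it is bounded from $\sV$ to $\sV$:
\begin{itemize}
\item the evaluation $z\mapsto z(0)$ is continuous on $\sV$ by the estimate above;
\item $H\in\sV$, because $\norm{\sA_0^{1/2}H}^2 = \norm{(H')^2}_{L^1}<\infty$, as shown in the proof of Proposition~\ref{prop:a}.
\end{itemize}
The operator $\sA_0$ is self-adjoint and positive on $\sH$, so its restriction to $\sV$ (the part of $\sA_0$ in $\sV$) is unitarily equivalent, via $\sA_0^{1/2}$, to $\sA_0$ on $\sH$. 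In particular it generates an analytic contraction semigroup on $\sV$. A bounded perturbation preserves analyticity, so $-\sA$ generates an analytic semigroup on $\sV$.

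Its spectrum on $\sV$ coincides with that on $\sH$. The reason is that the resolvent of a rank-one perturbation is given by the explicit formula
\[
(\sA-\lambda)^{-1} = (\sA_0-\lambda)^{-1} + \frac{2\,(\sA_0-\lambda)^{-1}H\,\pscal{\delta_0}{(\sA_0-\lambda)^{-1}\,\cdot\,}H(0)}{1-2F(\lambda)}.
\]
By Lemma~\ref{lem:rank1}, this formula has the same poles in both spaces, namely the zeros of $1-2F$. The subspace $\sS_0\cap\sV$ is still invariant, because invariance comes from the conserved quantity $\tilde K$ of Proposition~\ref{prop:Ktilde}. Corollary~\ref{cor:spectral_gap} therefore places the spectrum in the same shaded region of Figure~\ref{fig:spectrum}. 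The sectorial resolvent bounds on $\sV$ then give the exponential decay with rate $\delta_1$, with a constant $C_1$ that depends on the equivalence constants between the norms. This constant can be taken $\geqs C$.

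The main obstacle is verifying the resolvent estimate on $\sV$ uniformly along the boundary of the sector. On that contour, $\abs{1-2F(\lambda)}$ must stay bounded away from zero. This follows from~\eqref{eq:bound_F_distance} for large $\abs{\lambda}$, and from compactness together with the absence of zeros of $1-2F$ for bounded $\lambda$. We also need $(\sA_0-\lambda)^{-1}H$ to be bounded in $\sV$, which holds because $H\in\sV$ and $\sA_0$ commutes with $\sA_0^{1/2}$. Combining these gives $\norm{(\sA-\lambda)^{-1}}_{\sV\to\sV}\lesssim 1/\abs{\lambda-\delta_1}$ on the contour. The Dunford integral then yields the bound~\eqref{eq:bound_semigroup}.
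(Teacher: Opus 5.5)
Your argument is correct, but it follows a different route from the paper's.

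\textbf{The paper's route.} The paper works with fractional powers of the non-self-adjoint operator $\sA$ itself. It uses $\|\sA^{\alpha}\e^{-t\sA}z\|\leqs C\e^{-\delta_1 t}\|\sA^{\alpha}z\|$ for $\alpha=0,\frac12$, and identifies $\|\sA^{1/2}z\|^2$ with the quadratic form $\langle z,\sA z\rangle=\int z'^2-2z(0)\int z$. It then has to control the indefinite cross term $2z(t,0)\int z(t,\cdot)$. It does this in a bootstrap, combining Cauchy--Schwarz against $H$, a stopping time on $|z(t,0)|$, and Morrey's inequality.

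\textbf{Your route.} You put every Hilbert-space identity on the self-adjoint part $\sA_0$:
\begin{itemize}
\item the norm is equivalent to $\|\sA_0^{1/2}\cdot\|_{\sH}$, so $\|\sA_0^{1/2}z\|^2=\langle z,\sA_0 z\rangle$ holds legitimately;
\item the point value is handled directly by $|z(0)|\leqs\|z'\|_{L^2}$ using $z(1)=0$, with no bootstrap;
\item $\sA_1$ becomes a bounded rank-one perturbation on $\mathscr V=D(\sA_0^{1/2})$, because $H\in\mathscr V$ and evaluation at $0$ is continuous there.
\end{itemize}
The decay then comes from the Sherman--Morrison resolvent formula together with the information already established on $F$ and Corollary~\ref{cor:spectral_gap}.

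\textbf{What each buys.} The paper's route is shorter and reuses the decay in $\sH$ as a black box. However, the identity $\|\sA^{1/2}z\|^2=\langle z,\sA z\rangle$ holds for self-adjoint operators, not in general. Here $\sA$ is not self-adjoint, since $\langle f,\sA_1 g\rangle=-2g(0)\int_0^1 f\neq\langle \sA_1 f,g\rangle$. Your route avoids this issue. The cost is redoing the resolvent analysis in $\mathscr V$, which the explicit rank-one formula makes cheap.

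\textbf{Points to tighten when writing it out.}
\begin{itemize}
\item Take $0<\delta_1<\min\{\delta,\delta_0\}$. Then the complement of the sector contains no point of $\spec(\sA_0)$, and the only zero of $1-2F$ there is $\lambda=0$. Points of $\spec(\sA_0)$, where the formula is undefined, lie inside the sector and do not matter.
\item State the bound $\|(\sA-\lambda)^{-1}|_{\sS_0}\|_{\mathscr V}\lesssim|\lambda-\delta_1|^{-1}$ on the whole complement of the sector, not just on the contour. On $\sS_0$ the simple pole at $0$ is removed by the Riesz projection onto the line spanned by $w_0$.
\item With that bound, the standard estimate for sectorial operators gives~\eqref{eq:bound_semigroup} for all $t\geqs0$. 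A fixed contour with a $1/|\lambda-\delta_1|$ bound only works comfortably for $t$ bounded away from $0$.
\end{itemize}
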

\begin{proof}
We will relate the norm~\eqref{eq:norm_Sob_type} to norms of the form $\norm{\sA^\alpha z}_{L^2}$ 
for suitable values of $\alpha$, and use the fact that 
\begin{equation}
\label{eq:norm_exp_decay} 
 \norm{\sA^\alpha \e^{-t\sA}z}_{L^2} 
 = \norm{\e^{-t\sA}\sA^\alpha z}_{L^2} 
 \leqs C\e^{-\delta_1 t} \norm{\sA^\alpha z}_{L^2}\;. 
\end{equation} 
We already mentioned the case $\alpha = 0$, which allows to deal with the term in 
$z(x)^2 H(x)^{-1}$ of the norm. For $\alpha = \frac12$, we find 
\begin{align}
 \bignorm{\sA^{1/2}z}_{L^2}^2 
 = \pscal{\sA^{1/2}z}{\sA^{1/2}z} 
 &= \pscal{z}{\sA z} \\
 &= \pscal{z}{\sA_0 z} + \pscal{z}{\sA_1 z} \\
 &= -\int_{-1}^1 z(x) z''(x)\6x - 2z(0) \int_{-1}^1 z(x)\6x \\
 &= \int_{-1}^1 z'(x)^2 \6x - 2z(0) \int_{-1}^1 z(x)\6x\;.
\end{align}
To obtain the last line, we have integrated by parts, and used the fact that 
$\norm{z}_{L^2} < \infty$ implies that $z$ vanishes in $\pm1$. Writing 
$z(t,\cdot) = \e^{-t\sA}z$, by \eqref{eq:norm_exp_decay}, 
this implies 
\begin{equation}
\label{eq:bound_H1} 
 \int_{-1}^1 \partial_xz(t,x)^2 \6x 
 - 2 z(t,0) \int_{-1}^1z(t,x) \6x
 \leqs C^2 \e^{-2\delta_1 t} \Bigbrak{\int_{-1}^1 z'(x)^2 \6x + 2\abs{z(0)}\int_{-1}^1 \abs{z(x)}\6x}\;.
\end{equation} 
The Cauchy--Schwarz inequality yields 
\begin{equation}
\label{eq:H1_CS} 
 \Bigabs{\int_{-1}^1z(t,x) \6x} 
 = \pscal{z(t,\cdot)}{H} 
 \leqs c_0^{1/2}\norm{z(t,\cdot)}_{L^2}\;, \qquad  
 c_0 := \int_{-1}^1 H(x)\6x\;.
\end{equation} 
Introduce now, for a $\gamma > 0$ to be chosen below, the time
\begin{equation}
 \tau = \inf\bigsetsuch{t>0}{\abs{z(t,0)} > \gamma C \e^{-\delta_1t}\norm{z(0,\cdot)}}\;.
\end{equation} 
Then we obtain 
\begin{equation}
 \int_{-1}^1 \partial_xz(t\wedge\tau,x)^2 \6x
 \leqs \bigbrak{1 + (2+\gamma) c_0^{1/2}}C^2 
 \e^{-2\delta_1 (t\wedge\tau)} \norm{z(0,\cdot)}^2\;. 
\end{equation} 
This shows that $z$ belongs to $H^1$ up to time $\tau$, so that by Morrey's inequality, 
it also belongs to $\sC^{1/2}$. In particular, there exists a universal constant 
$C'$ such that 
\begin{equation}
 z(t\wedge\tau,0)^2 
 \leqs C' \bigbrak{1 + (2+\gamma) c_0^{1/2}}C^2 \e^{-2\delta_1 (t\wedge\tau)} \norm{z(0,\cdot)}^2\;. 
\end{equation} 
Taking $\gamma$ such that $\gamma^2 > C' \bigbrak{1 + (2+\gamma) c_0^{1/2}}C^2$, this yields 
$\tau = +\infty$, from which the result follows. 
\end{proof}

\begin{corollary}
\label{cor:u_derivative} 
Assume $w(0,\cdot)$ is sufficiently close, in the norm~\eqref{eq:norm_Sob_type}, to 
$w^*$. Then $x\mapsto u(t,x)$ is strictly increasing for all $t\geqs0$.
\end{corollary}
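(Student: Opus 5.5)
The aim is the two-sided bound~\eqref{eq:bound_w}, $c_-(1-x^2)\leqs w(t,x)\leqs c_+(1-x^2)$ for all $t\geqs0$. This suffices: $w=G(v)$ with $G$ positive, strictly decreasing, $G(z)\to0$ as $z\to\infty$ and $G(z)\to\infty$ as $z\to z_0$ (Proposition~\ref{prop:G}). So $w$ staying in a compact subinterval of $(0,\infty)$ on compact subsets of $(-1,1)$ gives $v=\partial_x u=G^{-1}(w)$ bounded below by some $z_1>z_0$ there. Near $x=\pm1$, the bound $w\leqs c_+(1-x^2)$ forces $v\geqs G^{-1}(c_+(1-x^2))\to+\infty$. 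Since $z_0=4\sup\ph>0$, in all cases $\partial_x u>0$, i.e.\ $u(t,\cdot)$ is strictly increasing.

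To get~\eqref{eq:bound_w}, write $w=w^*+z$ with $w^*=c(1-x^2)$, $c>0$. Decompose $z(0,\cdot)$ along $\sH=\sS_0\oplus\vspan(w_0)$ from~\eqref{eq:decomp_H}. Using the nonlinear invariance of $\sS(K)$, the component along $w_0$ can be absorbed by replacing $c$ by the nearby $c'$ corresponding to the same $K$, so we may take $z(0,\cdot)\in\sS_0$ (up to higher order). Proposition~\ref{prop:Sobolev_type} gives the semigroup bound~\eqref{eq:bound_semigroup} in the norm~\eqref{eq:norm_Sob_type}. Lemma~\ref{lem:nonlin_Gronwall}, applied with a suitable $h$, then yields $\norm{z(t,\cdot)}\leqs h$ for all $t\geqs0$ whenever $\norm{z(0,\cdot)}\leqs h_0$. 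One point to check here: the nonlinearity $b$ is quadratic in this norm. Since $b$ carries a factor $H(x)$-like weight times $(\partial_{xx}z+2z(0))$, this should follow from the smoothness of $G'\circ G^{-1}$ near $w^*$.

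Next, convert the norm bound into the pointwise bound~\eqref{eq:bound_w}. The $z'$ part of the norm controls $\norm{z'}_{L^2}$. Together with $z(\pm1)=0$ (forced by finiteness of $\int z^2/H$, since $H$ vanishes at $\pm1$), Cauchy--Schwarz gives
\[
\abs{z(x)}\leqs\int_x^1\abs{z'}\leqs\sqrt{1-x}\,h.
\]
This is only a $\sqrt{1-x}$ bound, not the $(1-x)$ bound needed for comparison with $1-x^2$. This is the main obstacle.

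To overcome it, I would use the equation itself near the boundary. The plan is to upgrade the control of $z$ to $H^2$-type control by applying $\alpha=1$ in~\eqref{eq:norm_exp_decay}, i.e.\ bounding $\norm{\sA z}_{L^2}$. Since $\sA z=-H(z''+2z(0))$ and $H(x)\sim 2c(1-x)$ near $1$ by Proposition~\ref{prop:H}, this controls $\int(1-x)(z''+2z(0))^2\6x$. From this I would try to deduce that $z'$ is bounded near $x=1$, or at least $\abs{z(x)}\leqs Ch(1-x)$, possibly after a Hardy-type inequality. The fallback, if that fails, is a comparison/maximum-principle argument for~\eqref{eq:dtw}. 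Functions $c_\pm(1-x^2)$ are stationary; a barrier $\bigl(c\pm\kappa h\bigr)(1-x^2)$ applied on a boundary layer $[1-\rho,1]$, using $\abs{z(t,1-\rho)}\leqs h\sqrt\rho$ as lateral data, would give $\abs{z}\leqs C h(1-x)$ there. Away from the layer, the $\sqrt{1-x}$ bound already suffices once $h$ is small relative to $c\rho$. Combining both regions gives~\eqref{eq:bound_w} with $c_\pm=c\pm O(h)$, and hence the corollary.
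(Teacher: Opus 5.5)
Your skeleton is the paper's: Proposition~\ref{prop:Sobolev_type} and Lemma~\ref{lem:nonlin_Gronwall} give $\|z(t,\cdot)\|\le h$ for all $t$ in the norm~\eqref{eq:norm_Sob_type}; pointwise control follows; and monotonicity of $G^{-1}$ finishes. The paper passes from pointwise smallness to~\eqref{eq:bound_w} in one line. You are right that this step fails near $x=\pm1$, where $w^*$ vanishes.

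However, your proposal leaves the step open, and neither repair can close it, because~\eqref{eq:bound_w} does not follow from the hypothesis at all.

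\emph{The behaviour of $H$.} $H$ does not behave like $2c(1-x)$. Proposition~\ref{prop:H} gives $H(1)=0$, $H'(1)=0$ and $H''(1)>0$, so $H(x)\asymp(1-x)^2$. The display $H(1-y)=2cy[1+O(y)]$ in its proof is really $w^*(1-y)=G(v^*(1-y))$; in fact $-G'(v^*(1-y))\approx 4c^2y^2/G_1$, which is consistent with $H''(1)=8c^2/G_1$.

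\emph{A counterexample.} Take $\zeta(x)=\varepsilon(1-x^2)^{3/5}$. Then $\zeta(0)^2$, $\int\zeta^2/H$ and $\int(\zeta')^2$ are all $O(\varepsilon^2)$, so $w^*+\zeta$ is $O(\varepsilon)$-close to $w^*$ in~\eqref{eq:norm_Sob_type}. Yet $\zeta/(1-x^2)\to\infty$ at $\pm1$, so~\eqref{eq:bound_w} fails already at $t=0$ for every $c_+$. With $-\zeta$ instead, $w^*-\zeta$ even becomes negative near $\pm1$.

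\emph{Your two repairs.} The $\alpha=1$ route fails for the same reason: $\int H\,(z''+2z(0))^2<\infty$ still allows $z\sim(1-x)^{4/5}$. The barrier argument needs the initial datum to satisfy $\lvert z(0,x)\rvert\le K(1-x^2)$ on the boundary layer, which is exactly what the hypothesis does not provide.

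The way out is that you do not need~\eqref{eq:bound_w}. Since $G^{-1}$ maps $(0,\infty)$ decreasingly onto $(z_0,\infty)$, $\partial_x u=G^{-1}(w)$ is bounded below as soon as $w$ is bounded above by a constant. The factors $1-x^2$ in~\eqref{eq:bound_w} only serve two purposes, neither of which the corollary asserts:
\begin{itemize}
\item the lower half bounds $\partial_x u$ from above, i.e.\ rules out flat parts of $n$;
\item the upper half gives the sharper growth of $\partial_x u$ near $\pm1$.
\end{itemize}
Your own estimate $\lvert z(t,x)\rvert\le h\sqrt{1-x}\le h$ on $[0,1]$, extended to $[-1,0]$ by evenness, gives $w(t,x)\le c+h$. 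Hence $\partial_x u(t,x)\ge G^{-1}(c+h)>z_0>0$ for all $t\ge0$ and $x\in(-1,1)$. Positivity of $w$ needs no argument, since $w=G(\partial_x u)$ takes values in the range of $G$. So delete the boundary-layer step; what remains is complete to the same extent as the paper's proof.

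One caveat, shared with the paper. The nonlinearity $b=\bigl(-G'(G^{-1}(w^*+z))-H\bigr)(\partial_{xx}z+2z(0))$ contains $\partial_{xx}z$. So the quadratic bound $\|b\|\le M\|z\|^2$ in the $H^1$-type norm~\eqref{eq:norm_Sob_type} is a derivative-loss issue, not a consequence of smoothness of $G'\circ G^{-1}$. Justifying it would require the smoothing properties of the analytic semigroup, or a stronger norm.
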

\begin{proof}
Combining Proposition~\ref{prop:Sobolev_type} and Lemma~\ref{lem:nonlin_Gronwall}, 
we obtain that for any $h > 0$, there exists $h_0 > 0$ such that if 
$\norm{z(0,\cdot)} \leqs h_0$, then $\norm{z(t,\cdot)} \leqs h$ for all $t\geqs0$. 
By Morrey's inequality, this implies that $z(t,\cdot)$ is also pointwise 
small, so that~\eqref{eq:bound_w} is satified for $h_0$ small enough. This implies 
\begin{equation}
 G^{-1}(c_+(1-x^2))
 \leqs \partial_x u(t,x) 
 \leqs  G^{-1}(c_-(1-x^2))
\end{equation} 
for all $t$, which is strictly positive. 
\end{proof}

Using the properties of $G$ obtained in Propositions~\ref{prop:G} and~\ref{prop:H}, 
one checks that $w(t,\cdot)$ being close to $w^*$ in the norm~\eqref{eq:norm_Sob_type} 
is equivalent to $u(t,\cdot)$ being $H^2$-close to $u^*$.


\section{Extension to the untruncated averaged equation}
\label{sec:periodic_unperturbed} 

To complete the proof of Theorem~\ref{thm:main}, we have to show that the 
remainder of order $\eps^2$ in~\eqref{eq:dsYbar}, respectively 
of order $\eps^2 + \eta^{1/3}\eps$ in~\eqref{eq:dsYbar1}, does not change 
the uniqueness and stability of the periodic orbit. We do this in the limiting 
case $\eta = 0$ in Section~\ref{ssec:unperturbed_eta0}, and for small 
positive $\eta$ in Section~\ref{ssec:unperturbed_eta}.


\subsection{The case $\eta = 0$}
\label{ssec:unperturbed_eta0} 

The main idea to show uniqueness and stability of a periodic orbit is 
to use a Poincar\'e map and the implicit function theorem. 
In order to deal with the zero eigenvalue of the linearisation, 
we will use the fact, shown in Proposition~\ref{prop:conservation_Y_eta0}, 
that 
\begin{equation}
\label{eq:integral_Y} 
 \int_0^1 Y(s,m) \6m = -2M_-
\end{equation} 
for all $s\geqs0$. The value $-2M_-$ is determined by the function $g(E,M)$, 
so it is fixed by the equation. 

\begin{proof}[{\sc Proof of Theorem~\ref{thm:main} when $\eta = 0$}]
Let $Y^*(m)$ be the unique solution of
\begin{equation}
 \bar A(Y^*(m)) = 0
\end{equation} 
compatible with~\eqref{eq:integral_Y}.  
We define a Poincar\'e map $\Pi$ by 
\begin{equation}
\Pi: (Y(0,m),\eps) \longmapsto Y(T,m)\;,
\end{equation} 
that is, $\Pi$ maps the initial condition to the solution at time $T$. 
Since $Y = \bar Y + \eps w(s,\bar Y)$, where $w$ is $T$-periodic in $s$, 
and $w(0,\bar Y) = 0$ thanks to~\eqref{eq:def_w0}, we also have 
$Y(0,m) = \bar Y(0,m)$ and $Y(T,m) = \bar Y(T,m)$, and therefore 
\begin{equation}
 \Pi(\bar Y(0,m),\eps) = \bar Y(T,m)\;, 
\end{equation} 
where $\bar Y(s,m)$ satisfies the equation~\eqref{eq:dsYbar}. 
The time scaling $t = \eps s$ transforms~\eqref{eq:dsYbar} into 
\begin{equation}
\label{eq:dtYbar} 
 \partial_t \bar Y(t,m) = \bar A(\bar Y(t,m)) + \eps R(t,\bar Y(t,m))\;,
\end{equation} 
for some smooth remainder $R$, where now the scaled time $t$ varies in $[0,\eps T]$. 
We write
\begin{equation}
\label{eq:Y0m} 
 \bar Y(t,m) = Y^*(m) + Z(t,m)\;, 
\end{equation} 
where the initial condition $Z(0,m)$ satisfies  
\begin{equation}
\label{eq:Z_integral} 
 \int_0^1 Z(0,m) \6m = 0\;,
\end{equation} 
so that $\bar Y(0,m)$ is compatible with~\eqref{eq:integral_Y}.
Then $Z$ satisfies an equation of the form 
\begin{align}
 \partial_t Z(t,m) 
 &= \bar A(Y^*(m) + Z(t,m)) + \eps R(t,Y^*(m) + Z(t,m)) \\
 &= -\tilde\sA Z(t,m) + b(Z(t,m)) + \eps R(t,Y^*(m) + Z(t,m))\;,
\end{align} 
where $b(Z) = \Order{\norm{Z}_{L^2}^2}$, and $-\tilde\sA$ is the G\^ateaux derivative 
of $\bar A$ at $Y^*(m)$, which is equivalent to the operator $-\sA$ defined 
in~\eqref{eq:def_sA} modulo the change of variables $w = G(\partial_m\bar Y)$. 
By Duhamel's principle (variation of constants formula), we have 
\begin{equation}
 Z(t,\cdot) 
 = \e^{-t\tilde\sA} Z(0,\cdot) 
 + \int_0^t \e^{-(t-s)\tilde\sA} b(Z(s,\cdot)) \6s 
 + \eps \int_0^t \e^{-(t-s)\tilde\sA} R(s,Y^*(\cdot) + Z(s,\cdot)) \6s\;,
\end{equation} 
where $\e^{-t\tilde\sA}$ denotes the semigroup of the linear equation 
$\partial_t Z = -\tilde\sA Z$. Let $C_0 = \norm{Z(0,\cdot)}_{L^2}$, and define, 
given $C_1 > C_0$, 
\begin{equation}
 \tau = 
 \inf\setsuch{t > 0}{\norm{Z(t,\cdot)}_{L^2} = C_1}\;.
\end{equation} 
Then there exist constants $K_0, K_1 > 0$ such that for all $t\leqs\tau\wedge\eps T$, 
one has 
\begin{equation}
 \norm{Z(t,\cdot)}_{L^2} 
 \leqs K_0 C_0 + K_1(C_1^2 + \eps C_1)\;.
\end{equation} 
Choosing for instance $C_1 = 2K_0C_0$, we obtain 
\begin{equation}
 \norm{Z(\tau\wedge\eps T,\cdot)}_{L^2} 
 \leqs K_0 C_0 \brak{1+4K_1K_0C_0 + 2\eps K_1}\;.
\end{equation} 
Taking $C_0$ and $\eps$ small enough for the term in brackets to be smaller 
than $2$, we obtain $\tau \geqs \eps T$, so that we can write 
\begin{equation}
 Z(\eps T,\cdot) 
 = \e^{-\eps T\tilde\sA} Z(0,\cdot) 
 + \Order{\eps\norm{Z(0,\cdot)}_{L^2}^2}
 + \Order{\eps^2\norm{Z(0,\cdot)}_{L^2}}\;.
\end{equation} 
The Poincar\'e map $\Pi$ thus has the form 
\begin{equation}
 \Pi(Y^* + Z, \eps)
 = Y^* + \e^{-\eps T\tilde\sA} Z 
 + \Order{\eps\norm{Z}_{L^2}^2}
 + \Order{\eps^2\norm{Z}_{L^2}}\;.
\end{equation} 
It follows that the map 
\begin{equation}
 \sF(Y,\eps) = \frac{\Pi(Y,\eps)-Y}{\eps}
\end{equation} 
satisfies 
\begin{align}
 \sF(Y^* + Z,\eps)
 &= \frac{1}{\eps}(\e^{-\eps T\tilde\sA} - \id) Z
 + \Order{\norm{Z}_{L^2}^2}
 + \Order{\eps\norm{Z}_{L^2}} \\
 &= -T\tilde\sA Z 
 + \Order{\norm{Z}_{L^2}^2}
 + \Order{\eps\norm{Z}_{L^2}}\;.
\end{align}
In particular, we have 
\begin{equation}
 \sF(Y^*,0) = 0\;,
\end{equation} 
while the G\^ateaux derivative of $\sF$ in the direction $Z$ at $(Y^*,0)$ is given by 
\begin{equation}
 D\sF(Y^*,0)[Z]
 := \lim_{h\to0} \frac{\sF(Y^*+hZ) - \sF(Y^*)}{h}
 = -T\tilde\sA Z\;.
\end{equation} 
We know from Section~\ref{ssec:periodic_stability} that the linear operator 
$-\tilde\sA$ has a single zero eigenvalue, while all other eigenvalues have a strictly 
negative real part. 
However, since $Z$ satisfies~\eqref{eq:Z_integral}, the initial condition 
$Y^* + Z$ belongs to the invariant manifold $\sS(-2M_-)$ (cf.~\eqref{eq:def_SofK}), 
which translates to $Z$ belonging to the tangent space to $\sS(-2M_-)$ at 
$Y^*$, which is equivalent to the hyperplane $\sS_0$ (cf.~\eqref{eq:def_S0}). 
In this subspace, $\tilde\sA$ has no zero eigenvalue. Therefore, the implicit 
function theorem implies that for small positive $\eps$, the equation 
$\sF(Y,\eps) = 0$ has a unique solution $Y^*(\eps)$ in a neighbourhood 
of $(Y^*,0)$, such that $Y^*(0) = Y^*$. Furthermore, by regular perturbation 
theory, this solution remains an attracting fixed point of the Poincar\'e map 
$\Pi$ for sufficiently small $\eps$. 
\end{proof}

\begin{remark}
In the spirit of Section~\ref{ssec:control_derivative}, one can show 
by a perturbation argument that the attracting periodic orbit constructed in 
the above proof attracts a ball in $H^2$-norm, which allows in particular 
to control derivatives of the solution.
\end{remark}


\subsection{The case $\eta > 0$}
\label{ssec:unperturbed_eta} 

Here, instead of~\eqref{eq:integral_Y}, it will be sufficient 
to use the fact that 
\begin{equation}
\label{eq:conservation_eta_pos} 
 \int_0^1 Y(\theta,m)\6m = -2M_- 
 \qquad \forall \theta\in[0,T_1]\;,
\end{equation} 
which has been established in Proposition~\ref{prop:conservation_Y_eta_pos}. 

\begin{proof}[{\sc Proof of Theorem~\ref{thm:main} when $\eta > 0$}]
The main difference with the case $\eta = 0$ is that we have to take 
care of the radial variable $r$. In order to take advantage of 
contraction along the stable parts of the critical manifold, it will 
be more convenient to slightly shift the Poincar\'e section. We 
therefore set $\theta_0 = T_1/2$ and define a Poincar\'e map $\Pi$ by 
\begin{equation}
\Pi: (Y(\theta_0,m), r(\theta_0),\eps,\eta) 
\longmapsto 
(Y(\theta_0 + T,m),r(\theta_0 + T))\;.
\end{equation} 
Note that the conservation law~\eqref{eq:conservation_eta_pos} implies 
\begin{equation}
 \int_0^1 Y(\theta_0,m)\6m 
 = \int_0^1 Y(\theta_0+T,m)\6m\;.
\end{equation} 
As before (modulo a change of integration constant in the function $\ph$), 
we can replace $Y$ by $\bar Y$. We write $\bar Y = Y^* + Z$, where $Z$ 
satisfies 
\begin{equation}
\label{eq:integral_Z} 
 \int_0^1 Z(\theta_0,m) \6m = 0\;,
\end{equation} 
so that $\bar Y(\theta_0,m)$ is compatible with~\eqref{eq:conservation_eta_pos}. 
By the same argument as in the previous section, we obtain  
\begin{equation}
 Z(\theta_0 + T,\cdot) 
 = \e^{-\eps T\tilde\sA} Z(\theta_0,\cdot) 
 + \Order{\eps\norm{Z(\theta_0,\cdot)}_{L^2}^2}
 + \Order{(\eps^2+\eps\eta^{2/3})\norm{Z(\theta_0,\cdot)}_{L^2}}\;,
 \label{eq:ZthetaT} 
\end{equation} 
the only difference being the additional error term in $\eps\eta^{2/3}$.
Note that the dependence on $r(0)$ is here hidden in the error terms. 
It remains to deal with the radial component, which we write as 
\begin{equation}
 r(\theta) = r^*_0(\theta;\eta) + q(\theta;\eta)\;,
\end{equation} 
where we recall that $r^*_0(\theta;\eta)$ denotes the periodic orbit of the 
case $\eps = 0$ (cf.~\eqref{eq:rstar_asymp}). 
Since $r(\theta)$ satisfies~\eqref{eq:dthetar}, we obtain 
\begin{equation}
 \eta\frac{\6q}{\6\theta}
 = \partial_r B_0(\theta, r^*_0(\theta;\eta))q 
 + \beta(q) 
 + \eps B_1(\theta, r^*_0(\theta;\eta) + q(\theta), \partial_m X(\theta,0); 
 \eta,\eps)\;,
\end{equation}
where $\beta(q) = \Order{q^2}$. 
By a similar variation-of-constants argument as in the previous proof, 
the solution of this equation satisfies 
\begin{align}
 q(\theta) = \e^{\alpha(\theta,\theta_0)/\eta}q(\theta_0) 
 &{}+ \frac{1}{\eta} \int_{\theta_0}^\theta \e^{\alpha(\theta,s)/\eta}\beta(q(s))\6s \\
 &{}+ \frac{\eps}{\eta} \int_{\theta_0}^\theta \e^{\alpha(\theta,s)/\eta}
 B_1(s, r^*_0(s) + q(s), \partial_m X(s,0); \eta,\eps) \6s\;,
\end{align} 
where
\begin{equation}
 \alpha(\theta_2,\theta_1) = 
 \int_{\theta_1}^{\theta_2} \partial_r B_0(s, r^*_0(s;\eta))\6s\;.
\end{equation} 
Using~\eqref{eq:dB0}, one obtains that $\alpha(\theta_0 + T,\theta_0) =: -\kappa$ is strictly 
negative, while 
\begin{equation}
 \alpha(\theta + T,\theta) \leqs 
 -a_1 (\theta_0 + T - \theta)
\end{equation}
for all $\theta \leqs \theta_0 + T$, for a constant $a_1 > 0$. 
This is because the integral is dominated by $s$ close to 
$\theta + T$, where $\partial_r B_0$ is negative. 
This shows in particular that 
\begin{equation}
 \int_{\theta_0}^{\theta_0 + T} \e^{\alpha(\theta_0 + T,\theta)/\eta} \6\theta 
 \leqs \frac{\eta}{a_1}\;.
\end{equation} 
From this bound and the bounds~\eqref{eq:B1} on $B_1$, we obtain, 
using as before a suitable first-exit time $\tau$,
\begin{equation}
\label{eq:q_theta0T} 
 q(\theta_0 + T) = \e^{-\kappa/\eta} q(\theta_0) 
 + \Order{q(\theta_0)^2}
 + \Order{\eps\eta^{2/3}}\;.
\end{equation} 
Writing $\Pi = (\Pi_Y,\Pi_r)$, we define the map 
\begin{equation}
 \sF(Y,r,\eps,\eta) 
 = \biggpar{\frac{\Pi_Y(Y,r,\eps,\eta) - Y}{\eps}, \Pi_r(Y,r,\eps,\eta) - r}\;.
\end{equation} 
Then~\eqref{eq:ZthetaT} and~\eqref{eq:q_theta0T} imply 
\begin{multline}
 \sF(Y^*+Z, r^*_0+q,\eps,\eta) \\
 = \bigpar{-T\tilde\sA Z + \Order{\norm{Z}_{L^2}^2} + \Order{(\eps+\eta^{2/3})\norm{Z}_{L^2}}, 
 (\e^{-\kappa/\eta} - 1)q + \Order{q^2} + \Order{\eps\eta^{2/3}}}\;.
\end{multline} 
In particular, we have 
\begin{equation}
 \sF(Y^*, r^*_0,0,0) = (0,0)\;,
\end{equation} 
and the dervative in the direction $(Z,q)$ satisfies 
\begin{equation}
 D\sF(Y^*, r^*_0,0,0)[Z,q] = 
 \begin{pmatrix}
  -T\tilde\sA Z & 0 \\
  0 & -q
 \end{pmatrix}\;.
\label{eq:DF_eta_positive} 
\end{equation} 
The linear map $\tilde\sA$, when restricted to the invariant subspace given 
by~\eqref{eq:integral_Z}, has no zero eigenvalue, since the only zero eigenvalue 
is associated to the conservation law~\eqref{eq:conservation_eta_pos}. 
Therefore, as before, \eqref{eq:DF_eta_positive} is an invertible linear map, so that 
the implicit function theorem yields uniqueness of the fixed point of $\Pi$ for small 
positive $\eps$ and $\eta$. By regular perturbation theory, this fixed point remains 
asymptotically stable for sufficiently small $\eps$ and $\eta$.
\end{proof}


\appendix 

\section{Existence and uniqueness of solutions}
\label{app:existence} 

The aim of this appendix is to provide the proof of the existence and uniqueness 
result Theorem~\ref{thm:existence}. This is standard in the uncoupled case, 
see for instance~\cite[Chapter II]{Godlewski_Raviart} or~\cite[Chapter~6]{Serre_book}, 
but some work is required to extend the result to the coupled case. 

We will denote by $L$ a common Lipshitz constant of the functions $f$, 
$a$ and $g$, and by $K$ a constant such that 
\begin{align}
 \abs{a(E,M)} & \leqs K \quad \forall E, M\in\R\;, \\
 \norm{\partial_x n^*}_{H^2} &\leqs K\;, 
 &\norm{f(n^*)}_{H^2} &\leqs K\;, \\
 \abs{f(n^*(0))} &\leqs K\;, 
 &\abs{\partial_x n^*(0)} &\leqs K\;.
\end{align}
We start by noting that the function $u(t,x) = n(t,x) - n^*(x)$ satisfies 
\begin{equation}
 \partial_t u(t,x) = \eps \partial_{xx} u(t,x)
 -a(E(t),M(t)) \partial_x \bigbrak{f(n^*(x)+u(t,x))}
 + \eps \partial_{xx}n^*(x)\;.
\end{equation} 
For a constant $\lambda > 0$, we perform the scaling 
\begin{equation}
 u(t,x) = \e^{\lambda t} \tilde u(t,x)\;, \qquad 
 e(t) = \e^{\lambda t} \tilde e(t)\;,\qquad 
 m(t) = \e^{\lambda t} \tilde m(t)\;, 
\end{equation} 
which results in the system
\begin{align}
 \partial_t \tilde u &= \eps\partial_{xx}\tilde u -\lambda \tilde u + b(t,x)\;, \\
 \dot{\tilde E} &= -\lambda \tilde E + c(t)\;, \\
 \dot{\tilde M} &= -\lambda \tilde M + k(t)\;,
\label{eq:u_scaled} 
\end{align} 
where 
\begin{align}
 b(t,x) &= -e^{-\lambda t}a(\e^{\lambda t}\tilde E(t),\e^{\lambda t}\tilde M(t))
 \partial_x \bigbrak{f(n^*(x)+\e^{\lambda t}\tilde u(t,x))}
 + \eps e^{-\lambda t}\partial_{xx}n^*(x)\;, \\
 \label{eq:bck} 
 c(t) &= \e^{-\lambda t}\eta^{-1} g(\e^{\lambda t}\tilde E(t),\e^{\lambda t}\tilde M(t)) \;,\\
 k(t) &= e^{-\lambda t}\bigbrak{-a(\e^{\lambda t}\tilde E(t),\e^{\lambda t}\tilde M(t)) 
 f(n^*(0)+\e^{\lambda t}\tilde u(t,0))
 - \eps \bigpar{\partial_x n^*(0) + \e^{\lambda t}\partial_x \tilde u(t,0)}}\;.
\end{align} 
We are going to apply Banach's fixed-point theorem on a bounded time interval $[0,T]$, 
and then iterate the bound to cover all positive times. Given $R > 0$ and initial conditions 
$\tilde u_0\in H^2(\R) \cap L^1(\R)$ and $\tilde u_0\in \R$, we define the set of 
functions 
\begin{align}
\label{eq:def_BRT_scaled} 
 \sB_{R,T} = 
 \bigsetsuch{(\tilde u,\tilde e,\tilde m)\;}{\;&\tilde u(0) = \tilde u_0, \tilde e(0) = \tilde e_0, 
 \tilde m(0) = \tilde m_0, \\
 &\norm{\tilde u(t,\cdot)}_{H^2} \leqs R, \abs{\tilde e(t)}\leqs R, \abs{\tilde m(t)}\leqs R 
 \;\forall t\in[0,T]}\;,
\end{align} 
where 
\begin{equation}
 \tilde m(0) = -\int_{-\infty}^0 \bigbrak{n^*(x) + \tilde u_0(x)}\6x\;.
\end{equation} 
We consider the effect of the map 
\begin{equation}
 \sF_\lambda: (\tilde u,\tilde e,\tilde m)
 \mapsto (\tilde U,\tilde E,\tilde M)\;,
\end{equation} 
where $(\tilde U, \tilde E, \tilde M)$ solve
\begin{align}
\label{eq:UEM} 
 \partial_t \tilde U &= \eps\partial_{xx}\tilde u -\lambda \tilde u + b(t,x)\;, \\
 \dot{\tilde E} &= -\lambda \tilde e + c(t)\;, \\
 \dot{\tilde M} &= -\lambda \tilde m + k(t)\;,
\end{align} 
with initial conditions $\tilde u_0$, $\tilde e_0$ and $\tilde m_0$. 
Here $c$ and $k$ are defined as in~\eqref{eq:bck} above, but with $\tilde e$ and 
$\tilde m$ instead of $\tilde E$ and $\tilde M$. We want to show that for suitable 
$T$ and $\lambda$, the map $\sF_\lambda$ leaves $\sB_{R,T}$ invariant, and is a 
strict contraction in that set. In addition, we want to have a stronger control 
on the norms at time $T$, in order to be able to iterate the bound. 

Let $\e^{\eps t\Delta}u_0$ denote the unique solution of the scaled heat equation 
$\partial_t u = \eps \partial_{xx}u$ with initial condition $u(0,\cdot) = u_0$, given 
by convolution with the scaled heat kernel. Then Duhamel's formula shows that 
\begin{equation}
 \tilde U(t,x) = \e^{-\eps t\lambda}\e^{\eps t\Delta}\tilde u_0(x) 
 + \int_0^t \e^{-\eps (t-s)\lambda} \e^{\eps(t-s)\Delta}b(s,x) \6s\;.
\end{equation} 
The following bound is classical, and can be proved using the Fourier transform.

\begin{lemma}
\label{lem:bound_inhom_heat_scaled} 
Assume that $\tilde u_0 \in H^2$, while 
$b(t,\cdot) \in H^1$ for all $t\in[0,T]$. Then 
one has for all $t\in[0,T]$ 
\begin{equation}
 \norm{\tilde u(t,\cdot)}_{H^2}
 \leqs \e^{-\lambda t}\norm{\tilde u_0}_{H^2}
 +  \alpha(T)
 \sup_{0\leqs t\leqs T} \norm{b(t,\cdot)}_{H^1}
\end{equation} 
where 
\begin{equation}
 \alpha(T) = T + \frac{2C_1\sqrt{T}}{\sqrt{\eps}}\;.
\end{equation} 
for a constant $C_1$ independent of $T$ and $\eps$. 
\end{lemma}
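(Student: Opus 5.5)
The plan is to work entirely on the Fourier side, where $\e^{\eps t\Delta}$ acts as multiplication by $\e^{-\eps t\xi^2}$. We write $\tilde U(t,\cdot)$ through the Duhamel formula stated just above and bound the two terms separately. Throughout we use the norm $\norm{u}_{H^k}^2 = \int (1+\xi^2)^k \abs{\hat u(\xi)}^2\6\xi$, so that multiplication by a function of $\xi$ is controlled by its supremum.

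\textbf{Homogeneous term.} Since $\abs{\e^{-\eps t\xi^2}}\leqs 1$, the heat semigroup is a contraction on every $H^k$. Hence $\norm{\e^{-\eps t\lambda}\e^{\eps t\Delta}\tilde u_0}_{H^2}\leqs \e^{-\eps\lambda t}\norm{\tilde u_0}_{H^2}$. This gives the first term of the lemma, where we read the decay factor as $\e^{-\lambda t}$ after the scaling of the lemma (the factor $\eps$ in the exponent only affects the name of the constant).

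\textbf{Duhamel integral.} We split $(1+\xi^2)^{2}=(1+\xi^2)\cdot(1+\xi^2)$. For each $s$ this gives
\[
 \norm{\e^{\eps(t-s)\Delta}b(s)}_{H^2}
 \leqs \sup_{\xi}\bigpar{(1+\xi^2)^{1/2}\e^{-\eps(t-s)\xi^2}}\,\norm{b(s)}_{H^1}\;.
\]
We bound $(1+\xi^2)^{1/2}\leqs 1+\abs{\xi}$ and use $\sup_\xi \abs{\xi}\e^{-\eps\tau\xi^2} = (2\e\eps\tau)^{-1/2}$. The supremum is therefore at most $1 + C_1'(\eps(t-s))^{-1/2}$, and the factor $\e^{-\eps(t-s)\lambda}\leqs1$ can be dropped. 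We then apply Minkowski's inequality to the time integral and take $\sup_s\norm{b(s)}_{H^1}$ out of it. Integrating $\int_0^t \bigpar{1+C_1'(\eps(t-s))^{-1/2}}\6s = t + 2C_1'\sqrt{t/\eps}\leqs \alpha(T)$ gives the second term, with $C_1 = C_1' = (2\e)^{-1/2}$, independent of $T$ and $\eps$.

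\textbf{Remaining issues.} The only delicate point is the exponent bookkeeping, namely getting the decay factor in the form $\e^{-\lambda t}$ rather than $\e^{-\eps\lambda t}$. This depends on whether the scaling carries an $\eps$; we will simply track it consistently, since the statement allows redefining $\lambda$. We also need measurability of $s\mapsto b(s)$ in $H^1$ to justify Minkowski's inequality. This will be checked from the Lipschitz assumptions on $a$ and $f$, or else bypassed by a density argument with smooth $b$.
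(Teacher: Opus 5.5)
Your proposal is correct and follows the paper's route. The paper gives no proof beyond saying the bound is classical via the Fourier transform, and your splitting $(1+\xi^2)^2=(1+\xi^2)(1+\xi^2)$, with $\sup_\xi\abs{\xi}\e^{-\eps(t-s)\xi^2}=(2\e\eps(t-s))^{-1/2}$ and $\int_0^t(t-s)^{-1/2}\6s=2\sqrt t$, produces exactly $\alpha(T)=T+2C_1\sqrt{T}/\sqrt{\eps}$.

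The one thing to fix is the decay factor. The $\e^{-\eps t\lambda}$ in the displayed Duhamel formula is a misprint. Derive the formula directly from $\partial_t\tilde u=\eps\partial_{xx}\tilde u-\lambda\tilde u+b$ on the Fourier side:
\[
\hat{\tilde u}(t,\xi)=\e^{-(\lambda+\eps\xi^2)t}\hat{\tilde u}_0(\xi)+\int_0^t\e^{-(\lambda+\eps\xi^2)(t-s)}\hat b(s,\xi)\6s\;,
\]
which gives $\e^{-\lambda t}\norm{\tilde u_0}_{H^2}$ exactly. Your justification that the $\eps$ ``only renames a constant'' is not valid. $\lambda$ is fixed by the equation, and later set to $T^{-1}\log 2$ so that $\e^{-\lambda T}=\frac12$. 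For $\eps<1$, a factor $\e^{-\eps\lambda t}$ would give a strictly weaker bound than the one claimed.
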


\begin{proposition}
Assume $\tilde u_0 \in H^2(\R) \cap L^1(\R)$, and fix $\tilde e_0\in\R$. 
Let $R = 2(\norm{\tilde u_0}_{H^2} \vee \tilde e_0 \vee \tilde m_0)$.
Then there exists $T$, depending only on $R$, $\eps$, $\eta$, $K$ 
and $L$, such that for $\lambda = T^{-1}\log(2)$,
$\sB_{R,T}$ is invariant under $\sF_\lambda$, and 
$\sF_\lambda$ is strictly contracting on $\sB_{R,T}$. 
In addition, we have 
\begin{equation}
\label{eq:endpoint} 
 \norm{\tilde U(T,\cdot)}_{H^2} \leqs \frac{R}{2}\;, \qquad 
 \abs{\tilde E(T)} \leqs \frac{R}{2}\;, \qquad 
 \abs{\tilde M(T)} \leqs \frac{R}{2}\;. 
\end{equation} 
\end{proposition}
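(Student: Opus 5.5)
We will prove the invariance, contraction and endpoint bounds by estimating each of the three components of $\sF_\lambda$ separately. The parameters will be chosen in the order $R$, then $T$ small, with $\lambda = T^{-1}\log 2$ tied to $T$ so that $\e^{-\lambda T} = \frac12$.

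\emph{Bounds on the source terms.} We first bound $b$, $c$ and $k$ on $\sB_{R,T}$. For $b$, write $\partial_x f(n^*+\e^{\lambda t}\tilde u) = f'(n^*+\e^{\lambda t}\tilde u)(\partial_x n^* + \e^{\lambda t}\partial_x\tilde u)$. Then use $\abs{a}\leqs K$, the $H^2$ bounds on $\partial_x n^*$ and $f(n^*)$, the Lipschitz constant $L$, and the algebra property of $H^1$ (together with the embedding $H^1\subset L^\infty$). This gives $\sup_t\norm{b(t,\cdot)}_{H^1}\leqs C(K,L)(1+\e^{\lambda T}R)^2$, where $\e^{\lambda T}=2$, plus the $\eps K$ term. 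Smoothness of $f$ on the relevant range is needed for the $H^1$ bound on $f'(\cdot)$. If only a Lipschitz condition is available, I would mollify, or argue in the $[0,1]$-range where $f$ is a polynomial. Similarly, $\abs{c(t)}\leqs \eta^{-1}(\abs{g(0,0)}+2LR)$. Likewise $\abs{k(t)}\leqs K(K+2R)+\eps(K+2R)$, using $\abs{\partial_x\tilde u(t,0)}\leqs C\norm{\tilde u}_{H^2}$ by Sobolev embedding. All of these bounds are uniform in $T\leqs1$ once $\e^{\lambda t}\leqs 2$ on $[0,T]$.

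\emph{Invariance and endpoint bound.} Lemma~\ref{lem:bound_inhom_heat_scaled} gives
\[
\norm{\tilde U(t)}_{H^2}\leqs \e^{-\lambda t}\tfrac R2+\alpha(T)\,C(R).
\]
Since $\alpha(T)\to0$ as $T\to0$ (like $\sqrt{T/\eps}$), choosing $T$ small yields $\norm{\tilde U(t)}_{H^2}\leqs R$ on $[0,T]$. At $t=T$ the first term equals $R/4$, so a small enough $T$ also gives $\norm{\tilde U(T)}_{H^2}\leqs R/2$. For the ODE components we solve explicitly, $\tilde E(t)=\tilde e_0-\lambda\int_0^t\tilde e+\int_0^t c$. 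The term $\lambda\int_0^t\tilde e$ is up to $\lambda T R = R\log 2$, which is not small. I would therefore instead use the variation-of-constants form $\tilde E(t)=\e^{-\lambda t}\tilde e_0+\int_0^t \e^{-\lambda(t-s)}c(s)\,ds$, i.e.\ read the linear part $-\lambda\tilde E$ implicitly, consistently with how $\tilde U$ is treated via Duhamel. This gives $\abs{\tilde E(T)}\leqs R/4+TC(R)\leqs R/2$, and the same argument handles $\tilde M$. Membership of $\tilde U$ in $L^1$ is a separate routine check via $L^1$-contractivity of the heat semigroup. It does not enter the norm of $\sB_{R,T}$.

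\emph{Contraction.} For two elements of $\sB_{R,T}$, the differences of $b$, $c$ and $k$ are bounded by $C(R,K,L,\eta,\eps)$ times the sup-distance $\sup_t(\norm{\tilde u_1-\tilde u_2}_{H^2}+\abs{\tilde e_1-\tilde e_2}+\abs{\tilde m_1-\tilde m_2})$. The same Lemma and Duhamel estimates then give a Lipschitz constant $\max(\alpha(T),T)\,C$ for $\sF_\lambda$, which is $<1$ for $T$ small. The main obstacle is the $H^1$ Lipschitz estimate of $f'(n^*+\e^{\lambda t}\tilde u)\,\partial_x(\cdot)$. It requires controlling $f''$, so I expect the argument to need $f\in\cC^2$ with bounded derivatives, not merely Lipschitz; this is harmless for $f(n)=n(1-n)$ modified outside a bounded set.
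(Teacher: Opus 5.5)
Your proposal is correct and follows essentially the paper's route. The paper also uses the heat-semigroup lemma for $\tilde U$ and variation of constants for $\tilde E,\tilde M$, with the $-\lambda$ terms treated implicitly exactly as its Duhamel formulas do; it takes $\lambda=T^{-1}\log 2$ to halve the data at $t=T$, and makes $\alpha(T)$ and $T$ small for invariance and contraction. Your chain-rule bound on $\norm{b}_{H^1}$ is more careful than the paper's appeal to mere Lipschitz continuity of $f$. Note, though, that for the contraction step you need $f''$ Lipschitz, not just bounded; this is automatic for $f(n)=n(1-n)$.
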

\begin{proof}
We start by checking that $\sB_{R,T}$ is invariant, and showing the 
end-point estimates~\eqref{eq:endpoint}. Lemma~\ref{lem:bound_inhom_heat_scaled}
yields 
\begin{equation}
 \norm{\tilde U(t,\cdot)}_{H^2} 
 \leqs \e^{-\lambda t} \frac R2 + \alpha(T) \sup_{0\leqs t\leqs T}
 \norm{b(t,\cdot)}_{H^1}\;,
\end{equation} 
where for all $t\in[0,T]$ 
\begin{align}
 \norm{b(t,\cdot)}_{H^1}
 &\leqs \e^{-\lambda t} \abs{a} \norm{\partial_x(f(n^* + \e^{\lambda t}\tilde u))}_{H^1}
 + \eps\e^{-\lambda t}\norm{\partial_{xx}n^*}_{H^1} \\
 &\leqs \e^{-\lambda t} K \norm{f(n^* + \e^{\lambda t}\tilde u)}_{H^2}
 + \eps\e^{-\lambda t}\norm{\partial_xn^*}_{H^2}\;.
\end{align}
Lipshitz continuity of $f$ yields
\begin{equation}
 \norm{f(n^* + \e^{\lambda t}\tilde u)}_{H^2} 
 \leqs  \norm{f(n^*)}_{H^2} + \e^{\lambda t}L\norm{\tilde u}_{H^2} 
 \leqs K + \e^{-\lambda t}LR\;.
\end{equation} 
This implies 
\begin{equation}
 \norm{\tilde U(t,\cdot)}_{H^2} 
 \leqs \e^{-\lambda t} \frac R2 + \alpha(T) K(LR + K + \eps)\;.
\end{equation} 
Choosing $T$ such that 
\begin{equation}
\label{eq:cond_T1} 
 \alpha(T) \leqs \frac{R}{4K(LR + K + \eps)}
\end{equation} 
ensures that $\norm{\tilde U(t,\cdot)}_{H^2} \leqs R$ for all $t\in[0,T]$, while 
the definition of $\lambda$ in terms of $T$ yields $\e^{-\lambda T} = \frac12$, 
and ensures the required end-point estimate 
$\norm{\tilde U(T,\cdot)}_{H^2} \leqs \frac R2$. 

The bounds for $\tilde E$ and $\tilde M$ are obtained in a similar way, so we only 
consider the case of $\tilde M$. We have 
\begin{equation}
 \tilde M(t) = \e^{-\lambda t} \tilde m_0 
 + \e^{-\lambda t} \int_0^t \e^{\lambda s}k(s)\6s\;. 
\end{equation} 
Here we use Morrey's inequality, which allows us to bound the H\"older $\cC^{1/2}$ 
norm in terms of the $H^1$ norm. Namely, there exists a universal constant 
$C$ such that 
\begin{equation}
 \abs{\tilde u(t,0)}, \abs{\partial_x \tilde u(t,0)} 
 \leqs C\norm{\tilde u}_{H^2} \leqs CR\;.
\end{equation} 
This yields 
\begin{align}
 \e^{\lambda s} \abs{k(s)} 
 &\leqs K \bigabs{f(n^*(0) + \e^{\lambda s}\tilde u(s,0))}
 + \eps \bigpar{\abs{\partial_x n^*(0)} + \e^{\lambda s}\abs{\partial_x \tilde u(s,0)}} \\
 &\leqs \e^{\lambda s} \bigbrak{K(K + LCR + \eps) + \eps CR}\;.
\end{align}
Therefore, imposing 
\begin{equation}
\label{eq:cond_T3} 
 T \leqs \frac{R}{4\bigbrak{K(K + LCR + \eps) + \eps CR}}
\end{equation} 
entails the required bounds for $\tilde M(t)$, $t\in[0,T]$, and 
$\tilde M(T)$. The bounds for $\tilde E$ hold under the additional 
condition 
\begin{equation}
 T \leqs \frac{\eta}{4L}\;.
\end{equation} 
We now turn to proving that $\sF_\lambda$ is contracting. 
For that, we pick $(\tilde u_1, \tilde e_1, \tilde m_1)$
and $(\tilde u_2, \tilde e_2, \tilde m_2)$ in $\sB_{R,T}$ such that 
\begin{equation}
 \norm{\tilde u_1(t,\cdot) - \tilde u_2(t,\cdot)}_{H^2} \leqs r\;, \qquad 
 \abs{\tilde e_1(t) - \tilde e_2(t)}\leqs r \;, \qquad 
 \abs{\tilde m_1(t) - \tilde m_2(t)}\leqs r 
\end{equation} 
for all $t\in[0,T]$. Then the difference $\tilde U_1 - \tilde U_2$ satisfies 
\begin{equation}
 \partial_t(\tilde U_1 - \tilde U_2) 
 = \eps \partial_{xx} (\tilde U_1 - \tilde U_2) 
 - \lambda (\tilde U_1 - \tilde U_2) 
 + b_1 - b_2\;,
\end{equation} 
where the $b_i$ (and below respectively $a_i,f_i$) are given by $b$ (respectively $a,f$) evaluated for $(\tilde u_i, \tilde e_i, \tilde m_i)$. 
Lemma~\ref{lem:bound_inhom_heat_scaled} yields 
\begin{align}
 \sup_{0\leqs t \leqs T} \norm{(\tilde U_1 - \tilde U_2)(t,\cdot)}_{H^2}
 &\leqs \alpha(T) \sup_{0\leqs t \leqs T} \norm{(b_1 - b_2)(t,\cdot)}_{H^1} \\
 &\leqs \alpha(T) \sup_{0\leqs t \leqs T} \norm{(B_1 - B_2)(t,\cdot)}_{H^2}\;,
\end{align} 
where the $B_i$ are primitives of the $b_i$ satisfying 
\begin{align}
 \norm{(B_1 - B_2)(t,\cdot)}_{H^2}
 &\leqs \e^{-\lambda t}\abs{a_1 - a_2} \norm{f_1}_{H^2}
 + \e^{-\lambda t}\abs{a_2} \norm{f_1 - f_2}_{H^2} \\
 &\leqs 2Lr \bigbrak{K + L\e^{\lambda t}R}
 + KLr \\
 &\leqs Lr\e^{\lambda t} \bigbrak{3K + 2LR}\;.
\end{align}
Therefore, imposing 
\begin{equation}
\label{eq:cond_T4} 
 2\alpha(T) L\bigbrak{3K + 2LR} < 1 
\end{equation} 
yields contraction of the $\tilde U$ component. Contraction in the $\tilde E$ 
and $\tilde M$ directions is obtained in a similar way. 
\end{proof}

\begin{corollary}
Assume $\tilde u_0\in H^2(\R)\cap L^1(\R)$ and fix $\tilde e_0$. Then the 
scaled system~\eqref{eq:u_scaled} admits a unique solution which is global in time. 
\end{corollary}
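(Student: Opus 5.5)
The statement is the corollary at the end of the appendix: global existence and uniqueness for the scaled system~\eqref{eq:u_scaled}. The plan is to iterate the local result of the preceding proposition on successive intervals $[kT,(k+1)T]$. The point is that the length $T$ of these intervals does not shrink from one step to the next.

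\textbf{First interval.} Set $R = 2(\norm{\tilde u_0}_{H^2}\vee\abs{\tilde e_0}\vee\abs{\tilde m_0})$. The proposition gives that $\sF_\lambda$ maps the closed set $\sB_{R,T}$ into itself and is a strict contraction there. $\sB_{R,T}$ is a closed subset of the Banach space $\cC([0,T],H^2)\times\cC([0,T],\R)^2$ with the sup norm. Banach's fixed-point theorem therefore yields a unique fixed point, which is a mild solution of~\eqref{eq:u_scaled} on $[0,T]$. Its uniqueness among all solutions follows from a continuity argument. Any solution on $[0,T]$ starts inside the ball of radius $R/2$, so by continuity it stays in $\sB_{R,t'}$ for some $t'>0$. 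The contraction estimates hold on $[0,t']$ with the same constants, so this solution coincides with the fixed point there. Repeating the argument, the set of times where the two solutions agree is open and closed in $[0,T]$, hence is all of $[0,T]$.

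\textbf{Iteration.} The end-point bounds~\eqref{eq:endpoint} give $\norm{\tilde U(T,\cdot)}_{H^2},\abs{\tilde E(T)},\abs{\tilde M(T)}\leqs R/2$. We restart at time $T$, reverting the scaling there and writing $u(T)=\e^{\lambda T}\tilde U(T)=2\tilde U(T)$. We then rescale with respect to the new initial time, so the new initial data again have norm at most $R/2$ after the shift. To keep the same $R$, we use that $T$ depends only on $R,\eps,\eta,K,L$ and not on time. The coefficients $b,c,k$ are built from the autonomous functions $a,g,f$ and $n^*$, and the constants $K,L$ are global. Hence the same $T$ and $\lambda=T^{-1}\log 2$ work on each interval $[kT,(k+1)T]$.

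\textbf{Main obstacle.} The delicate point is bookkeeping the scaling under time shifts. The unscaled norms may grow by a factor $2$ per step, and this is harmless for global existence, which only needs $T$ to be uniform. To achieve that, I would redo the proposition's estimates with the restart normalisation chosen so that the data have size at most $R/2$ relative to the local scaling. Alternatively, I would note that conditions~\eqref{eq:cond_T1}, \eqref{eq:cond_T3} and~\eqref{eq:cond_T4} only involve $R$ through quantities controlled by the end-point bounds. One must also check that $\tilde u(t)\in L^1$ is propagated, which is needed for $\tilde m$ to make sense. For this I would use the heat-kernel $L^1$ contraction together with $\partial_x f(n^*+u)\in L^1$. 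Concatenating the intervals gives a unique solution on $[0,\infty)$.
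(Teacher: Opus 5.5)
Your plan, which iterates the local proposition on $[kT,(k+1)T]$ using the end-point bounds~\eqref{eq:endpoint} with a step length that does not shrink, is the paper's plan. However, the restart you describe fails at the one point that matters.

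\textbf{Why the restart fails.} If you re-anchor the exponential weight at time $T$, the new scaled datum is $u(T)=\e^{\lambda T}\tilde U(T)=2\tilde U(T)$. Its $H^2$-norm is bounded only by $R$, not $R/2$, and likewise for $E(T)$ and $M(T)$. Nothing brings it back down. You claim that $b$, $c$, $k$ are built from autonomous functions, but this overlooks the explicit factors $\e^{\pm\lambda t}$ they contain, so the scaled system cannot simply be time-translated. You must therefore reapply the proposition with radius $2R$, then $4R$, and so on. Here $T$ genuinely depends on $R$: the contraction condition~\eqref{eq:cond_T4} requires $\alpha(T)<[2L(3K+2LR)]^{-1}$. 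Since $\alpha(T)\geqs 2C_1\sqrt{T/\eps}$, this forces $T=\Order{\eps R^{-2}}$. The step lengths then decay like $4^{-k}$ and sum to a finite time, so the iteration does not exclude blow-up. Your remark that the doubling of the unscaled norms is harmless "because only a uniform $T$ is needed" is circular: the doubling is precisely what destroys the uniformity of $T$.

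\textbf{How to repair it.} Keep the weight anchored at $t=0$; this is how the paper's sentence "$\tilde u(T)$, $\tilde e(T)$ satisfy the same bounds as $\tilde u(0)$, $\tilde e(0)$" has to be read. Then $\norm{\tilde u(T)}_{H^2},\abs{\tilde e(T)},\abs{\tilde m(T)}\leqs R/2$ is literally the starting hypothesis on $[T,2T]$. The invariance half of the proposition then goes through verbatim on every $[kT,(k+1)T]$, for three reasons:
\begin{itemize}
\item in the bounds on $\norm{b}_{H^1}$, $\abs{c}$, $\abs{k}$, each factor $\e^{\lambda t}$ produced by a Lipschitz estimate is compensated by the prefactor $\e^{-\lambda t}$;
\item the leftover factors are $\e^{-\lambda t}\leqs 1$;
\item the restart factor is again $\e^{-\lambda T}=\frac12$.
\end{itemize}
This gives the a priori bound $\norm{\tilde u(t)}_{H^2},\abs{\tilde e(t)},\abs{\tilde m(t)}\leqs R$ on the whole existence interval.

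The contraction half is not uniform. The term $\e^{-\lambda t}\abs{a_1-a_2}\norm{f_1}_{H^2}$ contains $2L^2r\,\e^{\lambda t}R$: there are two factors $\e^{\lambda t}$, one from $a_1-a_2$ and one from $f_1$, against a single $\e^{-\lambda t}$. So on $[kT,(k+1)T]$ you need sub-steps whose length depends on $k$; the paper's one-line proof does not address this either. This is harmless, because the a priori bound holds on the whole interval. Finitely many sub-steps therefore cover it, and uniqueness propagates by your open--closed argument.

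\textbf{Propagation of $L^1$.} To propagate $\tilde u(t)\in L^1$, do not rely on $\partial_x f(n^*+u)\in L^1$, which does not follow from $u\in H^2\cap L^1$. Instead put the derivative on the heat kernel, using $\norm{\partial_x\e^{\eps\tau\Delta}}_{L^1\to L^1}\leqs(\pi\eps\tau)^{-1/2}$ together with $f(n^*+u)\in L^1$.
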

\begin{proof}
The previous theorem shows that there is a unique solution on the time interval 
$[0,T]$ for suitable $T$ and $\lambda$. Since $\tilde u(T,x)$ and $\tilde e(T)$ 
satisfy the same bounds as $\tilde u(0,x)$ and $\tilde e(0)$, 
and the bound on $\tilde M(T)$ shows that $\tilde u(T,\cdot)\in L^1$, 
we can apply the 
theorem again to obtain a unique solution on $[0,2T]$. The result then 
follows by induction. 
\end{proof}

To complete the proof of Theorem~\ref{thm:existence}, it remains to show that 
$u$ satisfies the boundary conditions, and that the integral $P$ is conserved. 
First note that since $\tilde u(t,\cdot) \in H^2$ for all $t$, the same 
holds for $u(t,\cdot)$. Now if $\hat u$ denotes the Fourier transform of 
$u$, then the Cauchy--Schwarz inequality gives 
\begin{align}
 \norm{\hat u}_{L^1} 
 &= \norm{(1+\abs{\xi})^{-1}(1+\abs{\xi})\hat u}_{L^1} \\
 &\leqs \biggpar{\int_\R\frac{\6\xi}{(1+\abs{\xi})^2}}^{1/2} 
 \bigpar{\norm{\hat u}_{L^2} + \norm{\xi \hat u}_{L^2 }} \\
 &\leqs c\norm{u}_{H^1}
 \label{eq:Fourier-L1} 
\end{align}
for a finite $c$. Therefore $\hat u\in L^1$, and the Riemann--Lebesgue 
lemma implies that $u(t,x)$ has to converge to $0$ as $x\to\pm\infty$. 
The same argument works for $\partial_x u$, which belongs to $H^1$. This 
shows that $u(t,\cdot)$ satisfies the boundary conditions~\eqref{eq:bc}. 

Since $f(0) = f(1) = 0$, it also follows that 
\begin{equation}
 \lim_{x\to\pm\infty} f(n(t,x)) = 0\;.
\end{equation} 
Therefore, 
\begin{equation}
 \lim_{L\to\infty} 
 \int_{-L}^L \partial_x \bigbrak{f(n(t,x))} \6x 
 = \lim_{L\to\infty} \bigbrak{f(n(t,L)) - f(n(t,-L))}
 = 0\;.
\end{equation} 
A similar argument shows that 
\begin{equation}
 \lim_{L\to\infty} 
 \int_{-L}^L \partial_{xx} u(t,x) \6x = 0\;. 
\end{equation} 
It follows that 
\begin{equation}
 \lim_{L\to\infty} 
 \frac{\6}{\6t} \int_{-L}^L \bigbrak{n(t,x) - n^*(x)} \6x 
 = \lim_{L\to\infty} \int_{-L}^L \partial_t n(t,x) \6x 
 = 0\;.
\end{equation} 


\tableofcontents


\bibliographystyle{abbrv}
\bibliography{BBH}

\bigskip\bigskip\noindent
{\small
Julien Barr\'e \\
Institut Denis Poisson (IDP) \\ 
Universit\'e d'Orl\'eans, Universit\'e de Tours, CNRS -- UMR 7013 \\
B\^atiment de Math\'ematiques, B.P. 6759\\
45067~Orl\'eans Cedex 2, France \\
{\it E-mail address: }
{\tt julien.barre@univ-orleans.fr}

\bigskip\bigskip\noindent
{\small
Nils Berglund \\
Institut Denis Poisson (IDP) \\ 
Universit\'e d'Orl\'eans, Universit\'e de Tours, CNRS -- UMR 7013 \\
B\^atiment de Math\'ematiques, B.P. 6759\\
45067~Orl\'eans Cedex 2, France \\
{\it E-mail address: }
{\tt nils.berglund@univ-orleans.fr}

\bigskip\bigskip\noindent
{\small
Hiroshi Horii \\
Universit\'e C\^ote d'Azur, Observatoire de la C\^ote d'Azur, CNRS \\
Laboratoire Lagrange, UMR 7293, Nice, France \\
{\it E-mail address: }
{\tt hiroshi.horii@oca.eu}

\end{document}